\documentclass[11pt,a4paper,reqno]{amsart}
\usepackage[a4paper,left=2.0cm,right=2.0cm,top=2.3cm,bottom=2.3cm]{geometry}
\usepackage{mathrsfs,amssymb,amsmath,amsthm,graphicx,mathtools,xfrac}
\usepackage{upgreek}
\usepackage[amsstyle]{cases}
\usepackage{enumitem}
\usepackage{array}
\usepackage{aliascnt}
\usepackage[linkcolor=blue,citecolor=cyan,urlcolor=blue,colorlinks,hypertexnames=false]{hyperref}
\usepackage[noabbrev]{cleveref}
\usepackage[utf8]{inputenc}
\usepackage[T1]{fontenc}
\usepackage{xcolor}
\usepackage[integrals]{wasysym}
\usepackage{subcaption}

\newtheorem{theorem}{Theorem}[section]
\newaliascnt{lemma}{theorem}
\newtheorem{lemma}[lemma]{Lemma}
\aliascntresetthe{lemma}
\newaliascnt{definition}{theorem}
\newtheorem{definition}[definition]{Definition}
\aliascntresetthe{definition}
\newaliascnt{proposition}{theorem}
\newtheorem{proposition}[proposition]{Proposition}
\aliascntresetthe{proposition}
\newaliascnt{remark}{theorem}
\newtheorem{remark}[remark]{Remark}
\aliascntresetthe{remark}
\newaliascnt{corollary}{theorem}

\aliascntresetthe{corollary}
\newaliascnt{assumption}{theorem}

\aliascntresetthe{assumption}

\crefname{theorem}{Theorem}{Theorems}
\Crefname{theorem}{Theorem}{Theorems}

\crefname{proposition}{Proposition}{Propositions}
\Crefname{proposition}{Proposition}{Propositions}

\crefname{lemma}{Lemma}{Lemmas}
\Crefname{lemma}{Lemma}{Lemmas}

\crefname{corollary}{Corollary}{Corollaries}
\Crefname{corollary}{Corollary}{Corollaries}

\crefname{assumption}{Assumption}{Assumptions}
\Crefname{assumption}{Assumption}{Assumptions}

\crefname{definition}{Definition}{Definitions}
\Crefname{definition}{Definition}{Definitions}

\crefname{remark}{Remark}{Remarks}
\Crefname{remark}{Remark}{Remarks}

\newcommand{\Om}{\Omega}
\newcommand{\Ga}{\Gamma}
\newcommand{\R}{\mathbb R}
\newcommand{\diver}{\operatorname{div}}
\newcommand{\curl}{\operatorname{curl}}

\newcommand{\X}{\mathcal X}
\newcommand{\D}{\mathcal D}

\numberwithin{equation}{section}
\allowdisplaybreaks
\begin{document}
\title[Hadamard theory for free-boundary ideal MHD]{Hadamard well-posedness and nonuniform dependence for the incompressible free-boundary ideal MHD equations}
\author{Chengchun Hao \and Siqi Yang}
\address{Chengchun Hao\newline
State Key Laboratory of Mathematical Sciences, Academy of Mathematics and Systems Science, Chinese Academy of Sciences, Beijing 100190, China \newline
\and
School of Mathematical Sciences, University of Chinese Academy of Sciences, Beijing 100049, China
}
\email{hcc@amss.ac.cn}
\address{Siqi Yang\newline School of Mathematics, Statistics and Mechanics, Beijing University of Technology, Beijing 100124, China}
\email{siqiyang@bjut.edu.cn}

\begin{abstract}
We prove local Hadamard well-posedness for the three-dimensional
incompressible free-boundary ideal magnetohydrodynamic equations
without surface tension under the Taylor sign condition.
For every $5/2<s<4$, the result holds on bounded fluid domains
whose closures are diffeomorphic to the closed ball, with the
velocity, magnetic field, and boundary in the standard
$H^s$ spaces. No additional regularity in the magnetic direction
is required. The proof rests on a fourth order geometric energy
inequality whose coefficients are controlled by these $H^s$ norms,
the fixed geometric bounds, and the Taylor lower bound.
A harmonic extension of the boundary normal provides the
coefficient control needed at this regularity.
We also prove that, near every admissible initial state and at
each positive time in a common local existence interval, the
Eulerian solution map is not uniformly continuous on any
nonempty open subset of a sufficiently small data neighborhood. The construction combines
localized magnetic oscillations prescribed at the observation
time with time reversal and Galilean translations.
\end{abstract}

\keywords{Incompressible free-boundary ideal MHD equations, Taylor sign condition, Hadamard well-posedness, continuous dependence, nonuniform dependence}
\subjclass[2020]{35Q35, 35R35, 35B35, 76B03, 76B45}
\maketitle
\enlargethispage{2pt}
\tableofcontents
\section{Introduction}

We consider the incompressible free-boundary ideal magnetohydrodynamic (MHD)
equations in three space dimensions:
\begin{subnumcases}{\label{eq:MHD}}
D_t u+\nabla p=B\cdot\nabla B,
&\text{in }\Om_t,\label{eq:mom}\\
D_t B=B\cdot\nabla u,
&\text{in }\Om_t,\\
\diver u=0,\quad \diver B=0,
&\text{in }\Om_t,\\
V_{\Ga_t}=u\cdot N,\quad B\cdot N=0,\quad p=0,
&\text{on }\Ga_t.\label{eq:bc}
\end{subnumcases}
Here $\Om_t\subset\R^3$ is the bounded fluid region, $\Ga_t=\partial\Om_t$ is the moving boundary, $u$ is the velocity, and $B$ is the magnetic field.  The total pressure is
\begin{equation*}
p=q+\frac12|B|^2,
\end{equation*}
where $q$ is the fluid pressure.  The material derivative is
\begin{equation*}
D_t=\partial_t+u\cdot\nabla.
\end{equation*}
On $\Ga_t$, $N$ denotes the outward unit normal and $V_{\Ga_t}$ the normal velocity of the free boundary. We assume the Taylor sign condition
\begin{equation*}
-\partial_Np_0\ge c_0>0
\qquad\text{on }\Ga_0,
\end{equation*}
where $\partial_N=N\cdot\nabla$, $p_0=p|_{t=0}$, and $c_0$ is a
positive constant.

\subsection{Previous work}

\subsubsection{The incompressible Euler equations}
For $B=0$, problem \eqref{eq:MHD} reduces to the free-boundary Euler equations.
Under the Taylor sign condition, Christodoulou and Lindblad
\cite{ChristodoulouLindblad2000} established geometric a priori
estimates, and Lindblad \cite{Lindblad2005} and Coutand and Shkoller
\cite{CoutandShkoller2007} proved local well-posedness. The geometric
approach was further developed by Shatah and Zeng
\cite{ShatahZeng2008}. For irrotational flows, the paradifferential
theory of gravity water waves was developed in
\cite{AlazardBurqZuily2014}; see also \cite{Lannes2013}.
For rotational flows, Ifrim, Pineau, Tataru, and Taylor
\cite{IPTT-Euler} established sharp local Hadamard well-posedness
in the standard Sobolev spaces.

A related line of work concerns the local uniform continuity and
differentiability of the Eulerian solution map. On the torus and in the whole space, Himonas and Misio\l ek
\cite{HimonasMisiolek2010} proved nonuniform dependence on initial
data. In spatial dimension $n$, for $s>n/2+1$, \.Inci \cite{Inci2015} showed that the
solution map on $H^s(\R^n)$ is nowhere locally uniformly continuous.
On bounded two-dimensional domains, \.Inci and Li \cite{InciLi2019} proved that,
at each positive time, the solution map is nowhere locally uniformly
continuous and nowhere Fr\'echet differentiable. Bourgain and Li
\cite{BourgainLi2019} obtained nonuniform dependence by localized
Galilean transformations. For a moving boundary, the corresponding
question also requires comparing fields on different fluid domains.

\subsubsection{The incompressible free-boundary ideal MHD equations}
For the one-phase problem with surface tension, Luo and Zhang
\cite{LuoZhang2021SurfaceTension} established a priori estimates,
and Gu, Luo, and Zhang \cite{GuLuoZhang2024SurfaceTension} proved
local well-posedness. The first and second authors \cite{HaoYang2025}
obtained a priori estimates and a blow-up criterion for a closed free
surface, while Luo and the second author \cite{LuoYang2026} derived
low-regularity estimates and studied finite-time breakdown and
self-intersection singularities. The zero-surface-tension limit under
the Taylor sign condition was studied in
\cite{GuLuoZhang2022ZeroSurfaceTension}.

Surface tension also enters related problems involving plasma--vacuum
interfaces or current--vortex sheets. For the plasma--vacuum problem,
Liu and Xin \cite{LiuXin2025} proved nonlinear local well-posedness
without a graph assumption, under either surface tension or magnetic
noncollinearity, and established zero-surface-tension limits under
the Taylor sign or noncollinearity condition. In two dimensions,
Liu and Luo \cite{LiuLuo2026} obtained local well-posedness under
surface tension or nondegeneracy of the tangential magnetic fields,
together with zero-surface-tension limits. For the distinct two-plasma
current--vortex sheet problem, Liu and Xin \cite{LiuXin2025AdvMath}
proved local well-posedness in standard Sobolev spaces under surface
tension or the Syrovatskij stability condition, without a graph
assumption, and established the zero-surface-tension limit under
the Syrovatskij stability condition.

In the absence of surface tension, the stability conditions depend
on the magnetic configuration and the pressure at the interface.
For plasma--vacuum interfaces with a nontrivial vacuum magnetic field,
the first author \cite{Hao2017} obtained geometric a priori estimates
under a Taylor-type sign condition on the pressure jump. Morando,
Trakhinin, and Trebeschi \cite{MorandoTrakhininTrebeschi2014} studied
the linearized problem under magnetic noncollinearity, and Sun, Wang,
and Zhang \cite{SunWangZhang2019} established nonlinear well-posedness
under magnetic noncollinearity. Zhao \cite{Zhao2024} proved local
well-posedness in Eulerian coordinates under the Taylor sign
condition when the vacuum magnetic
field vanishes.

For the one-phase free-boundary problem without surface tension,
the first author and Luo \cite{HaoLuo2014} established geometric
a priori estimates under the Taylor sign condition, with separate
constant boundary conditions on the fluid pressure and the
magnetic-field strength. They subsequently constructed
two-dimensional examples of ill-posedness for the Lagrangian
flow map when the Taylor sign is violated \cite{HaoLuo2020},
and proved well-posedness of the linearized problem under the
Taylor sign condition \cite{HaoLuo2021}. Lee \cite{Lee2017}
constructed inviscid solutions by letting equal viscosity and
magnetic diffusivity tend to zero, with
the magnetic field vanishing in the vacuum and on the plasma boundary.
Gu and Wang \cite{GuWang2019} proved nonlinear local well-posedness
in an $H^4$ energy class on an initial periodic slab. At lower
regularity, Luo and Zhang \cite{LuoZhang2020} derived Lagrangian
a priori estimates for velocity and magnetic field in
$H^{5/2+\delta}$, $0<\delta<1/2$, with the flow map in
$H^{3+\delta}$, under a smallness assumption on the volume and a
constant magnetic field magnitude at the free boundary.

For the one-phase problem \eqref{eq:MHD}, Ifrim, Pineau, Tataru,
and Taylor \cite{IPTT-MHD} established local Hadamard
well-posedness in three dimensions for every $s>5/2$ on bounded
domains of general topology. Their phase space requires
$D_Bu,D_BB\in H^{s-1/2}$, where $D_B=B\cdot\nabla$, in addition
to $H^s$ regularity of the velocity, magnetic field, and free
boundary. Standard $H^s$ data, however, generally provide only
$H^{s-1}$ control of these magnetic derivatives.

We establish local Hadamard well-posedness
for \eqref{eq:MHD} in the standard Eulerian moving-domain $H^s$ topology
for every $5/2<s<4$. On bounded fluid domains whose closures are
diffeomorphic to the closed ball, the result gives existence, uniqueness,
and continuous dependence on a common lifespan under the Taylor sign
condition.
We further prove that, at every positive time in a common local existence
interval, the Eulerian solution map is not uniformly continuous on any
nonempty open subset of a local data neighborhood, with distances measured
in a fixed Eulerian metric.

\subsection{Main results}

We first present the main results informally in the Eulerian moving-domain
$H^s$ topology, with the precise definition of this topology and the full
statements given in \cref{sec:phase-spaces}. Admissibility is specified in
\cref{thm:main}.

\begin{theorem} 
Let $5/2<s<4$, and consider \eqref{eq:MHD} on a bounded
fluid domain whose closure is diffeomorphic to the closed ball.
Then the following hold:
\begin{enumerate}[label=(\roman*),leftmargin=2.6em]
\item For admissible $H^s$ initial data, \eqref{eq:MHD} is locally
Hadamard well-posed in the standard Eulerian moving-domain
$H^s$ topology.
\item For every positive time in a common local existence interval,
the solution map is not uniformly continuous on any nonempty
open subset of a sufficiently small data neighborhood, with
respect to the fixed Eulerian $H^s$ metric.
\end{enumerate}
\end{theorem}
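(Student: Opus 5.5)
The proof of part (i) follows the Ifrim--Pineau--Tataru--Taylor scheme for the Euler problem \cite{IPTT-Euler}, adapted to MHD, and proceeds in four stages. First, a \emph{geometric energy inequality}. For the linearized and the nonlinear problem we build a fourth-order (in the scaling of $D_t$) energy functional. It is modeled on the Christodoulou--Lindblad and Hao--Luo energies and uses the Taylor coefficient $a=-\partial_N p$ to weight the boundary term. Its coercivity is equivalent to the $H^s$ norm of $(u,B,\Gamma_t)$. The coefficients of the resulting differential inequality must be bounded only by the $H^s$ norms, the fixed geometric (ball-diffeomorphic collar) bounds, and $\inf a$. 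We deliberately avoid any $D_B u, D_B B\in H^{s-1/2}$ control. The only place such control would be needed is in commuting $D_t$ and $D_B$ with boundary projections. There we replace the normal $N$, which lives only on $\Gamma_t$, by its harmonic extension into $\Omega_t$. This gives an interior field with $H^{s-1/2}$ regularity and elliptic estimates, so commutators such as $[D_t,\partial_N]$ and $[D_B,\partial_N]$ lose no derivatives. The pressure is estimated by the elliptic problem $-\Delta p=\partial_iu^j\partial_ju^i-\partial_iB^j\partial_jB^i$ with $p=0$ on $\Gamma_t$, using Dirichlet boundary regularity at $H^{s+1/2}$-type level on $H^s$ domains. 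This is valid precisely for $5/2<s<4$, and the upper limit reflects the fixed order of the energy.

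Second, \emph{existence and uniqueness}. For smooth data we obtain solutions on a time interval controlled by the $H^s$ norm, either by invoking \cite{IPTT-MHD} for regular data, or by a Nash--Moser-free iteration using the linearized estimate. Uniqueness and Lipschitz dependence in a weaker topology come from a difference functional comparing two solutions on different domains. We pull both back to a common reference collar, control the distance at $L^2$/$H^{1/2}$ level, and use the energy inequality at low regularity. Rough data are then handled by regularizing the data and proving that the smooth solutions converge. This uses \emph{frequency envelopes}: regularized data $(u_0^h,B_0^h,\Gamma_0^h)$ at dyadic scales $2^h$ have $H^{s+j}$ growth bounded by $2^{jh}c_h$, and the difference bound gives $L^2$ decay $2^{-sh}c_h$. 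Interpolation yields convergence in $H^s$ on a uniform lifespan. Continuous dependence follows by the same envelope argument applied to a convergent data sequence. This completes part (i).

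Third, \emph{nonuniform dependence} for part (ii). Fix an admissible state and a time $T$ in the common existence interval, and pick any open subset of the data neighborhood. Inside it choose a smooth base datum, and add a magnetic perturbation prescribed \emph{at time $T$}: a localized interior wave packet $\epsilon\lambda^{-s}\phi(x)\sin(\lambda\,\omega\cdot x)$, divergence free, supported away from $\Gamma_T$, with $H^s$ size $\epsilon$. The system is time reversible under $(t,u,B)\mapsto(-t,-u,B)$. We therefore solve backward from $T$ to obtain two initial data, which part (i) controls. Next we apply small Galilean-type velocity shifts $\pm\sigma$ near the packet, realized as a localized modification of $u$ chosen so that the data remain admissible. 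Over time $T$ these shift the oscillation by $\pm\sigma T$. With $\sigma\sim\lambda^{-1}$ the two solutions at time $T$ differ by order $\epsilon$ in $H^s$, while their data differ by $O(\sigma)+o(1)$ as $\lambda\to\infty$. The distances are measured in the fixed Eulerian metric, which compares fields after extension or pullback. Boundary motion is of lower order because the packets stay in the interior during $[0,T]$.

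The main obstacle is the first step: obtaining the energy inequality with coefficients depending only on $H^s$ norms, without the magnetic-direction regularity used in \cite{IPTT-MHD}. In particular, the boundary term involving $D_B$ and the second fundamental form must close. The plan is to first try to absorb it using $B\cdot N=0$, which makes $D_B$ tangential on $\Gamma_t$, together with the harmonic extension of $N$. Only if this fails would we resort to further symmetrization of the $(u,B)$ system through the Elsässer variables $u\pm B$.
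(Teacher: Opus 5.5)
\textbf{The genuine gap is in part~(ii).} You replace the Galilean boost by a \emph{localized} velocity modification ``so that the data remain admissible''. You then assert that over $[0,T]$ it shifts the oscillation by $\pm\sigma T$.

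A localized modification is not a symmetry of the system. So the solution at time $T$ from the modified datum is unknown, and nothing in part~(i) identifies it with the translated packet up to $o(\epsilon)$ in $H^s$. Continuous dependence cannot help here. The pulled-back data $U_\lambda$ do not converge in $H^s$: if they did, the perturbed terminal states would converge too, but the packets have fixed $H^s$ size and tend only weakly to zero. You would therefore need a quantitative, $\lambda$-uniform description of how a high-frequency packet is carried by a rough background and by the perturbed flow.

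Even heuristically the claim is off. After backward evolution the packet at time $0$ is split along the characteristics of $u\mp B$ and coupled to the nonlocal pressure, so ``near the packet'' is not defined. The displacement induced by a localized $\sigma\psi$ is then $\sigma\int_0^T(\cdots)\,dt$, not $\sigma T$. Your assertion that the packets stay in the interior on $[0,T]$ is also unproved.

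\textbf{The localization is unnecessary.} On a bounded domain a constant $v\in\R^3$ lies in $H^s(\Om)$. Adding it to $u$ keeps $\diver u=0$ and leaves $B$, $\Ga$, the pressure (whose source involves only $\nabla u$) and the Taylor coefficient unchanged. The boosted solution is exactly $(u(t,x-vt)+v,\,B(t,x-vt),\,\Om_t+vt)$, and uniqueness identifies it with the solution map. Take $v=h/T$ with $h=(\pi/\lambda)e_1$. Then the initial distance is at most $C|v|\to0$, while at time $T$ the magnetic field is exactly the half-wavelength translate of the prescribed terminal field $\overline B+b_\lambda$. The separation is read off on a fixed box $Q\Subset\Om\cap(\Om+h)$, where the extensions in $d$ agree with the fields. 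Strong continuity of translations disposes of $\overline B(\cdot-h)-\overline B$. No information on the packet at intermediate times is needed. You should also check the Taylor sign at the perturbed terminal state. The paper gets it from continuity of $-\partial_Np$ in $H^{s'}$, $s'<s$, where the packets tend to zero.

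\textbf{Part~(i).} Your outline is the paper's: harmonic extension of $N$, Dirichlet pressure estimates, smooth solutions from the existing theory, a weak difference functional, and frequency envelopes. But one structural claim is wrong, and several steps are missing.
\begin{itemize}
\item \emph{Coercivity.} A fourth-order energy is not coercive for the $H^s$ norm. In the paper $E_4\simeq(1+\|U\|_{\X^4})^2$, and the point is the tame inequality $\frac{d}{dt}E_4\le C_RE_4$, with $C_R$ depending only on the $H^s$ bound, the collar and $c_0^{-1}$. This estimate, linear in the high norm, propagates $\|U_j(t)\|_{\X^4}\lesssim2^{(4-s)j}c_j$ along the regularized solutions on a common interval. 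An energy at the $H^s$ level alone would not feed the envelope interpolation.
\item \emph{Where $s<4$ enters.} It comes through this fixed order: the summation kernel $2^{(4-s)m}$ for $m<0$, and the interpolation exponents $\theta_0,\theta_1$ with $\theta_0+\theta_1<1$ and $3\theta_0<1$. It does not come from Dirichlet regularity.
\item \emph{The hard boundary term.} It is not a $D_B$ term. By $\diver B=0$ and $B\cdot N=0$, the magnetic transport terms reduce to $\int(D_Bg^{IJ})U_I\cdot V_J$, which is controlled since $D_B\gamma\in L^\infty$. The difficult term is $\int_\Ga a^{-1}T\cdot\Pi^{\otimes4}\nabla^4D_tp$. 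It is controlled through $D_tp|_\Ga=0$ and the fourth-order chain rule, while the Taylor sign recovers $D^4\eta$ from $\Pi^{\otimes4}\nabla^4p$.
\item \emph{Moving-domain steps hidden in ``regularize and interpolate''.} First, the regularized data need Dirichlet and Neumann corrections to restore $\diver=0$ and $B\cdot N=0$ on the new domain, with envelope-size errors, including an $H^{-1/2}$ bound on the normal defect. Second, $L^2$ differences are only controlled on $\Om_j\cap\Om_{j+1}$, via the intersection functional $\D$ rather than a pullback. They must therefore be telescoped through divergence-preserving regularizers with a domain-independent kernel defined slightly beyond each boundary.
\end{itemize}
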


\begin{remark}
Part~(i) gives continuous dependence at each initial datum, whereas
part~(ii) rules out a common modulus of continuity on any nonempty
open subset of the local data neighborhood. 
\end{remark}
\subsubsection{Proof strategy}\mbox{}\par
\vspace{2mm}
For part~(i), we use the geometric energy introduced by the first author
in \cite[Section~5]{Hao2017}. For smooth solutions, we prove a fourth
order energy inequality whose coefficients depend only on the $H^s$
norms of the velocity, magnetic field, and boundary, the fixed geometric
bounds, and the positive lower bound for $-\partial_Np$.
Here the boundary regularity is measured through local graph functions.

For the coupled first order equations, the leading magnetic transport
terms cancel between the velocity and magnetic energies, and between
their curl energies, by the divergence-free conditions and magnetic tangency. The
remaining coefficient estimates must account for the roughness of the
boundary: for $5/2<s\le3$, bounded $H^s$ norms allow arbitrarily large
curvature. Let $F$ be the componentwise harmonic extension of $N$
from $\Ga_t$ into $\Om_t$, and let $\Pi=\mathrm{Id}-N\otimes N$ be the tangential
projection. The identities 
\begin{equation*}
D_tN=-\Pi(\nabla u)^TN,
\qquad
D_BN=-\Pi(\nabla B)^TN,
\end{equation*} 
together with elliptic estimates for the commuted harmonic equations, give
$D_tF,D_BF\in H^{s-1}(\Om_t)$ throughout $5/2<s<4$.
Normalization and a cutoff give a nonnegative bulk quadratic form
$\gamma$, equal to $\Pi$ on the boundary and to $\mathrm{Id}$ outside a boundary
neighborhood, with 
\begin{equation*}
\nabla\gamma\in L^6(\Om_t),
\qquad
D_t\gamma,D_B\gamma\in L^\infty(\Om_t),
\end{equation*} 
controlled by the $H^s$ low norm and the fixed geometric bounds.
The $L^6$ bound is sufficient for the bulk pressure term because
the elliptic estimate for $p$ in $H^{9/2}(\Om_t)$ controls
$\nabla^4p$ in $L^3(\Om_t)$ linearly by the fourth order state norm.

The highest order pressure terms cancel between the bulk and
boundary energies. The remaining boundary term contains
$\Pi^{\otimes4}\nabla^4D_tp$, where $\Pi^{\otimes4}$ projects each
of the four derivative indices onto the tangent plane.
To estimate this trace, we use the zero boundary values of $p$ and $D_tp$.
Four differentiations in surface coordinates
express the projected pressure derivatives in terms of derivatives
of the boundary graph and lower order pressure derivatives. The
Taylor sign then recovers the fourth derivatives of the graph from
the boundary energy. Norms already covered by the low regularity
estimates are bounded directly; higher intermediate norms are obtained
by interpolation with the fourth order bounds. The powers of the high
norm that must be absorbed are strictly below one throughout
$5/2<s<4$. This closes the fourth order energy inequality.

The construction of rough solutions starts from the compatible smooth
approximations in \cref{lem:compatible-regularization}, the smooth existence
theorem \cite[Theorem~1.9]{IPTT-MHD}, and the weak stability estimate
\cite[Theorem~4.1]{IPTT-MHD}, which controls differences at the $L^2$ level. A further comparison is needed because consecutive
solutions occupy different domains. We construct one family of
inward regularizers from Piola transforms,
with a kernel independent of the domain and the largest approximation
index. These regularizers preserve divergence and are defined
slightly beyond each moving boundary. Moment cancellations give
approximation and consecutive-scale estimates.
A frequency envelope records the dyadic $H^s$ size of the initial
data in a square-summable sequence. We retain the decay between distinct
dyadic frequencies
and combine the weak differences and fourth order bounds in
$\ell^2$. The strict inequality $s<4$ supplies the high-frequency
decay needed for this summation.

After choosing the low norm bound, the bootstrap in \cref{prop:E4}
reduces the common lifespan to the control of one smooth coarse
approximation and a small frequency tail. The tail estimates in
\cref{lem:telescoping} give strong convergence and continuous dependence,
following the frequency-envelope method of
\cite{AlazardBurqIfrimTataruZuily}.
The uniqueness criterion \cite[Theorem~1.6]{IPTT-MHD} applies to the
same solution class, as shown in \cref{prop:persistence}\textup{(ii)}.

For part~(ii), after shrinking the initial neighborhood and the common
time interval, time reversal and the local Hadamard theory give a
continuous local backward map near each terminal state. Fix a positive
time in this interval and an initial state in any prescribed nonempty
open subset of the data neighborhood. At its terminal state, we prescribe
oscillatory magnetic perturbations as curls supported in the fluid interior.
They preserve zero divergence and boundary tangency, have a fixed
small $H^s$ size, and converge to zero in $H^{s'}$ for any
$5/2<s'<s$. Pressure continuity at this lower regularity preserves
the Taylor sign for sufficiently high frequencies. Choosing the
fixed packet amplitude sufficiently small places all their backward
images in a smaller neighborhood contained in the prescribed initial
open set. At time zero, a Galilean boost changes only the velocity
by a vanishing constant; at the chosen positive time, it translates
the magnetic field by half a wavelength. Thus the initial distances
tend to zero, while the terminal magnetic fields remain separated
in $H^s$ on a fixed interior set, and hence in the fixed Eulerian metric. 
\subsubsection{{Main contributions}}\mbox{}\par
\vspace{2mm} 
The main contributions of this paper are summarized as follows:
\begin{enumerate}[label={\rm (\roman*)},topsep=5pt,itemsep=4pt]
\item \emph{Hadamard well-posedness in the standard Sobolev spaces.}
Under the Taylor sign condition, we establish local Hadamard
well-posedness with velocity, magnetic field, and free boundary
at the same $H^s$ regularity, for $5/2<s<4$, on bounded domains
whose closures are diffeomorphic to the closed ball.
Compared with \cite{IPTT-MHD}, we remove the additional conditions
$D_Bu,D_BB\in H^{s-1/2}$; standard $H^s$ data generally provide
only $H^{s-1}$ control.
This complements the $H^4$ construction on a periodic slab in
\cite{GuWang2019}
and the low-regularity a priori estimates of \cite{LuoZhang2020}.

\item \emph{A geometric energy estimate controlled by the standard low norm.}
We prove a fourth order estimate with coefficients controlled by
the standard $H^s$ bound, the fixed geometry, and the Taylor lower
bound, including $5/2<s\le3$, where bounded $H^s$ norms allow arbitrarily
large curvature.
A harmonic extension of the normal gives an extended tangential
quadratic form with spatial derivatives in $L^6$ and material
and magnetic derivatives in $L^\infty$.
The zero boundary values of $p$ and $D_tp$, together with fractional
interpolation, control the projected pressure derivatives and
boundary remainders. The constants are uniform in the approximation scale.

\item \emph{Strong approximation and continuous dependence on moving domains.}
We combine the weak comparison of \cite{IPTT-MHD} with compatible
regularizations whose constraint corrections preserve the
adjacent-scale $L^2$ estimates and fourth order bounds.
In particular, $H^{-1/2}$ estimates for the normal trace error
control the correction restoring magnetic tangency.
A separate family of inward regularizers, constructed from Piola
transforms, has domain-independent kernels and preserves divergence
on outward enlargements of the same order as the smoothing scale.
This permits differences controlled on domain intersections to be
compared on a common domain through the same operator at each finite stage.
Frequency-by-frequency estimates control the strong $H^s$ tails
by an $\ell^2$ frequency envelope, following the nonlinear
interpolation principle of \cite{AlazardBurqIfrimTataruZuily}.
A bootstrap over finite families then yields a common lifespan
in a neighborhood of each admissible datum,
strong convergence, and continuous dependence in the standard
moving-domain topology.

\item \emph{Nonuniform dependence near arbitrary admissible states.}
At every fixed positive time in a common local interval, the
solution map is not uniformly continuous on any nonempty open
subset of the local data neighborhood, in the fixed Eulerian metric.
Related results hold for Euler flows on fixed domains
\cite{HimonasMisiolek2010,Inci2015,InciLi2019} and water-wave
trajectory maps \cite{RimahSaid2023}.
We combine the Galilean mechanism of \cite{BourgainLi2019}
with localized magnetic perturbations prescribed at the observation
time and the continuous backward flow provided by the local
Hadamard theory. The construction respects divergence, magnetic
tangency, and the Taylor sign, and yields separation near arbitrary
admissible states, detected in the magnetic fields on a common
interior set of the two fluid domains.
\end{enumerate}

\subsubsection{Notations}
We use the inhomogeneous Sobolev spaces $H^r$ and the H\"older spaces
$C^{k,\alpha}$, with the underlying domain indicated when needed.
We write $\widehat f$ for the unitary Fourier transform, $f*g$ for
convolution, and use
$\|f\|_{H^r(\R^3)}^2=\int_{\R^3}(1+|\xi|^2)^r|\widehat f(\xi)|^2\,d\xi$.
For negative $r$, boundary Sobolev spaces are defined by duality with
respect to surface measure. The symbols $dx$ and $dS$ denote volume and
surface measure, and $\|\cdot\|_p$ denotes the $L^p$ norm on the domain
or surface under discussion. We write $\ell^p$ for the usual sequence
space. Brackets $[\cdot]_{H^r}$ and $[\cdot]_{C^\alpha}$ denote the
corresponding homogeneous seminorms, and $\langle\cdot,\cdot\rangle$
denotes a duality pairing when its arguments belong to dual spaces.
Spatial component indices range from $1$ to $3$, and repeated
component indices are summed. The vectors $e_1,e_2,e_3$ form the standard
basis of $\R^3$. We use $(\nabla v)_{ij}=\partial_jv_i$;
$\nabla^m$ denotes the full tensor of spatial derivatives of order $m$. For a multi-index
$\alpha\in\mathbb N_0^3$, where $\mathbb N_0=\{0,1,2,\ldots\}$, set
$|\alpha|=\alpha_1+\alpha_2+\alpha_3$ and
$D^\alpha=\partial_1^{\alpha_1}\partial_2^{\alpha_2}\partial_3^{\alpha_3}$.
The superscript $T$ denotes transpose, $\operatorname{tr}$ denotes the
matrix trace, $\otimes$ denotes tensor product, and $\mathrm{Id}$ denotes the
identity matrix or operator on the indicated space. For operators
$L,M$, $[L,M]=LM-ML$. Constants denoted by $C$ may change from line to
line; any allowed dependence is specified in the relevant estimate.
We write $A\lesssim B$ when $A\le CB$, and $A\simeq B$ when both
$A\lesssim B$ and $B\lesssim A$ hold.

\subsubsection{Paper structure}
The rest of the paper is organized as follows.
Section~\ref{sec:phase-spaces} defines the Sobolev state space, states
the main theorems, and collects the elliptic and pressure estimates. Section~\ref{sec:lowerreg} constructs compatible smooth
approximations and establishes weak stability.
Section~\ref{sec:fourth-order} proves the fourth order energy estimate
and uses it to obtain a common lifespan, strong convergence, uniqueness,
and continuous dependence. Section~\ref{sec:nonuniform} proves
nonuniform dependence of the Eulerian solution map.

\section{State space, main theorems, and coefficient estimates}
\label{sec:phase-spaces}

In this section, we introduce the local Sobolev state space and the
Eulerian metrics for states on moving domains, using a fixed reference
collar to describe the boundaries and smooth ambient solenoidal fields
to compare the bulk fields. We then give precise statements of the
two main theorems: local Hadamard well-posedness in the standard $H^s$
topology and failure of uniform continuity of the solution map on
every nonempty open subset of a local data neighborhood at each
nonzero time in a common existence interval. We also collect the
elliptic and pressure estimates used in the approximation and energy arguments.

In the rest of the paper, we fix $5/2<s<4$.
\subsection{Reference collar and local states}

Choose a smooth reference domain $\Om_*$, set $\Ga_*=\partial\Om_*$,
and let $N_*$ be the outward unit normal to $\Ga_*$. Fix a smooth vector field $X_*$ along $\Ga_*$
with a uniform transverse component,
\begin{equation*}
\inf_{y\in\Ga_*}X_*(y)\cdot N_*(y)>0.
\end{equation*}
For a sufficiently small tubular neighborhood, the map
\begin{equation*}
(y,r)\longmapsto y+rX_*(y)
\end{equation*}
parametrizes a collar of $\Ga_*$. For a scalar graph function $\eta$ on
$\Ga_*$, define the graph map and its image by
\begin{equation*}
G_\eta(y)=y+\eta(y)X_*(y),\qquad \Ga_\eta=G_\eta(\Ga_*),
\end{equation*}
and denote by $\Om_\eta$ the bounded domain enclosed by $\Ga_\eta$.
For a boundary $\Ga$ in this chart, $\eta_\Ga$ denotes its graph function.
We use bounded subcharts with a common reference collar, uniform
transversality and injectivity bounds, and uniform $C^1$ bounds for the
graph maps and their inverses. These are our fixed geometric bounds.
For any Sobolev index $r$ under consideration, we set
$\|\Ga\|_{H^r}:=\|\eta_\Ga\|_{H^r(\Ga_*)}$; H\"older norms of
$\Ga$ are defined in the same way from $\eta_\Ga$.

A state in the chart is a triple $U=(u,B,\Ga)$, where $u$ and $B$ are defined on
the bounded domain $\Om$ enclosed by $\Ga$ and satisfy
\begin{equation*}
\diver u=\diver B=0\quad\text{in }\Om,
\qquad B\cdot N=0\quad\text{on }\Ga.
\end{equation*}
The associated pressure solves the Dirichlet problem
\begin{equation*}
\Delta p=-\operatorname{tr}\bigl((\nabla u)^2\bigr)
+\operatorname{tr}\bigl((\nabla B)^2\bigr)
\quad\text{in }\Om,
\qquad p=0\quad\text{on }\Ga.
\end{equation*}
On each local set, we impose a common
lower bound $-\partial_Np\ge c_*>0$ on the free surface.

Throughout the paper, fluid domains have closures diffeomorphic to the closed ball.
For rough boundaries, diffeomorphism is understood in the $C^1$ sense.
For every $r>5/2$, let $\X^r$ denote the corresponding local Sobolev state
space:
\begin{equation*}
\|U\|_{\X^r}^2
:=\|\eta_\Ga\|_{H^r(\Ga_*)}^2
+\|u\|_{H^r(\Om)}^2
+\|B\|_{H^r(\Om)}^2,
\end{equation*}
which measures the size of a state $U$. Convergence is defined as follows.

\subsection{Convergence and Eulerian metrics}
\label{subsec:state-topology}

\begin{definition}
\label{def:stdtop}
Let $U_n=(u_n,B_n,\Ga_n)$ and $U=(u,B,\Ga)$ be states in one
bounded chart.  We say that $U_n\to U$ in $\X^s$ if the following three
conditions hold.
\begin{enumerate}[label=(\roman*),leftmargin=2.5em]
\item The surfaces remain in the fixed chart and all normal pressure coefficients have
a common positive lower bound.
\item $\eta_{\Ga_n}\to\eta_\Ga$ in $H^s(\Ga_*)$.
\item Given $\delta>0$, one can find smooth divergence-free ambient fields
$\widetilde u,\widetilde B$, defined on a neighborhood of
$\overline\Om$, for which
\begin{equation*}
\|u-\widetilde u\|_{H^s(\Om)}
+\|B-\widetilde B\|_{H^s(\Om)}<\delta
\end{equation*}
and, writing $\Om_n$ for the domain enclosed by $\Ga_n$,
\begin{equation*}
\limsup_{n\to\infty}
\left(\|u_n-\widetilde u\|_{H^s(\Om_n)}
+\|B_n-\widetilde B\|_{H^s(\Om_n)}\right)<\delta.
\end{equation*}
\end{enumerate}
For sufficiently large $n$, the domains $\Om_n$ are contained in the ambient
neighborhood on which the comparison fields are defined.
\end{definition}

When applying a whole space extension estimate to such a local comparison
field, we first multiply it by a smooth compactly supported cutoff inside
its domain of definition, equal to one on a neighborhood of
$\overline\Om$ and all sufficiently late $\overline\Om_n$, and extend
the product by zero. The resulting field belongs to $C_c^\infty(\R^3)$
and agrees with the comparison field on that common neighborhood, where
it remains divergence-free.

We use the ambient comparison topology of
\cite[Section~3.3, Remark~3.5]{IPTT-MHD} for the standard variables
$(u,B,\Ga)$. To metrize a bounded subchart, fix the
extension family $E_\Om$ from
\cite[Proposition~A.9]{IPTT-MHD}.  These operators are uniformly bounded
$H^s(\Om)\to H^s(\R^3)$ on the subcharts under consideration, and for any
fixed $F\in H^s(\R^3)$,
\begin{equation}\label{eq:extension-continuity}
E_{\Om_n}(F|_{\Om_n})\longrightarrow E_\Om(F|_\Om)
\quad\text{in }H^s(\R^3)
\end{equation}
whenever the graphs converge in $H^s$.

\begin{definition}
\label{def:extmetric}
For two states
$U=(u,B,\Ga)$ and $\widetilde U=(\widetilde u,\widetilde B,\widetilde\Ga)$
in the same bounded subchart, let $\Om$ and $\widetilde\Om$ be the domains
enclosed by their boundaries and define
\begin{equation*}
d(U,\widetilde U):=
\|\eta_\Ga-\eta_{\widetilde\Ga}\|_{H^s(\Ga_*)}
+\|E_\Om u-E_{\widetilde\Om}\widetilde u\|_{H^s(\R^3)}+\|E_\Om B-E_{\widetilde\Om}\widetilde B\|_{H^s(\R^3)}.
\end{equation*} 
\end{definition}

By \cref{lem:extmetric-top}, $d$ generates the $\X^s$ convergence in
\cref{def:stdtop}. We equip $C([0,T];\X^s)$ with the corresponding
supremum metric on bounded subcharts.

\begin{remark}
The metric $d$ is the strong Eulerian metric used in the main theorems.
The nonnegative comparison functional $\D$ defined in \cref{sec:lowerreg}
measures squared differences at the $L^2$ level on two moving domains.
Its evolution controls weak differences. The frequency estimates in
\cref{lem:telescoping} recover strong $H^s$ convergence in $d$.
\end{remark}

\subsection{Hadamard well-posedness and nonuniform dependence}

\begin{theorem}
\label{thm:main}
Let $5/2<s<4$. Suppose that $\Om_0$ is bounded,
$\overline\Om_0$ is diffeomorphic to the closed ball, and
$\Ga_0=\partial\Om_0\in H^s$ in the reference chart. Let $N_0$ be its
outward unit normal. Assume
\begin{equation*}
u_0,B_0\in H^s(\Om_0),
\qquad \diver u_0=\diver B_0=0,
\qquad B_0\cdot N_0=0\quad\text{on }\Ga_0,
\end{equation*}
and let the associated total pressure satisfy
\begin{equation*}
-\partial_{N_0}p_0\ge c_0>0\quad\text{on }\Ga_0.
\end{equation*}
Then there are $T>0$ and an open $\X^s$ neighborhood
$\mathscr U$ of the initial state such that every datum in $\mathscr U$
generates a unique solution
\begin{equation*}
U=(u,B,\Ga)\in C([0,T];\X^s).
\end{equation*}
These solutions remain in one controlled collar and preserve a common
positive Taylor lower bound on $[0,T]$. Their graphs have uniform
$C^{1,\alpha_s}$ bounds, where
$\alpha_s=\tfrac12+\tfrac12\min\{s-\tfrac52,\tfrac12\}$.

The trajectory map
\begin{equation*}
\Phi:\mathscr U\longrightarrow C([0,T];\X^s)
\end{equation*}
is continuous. We write $\Phi(t)U_0:=\Phi(U_0)(t)$.
\end{theorem}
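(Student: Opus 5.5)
The proof follows the frequency-envelope scheme outlined in the proof strategy. Existence, the common lifespan and continuous dependence all come from smooth approximations controlled by two estimates: the fourth order energy inequality (\cref{prop:E4}) and the weak $L^2$ stability bound \cite[Theorem~4.1]{IPTT-MHD}.

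\emph{Step 1: regularization and smooth solutions.} Given a datum $U_0$ near the reference state, apply \cref{lem:compatible-regularization} to produce a family $U_0^{h}$, $h=2^{k}$. Each $U_0^h$ is smooth and compatible, meaning divergence-free, magnetically tangent and satisfying the Taylor sign. The family obeys
\[
\|U_0^h\|_{\X^4}\lesssim h^{4-s}c_h,\qquad \D(U_0^{h},U_0^{2h})\lesssim h^{-2s}c_h^2,
\]
where $(c_h)$ is an $\ell^2$ frequency envelope for $U_0$ in $\X^s$. By \cite[Theorem~1.9]{IPTT-MHD}, each $U_0^h$ generates a smooth solution $U^h$ on some interval.

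\emph{Step 2: uniform lifespan and bounds.} Run a bootstrap on the assumption that the $\X^s$ norm and the $C^{1,\alpha_s}$ graph bounds stay below twice their initial size, the Taylor coefficient stays above $c_0/2$, and the boundary stays in the collar. Under this assumption, \cref{prop:E4} gives
\[
\|U^h(t)\|_{\X^4}\lesssim h^{4-s}c_h .
\]
Its coefficients depend only on the low norm, the fixed geometry and the Taylor bound. The weak stability estimate, compared through the Piola-based inward regularizers on a common domain, gives $\D(U^h,U^{2h})(t)\lesssim h^{-2s}c_h^2$. Interpolating these two at each dyadic scale, and using $s<4$ to retain off-diagonal decay, the $\X^s$ norm of the tail $\sum_{h\ge h_0}(U^{2h}-U^h)$ is at most $C\bigl(\sum_{h\ge h_0}c_h^2\bigr)^{1/2}$. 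This is \cref{lem:telescoping}. The low-frequency part $U^{h_0}$ is a single smooth solution and exists on a time $T$ that can be made uniform on a neighborhood $\mathscr U$. Choosing $h_0$ large and $T$ small then closes the bootstrap with uniform constants. Continuity in time of $\partial_Np$ in a lower norm, which follows from the pressure elliptic estimates, preserves the Taylor bound. Sobolev embedding of $H^s$ graphs gives the $C^{1,\alpha_s}$ bounds.

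\emph{Step 3: convergence, uniqueness and continuity.} The telescoping tail estimate shows that $U^h$ is Cauchy in $C([0,T];\X^s)$ for the metric $d$, using \cref{lem:extmetric-top}; its limit $U$ solves \eqref{eq:MHD}. Uniqueness follows from \cite[Theorem~1.6]{IPTT-MHD}, once \cref{prop:persistence}(ii) places $U$ in its class. For continuity of $\Phi$, take $U_{0,n}\to U_0$ in $\X^s$. Their envelopes can be chosen to converge in $\ell^2$, so the tails are uniformly small. For fixed $h_0$, the smooth solutions $U_n^{h_0}\to U^{h_0}$, by smooth stability of the regularized data. A three-$\varepsilon$ argument then gives $\Phi(U_{0,n})\to\Phi(U_0)$ uniformly on $[0,T]$.

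\emph{Main obstacle.} The hardest step is the uniform-in-scale fourth order bound of Step 2 for $5/2<s\le 3$, where the curvature of the boundary is not bounded. Here I rely on \cref{prop:E4}, with coefficients controlled through the harmonic extension of $N$. The delicate point I would check first is the consecutive-scale comparison on different moving domains, since it must lose no powers of $h$. I would handle it by applying the same domain-independent inward regularizer to both $U^h$ and $U^{2h}$ on their common enlargement.
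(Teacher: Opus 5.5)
Your proposal is correct and is essentially the paper's argument. It uses the same ingredients in the same roles: compatible regularization, the fourth order inequality \eqref{eq:H4-growth} with $H^s$-controlled coefficients, the weak bound of \cref{prop:Dprop} (obtained directly from \cite[Theorem~4.1]{IPTT-MHD} on domain intersections), the Piola comparison operators inside \cref{lem:telescoping}, the bootstrap of \cref{prop:E4}, uniqueness via \cref{prop:persistence}, and continuity through a fixed coarse scale as in \cref{prop:cont}. Two small points: the fourth order inequality is the unlabeled proposition preceding \eqref{eq:E4local}, not \cref{prop:E4}, which is the bootstrap itself; and closing the bootstrap also requires ruling out breakdown of each smooth approximation, which the paper does by converting the $\X^4$ bound into $\mathcal Y^{7/2}$ control via \eqref{eq:enh-72-bound} and applying the continuation criterion \cite[Theorem~1.11]{IPTT-MHD}.
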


The solution map also satisfies the following nonuniform dependence result.

\begin{theorem}
\label{thm:nonuniform}
Let $U_0$ be any datum covered by \cref{thm:main}. After shrinking the
neighborhood and lifespan in \cref{thm:main}, there are a bounded open
neighborhood $\mathscr U$ of $U_0$ and $T>0$ with the following property.
For every $0<\tau\le T$ and every nonempty open set
$\mathscr O\subset\mathscr U$, there are sequences
$U_k,\widetilde U_k\in\mathscr O$ such that
\begin{equation*}
d(U_k,\widetilde U_k)\longrightarrow0\qquad \text{and}\qquad
\liminf_{k\to\infty}
d\bigl(\Phi(\tau)U_k,
\Phi(\tau)\widetilde U_k\bigr)>0.
\end{equation*}
Thus, the map at time $\tau$ is not uniformly continuous on any nonempty open
subset of $\mathscr U$.
\end{theorem}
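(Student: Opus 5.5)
We follow the strategy outlined in the introduction: prescribe the oscillation at the observation time $\tau$, transport it back to time zero with the continuous backward flow, and then apply a Galilean boost.

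\textbf{Step 1 (backward map).} The system \eqref{eq:MHD} is invariant under the time reversal
\begin{equation*}
(u,B,\Ga)(t)\mapsto(-u,-B,\Ga)(-t),
\end{equation*}
since every term is quadratic in $(u,B)$ except $D_t$. This map preserves the pressure, and hence the Taylor sign. Apply \cref{thm:main} around each terminal state $\Phi(\tau)U_0'$ with $U_0'\in\mathscr U$. After shrinking $\mathscr U$ and $T$, we obtain a continuous backward map $\Psi_\tau$, defined on a $d$-neighborhood $\mathscr V_\tau$ of $\Phi(\tau)U_0'$. It satisfies $\Psi_\tau\Phi(\tau)=\mathrm{Id}$ near $U_0'$ and $\Phi(\tau)\Psi_\tau=\mathrm{Id}$ on $\mathscr V_\tau$, by uniqueness.

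To make this uniform, we need a common lifespan and common constants, depending only on $\mathscr U$ and $T$, for all terminal states. We get them from the fact that the neighborhoods in \cref{thm:main} depend only on the $\X^s$ bound, the geometric bounds and the Taylor constant. These are preserved on $[0,T]$.

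\textbf{Step 2 (terminal packets).} Fix $\tau$ and $\mathscr O$, pick $U_0'\in\mathscr O$, and set $V=(v,H,\Ga)=\Phi(\tau)U_0'$. Choose a ball $K\Subset\Om$ with center $x_0$ and a cutoff $\chi$ supported in $K$. For frequency $\lambda_k\to\infty$ define
\begin{equation*}
H_k=H+\epsilon\,\curl\bigl(\chi(x)\,\lambda_k^{-1-s}\cos(\lambda_k x_1)\,e_2\bigr).
\end{equation*}
This perturbation is divergence-free, vanishes near $\Ga$ (so tangency is kept), and has $H^s$ norm $\simeq\epsilon$. It also tends to zero in $H^{s'}$ for each $5/2<s'<s$.

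The pressure depends continuously on the state in $\X^{s'}$ through the elliptic estimates of \cref{sec:phase-spaces}. Hence $-\partial_Np$ converges uniformly and the Taylor bound survives for large $k$. Choosing $\epsilon$ small, $V_k=(v,H_k,\Ga)\in\mathscr V_\tau$, and by continuity of $\Psi_\tau$ the states $U_k=\Psi_\tau V_k$ lie in a small ball $B_d(U_0',r)\subset\mathscr O$. A subtle point is that these are uniform in $k$ only at the $\epsilon$-level. So we take $r$ with $B_d(U_0',2r)\subset\mathscr O$ first and then $\epsilon$ accordingly.

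\textbf{Step 3 (Galilean boost).} If $U(t)=(u,B,\Ga_t)$ solves \eqref{eq:MHD}, so does
\begin{equation*}
U^c(t)=\bigl(u(\cdot-ct,t)+c,\;B(\cdot-ct,t),\;\Ga_t+ct\bigr),
\end{equation*}
with the same Taylor sign. Take $c_k=\pi\lambda_k^{-1}\tau^{-1}e_1$ and $\widetilde U_k=U_k^{c_k}(0)$, which is $U_k$ with the constant $c_k$ added to the velocity. Then
\begin{equation*}
d(U_k,\widetilde U_k)\le C|c_k|\to0,
\end{equation*}
using uniform boundedness of $E_\Om$ on constants. So $\widetilde U_k\in\mathscr O$ for large $k$. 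By uniqueness, $\Phi(\tau)\widetilde U_k$ is $V_k$ translated by $c_k\tau=\pi\lambda_k^{-1}e_1$.

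\textbf{Step 4 (separation).} Translation by half a wavelength flips the sign of the packet. On an interior set $K'\Subset\Om\cap(\Om+c_k\tau)$ containing $K$, the difference of the terminal magnetic fields is
\begin{equation*}
2\epsilon\,\curl(\chi\lambda_k^{-1-s}\cos(\lambda_kx_1)e_2)+\bigl(H-H(\cdot-c_k\tau)\bigr)+O(\lambda_k^{-1})\text{ errors from translating }\chi.
\end{equation*}
The translation error of $H$ tends to zero in $H^s$ by continuity of translations. The leading term has $H^s(K')$ norm $\gtrsim\epsilon$. Since the extensions agree with the fields on $K'$ and restriction is bounded, we get
\begin{equation*}
d\bigl(\Phi(\tau)U_k,\Phi(\tau)\widetilde U_k\bigr)\ge c\,\epsilon>0.
\end{equation*}

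The main obstacle is Step 1: obtaining a backward flow on a whole $d$-neighborhood of every terminal state with a time interval reaching $\tau$, uniformly. This requires checking that \cref{thm:main}'s $T$ depends only on quantitatively controlled data, which I would extract from the bootstrap in \cref{prop:E4}.
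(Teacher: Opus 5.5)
Your Steps 2--4 are essentially the paper's construction. You use interior curl packets prescribed at time $\tau$, pull them back by a backward map, apply a Galilean boost of size $\pi/(k\tau)$, and separate the magnetic fields on an interior set. Your reversal $(u,B)(t)\mapsto(-u,-B)(-t)$ is as valid as the paper's $\mathfrak R(u,B,\Ga)=(-u,B,\Ga)$. The gap is in Step 1, exactly where you locate it, and the remedy you propose does not work.

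\cref{thm:main} is purely qualitative. It yields some $T>0$ and some open neighborhood of one given datum. It does not say that $T$ depends only on the $\X^s$ bound, the geometric bounds and the Taylor constant. Nor can such dependence be read off from \cref{prop:E4}. There $T=\min\{1,T_J,T_{\rm geo}\}$, where $J$ is chosen so that the tail $\|c_{\ge J-C}\|_{\ell^2}$ of the frequency envelope of the reference datum is small. The time $T_J$ comes from continuity of the smooth coarse flow from $S_JU_0$. Two data of the same $H^s$ size with differently distributed high frequencies can therefore receive different $J$, and, as far as the proof shows, different lifespans. So applying \cref{thm:main} separately at each reflected terminal state gives no guarantee that the reversed flow reaches time $\tau$. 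Since $\mathscr U$ and $T$ must be fixed before $\tau$ and $\mathscr O$ are chosen, you cannot shrink them afterwards.

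The missing idea is a soft continuity argument that needs no quantitative lifespan:
\begin{enumerate}
\item Apply \cref{thm:main} only twice: at $U_0$ and at its reflection $\mathfrak RU_0$, which has the same pressure and is admissible. This gives neighborhoods $\mathscr W$ and $\mathscr N$ with a common time $T$.
\item Choose $r>0$ so that the $d$-ball of radius $3r$ about $\mathfrak RU_0$ lies inside $\mathscr N$.
\item Decrease $T$ so that $d(\Phi(t)U_0,U_0)<r$ on $[0,T]$.
\item Use continuous dependence to shrink $\mathscr W$ to $\mathscr U$ with $\sup_{t\le T}d(\Phi(t)W,\Phi(t)U_0)<r$.
\end{enumerate}
The reflection is a $d$-isometry. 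Hence every reflected terminal state $\mathfrak R\Phi(\tau)W$, with $W\in\mathscr U$ and $0<\tau\le T$, lies in the single set $\mathscr N$. On $\mathscr N$ the forward map $\Phi_-$ has the common lifespan $T\ge\tau$. Thus $\mathfrak R\Phi_-(\tau)\mathfrak R$ is a continuous backward map near every terminal state. After shrinking its domain so that its image lies in $\mathscr U$, uniqueness gives both inverse identities.

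The same construction keeps all boundary graphs $C^1$-close to the initial graph. You also need this in Step 3, which you currently skip. To invoke \cref{prop:persistence}\textup{(ii)} for the boosted trajectory, you must check three things: the translated boundaries $\Ga_t+tc_k$ remain graphs in the fixed chart, they have uniform $H^s$ bounds, and the boosted solution belongs to $C([0,\tau];\X^s)$.
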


In particular, the solution map has no local Lipschitz or positive H\"older
modulus on any nonempty open subset. 

\subsection{Coefficient estimates and pressure continuity}

Fix the smooth reference domain $\Om_*$, with boundary $\Ga_*$,
and a sufficiently small $C^{1,\epsilon_0}$ collar of $\Ga_*$,
where $0<\epsilon_0<1/8$.
Domains $\Om$ have boundary $\Ga$ in this collar. Boundary Sobolev
norms are defined in these fixed coordinates.
Constants may depend on the fixed chart, its transversality bounds, and
its finitely many coordinate neighborhoods.

We fix a linear lifting $\mathscr L_*$ of vector fields on $\Ga_*$.
It is obtained by componentwise harmonic extension into $\Om_*$,
multiplication by a fixed smooth cutoff equal to one near $\Ga_*$
and supported in an interior collar, and a fixed compactly supported
extension from $\overline{\Om_*}$ to $\R^3$. The extension is chosen
bounded in both Sobolev and H\"older norms. Thus, for $5/2<r\le4$,
\begin{equation*}
(\mathscr L_*g)|_{\Ga_*}=g,\qquad
\|\mathscr L_*g\|_{H^{r+1/2}(\R^3)}
\le C_r\|g\|_{H^r(\Ga_*)},
\end{equation*}
and
\begin{equation*}
\|\mathscr L_*g\|_{C^{1,\epsilon_0}(\R^3)}
\le C\|g\|_{C^{1,\epsilon_0}(\Ga_*)}.
\end{equation*}
After decreasing the fixed collar, we have
$\|D\mathscr L_*(\eta X_*)\|_{L^\infty(\R^3)}<1/2$.
Consequently
\begin{equation}\label{eq:reference-domain-map}
\Xi_\eta
:=\bigl(\mathrm{Id}+\mathscr L_*(\eta X_*)\bigr)|_{\Om_*}
:\Om_*\longrightarrow\Om_\eta
\end{equation}
is an orientation-preserving $C^1$ diffeomorphism with uniformly
bounded derivative and inverse derivative. Indeed, its ambient
extension is a global diffeomorphism, maps $\Ga_*$ onto $\Ga_\eta$
by $G_\eta$, and therefore maps the bounded enclosed domain onto
$\Om_\eta$. In particular, $\Xi_\eta|_{\Ga_*}=G_\eta$.
The linearity and Sobolev bounds of $\mathscr L_*$ give continuous
dependence of $\Xi_\eta$ in $H^{r+1/2}(\Om_*)$ on $\eta$ in
$H^r(\Ga_*)$.

Set $M_m=1+\|U\|_{\X^m}$ for $m=s,4$, and assume
\begin{equation}\label{eq:low-bound}
M_s\le R,
\qquad \diver u=\diver B=0,
\qquad B\cdot N=0\quad\hbox{on }\Ga.
\end{equation}
For the time dependent estimates, the domain moves with normal velocity $u\cdot N$
and $(u,B,p)$ obey
\begin{equation*}
D_tu=B\cdot\nabla B-\nabla p,\qquad
D_tB=B\cdot\nabla u,\qquad
D_t=\partial_t+u\cdot\nabla,\qquad D_B=B\cdot\nabla.
\end{equation*}
We also write $D_t^\pm=D_t\pm D_B$.
Write $A=\nabla u$ and $\mathsf{C}=\nabla B$, with matrix entries
$A_{ij}=\partial_ju_i$ and $\mathsf{C}_{ij}=\partial_jB_i$. The pressure solves
\begin{equation}\label{eq:pressure-dirichlet}
\Delta p=\operatorname{tr}(\mathsf{C}^2)-\operatorname{tr}(A^2)
\quad\hbox{in }\Om,\qquad p|_{\Ga}=0.
\end{equation}
The constants $C_R$ may depend on $s$, $R$, the fixed collar, and a
fixed volume bound. For the energy estimates, we also assume
$a=-\partial_Np\ge c_0>0$, and their constants may additionally depend
on $c_0^{-1}$. All constants are independent of $M_4$.
Here $N$ is the outward unit normal and
$\Pi=\mathrm{Id}-N\otimes N$ is the orthogonal projection onto the tangent plane
of $\Ga$.

We use the balanced Dirichlet, trace, harmonic extension, and
Dirichlet-to-Neumann estimates of \cite[Appendix A.3--A.4]{IPTT-MHD}. We write the
geometric factors explicitly as $1+\|\Ga\|_{H^\rho}$, since
$\|\Ga\|_{H^\rho}$ denotes the graph norm in our convention;
\cite[Remark~A.5]{IPTT-MHD} suppresses the additive constant.

We also use, with constants uniform on the fixed collar, the following
standard consequences of the Sobolev calculus.  In
three dimensions, $H^\rho(\Om)$ is an algebra for $\rho>3/2$, and
multiplication satisfies
\begin{equation}\label{eq:uniform-product}
\|fg\|_{H^r(\Om)}\le C_R\|f\|_{H^{r_1}(\Om)}\|g\|_{H^{r_2}(\Om)}
\end{equation}
for nonnegative Sobolev orders $r,r_1,r_2$ satisfying
$r\le\min\{r_1,r_2\}$ and $r_1+r_2-r>3/2$.  On the two-dimensional boundary the corresponding
threshold is $1$.  We also use the interpolation inequality
\begin{equation*}
\|v\|_{H^{(1-\theta)r_0+\theta r_1}}
\le C_R\|v\|_{H^{r_0}}^{1-\theta}\|v\|_{H^{r_1}}^\theta,
\qquad 0\le\theta\le1.
\end{equation*}
For $x>0$, $0\le\theta<1$, and $\varepsilon>0$, we also use
$x^\theta\le \varepsilon x+C_{\varepsilon,\theta}$. 

In particular, with $r\ge0$, $0\le\alpha\le1$, and
$r+\alpha=1$, the Dirichlet and trace bounds are
\begin{align}
\|v\|_{H^\tau(\Om)}
&\lesssim \|\Delta v\|_{H^{\tau-2}(\Om)}
+\|v|_{\Ga}\|_{H^{\tau-1/2}(\Ga)}
+(1+\|\Ga\|_{H^{\tau+r-1/2}})\|v\|_{C^\alpha(\Om)},
&&\tau\ge2,\label{eq:balanced-dir}\\
\|v|_{\Ga}\|_{H^{\tau-1/2}(\Ga)}
&\lesssim \|v\|_{H^\tau(\Om)}
+(1+\|\Ga\|_{H^{\tau+r-1/2}})\|v\|_{C^\alpha(\Om)},
&&\tau>1/2.\label{eq:balanced-trace}
\end{align}

Estimates \eqref{eq:balanced-dir} and \eqref{eq:balanced-trace} follow from
\cite[Propositions~A.14 and A.7]{IPTT-MHD}, respectively. For
$(r,\alpha)=(1,0)$,
the trace estimate uses the $L^\infty$ norm of $v$ and the boundary norm
$H^{\tau+1/2}$.

\begin{lemma}\label{lem:elliptic-estimates}
Let $\Ga$ lie in the fixed collar and satisfy $\|\Ga\|_{H^s}\le R$.

\textup{(i)} The Dirichlet estimates are
\begin{align}
\|v\|_{H^2(\Om)}&\le C_R
\bigl(\|\Delta v\|_{L^2(\Om)}+
\|v|_{\Ga}\|_{H^{3/2}(\Ga)}\bigr),\label{eq:dir2}\\
\|v\|_{H^s(\Om)}&\le C_R\|\Delta v\|_{H^{s-2}(\Om)},
\qquad v|_{\Ga}=0.\label{eq:dir-s}
\end{align}
Estimates \eqref{eq:dir2}--\eqref{eq:dir-s} also hold for weak Dirichlet solutions.

\textup{(ii)} Suppose in addition that $\Ga$ is smooth. Let
$f\in H^{-1/2}(\Ga)$ satisfy $\langle f,1\rangle=0$, and let $\psi$ be
the weak harmonic solution with $\partial_N\psi=f$ and
$\int_\Ga\psi\,dS=0$. Then
\begin{align}
\|\nabla\psi\|_{H^m}
&\le C_R\left(\|f\|_{H^{m-1/2}(\Ga)}
+(1+\|\Ga\|_{H^{m+3/4}})
\|f\|_{H^{s-3/2}(\Ga)}\right),
\label{eq:neumann-tame}\\
\|\nabla\psi\|_{L^2}&\le C_R\|f\|_{H^{-1/2}(\Ga)}.
\label{eq:neumann-low}
\end{align}
Here \eqref{eq:neumann-tame} holds for $m\in\{s,4\}$ whenever
$f\in H^{m-1/2}(\Ga)$.
\end{lemma}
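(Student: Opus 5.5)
The plan is to derive all four estimates from the balanced Dirichlet and trace bounds \eqref{eq:balanced-dir}--\eqref{eq:balanced-trace} and the cited Neumann/Dirichlet-to-Neumann estimates of \cite[Appendix A.3--A.4]{IPTT-MHD}. The lower order geometric factors are absorbed into $C_R$ by Sobolev embedding on $\Ga$, using $\|\Ga\|_{H^s}\le R$, and the low norms of $v$ (or $\psi$) are handled by energy estimates.

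For \eqref{eq:dir2} we apply \eqref{eq:balanced-dir} with $\tau=2$ and choose $(r,\alpha)$ so that $\tau+r-1/2\le s$. Since $s>5/2$, we may take $r=s-3/2\in(1,\ldots)$; but $r\le1$ is required, so we take $r=1$, $\alpha=0$. This gives the geometric factor $1+\|\Ga\|_{H^{5/2}}\le C_R$ and leaves $\|v\|_{L^\infty(\Om)}$. To control $\|v\|_{L^\infty}$ we split $v=v_1+v_2$, where $v_1$ is the harmonic extension of $v|_\Ga$ and $v_2$ solves the Dirichlet problem with zero data and source $\Delta v$. The maximum principle gives $\|v_1\|_\infty\le\|v|_\Ga\|_{L^\infty(\Ga)}\lesssim\|v|_\Ga\|_{H^{3/2}(\Ga)}$. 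For $v_2$, Poincaré and the energy identity give $\|v_2\|_{H^1}\le C_R\|\Delta v\|_{L^2}$. Applying \eqref{eq:balanced-dir} to $v_2$ with a small Hölder exponent, $\alpha$ close to $0$ and $r=1-\alpha$, together with the interpolation $\|v_2\|_{C^\alpha}\lesssim\|v_2\|_{H^1}^{1-\theta}\|v_2\|_{H^2}^\theta$ and $x^\theta\le\varepsilon x+C$, absorbs the high term. Alternatively, a bound on $\|v_2\|_\infty$ through Green's function estimates on $C^{1,\epsilon_0}$ domains also works, and is valid since the collar gives uniform $C^{1,\epsilon_0}$ bounds.

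For \eqref{eq:dir-s} we take $\tau=s$ and $r=0$, $\alpha=1$, so that the factor is $1+\|\Ga\|_{H^{s-1/2}}\le C_R$ and we need $\|v\|_{C^1}$. Since $s>5/2$, we have $C^1\supset H^{s'}$ for $5/2<s'<s$. Interpolating between $H^{s'}$, which lies between $H^1$ and $H^s$, gives $\|v\|_{C^1}\lesssim\|v\|_{H^1}^{1-\theta}\|v\|_{H^s}^\theta$ with $\theta<1$. Absorbing and using $\|v\|_{H^1}\le C_R\|\Delta v\|_{H^{-1}}\le C_R\|\Delta v\|_{H^{s-2}}$ closes the estimate. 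For weak solutions we approximate by smooth data and domains in the collar and pass to the limit, since the constants are uniform.

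For (ii), \eqref{eq:neumann-low} is the Lax--Milgram energy estimate for the Neumann problem, using the trace inequality $H^1(\Om)\to H^{1/2}(\Ga)$ with a constant uniform on Lipschitz collars and Poincaré with mean-zero normalization. For \eqref{eq:neumann-tame} we invoke the tame Neumann estimate of \cite[Appendix A.4]{IPTT-MHD}, whose geometric factor is $1+\|\Ga\|_{H^{m+\delta}}$, and choose $\delta=3/4$. We bound the resulting low-norm term $\|\nabla\psi\|_{C^{0,\alpha}}$ by $\|f\|_{H^{s-3/2}}$ by applying the same estimate at $m=s-1$, which has a controlled geometric factor since $s-1+3/4<s+1/2$; wait, we need this to be at most $s$, which holds because $s-1/4<s$.

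The main obstacle is bookkeeping exponents so that every geometric factor stays at most $H^s$ in the low estimates, and matching the $3/4$ loss in the tame bound.
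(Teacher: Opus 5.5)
Your part (i) and \eqref{eq:neumann-low} are fine. The $H^s$ bound follows the paper exactly: the same choice $(\tau,r,\alpha)=(s,0,1)$, interpolation of $\|v\|_{C^1}$ between $H^1$ and $H^s$, absorption, and the energy bound. The low Neumann bound is the same Lax--Milgram argument. For \eqref{eq:dir2} you take $(r,\alpha)=(1,0)$ and split $v$ into two pieces. The harmonic piece is handled by the maximum principle and $H^{3/2}(\Ga)\hookrightarrow L^\infty$; the zero-trace piece is handled by a second application with small $\alpha$ and absorption. The paper instead applies \eqref{eq:balanced-dir} once to $v$ with $(r,\alpha)=(3/4,1/4)$ and controls $\|v\|_{H^1}$ by a trace lifting. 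That is shorter and needs no a priori regularity of auxiliary pieces, but your route also works. For weak solutions, uniform constants alone are not enough. You must also show that the smooth approximants converge to the given weak solution; the paper does this by pulling back to $\Om_*$ with $\Xi_n$ and using the variational difference estimate before weak lower semicontinuity.

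The genuine gap is in \eqref{eq:neumann-tame}. You invoke a ``tame Neumann estimate'' from \cite[Appendix~A.4]{IPTT-MHD} for $\|\nabla\psi\|_{H^m}$, with factor $1+\|\Ga\|_{H^{m+\delta}}$ and low norm $\|\nabla\psi\|_{C^{0,\alpha}}$. What is available there are Dirichlet, trace, harmonic-extension and Dirichlet-to-Neumann bounds. An estimate of the form you cite is essentially the statement being proved, and $\delta=3/4$ is read off from the conclusion rather than produced by the argument.

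The missing step is to pass through the trace $h=\psi|_\Ga$, which satisfies $\mathcal N h=f$.
\begin{itemize}
\item The one-derivative DN estimate \cite[Proposition~A.19]{IPTT-MHD}, with $(r,\alpha)=(1/4,3/4)$ at base index $m-1/2$, gives $\|h\|_{H^{m+1/2}}\le C_R\bigl(\|f\|_{H^{m-1/2}}+(1+\|\Ga\|_{H^{m+3/4}})(\|h\|_{C^{3/4}}+\|h\|_{L^2})\bigr)$. This is where the $3/4$ comes from.
\item Then \eqref{eq:balanced-dir} for the harmonic function $\psi$ at $\tau=m+1$, $(r,\alpha)=(0,1)$, converts this into a bound on $\|\psi\|_{H^{m+1}}$, at the cost of a term $(1+\|\Ga\|_{H^{m+1/2}})\|\psi\|_{C^1}$.
\end{itemize}

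Your control of the low norm is also circular as written: applying the same estimate at $m=s-1$ puts $\|\nabla\psi\|_{C^{0,\alpha}}$ back on the right-hand side. The paper breaks the circle as follows.
\begin{itemize}
\item It applies the DN estimate at base index $s-3/2$. The factor is $\|\Ga\|_{H^{s-1/4}}\le C_R$, which matches your bookkeeping.
\item It absorbs $\|h\|_{C^{3/4}}$ by interpolating against $\|h\|_{L^2}\le C_R\|f\|_{H^{-1/2}}$, obtained from \eqref{eq:neumann-low} and the trace theorem. This yields $\|h\|_{H^{s-1/2}}\le C_R\|f\|_{H^{s-3/2}}$.
\item Hence $h\in C^{1,\beta}$, and a harmonic Schauder estimate gives $\|\psi\|_{C^1}\le C_R\|f\|_{H^{s-3/2}}$. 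This feeds both low-order terms above.
\end{itemize}
Within your own framework, interpolating $\|\nabla\psi\|_{C^{0,\alpha}}$ between $\|\nabla\psi\|_{L^2}$ and $\|\nabla\psi\|_{H^{s-1}}$ would remove the circularity. It would still rest on the unproved Neumann bound.
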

\begin{proof}
The proof is divided into four steps.

\par\nobreak\smallskip
\noindent\textit{Step 1. The $H^2$ Dirichlet estimate.}\par\nobreak

Let $F=\Delta v$ and $g=v|_\Ga$.  Apply
\eqref{eq:balanced-dir} with
$(\tau,r,\alpha)=(2,3/4,1/4)$.  Since
$2+r-1/2=9/4$ and $\|\Ga\|_{H^{9/4}}\le C_R$, we obtain
\begin{equation}\label{eq:dir2-preabsorb}
\|v\|_{H^2}
\le C_R\bigl(\|F\|_{L^2}+\|g\|_{H^{3/2}}+\|v\|_{C^{1/4}}\bigr).
\end{equation}
After applying a fixed Stein extension to $\R^3$, the three-dimensional
Sobolev embedding $H^{15/8}(\R^3)\hookrightarrow C^{1/4}(\R^3)$ and
interpolation between $H^1$ and $H^2$ give, for every $\varepsilon>0$,
\begin{equation}\label{eq:Cquarter-interp}
\|v\|_{C^{1/4}}
\le C\|v\|_{H^{15/8}}
\le \varepsilon\|v\|_{H^2}+C_\varepsilon\|v\|_{H^1}.
\end{equation}
A bounded trace lifting $\mathcal L:H^{1/2}(\Ga)\to H^1(\Om)$
and the energy estimate for $v-\mathcal Lg\in H^1_0(\Om)$ give
\begin{equation*}
\|v\|_{H^1}
\le C_R\bigl(\|F\|_{H^{-1}}+\|g\|_{H^{1/2}}\bigr)
\le C_R\bigl(\|F\|_{L^2}+\|g\|_{H^{3/2}}\bigr).
\end{equation*}
Insert this bound into \eqref{eq:Cquarter-interp} and then
\eqref{eq:dir2-preabsorb}. Choosing $\varepsilon$ small enough to absorb
$\|v\|_{H^2}$ proves \eqref{eq:dir2}.

\par\smallskip
\noindent\textit{Step 2. The homogeneous $H^s$ Dirichlet estimate.}\par\nobreak

Assume $v|_\Ga=0$.  Use \eqref{eq:balanced-dir} with
$(\tau,r,\alpha)=(s,0,1)$:
\begin{equation}\label{eq:dirs-preabsorb}
\|v\|_{H^s}
\le C_R\bigl(\|\Delta v\|_{H^{s-2}}+\|v\|_{C^1}\bigr),
\end{equation}
where $1+\|\Ga\|_{H^{s-1/2}}\le C_R$. With
$\kappa=(s+5/2)/2\in(5/2,s)$, Sobolev embedding and interpolation give
\begin{equation*}
\|v\|_{C^1}
\le C_s\|v\|_{H^\kappa}
\le \varepsilon\|v\|_{H^s}+C_{\varepsilon,s}\|v\|_{H^1}.
\end{equation*}
The zero boundary energy estimate
$\|v\|_{H^1}\le C_R\|F\|_{H^{-1}}\le C_R\|F\|_{H^{s-2}}$
then allows absorption in \eqref{eq:dirs-preabsorb}, proving
\eqref{eq:dir-s}, also componentwise for vector and tensor fields.
To obtain the estimates for weak solutions, choose smooth graphs
$\eta_n\to\eta$ in $H^s(\Ga_*)$ and write
$\Om_n=\Om_{\eta_n}$, $\Xi_n=\Xi_{\eta_n}$, and $\Xi=\Xi_\eta$
using \eqref{eq:reference-domain-map}. The maps converge in $C^1$
and have uniform bounds on their derivatives and inverse derivatives.
Extend $F$ from $\Om$ by a Stein operator, smooth this ambient
extension, and restrict it to $\Om_n$ to obtain $F_n$.
The ambient approximations converge in $L^2$, and in $H^{s-2}$
for \eqref{eq:dir-s}, with uniformly bounded norms.
For \eqref{eq:dir2}, approximate $g\circ G_\eta$ by smooth
functions $\widehat g_n$ in $H^{3/2}(\Ga_*)$ and prescribe
$g_n\circ G_{\eta_n}=\widehat g_n$; for \eqref{eq:dir-s}, take $g_n=0$.
Let $v_n$ solve the resulting smooth Dirichlet problems.

After pullback to $\Om_*$, the Dirichlet matrices converge uniformly
and remain uniformly elliptic. The pulled-back sources converge in
$L^2$, and the boundary values converge in $H^{1/2}(\Ga_*)$.
Subtracting fixed-domain $H^1$ liftings of these boundary values and
applying the variational difference estimate gives
$v_n\circ\Xi_n\to v\circ\Xi$ in $H^1(\Om_*)$, where variational
uniqueness identifies $v$ with the prescribed weak solution.
The estimates already proved give uniform $H^2(\Om_n)$ or
$H^s(\Om_n)$ bounds for $v_n$. Apply uniform Stein extensions
to these solutions and extract a weakly convergent subsequence in
the corresponding ambient Sobolev space. On each compact subset
of $\Om$, the fixed-domain $H^1$ convergence identifies its
restriction with $v$. Weak lower semicontinuity and restriction
therefore give \eqref{eq:dir2}--\eqref{eq:dir-s} on $\Om$.

\par\smallskip
\noindent\textit{Step 3. The low Neumann estimate.}\par\nobreak

The compatibility condition $\langle f,1\rangle=0$ gives a unique harmonic
solution modulo constants. Its weak formulation on $H^1(\Om)/\R$ is, for every $\phi\in H^1(\Om)$,
\begin{equation*}
\int_\Om\nabla\psi\cdot\nabla\phi\,dx
=\langle f,\phi|_\Ga\rangle_{H^{-1/2},H^{1/2}}.
\end{equation*}
Taking $\phi=\psi$ after fixing the boundary mean and using the trace and
Poincar\'e inequalities gives
\begin{equation*}
\|\nabla\psi\|_2^2
\le C_R\|f\|_{H^{-1/2}}\|\psi\|_{H^1}
\le C_R\|f\|_{H^{-1/2}}\|\nabla\psi\|_2.
\end{equation*}
This proves \eqref{eq:neumann-low}. For $h=\psi|_\Ga$,
\eqref{eq:neumann-low} and the trace theorem give
\begin{equation}\label{eq:h-L2-low}
\|h\|_{L^2(\Ga)}\le C_R\|f\|_{H^{-1/2}(\Ga)}.
\end{equation}

\par\smallskip
\noindent\textit{Step 4. Boundary regularity through the
Dirichlet-to-Neumann map.}\par\nobreak

Let $\mathcal N$ denote the Dirichlet-to-Neumann operator,
$\mathcal N g=\partial_N(\mathcal H_{\Om}g)$, where
$\mathcal H_{\Om}g$ is the harmonic extension of boundary data $g$.
Then $\mathcal N h=f$. Apply the one-derivative
Dirichlet-to-Neumann estimate from
\cite[Proposition~A.19]{IPTT-MHD} with
$(r,\alpha)=(1/4,3/4)$ and base index $s-3/2$.  It gives
\begin{equation}\label{eq:DN-base}
\|h\|_{H^{s-1/2}}
\le C_R\Bigl(
\|f\|_{H^{s-3/2}}+\|h\|_{L^2}
+(1+\|\Ga\|_{H^{s-1/4}})\|h\|_{C^{3/4}}\Bigr).
\end{equation}
Choose $2<\rho<s-1/2$. Boundary Sobolev embedding and interpolation give
\begin{equation*}
\|h\|_{C^{3/4}}
\le C\|h\|_{H^\rho}
\le \varepsilon\|h\|_{H^{s-1/2}}+C_\varepsilon\|h\|_{L^2}.
\end{equation*}
The bound $1+\|\Ga\|_{H^{s-1/4}}\le C_R$ permits absorption in
\eqref{eq:DN-base}; together with \eqref{eq:h-L2-low}, this yields
\begin{equation}\label{eq:h-base-bound}
\|h\|_{H^{s-1/2}}
\le C_R\|f\|_{H^{s-3/2}}.
\end{equation}
Choose $0<\beta<\min\{s-5/2,1/4\}$.  The boundary embedding gives
$h\in C^{1,\beta}$. The bound $\|\Ga\|_{H^s}\le R$ and $s>5/2$
give a uniform $C^{1,1/2}$ boundary bound. The harmonic Dirichlet
Schauder estimate therefore yields
\begin{equation*}
\|\psi\|_{C^1(\overline\Om)}
\le C_R\|h\|_{C^{1,\beta}(\Ga)}
\le C_R\|f\|_{H^{s-3/2}}.
\end{equation*}

At base index $m-1/2$, for $m\in\{s,4\}$,
\cite[Proposition~A.19]{IPTT-MHD} and \eqref{eq:h-base-bound} give
\begin{align*}
\|h\|_{H^{m+1/2}}
&\le C_R\Bigl(\|f\|_{H^{m-1/2}}
+(1+\|\Ga\|_{H^{m+3/4}})
(\|h\|_{C^{3/4}}+\|h\|_{L^2})\Bigr)\\
&\le C_R\Bigl(\|f\|_{H^{m-1/2}}
+(1+\|\Ga\|_{H^{m+3/4}})\|f\|_{H^{s-3/2}}\Bigr).
\end{align*}
Finally apply \eqref{eq:balanced-dir} to the harmonic function $\psi$
at order $\tau=m+1$ with $(r,\alpha)=(0,1)$.  Since $\Delta\psi=0$,
\begin{align*}
\|\psi\|_{H^{m+1}}
&\le C_R\Bigl(
\|h\|_{H^{m+1/2}}
+(1+\|\Ga\|_{H^{m+1/2}})\|\psi\|_{C^1}\Bigr)\\
&\le C_R\left(
\|f\|_{H^{m-1/2}}
+(1+\|\Ga\|_{H^{m+3/4}})\|f\|_{H^{s-3/2}}
\right).
\end{align*}
Taking one derivative proves \eqref{eq:neumann-tame}.
\end{proof}

Let $Q=D_tp$ be the material pressure derivative, and write
\begin{equation*}
a=-\partial_Np,\qquad a_Q=-\partial_NQ
\end{equation*}
for the normal derivative coefficients of $p$ and $Q$.
Set
\begin{equation*}
\theta_0=\frac{(3-s)_+}{4-s},\qquad
\theta_1=\frac{(7/2-s)_+}{4-s},
\qquad x_+:=\max\{x,0\}.
\end{equation*}
For $5/2<s<4$, they satisfy
\begin{equation}\label{eq:strict-exponents}
0\le\theta_0<\frac13,\qquad
0\le\theta_1<\frac23,\qquad
2\theta_0\le\theta_1,\qquad
\theta_0+\theta_1<1,\qquad 3\theta_0<1.
\end{equation}

\begin{lemma}\label{lem:pressure-estimates}
Let $(u,B,\Ga)$ satisfy \eqref{eq:low-bound}, and let $p$ solve the
problem \eqref{eq:pressure-dirichlet}. Here the Taylor coefficient $a$ may change sign.

\textup{(i)} Along smooth solutions,
\begin{align}
&\|p\|_{H^{s+1/2}}+\|D_Bp\|_{H^s}
+\|a\|_{H^{s-1}(\Ga)}
+\|Q\|_{H^s}+\|a_Q\|_{H^{s-3/2}(\Ga)}
\le C_R,\notag\\
&\|p\|_{H^{9/2}}+\|Q\|_{H^4}\le C_RM_4.
\label{eq:pressure-high}
\end{align}
The estimates for $p$, $D_Bp$, and $a$ hold for smooth states.
Their low-order bounds also hold for $H^s$ states.
Along smooth solutions,
\begin{equation}\label{eq:Dt-a}
D_ta=a_Q-a\,N\cdot A^TN,
\qquad \|D_ta\|_{L^\infty(\Ga)}\le C_R.
\end{equation}

\textup{(ii)} For every $5/2<s'<s$, the map
$U\mapsto a\circ G_{\eta_\Ga}$ is uniformly continuous into
$C^0(\Ga_*)$ on the class defined by \eqref{eq:low-bound},
with respect to the metric in \cref{def:extmetric}
with $s$ replaced by $s'$.

Consequently, strict Taylor sign is open in the standard $H^s$
topology. Along smooth solutions, $a(0)\ge c_0>0$ implies
$a(t)\ge c_0/2$ for $0\le t\le c_0/(2C_R)$
while \eqref{eq:low-bound} holds.
\end{lemma}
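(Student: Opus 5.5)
The plan is to treat the low-order and high-order pressure bounds in (i) by the Dirichlet estimates of \cref{lem:elliptic-estimates}, then compute $D_ta$ directly, and finally obtain (ii) by pulling back to the fixed reference domain.

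\textbf{Bounds on $p$, $D_Bp$, $a$.} The source $\operatorname{tr}(\mathsf C^2)-\operatorname{tr}(A^2)$ is a sum of products of $H^{s-1}$ functions. Write it in divergence form, $\operatorname{tr}(A^2)=\partial_i\partial_j(u_iu_j)$ by $\diver u=0$, and similarly for $B$. Then \eqref{eq:uniform-product} and \eqref{eq:balanced-dir} with $\tau=s+1/2$ give $\|p\|_{H^{s+1/2}}\le C_R$. The geometric factor $1+\|\Ga\|_{H^{s}}$ appears with $r=0$, and the $C^1$ norm is absorbed by interpolation as in Step~2. The same argument at $\tau=9/2$ gives $\|p\|_{H^{9/2}}\le C_RM_4$, with a tame linear dependence on $\|\Ga\|_{H^4}$ and $\|(u,B)\|_{H^4}$.

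For $D_Bp$, we have $D_Bp|_\Ga=0$ by tangency. Its Laplacian is $D_B\Delta p+[\Delta,D_B]p$, and $[\Delta,D_B]p=2\nabla B:\nabla^2p+\Delta B\cdot\nabla p$. Since $\Delta B\in H^{s-2}$ multiplies $\nabla p\in H^{s-1/2}$, this lies in $H^{s-2}$, so \eqref{eq:dir-s} applies. The term $D_B\Delta p$ involves second derivatives of $B,u$ times first derivatives; by $\diver B=0$ we write $D_Bf=\diver(Bf)$, so it lies in $H^{s-2}$. The trace bound $\|a\|_{H^{s-1}(\Ga)}\le C_R$ follows from \eqref{eq:balanced-trace} applied to $\nabla p\in H^{s-1/2}$, together with $N\in H^{s-1}(\Ga)$.

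\textbf{Bounds on $Q=D_tp$ and $a_Q$.} These are the crux, since $Q$ requires time derivatives. First, $Q|_\Ga=0$ because $p=0$ on the moving boundary. Next,
\[
\Delta Q=D_t\Delta p+2A:\nabla^2p+\Delta u\cdot\nabla p .
\]
In $D_t\Delta p$, substitute $D_tA=\nabla(D_tu)-A^2$ with $D_tu=D_B B-\nabla p$, and $D_t\mathsf C=\nabla(D_Bu)-\mathsf C A$. This produces terms such as $\operatorname{tr}(A\nabla^2p)$ and $\operatorname{tr}(\mathsf C\nabla D_Bu)$. The last one contains $\nabla^2u\cdot B$, which is only $H^{s-2}$ times $H^{s-1}$, so it lies in $H^{s-2}$. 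Hence $\|\Delta Q\|_{H^{s-2}}\le C_R$ and \eqref{eq:dir-s} gives $\|Q\|_{H^s}\le C_R$.

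The main obstacle is the high bound $\|Q\|_{H^4}\le C_RM_4$. At that level, $\Delta Q\in H^2$ involves $\nabla^3u$ and $\nabla^3p$ multiplied by low-norm quantities, so it is linear in $M_4$. Here I would use \eqref{eq:balanced-dir} at $\tau=4$ and bound the geometric factor $\|\Ga\|_{H^{4}}$ by $M_4$. The trace $a_Q=-N\cdot\nabla Q\in H^{s-3/2}(\Ga)$ follows from \eqref{eq:balanced-trace} applied to $\nabla Q\in H^{s-1}$.

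\textbf{The formula \eqref{eq:Dt-a}.} Since $p=0$ on $\Ga$, we have $\nabla p=-aN$ there. Differentiate $a=-N\cdot\nabla p$ along $D_t$, using $D_t\nabla p=\nabla Q-A^T\nabla p$ and $D_tN=-\Pi A^TN$. The term $D_tN\cdot\nabla p$ vanishes because $\Pi A^TN\perp N$. What remains is $D_ta=a_Q-aN\cdot A^TN$. The $L^\infty$ bound needs $a_Q\in L^\infty$. For $s>5/2$, the space $H^{s-3/2}(\Ga)$ embeds into $L^\infty$ only when $s-3/2>1$, which holds, and $a\in H^{s-1}$ is also bounded. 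The statement that $a(t)\ge c_0/2$ for $t\le c_0/(2C_R)$ then follows by integrating along particle paths.

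\textbf{Part (ii).} Pull back by $\Xi_\eta$ to $\Om_*$. There, $p\circ\Xi_\eta$ solves a divergence-form elliptic problem whose coefficients depend continuously in $H^{s'-1}$ on $\eta$, and whose source depends continuously on $E_\Om u$ and $E_\Om B$ in the $s'$-metric. Uniform $H^{s'+1/2}$ bounds on the class \eqref{eq:low-bound}, via (i) with $s'$ in place of $s$, together with difference estimates give Lipschitz dependence of $\nabla p\circ\Xi$ in $H^{s'-1/2}(\Om_*)$, hence in $C^0(\overline{\Om_*})$. Composing with the continuous map $\eta\mapsto N\circ G_\eta$ in $C^0$ gives uniform continuity of $a\circ G_\eta$. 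Openness of the strict Taylor sign follows with $s'=s$.
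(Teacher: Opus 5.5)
Your part (i) follows the paper's proof. You use Dirichlet and balanced estimates for $p$ at orders $s+1/2$ and $9/2$, commutator equations with zero trace for $D_Bp$ and $Q$, the balanced trace estimate for $a$ and $a_Q$, and a direct computation of $D_ta$. Writing $D_Bf=\diver(Bf)$ is a neat shortcut. Your justification of $\|Q\|_{H^4}\le C_RM_4$ is thin. The norm $\|\Delta Q\|_{H^2}$ contains $\nabla^4u$, $\nabla^4B$, $\nabla^4p$, and also products of two intermediate factors such as $\nabla^3B\,\nabla^2u$ or $\nabla^2u\,\nabla^3p$. These are linear in $M_4$ only after pairing them as $L^6\times L^3$, using $s>5/2$ and $\|p\|_{H^4}\le\|p\|_{H^{9/2}}$. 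The paper instead handles them by interpolation with the exponents $\theta_0,\theta_1$.

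\textbf{Missing step: rough states.} The statement asserts the low-order bounds for $p$, $D_Bp$, $a$ for $H^s$ states, not only smooth ones. Part (ii) is posed on the class \eqref{eq:low-bound} of such states, and your (ii) invokes (i) there. However, \eqref{eq:balanced-dir} at $\tau=s+1/2$ and \eqref{eq:balanced-trace} are a priori estimates, which you only justify for smooth configurations. You need the approximation step the paper carries out:
\begin{itemize}
\item take smooth graphs $\eta_n\to\eta$ in $H^s$ and smoothed Stein-extended sources, and obtain uniform $H^{s+1/2}$ bounds for the smooth Dirichlet solutions;
\item pass to a weak limit and identify it with $p$ through the fixed-domain variational limit;
\item get the bound on $a$ by weak lower semicontinuity of the pulled-back traces;
\item check that $D_Bp\in H^{s-1/2}\subset H^1$ has zero trace, so it is the weak zero-Dirichlet solution to which \eqref{eq:dir-s} applies.
\end{itemize}

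\textbf{Part (ii): a different route that overclaims as written.} The pulled-back source contains $(\nabla E_2u_2)\circ\Xi_1-(\nabla E_2u_2)\circ\Xi_2$, where $\nabla E_2u_2$ ranges over a bounded set of $H^{s-1}$. This difference is Lipschitz in the displacement only in $H^{s-2}$. In $H^{s'-3/2}$ it is only H\"older of order $s-s'+1/2$ when $s-s'<1/2$. So your difference estimate does not close at the level $\nabla(p\circ\Xi)\in H^{s'-1/2}$ uniformly on the class.

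It does close at order $s'$. Source differences are Lipschitz in $H^{s'-2}$ and coefficient differences in $H^{s'-1/2}$. Hence $\nabla(p\circ\Xi)$ is Lipschitz in $H^{s'-1}\subset C^0$, which is all you need. This requires an $H^{s'}$ regularity estimate on $\Om_*$ for $\diver(H\nabla\cdot)$ with coefficients $H\in H^{s-1/2}$. That tool is not among the balanced Laplacian estimates the paper uses.

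The repaired route buys an explicit Lipschitz modulus in $C^0$. The paper avoids higher-order difference estimates altogether. It uses only the $H^1_0$ energy estimate for the difference of the pulled-back problems, driven by $\|H_n-H\|_\infty$ and the source difference in $H^{-1}$. It then upgrades to $C^1$ convergence by compactness from the uniform $C^{1,1/4}$ bound \eqref{eq:p-C1alpha}, and gets uniformity from a contradiction argument with pairs of states.
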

\begin{proof}
The proof is divided into seven steps.

\par\nobreak\smallskip
\noindent\textit{Step 1. Low regularity of the Dirichlet pressure.}\par\nobreak

Set $f=\operatorname{tr}(\mathsf{C}^2)-\operatorname{tr}(A^2)$,
where $A=\nabla u$ and $\mathsf{C}=\nabla B$.
Since $s-1>3/2$, the algebra property of $H^{s-1}(\Om)$
gives
\begin{equation*}
\|f\|_{H^{s-1}}
\le C_R\bigl(\|u\|_{H^s}^2+\|B\|_{H^s}^2\bigr)
\le C_R.
\end{equation*}
As $\Delta p=f$ and $p|_\Ga=0$, \eqref{eq:dir-s}
and the Sobolev embedding for $s>5/2$ yield
\begin{equation*}
\|p\|_{H^s}+\|p\|_{C^1}
\le C_R\|f\|_{H^{s-2}}\le C_R.
\end{equation*}
Applying \eqref{eq:balanced-dir} with
$(\tau,r,\alpha)=(s+1/2,0,1)$, we obtain
\begin{equation*}
\|p\|_{H^{s+1/2}}
\le C\Bigl(\|f\|_{H^{s-3/2}}
+(1+\|\Ga\|_{H^s})\|p\|_{C^1}\Bigr)
\le C_R.
\end{equation*}
Sobolev embedding then gives
\begin{equation}\label{eq:p-C1alpha}
\|p\|_{C^{1,1/4}(\overline\Om)}\le C_R.
\end{equation}

At order $9/2$, \eqref{eq:balanced-dir} gives
\begin{equation}\label{eq:p-high-pre}
\|p\|_{H^{9/2}}
\le C\Bigl(\|f\|_{H^{5/2}}
+(1+\|\Ga\|_{H^4})\|p\|_{C^1}\Bigr).
\end{equation}
The tame product estimate and the low $H^s$ bound imply
\begin{equation*}
\|f\|_{H^{5/2}}
\le C_R\bigl(\|u\|_{H^{7/2}}+
\|B\|_{H^{7/2}}\bigr)
\le C_RM_4.
\end{equation*}
Consequently,
$\|p\|_{H^{9/2}}\le C_RM_4$.

\par\smallskip
\noindent\textit{Step 2. Equation for the material pressure derivative.}\par\nobreak

For any smooth vector field $v$, the commutation identity
\begin{equation*}
D_t(\partial_jv_i)=\partial_j(D_tv_i)-A_{kj}\partial_kv_i
\end{equation*}
and the MHD equations yield
\begin{equation}\label{eq:At-Ct}
D_tA=D_B\mathsf{C}+\mathsf{C}^2-\nabla^2p-A^2,
\qquad
D_t\mathsf{C}=D_BA+A\mathsf{C}-\mathsf{C}A.
\end{equation}
Moreover,
\begin{equation}\label{eq:laplace-Dt-comm}
\Delta(D_tp)-D_t(\Delta p)
= (\Delta u)\cdot\nabla p
+2\partial_ku_i\,\partial_{ki}p.
\end{equation}
Differentiate $f=\operatorname{tr}(\mathsf{C}^2)-\operatorname{tr}(A^2)$
materially and use \eqref{eq:At-Ct}:
\begin{align*}
D_tf
&=2\operatorname{tr}(\mathsf{C}D_t\mathsf{C})-2\operatorname{tr}(AD_tA)\\
&=2\operatorname{tr}(\mathsf{C}D_BA)-2\operatorname{tr}(AD_B\mathsf{C})
-2\operatorname{tr}(A\mathsf{C}^2)
+2\operatorname{tr}(A\nabla^2p)
+2\operatorname{tr}(A^3).
\end{align*}
The terms $\operatorname{tr}(\mathsf{C}A\mathsf{C})$ and
$\operatorname{tr}(\mathsf{C}^2A)$ cancel by cyclicity of the trace.  Adding the
commutator \eqref{eq:laplace-Dt-comm} therefore gives
\begin{align}
\Delta Q={}&(\Delta u)\cdot\nabla p
+4\operatorname{tr}(A\nabla^2p)
+2\operatorname{tr}(A^3)-2\operatorname{tr}(A\mathsf{C}^2)\notag\\
&+2\operatorname{tr}(\mathsf{C}D_BA)
-2\operatorname{tr}(AD_B\mathsf{C}),
\qquad Q|_\Ga=0.\label{eq:Q-equation}
\end{align}
The boundary value is zero because $p=0$ on the material free boundary,
so the material derivative of its trace is zero.

To estimate the source in $H^{s-2}$, \eqref{eq:uniform-product} gives
\begin{align}
\|(\Delta u)\cdot\nabla p\|_{H^{s-2}}
&\le C\|\Delta u\|_{H^{s-2}}\|\nabla p\|_{H^{s-1/2}}
\le C_R,\notag\\
\|A\nabla^2p\|_{H^{s-2}}
&\le C\|A\|_{H^{s-1}}\|\nabla^2p\|_{H^{s-3/2}}
\le C_R.\label{eq:pressure-products-low}
\end{align}
The cubic terms are bounded in $H^{s-1}$ because $s-1>3/2$.
For the magnetic terms, write
$\mathsf{C}D_BA=\mathsf{C}B_k\partial_kA$ and $AD_B\mathsf{C}=AB_k\partial_k\mathsf{C}$.
Here $AB_k,\mathsf{C}B_k\in H^{s-1}$ and
$\partial_kA,\partial_k\mathsf{C}\in H^{s-2}$, so \eqref{eq:uniform-product} yields
\begin{equation*}
\|\mathsf{C}D_BA\|_{H^{s-2}}+\|AD_B\mathsf{C}\|_{H^{s-2}}\le C_R.
\end{equation*}
Thus $\|\Delta Q\|_{H^{s-2}}\le C_R$.
By \eqref{eq:dir-s},
\begin{equation}\label{eq:Q-low}
\|Q\|_{H^s}\le C_R.
\end{equation}
Since $s>5/2$, $H^s(\Om)\hookrightarrow W^{1,\infty}(\Om)$, and hence
$\|Q\|_{C^1}\le C_R$.

\par\smallskip
\noindent\textit{Step 3. The magnetic pressure derivative.}\par\nobreak

Commuting $D_B=B\cdot\nabla$ with the Laplacian gives
\begin{equation*}
\Delta(D_Bp)
=D_Bf+(\Delta B)\cdot\nabla p
+2\partial_kB\cdot\nabla\partial_kp.
\end{equation*}
Magnetic tangency and $p|_\Ga=0$ give $D_Bp|_\Ga=0$.
Each term in $D_Bf$ has the form $B_kA\partial_kA$ or
$B_k\mathsf{C}\partial_k\mathsf{C}$, and hence belongs to $H^{s-2}$ with norm at most $C_R$.
For the remaining two terms, use \eqref{eq:pressure-products-low}
with $u$ replaced by $B$. Therefore
\eqref{eq:dir-s} gives
\begin{equation*}
\|D_Bp\|_{H^s}\le C_R.
\end{equation*}
Together with \eqref{eq:Q-low}, this gives
$\|D_t^\pm p\|_{H^s}\le C_R$.

\par\smallskip
\noindent\textit{Step 4. The high $H^4$ estimate for $Q$.}\par\nobreak

Choose
\begin{equation*}
0<\epsilon<\min\{s-5/2,1/4,(4-s)/4\},
\qquad e:=\frac{\epsilon}{4-s}.
\end{equation*}
Interpolation between the low $H^s$ bounds and the $H^4$ state norm,
using the low bound directly when the target order is at most $s$, gives
\begin{align}
\|V\|_{H^3}&\le C_RM_4^{\theta_0},
&V&\in\{u,B\},\label{eq:interp-H3-fields}\\
\|V\|_{H^{7/2+\epsilon}}
&\le C_RM_4^{\theta_1+e},
&\|V\|_{W^{2,\infty}}&\le C_RM_4^{\theta_1+e},\notag
\end{align}
using $H^{7/2+\epsilon}\hookrightarrow W^{2,\infty}$.
Interpolation between $H^{s+1/2}$ and $H^{9/2}$ similarly gives
\begin{align}
\|p\|_{H^4}&\le C_RM_4^{\theta_1},\label{eq:p-H4-interp}\\
\|p\|_{H^{7/2+\epsilon}}
&\le C_RM_4^{\theta_0+e},
&\|p\|_{W^{2,\infty}}&\le C_RM_4^{\theta_0+e}.\notag
\end{align}
The exponents in \eqref{eq:interp-H3-fields} and \eqref{eq:p-H4-interp}
also apply when the target order is at most the low Sobolev order. Also $\|p\|_{H^3}\le C_R$, since $s+1/2>3$.
For $s\le3$, use $\epsilon<s-5/2$; for $s>3$, use
$\theta_0=0$, $\theta_1\le1/2$, and $e<1/4$. These give
\begin{equation*}
\theta_1+e<1,
\qquad \theta_0+\theta_1+e<1,
\qquad 2\theta_0+e<1.
\end{equation*}

To estimate the source of \eqref{eq:Q-equation} in $H^2$, use the
integer tame product rule,
\begin{align*}
\|(\Delta u)\cdot\nabla p\|_{H^2}
&\le C\Bigl(
\|u\|_{H^4}\|\nabla p\|_{L^\infty}
+\|u\|_{W^{2,\infty}}\|p\|_{H^3}\Bigr)\le C_R\bigl(M_4+M_4^{\theta_1+e}\bigr)
\le C_RM_4,\\
\|A\nabla^2p\|_{H^2}
&\le C\Bigl(
\|A\|_{L^\infty}\|p\|_{H^4}
+\|u\|_{H^3}\|p\|_{W^{2,\infty}}\Bigr)\le C_R\bigl(M_4^{\theta_1}+M_4^{2\theta_0+e}\bigr)
\le C_RM_4.
\end{align*}
For the directional terms, retain the undifferentiated factor $B_k$.
By \eqref{eq:interp-H3-fields},
$\|AB_k\|_{H^2}+\|\mathsf{C}B_k\|_{H^2}\le C_RM_4^{\theta_0}$,
while their $L^\infty$ norms are bounded by $C_R$. Thus
\begin{align*}
&\|\mathsf{C}B_k\partial_kA\|_{H^2}+\|AB_k\partial_k\mathsf{C}\|_{H^2}\\
&\quad\le C\Bigl[
(\|\mathsf{C}B_k\|_\infty+\|AB_k\|_\infty)
(\|\nabla A\|_{H^2}+\|\nabla \mathsf{C}\|_{H^2})\\
&\hspace{32mm}
+(\|\mathsf{C}B_k\|_{H^2}+\|AB_k\|_{H^2})
(\|\nabla A\|_\infty+\|\nabla \mathsf{C}\|_\infty)\Bigr]\\
&\quad\le C_R\Bigl(M_4+M_4^{\theta_0+\theta_1+e}\Bigr)
\le C_RM_4.
\end{align*}
The cubic terms satisfy
\begin{equation*}
\|A^3\|_{H^2}+\|A\mathsf{C}^2\|_{H^2}
\le C_R(\|A\|_{H^2}+\|\mathsf{C}\|_{H^2})
\le C_RM_4^{\theta_0}\le C_RM_4.
\end{equation*}
Consequently $\|\Delta Q\|_{H^2}\le C_RM_4$.
Applying \eqref{eq:balanced-dir} at $\tau=4$ and using
\eqref{eq:Q-low} with $H^s\hookrightarrow C^1$,
\begin{equation*}
\|Q\|_{H^4}\le C\Bigl(\|\Delta Q\|_{H^2}
+(1+\|\Ga\|_{H^{7/2}})\|Q\|_{C^1}\Bigr)\le C_RM_4.
\end{equation*}
This proves \eqref{eq:pressure-high}.

\par\smallskip
\noindent\textit{Step 5. Normal traces.}\par\nobreak

Apply \eqref{eq:balanced-trace} to $\nabla p$ with
$(\tau,r,\alpha)=(s-1/2,1,0)$.  Since
$\nabla p\in H^{s-1/2}(\Om)$,
\begin{equation*}
\|\nabla p|_\Ga\|_{H^{s-1}}
\le C_R\Bigl(
\|p\|_{H^{s+1/2}}+(1+\|\Ga\|_{H^s})\|\nabla p\|_\infty\Bigr)
\le C_R.
\end{equation*}
Similarly, for $\nabla Q$ with
$(\tau,r,\alpha)=(s-1,1,0)$,
\begin{equation*}
\|\nabla Q|_\Ga\|_{H^{s-3/2}}
\le C_R\Bigl(
\|Q\|_{H^s}+(1+\|\Ga\|_{H^{s-1/2}})\|\nabla Q\|_\infty\Bigr)
\le C_R.
\end{equation*}
The graph formula and Moser estimates give $\|N\|_{H^{s-1}(\Ga)}\le C_R$.
Since $s-3/2>1$, the boundary product estimates yield
\begin{equation}\label{eq:pressure-normal-low}
\|a\|_{H^{s-1}(\Ga)}+\|a_Q\|_{H^{s-3/2}(\Ga)}\le C_R.
\end{equation}
For an $H^s$ state, apply the approximation argument in the proof of
\cref{lem:elliptic-estimates}\textup{(i)} to the graph and to a
Stein extension of the pressure source $f\in H^{s-1}(\Om)$.
The smooth Dirichlet solutions have the uniform $H^{s+1/2}$ bound
from Step~1. Their uniform Stein extensions converge subsequentially
weakly in this space and strongly in $C^1$ on a fixed ball containing
the domains. The fixed-domain variational limit identifies their
restriction with $p$, which therefore has the same Sobolev bound.
The normals converge uniformly in the reference coordinates, so the
pulled-back normal pressure traces converge uniformly to
$a\circ G_\eta$. The trace estimates above bound these traces
uniformly in $H^{s-1}(\Ga_*)$. Weak lower semicontinuity gives
the asserted bound for $a$ in \eqref{eq:pressure-normal-low}.
The equation in Step~3 for $D_Bp$ holds distributionally. Since
$B\in C^1$, $p\in C^1$, $B\cdot N=0$, and $p|_\Ga=0$,
the trace of $D_Bp$ is zero. Moreover, $D_Bp\in H^{s-1/2}(\Om)
\subset H^1(\Om)$ by the Sobolev product estimate, so it is the
weak zero Dirichlet solution of that equation. Its source belongs
to $H^{s-2}$ by the estimates in Step~3, and \eqref{eq:dir-s}
yields the stated $H^s$ bound.

\par\smallskip
\noindent\textit{Step 6. Continuity of the normal pressure coefficient.}\par\nobreak

Let $U_n=(u_n,B_n,\Ga_n)\to U$ in the weaker $H^{s'}$ topology
of the statement, with a common $H^s$ bound. Denote by $\Om_n$ the domain
enclosed by $\Ga_n$, by $N_n$ its outward unit normal, by $p_n$ its
pressure, and by $f_n=\operatorname{tr}((\nabla B_n)^2)
-\operatorname{tr}((\nabla u_n)^2)$ the pressure source.
Use the fixed maps \eqref{eq:reference-domain-map}, with
$\Xi_n=\Xi_{\eta_{\Ga_n}}$ and $\Xi=\Xi_{\eta_\Ga}$.
Their construction gives
\begin{equation}\label{eq:Xi-conv}
\Xi_n\to\Xi\quad\hbox{in }H^{s'+1/2}\cap C^1,
\qquad
\sup_n\bigl(\|\Xi_n\|_{H^{s+1/2}}+\|\Xi_n\|_{C^{1,\alpha}}
+\|\Xi_n^{-1}\|_{W^{1,\infty}}\bigr)\le C_R
\end{equation}
for a fixed $0<\alpha<1/4$. \Cref{def:extmetric}, the embedding
$H^{s'}(\R^3)\hookrightarrow C^1(\R^3)$, and \eqref{eq:Xi-conv}
imply $C^1$ convergence of the pulled back fields. Hence their pressure sources satisfy
\begin{equation}\label{eq:source-unif-conv}
f_n\circ\Xi_n\longrightarrow f\circ\Xi
\quad\hbox{uniformly, and in particular in }L^2(\Om_*).
\end{equation}

Set
\begin{equation*}
v_n=p_n\circ\Xi_n,
\quad v=p\circ\Xi,
\quad J_n=\det D\Xi_n,
\quad H_n=J_n(D\Xi_n)^{-1}(D\Xi_n)^{-T},
\end{equation*}
and define $J,H$ analogously. Here $D\Xi_n$ is the Jacobian matrix,
and $(D\Xi_n)^{-T}$ is the transpose of its inverse. The matrices
$H_n,H$ are the coefficients of the pulled-back Dirichlet forms.
The weak equations on the fixed domain are, for $\varphi\in H^1_0(\Om_*)$,
\begin{align*}
\int_{\Om_*}H_n\nabla v_n\cdot\nabla\varphi
&=-\int_{\Om_*}J_n(f_n\circ\Xi_n)\varphi,\\
\int_{\Om_*}H\nabla v\cdot\nabla\varphi
&=-\int_{\Om_*}J(f\circ\Xi)\varphi.
\end{align*}
Put $w_n=v_n-v$ and take $\varphi=w_n$ in the difference. Let
$\lambda_R>0$ be a common lower ellipticity bound for $H_n$. Then
\begin{equation*}
\lambda_R\|\nabla w_n\|_2^2
\le \|H_n-H\|_\infty
\|\nabla v\|_2\|\nabla w_n\|_2+C\|J_n(f_n\circ\Xi_n)-J(f\circ\Xi)\|_{H^{-1}}
\|\nabla w_n\|_2.
\end{equation*}
The two coefficients on the right tend to zero by
\eqref{eq:Xi-conv}--\eqref{eq:source-unif-conv}.  Poincar\'e's inequality
therefore yields
\begin{equation}\label{eq:pressure-H1-conv}
v_n\longrightarrow v\quad\hbox{in }H^1_0(\Om_*).
\end{equation}
By \eqref{eq:p-C1alpha} and \eqref{eq:Xi-conv}, the sequence $(v_n)$
is bounded in $C^{1,\beta}$ for some $\beta>0$. Compactness in $C^1$
and uniqueness of the $H^1$ limit in \eqref{eq:pressure-H1-conv} yield
\begin{equation}\label{eq:pressure-C1-conv}
v_n\longrightarrow v\quad\hbox{in }C^1(\overline{\Om_*}).
\end{equation}
Writing $a_n=-\partial_{N_n}p_n$, we have on $\partial\Om_*$,
\begin{equation}\label{eq:normal-pressure-pullback}
a_n\circ\Xi_n
=-(N_n\circ\Xi_n)\cdot (D\Xi_n)^{-T}\nabla v_n.
\end{equation}
Every factor on the right converges uniformly, so the pulled back Taylor
coefficients converge in $C^0$.

For the uniform assertion, suppose that there are $\varepsilon_0>0$
and pairs $U_n,\widetilde U_n$ satisfying \eqref{eq:low-bound} whose
distance at order $s'$ tends to zero, while
\begin{equation*}
\|a_n\circ G_{\eta_n}-\widetilde a_n\circ G_{\widetilde\eta_n}
\|_{C^0(\Ga_*)}\ge\varepsilon_0.
\end{equation*}
Here $\eta_n,\widetilde\eta_n$ are their graph functions and
$a_n,\widetilde a_n$ their Taylor coefficients. The common $H^s$
bound gives subsequential compactness of the graphs and pulled back
fields in $C^1$. The distance at order $s'$ controls the graph and
ambient field discrepancies in $C^1$, so the two sequences have the
same limits. Their pulled back Dirichlet matrices and pressure sources
therefore converge to the same limits. Subtracting the two weak pressure
equations, as in \eqref{eq:pressure-H1-conv}, gives convergence of the
pressure difference in $H^1_0(\Om_*)$. The uniform $C^{1,\beta}$ bounds
then give convergence in $C^1$, and \eqref{eq:normal-pressure-pullback}
makes the displayed discrepancy tend to zero. This contradiction proves
the uniform assertion.

\par\smallskip
\noindent\textit{Step 7. Evolution of the Taylor coefficient.}\par\nobreak

On the free surface, $\nabla p=-aN$ and $N\cdot D_tN=0$.
Using $D_t\nabla p=\nabla Q-A^T\nabla p$, we obtain
\begin{equation*}
D_ta=-D_t(N\cdot\nabla p)
=-\partial_NQ-a\,N\cdot A^TN
=a_Q-a\,N\cdot A^TN.
\end{equation*}
Estimates \eqref{eq:low-bound} and \eqref{eq:pressure-normal-low}, together
with $H^{s-3/2}(\Ga)\hookrightarrow L^\infty(\Ga)$, give
$\|D_ta\|_\infty\le C_R$, proving \eqref{eq:Dt-a}.  Along each material boundary trajectory,
\begin{equation*}
a(t)\ge a(0)-C_Rt.
\end{equation*}
Thus $a(0)\ge c_0$ implies $a(t)\ge c_0/2$ for
$t\le c_0/(2C_R)$.  The convergence in \eqref{eq:pressure-C1-conv} and the trace formula
\eqref{eq:normal-pressure-pullback} give openness of the strict Taylor
sign in the standard $H^s$ topology.
\end{proof}

\section{Smooth approximations and weak stability}
\label{sec:lowerreg}

In this section, we construct smooth approximations of $H^s$
initial data that preserve the divergence and magnetic tangency
constraints, with approximation errors controlled by frequency
envelopes. We then establish weak stability estimates for the
corresponding smooth solutions on moving domains, uniformly
on intervals where the $H^s$ norms and collar geometry remain bounded
and the Taylor coefficient has a common positive lower bound.
These estimates provide the low-order comparison used with the fourth
order energy bound in \cref{sec:fourth-order}. Here $R$ is a fixed bound for $1+\|U\|_{\X^s}$.
Constants $C_R$ may depend on $R$, $s$, the fixed collar and its geometric
bounds, and the reciprocal of the common Taylor lower bound, but are
independent of the approximation scale.

For $r>5/2$, define the state space with additional magnetic-direction
regularity by
\begin{gather*}
\mathcal Y^r=\{U=(u,B,\Ga)\in\X^r:
D_Bu,D_BB\in H^{r-1/2}(\Om)\},\\
\|U\|_{\mathcal Y^r}^2=\|U\|_{\X^r}^2
+\|D_Bu\|_{H^{r-1/2}(\Om)}^2+\|D_BB\|_{H^{r-1/2}(\Om)}^2.
\end{gather*}
On our local set of domains, this is the state space of
\cite[Section~3.3]{IPTT-MHD}, with the corresponding ambient comparison
topology for the fields and their magnetic derivatives.
As with $\|U\|_{\X^r}$, $\|U\|_{\mathcal Y^r}$ measures the size of a
state in a nonlinear space. Standard $H^s$ data only give
\begin{equation*}
\|D_Bu\|_{H^{s-1}}+\|D_BB\|_{H^{s-1}}
\le C\|B\|_{H^s}\bigl(\|u\|_{H^s}+\|B\|_{H^s}\bigr).
\end{equation*}
The existence theorem \cite[Theorem~1.9]{IPTT-MHD} in $\mathcal Y^r$
supplies the smooth approximating solutions.
For continuation, the standard $H^4$ bound \eqref{eq:H4bound} controls
their $\mathcal Y^{7/2}$ size through \eqref{eq:enh-72-bound}.

For two states $U_i=(u_i,B_i,\Ga_i)$, $i=1,2$, let $\Om_i$ be the
domain enclosed by $\Ga_i$, let $N_i$ be its outward unit normal, and
let $p_i$ be the associated pressure. With $a_i=-\partial_{N_i}p_i$,
we use the weak comparison functional of
\cite[(4.2)--(4.3)]{IPTT-MHD}:
\begin{equation*}
\D(U_1,U_2)= 
\int_{\Om_1\cap\Om_2}\bigl(|u_1-u_2|^2+|B_1-B_2|^2\bigr)\,dx +\int_{\Ga_1\cap\Om_2}a_1^{-1}|p_2|^2\,dS
+\int_{\Ga_2\cap\Om_1}a_2^{-1}|p_1|^2\,dS.
\end{equation*}
Its boundary part is locally equivalent to the squared $L^2$ distance
between the graph functions by \cref{prop:Dprop}\textup{(i)}. Thus
$\D$ supplies the weaker comparison propagated by the equations,
whereas $d$ measures the strong $H^s$ separation in the main theorems.

\subsection{Compatible smoothing and frequency envelopes}

Fix
\begin{equation*}
0<\sigma<\min\left\{\frac1{64},\frac{s-5/2}{64},\frac{4-s}{64}\right\}.
\end{equation*}
Choose a smooth cutoff $\chi:[0,\infty)\to[0,1]$, equal to one on
$[0,1]$ and zero on $[2,\infty)$. On $\R^3$, let
$\Lambda=\sqrt{-\Delta}$, and on $\Ga_*$ let
$\Lambda=\sqrt{-\Delta_{\Ga_*}}$, where $\Delta_{\Ga_*}$ is the
Laplace--Beltrami operator of the fixed reference surface. Define
\begin{equation*}
L_j=\chi(2^{-j}\Lambda)\quad(j\ge0),\qquad
P_0=L_0,\qquad P_j=L_j-L_{j-1}\quad(j\ge1).
\end{equation*}
These are Fourier multipliers on $\R^3$ and spectral multipliers on
$\Ga_*$. We use the same notation on both spaces; the argument
specifies which operator is intended. Thus $L_j=\sum_{k=0}^jP_k$
is the low frequency cutoff at scale $2^j$, and $(P_j)_{j\ge0}$ is the
associated inhomogeneous Littlewood--Paley decomposition.
For $U_0=(u,B,\Ga)$, write
$\eta=\eta_\Ga$ and define, using the extension family from
\cref{def:extmetric},
\begin{equation}\label{eq:dyadic-coefficients}
\alpha_j^2:=
2^{2sj}\Bigl(
\|P_jE_\Om u\|_{L^2(\R^3)}^2
+\|P_jE_\Om B\|_{L^2(\R^3)}^2
+\|P_j\eta\|_{L^2(\Ga_*)}^2\Bigr).
\end{equation}
Here $\alpha_j\ge0$. The sequence $\alpha=(\alpha_j)_{j\ge0}$ belongs
to $\ell^2$, and its norm is equivalent
on the collar to the $H^s$ size.  Set
\begin{equation}
c_j:=2^{-\sigma j}
+\left(\sum_{k\ge0}2^{-2\sigma|j-k|}\alpha_k^2\right)^{1/2}.
\label{eq:envelope}
\end{equation}
Then $c=(c_j)_{j\ge0}\in\ell^2$, $c_j\to0$, and
$c_j\le C2^{\sigma|j-k|}c_k$.
Since $c_j\ge2^{-\sigma j}$ and $\sigma<4-s$, we also have
$2^{(4-s)j}c_j\ge1$.
If $U_0^{(n)}\to U_0$ in $\X^s$, let $\alpha^{(n)},c^{(n)}$ be the
corresponding sequences in \eqref{eq:dyadic-coefficients}--\eqref{eq:envelope}.
Uniform extension bounds and \eqref{eq:extension-continuity}, applied
to the common smooth comparison fields in \cref{def:stdtop}, give
convergence of the ambient extensions in $H^s$. Together with graph
convergence, the square function characterization of $H^s$ yields
$\alpha^{(n)}\to\alpha$ in $\ell^2$.
The reverse triangle inequality in \eqref{eq:envelope} and Fubini's
theorem give
\begin{equation}\label{eq:envelope-continuity}
\|c^{(n)}-c\|_{\ell^2}
\le C_\sigma\|\alpha^{(n)}-\alpha\|_{\ell^2}\longrightarrow0.
\end{equation}
Thus the envelope depends continuously on the datum, and its tails are
uniformly small along every convergent sequence.

\begin{lemma}\label{lem:compatible-regularization}
Let $U_0$ range over a bounded set of admissible initial data
in $\X^s$, with uniform geometric bounds and a common positive
lower bound for the Taylor coefficient. Then there exist an
integer $j_0$, independent of $U_0$, and smooth compatible
approximations $S_jU_0=(u_j,B_j,\Ga_j)$, $j\ge j_0$. Writing
$\Om_j$ for the domain enclosed by $\Ga_j$, these satisfy
\begin{align}
&S_jU_0\longrightarrow U_0\quad\hbox{in }\X^s,
\qquad
\sup_{j\ge j_0}\|S_jU_0\|_{\X^s}\le C,
\label{eq:regconv}\\
&\|S_jU_0\|_{\X^4}\le C2^{(4-s)j}c_j,
\label{eq:reghigh}\\
&\|u_{j+1}-u_j\|_{L^2(\Om_j\cap\Om_{j+1})}
+\|B_{j+1}-B_j\|_{L^2(\Om_j\cap\Om_{j+1})}
+\|\eta_{\Ga_{j+1}}-\eta_{\Ga_j}\|_{L^2(\Ga_*)}
\le C2^{-sj}c_j,
\label{eq:reglow}\\
&\D(S_jU_0,S_{j+1}U_0)^{1/2}
\le C2^{-sj}c_j.
\label{eq:D0}
\end{align}
The constant $C$ is uniform over the prescribed set, and $c_j$ is the
envelope in \eqref{eq:envelope}. For each fixed $j\ge j_0$ and finite
$m>5/2$, the map $S_j$ is continuous from this set into $\X^m$.
\end{lemma}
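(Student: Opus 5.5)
The construction proceeds in three stages: smooth the boundary graph, smooth the fields by frequency truncation of their ambient extensions, and then correct the divergence and magnetic tangency constraints. For the boundary, set $\eta_j:=L_j\eta$ on $\Ga_*$ and $\Ga_j:=\Ga_{\eta_j}$. Bernstein estimates and the envelope property $c_j\le C2^{\sigma|j-k|}c_k$ give
\begin{equation*}
\|\eta_j\|_{H^4}\lesssim 2^{(4-s)j}c_j,\qquad
\|\eta_{j+1}-\eta_j\|_{L^2}\lesssim 2^{-sj}c_j .
\end{equation*}
They also give $\eta_j\to\eta$ in $H^s$ and, for $j\ge j_0$, uniform closeness in $C^{1,\alpha}$. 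This closeness keeps $\Ga_j$ in the collar with the fixed geometric bounds. For the fields, set $\widehat u_j:=L_jE_\Om u$ and $\widehat B_j:=L_jE_\Om B$ on $\R^3$, and restrict them to $\Om_j$. These are not divergence-free, since $E_\Om$ does not preserve divergence, and $\widehat B_j$ is not tangent to $\Ga_j$.

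The corrections are elliptic. Write $u_j=\widehat u_j-\nabla\phi_j$, where $\Delta\phi_j=\diver\widehat u_j$ in $\Om_j$ and $\phi_j=0$ on $\Ga_j$. For the magnetic field, write $B_j=\widehat B_j-\nabla\psi_j$, where $\Delta\psi_j=\diver\widehat B_j$ and $\partial_N\psi_j=\widehat B_j\cdot N_j$. The compatibility condition for $\psi_j$ is the divergence theorem. Given the corrections, the bounds follow from \cref{lem:elliptic-estimates}: \eqref{eq:dir-s} and \eqref{eq:neumann-tame} at $m=s,4$, with $\|\Ga_j\|_{H^{m+3/4}}$ bounded by Bernstein as $2^{(m+3/4-s)j}c_j$ times lower powers. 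The key smallness inputs are that $\diver E_\Om u$ vanishes on $\Om$ and that $B\cdot N=0$ on $\Ga$. Consequently $\diver\widehat u_j=L_j\diver E_\Om u$ is small on $\Om\cap\Om_j$ at low frequency, and is concentrated in the thin region $\Om_j\triangle\Om$ together with the frequency tail $(1-L_j)$. Likewise $\widehat B_j\cdot N_j$ differs from $(E_\Om B)\cdot N$ transported to $\Ga_j$ by terms of size $2^{-sj}c_j$ in $H^{-1/2}(\Ga_j)$ and of size $2^{(m-1/2-s)j}c_j$ in $H^{m-1/2}$. The $H^{-1/2}$ bound feeds \eqref{eq:neumann-low} and yields the $L^2$ estimate \eqref{eq:reglow}. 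The $H^{m-1/2}$ bound yields \eqref{eq:reghigh}. For consecutive scales, differences are taken on $\Om_j\cap\Om_{j+1}$, and the difference of correctors is controlled by the same low-frequency estimates, so it remains of size $2^{-sj}c_j$.

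Next comes the Taylor sign and \eqref{eq:D0}. Since $S_jU_0\to U_0$ in $\X^{s'}$ uniformly on the bounded set, by the envelope tails, \cref{lem:pressure-estimates}(ii) gives $a_j\ge c_*/2$ for $j\ge j_0$ uniformly. For $\D$, the bulk term is \eqref{eq:reglow}. Each boundary term $\int_{\Ga_j\cap\Om_{j+1}}a_j^{-1}|p_{j+1}|^2\,dS$ is bounded using $p_{j+1}=0$ on $\Ga_{j+1}$ and the uniform $C^1$ bound on $p_{j+1}$. This gives $|p_{j+1}|\lesssim|\eta_{j+1}-\eta_j|$ on $\Ga_j$, and hence the bound $C2^{-2sj}c_j^2$. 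Continuity of $S_j$ into $\X^m$ for fixed $j$ follows because $L_j$ is smoothing, $E_{\Om}$ satisfies \eqref{eq:extension-continuity}, and the elliptic correctors depend continuously on smooth domains.

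I expect the main obstacle to be the $H^{-1/2}$ estimate of the normal trace error $\widehat B_j\cdot N_j$, and similarly of $\diver\widehat u_j$ in $H^{-1}$, at the sharp rate $2^{-sj}c_j$. The trace lives on a displaced surface and involves the rough normal. My first approach would be to write $\widehat B_j\cdot N_j$ as the flux of $\widehat B_j$, and to use the divergence theorem on the region between $\Ga$ and $\Ga_j$. This reduces the error to bulk terms $\|(1-L_j)E_\Om B\|_{L^2}\lesssim 2^{-sj}c_j$, a divergence term, and a thin-shell term of width $|\eta_j-\eta|$, each tested against $H^{1/2}$ functions. If the paper's moment-cancelling Piola regularizers are needed to avoid losses from $\diver E_\Om B\ne0$ outside $\Om$, I would replace $E_\Om$ by a divergence-preserving Piola-type extension before applying $L_j$.
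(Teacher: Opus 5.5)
Your architecture is the paper's: smooth $\eta$ by $L_j$, smooth the fields, restore the constraints elliptically, feed an $H^{-1/2}$ normal-defect bound into the $L^2$ estimate and a high bound into \eqref{eq:reghigh}, and get \eqref{eq:D0} from $p_{j+1}|_{\Ga_{j+1}}=0$. Your single Neumann problem for $B$ is even convenient. Its weak form $\int_{\Om_j}\nabla\psi_j\cdot\nabla\varphi=\int_{\Om_j}\widehat B_j\cdot\nabla\varphi$, $\varphi\in H^1(\Om_j)$, lets $(\mathrm{Id}-L_j)E_\Om B$ cost only its $L^2$ norm. The Dirichlet correction for $u$ is not where the difficulty lies either.

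The gap is exactly the step you flag, and the mechanism you propose fails there. After subtracting $\int_\Om B\cdot\nabla\widetilde\varphi=0$, with $\widetilde\varphi$ an $H^1$ extension of $\varphi$, what remains is the shell functional
\begin{equation*}
\varphi\longmapsto\int_{\Om_j\setminus\Om}E_\Om B\cdot\nabla\widetilde\varphi\,dx-\int_{\Om\setminus\Om_j}E_\Om B\cdot\nabla\widetilde\varphi\,dx .
\end{equation*}
This is \emph{not} small by the width of the shell. Take a boundary-layer test function of unit $H^1$ norm with tangential frequency $\lambda\sim2^j$ in a direction $\omega$. Then $|\nabla\varphi|\sim\lambda^{1/2}$ across the shell, whose width $\lesssim2^{-(s-1)j}$ is $\ll\lambda^{-1}$. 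The functional is roughly $\lambda^{1/2}\int(\eta_j-\eta)\,(B\cdot\omega)\,e^{i\lambda\omega\cdot y}\,dy$. Size bounds give only $O(2^{j/2}\|\eta_j-\eta\|_{L^2})=O(2^{-(s-1/2)j}c_j)$. This half-derivative loss breaks \eqref{eq:reglow}, \eqref{eq:D0}, and the later $\ell^1$ frequency summation.

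The top endpoint has the same defect. The tame bound for $\widehat B_j\cdot N_j$ in $H^{7/2}$ contains $\|\widehat B_j\|_{L^\infty}\|N_j\|_{H^{7/2}}\sim2^{(9/2-s)j}c_j$, which exceeds the $2^{(4-s)j}c_j$ required for \eqref{eq:reghigh}. Separately, your stated $H^{m-1/2}$ rate $2^{(m-1/2-s)j}c_j$ beats natural trace scaling by a factor $2^{-j/2}$. What is needed is $2^{(m-s)j}c_j$.

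The missing idea is to exploit the exact cancellation $g\cdot\nu=0$, where $g=\mathcal TB$ and $\nu=\nu(\eta)$. Tangency of an $H^s$ field makes $\eta$ half a derivative smoother along $B$. In a local graph chart with $B=(B',B_3)$, one has $B'\cdot\nabla\eta=B_3|_\Ga\in H^{s-1/2}$. Hence $B'\cdot\nabla(L_j\eta-\eta)=[B'\cdot\nabla,L_j]\eta-(\mathrm{Id}-L_j)(B_3|_\Ga)$, which is $O(2^{-sj}c_j)$ in $H^{-1/2}$; this is what makes the shell functional small.

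The paper implements the same cancellation by comparing the trace of the smoothed field with the smoothed trace $L_jg$, and $\nu(L_j\eta)$ with $L_j\nu$. It uses the exact identity $\widetilde f_j=b_j\cdot\nu_j+r_j+(L_jg)\cdot e_j$, where $b_j=\mathcal T_j\widetilde B_j-L_jg$, $r_j=(L_jg)\cdot(L_j\nu)-L_j(g\cdot\nu)$, and $e_j=\nu_j-L_j\nu$. The commutators $r_j,e_j$ are $O(2^{-sj}c_j)$ in $L^2$ with the natural high bounds. The factor $\nu_j$ multiplies only $b_j$, which is small in $L^\infty$. This controls both endpoints.

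You need to supply an argument of this kind. Replacing $E_\Om$ by a divergence-preserving extension does not help, because the shell term does not involve $\diver E_\Om B$.

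Two smaller points. First, \eqref{eq:neumann-tame} is stated for harmonic $\psi$, so your inhomogeneous Neumann problem has to be split. Second, the uniform $j_0$ for the Taylor sign comes from the uniform rate $2^{-(s-s')j}$ at order $s'<s$. It does not come from envelope tails, which are not uniformly small on a bounded set.
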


\begin{proof}
The proof is divided into seven steps. 
\par\nobreak\smallskip
\noindent\textit{Step 1. Smoothing the graph.}\par\nobreak

Set
\begin{equation*}
\eta_j=L_j\eta,
\qquad \Ga_j=\Ga_{\eta_j},
\end{equation*}
and let $\Om_j$ be the enclosed domain, with outward unit normal
$N_j$. If $k\le j+O(1)$, then
\begin{equation*}
2^{sk}\|P_k\eta\|_2\le \alpha_k\le c_k
\le C2^{\sigma(j-k)}c_j.
\end{equation*}
Summing the geometric series gives, for every fixed $m>s+\sigma$,
\begin{equation}
\|L_j\eta\|_{H^m}
\le C_R+C c_j2^{\sigma j}
\sum_{k<j+O(1)}2^{(m-s-\sigma)k}
\le C_R+C2^{(m-s)j}c_j.                 \label{eq:eta-sm-high}
\end{equation}
In particular, \eqref{eq:eta-sm-high} applies at $m=4$ and every
$m\ge s+1/2$.
At $m=s$, the square function estimate gives
\begin{equation*}
\|\eta_j\|_{H^s}\le C_R.                 
\end{equation*}
Because $L_{j+1}-L_j$ is supported at frequencies comparable with
$2^j$,
\begin{equation}
\|\eta_{j+1}-\eta_j\|_{L^2(\Ga_*)}
\le C2^{-sj}c_j.                         \label{eq:eta-increment-L2}
\end{equation}
Bernstein's inequality on the reference surface and slow variation give
\begin{equation}
\|\eta-\eta_j\|_{L^\infty}
\le C\sum_{k\ge j-O(1)}2^k\|P_k\eta\|_2
\le C2^{-(s-1)j}c_j.                    \label{eq:eta-tail-Linf}
\end{equation}
Since $s>5/2$, this is $o(2^{-3j/2})$, uniformly on the bounded local
set. Let $K_j$ denote the scalar regularizer at scale $2^{-j}$
from \cite[Proposition~A.32]{IPTT-MHD}, applied componentwise to
vector fields. We fix its partition of unity and inward sampling
directions throughout this set. For all sufficiently large $j$,
$K_jF$ is defined on both $\Om$ and $\Om_j$ whenever $F$ is defined
on $\Om$. The kernel is given by \eqref{eq:scalar-kernel}.

\par\smallskip
\noindent\textit{Step 2. Smoothing the bulk fields and restoring
$\diver F=0$.}\par\nobreak

Let $F$ denote either $u$ or $B$.  The scalar kernel estimates, together
with the frequency envelope, give
\begin{align}
\|K_jF\|_{H^s(\Om_j)}&\le C_R,
\label{eq:kernel-Hs}\\
\|K_jF\|_{H^4(\Om_j)}
&\le C_R+C2^{(4-s)j}c_j,                 \label{eq:kernel-H4}\\
\|(K_{j+1}-K_j)F\|_{L^2(\Om_j\cap\Om_{j+1})}
&\le C2^{-sj}c_j.                         \label{eq:kernel-inc}
\end{align}
Fix an integer $M\ge8$ for the moment conditions below.
Convolution commutes with $(1-\Delta)^{s/2}$; Young's inequality
and boundedness of multiplication by the fixed smooth cutoffs
give the first bound.
For the other two, the moment conditions and rapid Fourier decay give
\begin{align*}
\|K_jF\|_{H^4(\Om_j)}
&\le C\sum_{m\le j}2^{(4-s)m}c_m
+C\sum_{m>j}2^{(4-s)m-M(m-j)}c_m
\le C2^{(4-s)j}c_j,\\
\|(K_{j+1}-K_j)F\|_2
&\le C\sum_{m\ge0}2^{-M|m-j|}2^{-sm}c_m
\le C2^{-sj}c_j.
\end{align*}
The sums converge by slow variation, $4-s>\sigma$, and $M>s+\sigma$.

The localized scalar kernel introduces a divergence defect. Put
\begin{equation*}
g_j:=\diver(K_jF).
\end{equation*}
In the local convolution formula from
\cite[proof of Proposition~A.32]{IPTT-MHD}, let $(\chi_a)_a$ be the
fixed finite smooth partition of unity and let $\kappa_a$ be the
corresponding smooth compactly supported kernels. Then
\begin{equation}\label{eq:scalar-kernel}
K_jF(x)
=\sum_a\chi_a(x)(\kappa_{a,j}*F)(x),
\qquad \kappa_{a,j}(z)=2^{3j}\kappa_a(2^jz).
\end{equation}
Here $*$ denotes convolution in $\R^3$.
Each kernel $\kappa_a$ has mass one and vanishing moments of orders
$1,\ldots,M-1$. On boundary patches, its reflected support lies in the
fixed inward sampling cone. Thus, wherever $\chi_a$ is nonzero on $\Om_j$, convolution samples
only points of $\Om$. Since convolution commutes with constant
coefficient differentiation and $\diver F=0$ there,
\begin{equation*}
g_j=\sum_a\nabla\chi_a\cdot(\kappa_{a,j}*F).
\end{equation*}
Let $F^e=E_\Om F$ and $F_m=P_mF^e$. The same formula holds with $F^e$
in place of $F$. For $m\le j$, the identity
$\sum_a\nabla\chi_a=0$ allows us to replace
$\kappa_{a,j}*F_m$ by $(\kappa_{a,j}*-\mathrm{Id})F_m$.
The moment conditions give a factor $2^{-M(j-m)}$. For $m>j$, we
retain the original convolution and use the rapid decay of
$\widehat\kappa_a(2^{-j}\xi)$ on $|\xi|\simeq2^m$.
Here $\widehat\kappa_a$ is the Fourier transform of $\kappa_a$.
The fixed smooth coefficients $\nabla\chi_a$ are bounded multipliers
on $H^r$, so for $0\le r\le3$ these two estimates give
\begin{equation*}
\|g_j\|_{H^r(\Om_j)}
\le C\sum_{m\ge0}2^{rm}2^{-M|m-j|}\|F_m\|_2\le C2^{(r-s)j}c_j
\sum_{m\ge0}2^{-(M-|r-s|-\sigma)|m-j|}.
\end{equation*}
Since $M-|r-s|-\sigma>0$ for $s<4$, $0\le r\le3$, and $M\ge8$,
the sum is uniformly bounded, giving
\begin{equation}
\|g_j\|_{H^r(\Om_j)}
\le C2^{(r-s)j}c_j,\qquad 0\le r\le3.
\label{eq:gj-all}
\end{equation}

We solve
\begin{equation}
\Delta q_j=g_j\quad\hbox{in }\Om_j,
\qquad q_j=0\quad\hbox{on }\Ga_j,
\qquad
\widetilde F_j:=K_jF-\nabla q_j.
\end{equation}
Then $\diver\widetilde F_j=0$. Write $q_j^u$ and $q_j^B$ for the
potentials obtained with $F=u$ and $F=B$, respectively.

Applying \eqref{eq:balanced-dir} with
$(\tau,r,\alpha)=(11/4,0,1)$ and using
$\|\Ga_j\|_{H^{9/4}}\le C_R$ gives
\begin{equation*}
\|q_j\|_{H^{11/4}}
\le C_R\bigl(\|g_j\|_{H^{3/4}}+\|q_j\|_{C^1}\bigr).
\end{equation*}
Choose $5/2<\rho<11/4$.  The embedding
$H^\rho(\Om_j)\hookrightarrow C^1(\overline\Om_j)$, interpolation
between $H^1$ and $H^{11/4}$, and the variational estimate
$\|q_j\|_{H^1}\le C_R\|g_j\|_{H^{-1}}$ imply, for any
$\varepsilon>0$,
\begin{equation*}
\|q_j\|_{C^1}
\le \varepsilon\|q_j\|_{H^{11/4}}
+C_{R,\varepsilon}\|g_j\|_{H^{-1}}.
\end{equation*}
After absorption and \eqref{eq:gj-all},
\begin{equation}\label{eq:qj-C1}
\|q_j\|_{C^1}
\le C_R\|g_j\|_{H^{3/4}}
\le C_R2^{(3/4-s)j}c_j.
\end{equation}
Applying \eqref{eq:balanced-dir} with
$(\tau,r,\alpha)=(5,0,1)$ gives
\begin{equation*}
\|q_j\|_{H^5}
\le C_R\Bigl(
\|g_j\|_{H^3}
+(1+\|\Ga_j\|_{H^{9/2}})\|q_j\|_{C^1}\Bigr)\le C_R\Bigl(
2^{(3-s)j}c_j
+(1+2^{(9/2-s)j}c_j)2^{(3/4-s)j}c_j\Bigr).
\end{equation*}
The exponent of the product of the two high frequency factors is
\begin{equation*}
\left(\frac92-s\right)+\left(\frac34-s\right)
=3-s+\left(\frac94-s\right)<3-s,
\end{equation*}
because $s>5/2$.  Since $c_j$ is bounded,
\begin{equation}
\|q_j\|_{H^5}\le C_R2^{(3-s)j}c_j,
\qquad
\|\nabla q_j\|_{H^4}\le C_R2^{(3-s)j}c_j.              \label{eq:qj-H5}
\end{equation}
At order $s+1$, \eqref{eq:gj-all} and \eqref{eq:eta-sm-high} give
$\|g_j\|_{H^{s-1}}\le C2^{-j}c_j$ and
$\|\Ga_j\|_{H^{s+1/2}}\le C_R+C2^{j/2}c_j$. Thus
\eqref{eq:balanced-dir} and \eqref{eq:qj-C1} yield
\begin{equation}                   \label{eq:qj-Hs}
\|\nabla q_j\|_{H^s}
\le C_R\Bigl(
2^{-j}c_j+(1+2^{j/2}c_j)2^{(3/4-s)j}c_j\Bigr)\le C_R2^{-j}c_j.    
\end{equation}
Here the second high frequency product has exponent $5/4-s<-1$
and is therefore bounded by $C_R2^{-j}c_j$.
The variational estimate, together with
$\|g_j\|_{H^{-1}}\le C\|g_j\|_2$, also gives
\begin{equation}
\|\nabla q_j\|_2\le C_R\|g_j\|_2
\le C_R2^{-sj}c_j.                       \label{eq:qj-L2}
\end{equation}
Set
\begin{equation*}
u_j:=K_ju-\nabla q_j^u,
\qquad \widetilde B_j:=K_jB-\nabla q_j^B.
\end{equation*}
Combining \eqref{eq:kernel-Hs}--\eqref{eq:qj-Hs} gives
\begin{equation}
\|\eta_j\|_{H^m}+\|u_j\|_{H^m(\Om_j)}
+\|\widetilde B_j\|_{H^m(\Om_j)}
\le C_R\bigl(1+2^{(m-s)j}c_j\bigr),
\qquad m\in\{s,4\}.                     \label{eq:stdregbound}
\end{equation}

\par\smallskip
\noindent\textit{Step 3. Quantitative normal defect of the magnetic
field.}\par\nobreak

We next estimate the normal component of the solenoidal field
$\widetilde B_j$ on the new boundary. Set
\begin{equation*}
f_j:=\widetilde B_j\cdot N_j\quad\hbox{on }\Ga_j.
\end{equation*}
The defect satisfies
\begin{equation}
\|f_j\|_{H^r(\Ga_j)}
\le C_R2^{(r-s+1/2)j}c_j,
\qquad -\frac12\le r\le\frac72.          \label{eq:normal-defect}
\end{equation}
We prove the estimates at $r=-1/2$ and $r=7/2$ and then interpolate.

We first record a trace estimate that keeps the frequency envelope.
Let $F^e=E_\Om B$ and let $\Sigma$ denote either $\Ga$ or $\Ga_j$.
For a field $G$ defined near $\Sigma$, denote its trace pulled back to
$\Ga_*$ by
\begin{equation*}
(\mathcal T_\Sigma G)(y)
=G\bigl(y+\eta_\Sigma(y)X_*(y)\bigr),\qquad y\in\Ga_*.
\end{equation*}
For $G=F^e$ or $G=K_\ell F^e$, with any regularization index $\ell$,
we claim that
\begin{equation}
\|P_k\mathcal T_\Sigma G\|_2
\le C_R2^{-(s-1/2)k}c_k,\qquad k\ge0.
\label{eq:trace-envelope}
\end{equation}
To prove this, split the ambient input as
$F^e=L_kF^e+(\mathrm{Id}-L_k)F^e$ and apply the same operator, either the
identity or $K_\ell$, to both terms. Denote the corresponding outputs
by $G_{\rm lo}$ and $G_{\rm hi}$. The kernels in
\eqref{eq:scalar-kernel} satisfy
$\|\kappa_{a,\ell}\|_{L^1}=\|\kappa_a\|_{L^1}$.
Convolution commutes with $(1-\Delta)^{r/2}$ and with first derivatives.
Young's inequality therefore bounds each convolution on $H^r$ and on
$C^1$, uniformly in $\ell$. Multiplication by the fixed smooth cutoffs
$\chi_a$ preserves these bounds for $K_\ell$. The low frequency contribution therefore satisfies
\begin{equation*}
\|G_{\rm lo}\|_{H^{s+1/2}(\R^3)}
\le C_R2^{k/2}c_k,
\qquad \|G_{\rm lo}\|_{C^1}\le C_R.
\end{equation*}
Apply the balanced trace estimate \eqref{eq:balanced-trace} with
$\tau=s+1/2$, $r=0$, and $\alpha=1$. Since
$\|\Sigma\|_{H^s}\le C_R$, it gives
\begin{equation*}
\|\mathcal T_\Sigma G_{\rm lo}\|_{H^s}
\le C_R(2^{k/2}c_k+1)
\le C_R2^{k/2}c_k.
\end{equation*}
The last inequality uses $c_k\ge2^{-\sigma k}$ and $\sigma<1/2$.
Applying $P_k$ and using $\|P_kh\|_2\le C2^{-sk}\|h\|_{H^s}$ gives
$\|P_k\mathcal T_\Sigma G_{\rm lo}\|_2
\le C_R2^{-(s-1/2)k}c_k$.
For the high frequency contribution, the envelope and slow variation
give
\begin{equation*}
\|G_{\rm hi}\|_2\le C2^{-sk}c_k,
\qquad
\|G_{\rm hi}\|_{H^1}\le C2^{-(s-1)k}c_k.
\end{equation*}
The multiplicative trace inequality on the uniformly Lipschitz domains
then yields
\begin{equation*}
\|\mathcal T_\Sigma G_{\rm hi}\|_2
\le C_R\|G_{\rm hi}\|_2^{1/2}
\|G_{\rm hi}\|_{H^1}^{1/2}
\le C_R2^{-(s-1/2)k}c_k.
\end{equation*}
This proves \eqref{eq:trace-envelope}. The argument uses only the
standard $H^s$ bounds for the field and the surface.

Let $Y_*(y)$ be a smooth local parametrization of $\Ga_*$, with
oriented coordinates $y=(y_1,y_2)$. Writing $X_*(y)$ for the pullback
of the fixed transverse field and $\partial_a=\partial/\partial y_a$, set
\begin{equation*}
G_\eta(y)=Y_*(y)+\eta(y)X_*(y),\qquad
\nu(\eta)=\frac{\partial_1G_\eta\times\partial_2G_\eta}
{\left|\partial_1Y_*\times\partial_2Y_*\right|}.
\end{equation*}
The denominator is the reference area density. Thus $\nu(\eta)$ is
an ambient vector field along $\Ga_*$, independent of the chosen
oriented coordinates.
Choose the orientation so that $\nu(\eta)\cdot(N\circ G_\eta)>0$. The collar bounds
keep $|\nu(\eta)|$ and its reciprocal uniformly bounded. The vector
$\nu(\eta)$ is affine in $\nabla\eta$, with coefficients smooth in
$y$ and polynomial of degree at most two in $\eta$. Expanding the cross product expresses it as a sum of
$1,\eta,\eta^2,\partial_a\eta$, and $\partial_a(\eta^2)$,
$a=1,2$, multiplied by fixed smooth vector fields. Set $\nu=\nu(\eta)$,
$\nu_j=\nu(\eta_j)$,
$\mathcal T_j=\mathcal T_{\Ga_j}$, $\mathcal T=\mathcal T_\Ga$, and
$\widetilde f_j=(\mathcal T_j\widetilde B_j)\cdot\nu_j$.
Thus $f_j\circ G_{\eta_j}=|\nu_j|^{-1}\widetilde f_j$.
In the trace estimates, we identify functions on $\Ga_j$ with their
pullbacks to the reference surface in the fixed surface coordinates.

Because $B$ is tangent to the original boundary, we have
\begin{equation*}
\widetilde f_j=(\mathcal T_j\widetilde B_j)\cdot\nu_j
-L_j\bigl((\mathcal TB)\cdot\nu\bigr).
\end{equation*}

To separate the trace error from the surface products, set
$g=\mathcal TB$.
The trace envelope estimate \eqref{eq:trace-envelope} gives
\begin{equation*}
\|P_kg\|_2\le C_R2^{-(s-1/2)k}c_k,
\qquad
\|P_k\nu\|_2\le C_R2^{-(s-1)k}c_k.
\end{equation*}
The second estimate follows from the polynomial representation for
$\nu$ and the surface product decomposition. Fix
$1/2<\beta<\min\{1,s-2\}$, so that
$\|g\|_{C^1}+\|\nu\|_{C^\beta}\le C_R$.
Define
\begin{equation*}
b_j=\mathcal T_j\widetilde B_j-L_jg,\qquad
r_j=(L_jg)\cdot(L_j\nu)-L_j(g\cdot\nu),\qquad
e_j=\nu_j-L_j\nu.
\end{equation*}
Since $g\cdot\nu=0$, the following identity is exact:
\begin{equation}\label{eq:upper-defect-decomposition}
\widetilde f_j=b_j\cdot\nu_j+r_j+(L_jg)\cdot e_j.
\end{equation}
We estimate the three terms separately.

Fix an integer $L>s+5$. We first establish the spectral remainder
estimate used in the product bounds below.
For a fixed smooth coefficient $A$, fixed coordinate differential
operators $D_1,\ldots,D_p$, and $1\le p\le3$, put
\begin{equation*}
 V_j=A\prod_{\iota=1}^p D_\iota(L_jF_\iota),\qquad F_\iota\in L^2(\Ga_*).
\end{equation*}
There is a fixed integer $C_0$ such that, for all $a,b\ge0$,
\begin{equation}\label{eq:surface-separated-tail}
 \|P_kV_j\|_2
 \le C_{a,b}2^{-aj}2^{-b(k-j)}
             \prod_{\iota=1}^p\|F_\iota\|_2,
 \qquad k\ge j+C_0.
\end{equation}
The constants may depend on $A$, the $D_\iota$, and the fixed surface.
To prove this, set $h=2^{-j}$, $h_k=2^{-k}$, and
$\varepsilon=h_k/h$. Smooth functional calculus on $\Ga_*$
\cite[Theorem~14.9 and its proof]{Zworski2012} gives local kernels
\begin{equation*}
 (2\pi h)^{-2}\int e^{i(x-y)\cdot\xi/h}
                   a_h(x,y,\xi)\,d\xi
\end{equation*}
for $L_j$, modulo remainders $O(h^T)$ in each fixed kernel
$C^m$ norm, for arbitrary $T,m$. The amplitudes have fixed compact support in $(x,y,\xi)$,
with $|\xi|\le R_0$, and uniformly bounded derivatives. Indeed the
resolvent symbol expansion, integrated against an almost-analytic
extension of a smooth compactly supported continuation of
$\lambda\mapsto\chi(\sqrt\lambda)$ from $[0,\infty)$ to $\R$, has coefficients
supported where $|\xi|_g\le2$; truncating it at arbitrarily high order
gives these remainder bounds. The corresponding kernels of $P_k$
have scale $h_k$ and frequency support $c\le|\zeta|\le R_0$, since
$\chi(\sqrt\lambda)-\chi(2\sqrt\lambda)$ vanishes near zero.
All coordinate cutoffs, volume densities, and $A$ enter as fixed
smooth amplitude factors.

Insert these $p$ input kernels and the output kernel. In a common
coordinate chart the oscillatory factor is $e^{i\Phi/h_k}$, with prefactor
$h_k^{-2}h^{-2p}$ and
\begin{equation*}
 \Phi=(x-w)\cdot\zeta
       +\varepsilon\sum_{\iota=1}^p(w-y_\iota)\cdot\xi_\iota.
\end{equation*}
Choose $C_0$ so that $3R_02^{-C_0}\le c/2$. Then
$|\nabla_w\Phi|=|-\zeta+\varepsilon\sum_\iota\xi_\iota|\ge c/2$.
Integrating $M$ times with
$(h_k/i)|\nabla_w\Phi|^{-2}\nabla_w\Phi\cdot\nabla_w$
gives, if $d_0=\sum_\iota\operatorname{ord}D_\iota$,
\begin{equation*}
 C_Mh^{-2p-d_0}h_k^{M-2}
   =C_Mh^{M-2(p+1)-d_0}\varepsilon^{M-2}
\end{equation*}
as a bound for the multilinear kernel. All remaining integrations
are over fixed compact sets; hence this also bounds its
$(L^2)^p\to L^2$ norm. Taking $M$ large proves
\eqref{eq:surface-separated-tail} for the oscillatory kernels.

An input remainder $\mathcal R_h$ can be made
$O(h^T)$ as an operator $L^2\to W^{m,\infty}$ for any fixed $T,m$.
Choose an integer $m\ge b$. The inequalities
\begin{equation*}
 \|L_jF\|_{W^{m,\infty}}
 \le C\|L_jF\|_{H^{m+2}}\le Ch^{-m-2}\|F\|_2,
 \qquad \|P_k v\|_2\le Ch_k^m\|v\|_{H^m}
\end{equation*}
bound every term with an input remainder by
$C h^{T-(p-1)(m+2)-d_0+m}\varepsilon^m
\prod_\iota\|F_\iota\|_2$.
An output remainder with kernel $O(h_k^T)$ contributes at most
$C h^{T-d_0-2(p-2)_+}\varepsilon^T
\prod_\iota\|F_\iota\|_2$: use two $L^2$ factors and, when $p=3$,
the $H^2\hookrightarrow L^\infty$ bound for the third.
Here $(p-2)_+=\max\{p-2,0\}$; for $p=1$ use the bounded volume of
the chart. Increasing $T$ proves the same estimate for all remainders.
Finite coordinate sums and off-diagonal kernels obey these bounds as well.

For any fixed $r\ge0$ and $L>0$, the spectral square function estimate
and \eqref{eq:surface-separated-tail}, with $a>L+r$ and $b>r$, give
\begin{equation*}
 \|(\mathrm{Id}-L_{j+C_0+2})V_j\|_{H^r}^2
 \le C2^{-2(a-r)j}
       \sum_{k\ge j+C_0}2^{-2(b-r)(k-j)}
                         \prod_\iota\|F_\iota\|_2^2
 \le C2^{-2Lj}\prod_\iota\|F_\iota\|_2^2.
\end{equation*}
For the low--low terms, choose a larger fixed integer $C_1$ and set
$\ell=j-C_1\ge0$, $F_{\rm lo}=L_\ell F$,
$G_{\rm lo}=L_\ell G$. Both inputs then satisfy
$L_jF_{\rm lo}=F_{\rm lo}$ and $L_jG_{\rm lo}=G_{\rm lo}$.
Applying \eqref{eq:surface-separated-tail} at scale $\ell$ and summing
over the output blocks $k\ge j-1$ gives
\begin{equation*}
 \|(\mathrm{Id}-L_j)(AF_{\rm lo}G_{\rm lo})\|_{H^r}
 \le C_{L,r}2^{-Lj}\|F\|_2\|G\|_2.
\end{equation*}
This controls the contribution where both input frequencies are below
$j-C_1$, including fixed coordinate derivatives. The finitely many
smaller $j$ are absorbed into the constant.

The complementary separated interaction has one annular input $P_kF$,
the other input $L_mG$ with $m\le k-C_0$, and output $L_j$ with
$j\le k-C_0$. Put $h_m=2^{-m}$. Its oscillatory factor is
$e^{i\Psi/h_k}$, where
\begin{equation*}
\Psi=\frac{h_k}{h}(x-w)\cdot\zeta
+(w-y)\cdot\xi+\frac{h_k}{h_m}(w-z)\cdot\theta.
\end{equation*}
Here $|\xi|\ge c$, while the other covectors are bounded, so
$|\nabla_w\Psi|\ge c/2$ after increasing $C_0$.
The same integration gives a kernel bound
$C_Mh^{-2}h_k^{M-2}h_m^{-2}\le C_Mh_k^{M-6}$.
Derivatives of the output cost fixed powers of $h^{-1}\le h_k^{-1}$.
For a remainder in the output or low input, place its smooth kernel
in the amplitude and keep the annular input phase; integration in $w$
gives the same arbitrary power of $h_k$. A remainder in the annular
input already has that decay. Consequently this interaction is bounded
by $C_{N,r}2^{-Nk}\|F\|_2\|G\|_2$ in $H^r$, for every $N$.
Summing over $k\ge j+C_0$ gives $C_{L,r}2^{-Lj}\|F\|_2\|G\|_2$.
All these statements apply componentwise to vector fields.
Since $c_j\ge2^{-\sigma j}$,
$2^{-Lj}\le2^{-(L-\sigma)j}c_j$. Our choice $L>s+5$ absorbs the
remainders in every estimate below, including after the required derivative.

We now split $r_j$ into low--high, high--low, and comparable-frequency
interactions, taking the separation between the low and high inputs
larger than the fixed width of the transition bands.
If the high input has index $k=j+O(1)$, the low input is
unchanged by $L_j$, and the corresponding difference is a commutator.
The kernel formula with the factor $a(y)-a(x)$ gives
\begin{equation*}
\|[L_j,a]v\|_2\le C2^{-j}\|a\|_{C^1}\|v\|_2,
\qquad
\|[L_j,a]v\|_2\le C2^{-\beta j}\|a\|_{C^\beta}\|v\|_2.
\end{equation*}
If $k<j-C$, both inputs are below the cutoff, and their difference
is the low--low remainder just estimated. If $k>j+C$, $L_j$
annihilates the high input, and the cutoff of the low--high product
is the complementary separated interaction. Thus only a fixed number
of bands around $j$ enters the two commutator sums. For comparable
inputs, use the surface Bernstein bound
$\|P_kv\|_\infty\le C2^k\|P_kv\|_2$.
Taking $C$ large enough to include all fixed overlaps, we obtain
\begin{align*}
\|r_j\|_2
\le{}& C_R2^{-j}\sum_{|k-j|\le C}\|P_k\nu\|_2
+C_R2^{-\beta j}\sum_{|k-j|\le C}\|P_kg\|_2\\
&+C\sum_{k\ge j-C}2^k\|P_kg\|_2
       \sum_{|\ell-k|\le C}\|P_\ell\nu\|_2
+C_{L,R}2^{-Lj}\\
\le{}& C_R\Bigl(2^{-sj}c_j+2^{-(s-1/2+\beta)j}c_j
+\sum_{k\ge j-C}2^{-(2s-5/2)k}c_k^2\Bigr)
+C_{L,R}2^{-Lj}
\le C_R2^{-sj}c_j.
\end{align*}
The last line uses slow variation, $\sup_kc_k\le C_R$,
$\beta>1/2$, and $2s-5/2>s$.
In $r_j$, the term $L_j(g\cdot\nu)$ has spectrum below $2^{j+1}$,
and \eqref{eq:surface-separated-tail} controls the upper tail of
$(L_jg)\cdot(L_j\nu)$. Bernstein's inequality on the remaining
output bands therefore yields
\begin{equation*}
\|r_j\|_{H^{7/2}}\le C_R2^{(7/2-s)j}c_j.
\end{equation*}

For the normal commutator, the polynomial representation above gives
\begin{equation*}
\nu(\eta)=A_0+\mathcal Q_1\eta+\mathcal Q_2(\eta^2),
\end{equation*}
where $A_0$ is a fixed smooth vector field and $\mathcal Q_1,\mathcal Q_2$
are fixed smooth vector-valued differential operators of order at most
one on $\Ga_*$. The fixed coordinate partition is included in their
coefficients. With $w_j=(L_j\eta)^2-L_j(\eta^2)$, we have the exact identity
\begin{equation*}
e_j=(\mathrm{Id}-L_j)A_0
+[\mathcal Q_1,L_j]\eta+[\mathcal Q_2,L_j](\eta^2)
+\mathcal Q_2w_j.
\end{equation*}
For each fixed smooth first order operator $\mathcal Q$, the symbol of
$[\mathcal Q,L_j]$ is uniformly of order zero on the transition bands.
Away from those bands, the symbol of $L_j$ is constant, so the
commutator is a separated-frequency remainder. Thus
\begin{equation*}
\|[\mathcal Q,L_j]F\|_2
\le C_{\mathcal Q}\sum_{|k-j|\le C}\|P_kF\|_2
+C_{L,\mathcal Q}2^{-Lj}\|F\|_2.
\end{equation*}
The product decomposition shows that $\eta^2$ has the same $H^s$
envelope bound as $\eta$: low--high terms use $\|\eta\|_\infty$,
and comparable-frequency terms are bounded by
$C\sum_{k\ge m-C}2^k\|P_k\eta\|_2
\sum_{|\ell-k|\le C}\|P_\ell\eta\|_2
\le C_R2^{-sm}c_m$ in the output band $m$.
The two commutators therefore have $L^2$ norm at most $C_R2^{-sj}c_j$.
Applying the preceding low--high decomposition to $w_j$ gives
\begin{align*}
\|w_j\|_2
\le{}& C_R2^{-j}\sum_{|k-j|\le C}\|P_k\eta\|_2
+C\sum_{k\ge j-C}2^k\|P_k\eta\|_2
       \sum_{|\ell-k|\le C}\|P_\ell\eta\|_2
+C_{L,R}2^{-Lj}\\
\le{}& C_R2^{-(s+1)j}c_j,
\end{align*}
since $2s-1>s+1$. Its upper spectral tail is controlled by
\eqref{eq:surface-separated-tail}, so one derivative costs $C2^j$
on the remaining bands. The same tail estimate applies to
$\mathcal QL_jF$, while $L_j\mathcal QF$ is spectrally localized.
The fixed smooth term $(\mathrm{Id}-L_j)A_0$ decays to arbitrary
order in every fixed Sobolev norm. Combining these estimates gives
\begin{equation*}
\|e_j\|_{H^r}\le C_R2^{(r-s)j}c_j,\qquad r\in\{0,7/2\}.
\end{equation*}
This is the analogue, in the present collar coordinates, of the normal
comparison in
\cite[Appendix~A.6.1, proof of Proposition~A.37, step~(e)]{IPTT-MHD}.
Finally, $\|L_jg\|_\infty\le C_R$ gives
$\|(L_jg)\cdot e_j\|_2\le C_R2^{-sj}c_j$.
The identity
\begin{equation*}
e_j=A_0+\mathcal Q_1L_j\eta
+\mathcal Q_2((L_j\eta)^2)-L_j\nu
\end{equation*}
expresses this product using at most three cutoff factors and one derivative, so
\eqref{eq:surface-separated-tail} and Bernstein's inequality yield
\begin{equation*}
\|(L_jg)\cdot e_j\|_{H^{7/2}}
\le C_R2^{(7/2-s)j}c_j.
\end{equation*}

We next estimate $b_j\cdot\nu_j$ in $H^{-1/2}$. Set
$\ell=\lfloor3j/2\rfloor$ and
\begin{equation*}
G=K_\ell F^e,\qquad
B'=G-\nabla q',\qquad
\Delta q'=\diver G\ \hbox{in }\Om_j,\qquad q'|_{\Ga_j}=0.
\end{equation*}
By \eqref{eq:eta-tail-Linf}, the distance between $\Ga$ and $\Ga_j$
is $o(2^{-\ell})$. The finer kernel is therefore defined along the
transverse segments joining these surfaces and samples only the
original domain. The proof of \eqref{eq:gj-all}, applied at scale
$\ell$ with target domain $\Om_j$, gives
$\|\diver G\|_2\le C_R2^{-s\ell}c_\ell$.
The Dirichlet estimate \eqref{eq:dir2} gives
$\|\nabla q'\|_{H^1}\le C_R\|\diver G\|_2$.
The uniform trace map $H^1(\Om_j)\to H^{1/2}(\Ga_j)$,
followed by $H^{1/2}(\Ga_j)\hookrightarrow L^2(\Ga_j)$, then gives
\begin{equation}\label{eq:qprime-trace}
\|\nabla q'\|_{H^1(\Om_j)}
+\|\mathcal T_j\nabla q'\|_2
\le C_R2^{-s\ell}c_\ell.
\end{equation}

The same moment estimates for the kernels, now applied to
$\mathrm{Id}-K_\ell$, give approximation bounds
$C2^{-(s-r)\ell}c_\ell$ in $H^r$ for $r=0,1$. A fixed cutoff equal
to one on the collar may be inserted when taking ambient norms.
The multiplicative trace inequality consequently gives
\begin{equation*}
\|\mathcal T(G-B)\|_2
\le C_R2^{-(s-1/2)\ell}c_\ell
\le C_R2^{-sj}c_j.
\end{equation*}
In the last step,
$(s-1/2)\ell-\sigma(\ell-j)-sj
=((s-3/2-\sigma)/2)j+O(1)>0$.
Moreover, $\|G\|_{C^1}\le C_R$, and integration along transverse
segments gives
\begin{equation*}
\|\mathcal T_jG-\mathcal TG\|_2
\le C_R\|\eta_j-\eta\|_2
\le C_R2^{-sj}c_j.
\end{equation*}
Together with \eqref{eq:qprime-trace}, these estimates show that
\begin{equation}\label{eq:trace-replacement}
\|\mathcal T_jB'-g\|_2\le C_R2^{-sj}c_j.
\end{equation}

Since $b_j=\mathcal T_j\widetilde B_j-L_jg$, we have the exact
decomposition
\begin{equation}\label{eq:fhat-decomp}
b_j\cdot\nu_j
=\bigl(\mathcal T_j(\widetilde B_j-B')\bigr)\cdot\nu_j
+\bigl((\mathrm{Id}-L_j)\mathcal T_jB'\bigr)\cdot\nu_j
+\bigl(L_j(\mathcal T_jB'-g)\bigr)\cdot\nu_j.
\end{equation}
The last term is bounded by $C_R2^{-sj}c_j$ in $L^2$ by
\eqref{eq:trace-replacement}.

For the first term, both fields are divergence-free on $\Om_j$.
The kernel approximation estimates and the Dirichlet corrections give
\begin{equation*}
\|\widetilde B_j-B'\|_{L^2(\Om_j)}
\le C_R2^{-sj}c_j.
\end{equation*}
For $\varphi\in H^{1/2}(\Ga_j)$, let
$\mathcal E_j\varphi\in H^1(\Om_j)$ be a uniformly bounded extension.
Green's formula then gives
\begin{equation*}
\left|\left\langle(\widetilde B_j-B')\cdot N_j,
\varphi\right\rangle\right|
=\left|\int_{\Om_j}(\widetilde B_j-B')\cdot
\nabla\mathcal E_j\varphi\,dx\right|\le C_R2^{-sj}c_j\|\varphi\|_{H^{1/2}(\Ga_j)}.
\end{equation*}
Thus
\begin{equation*}
\|(\widetilde B_j-B')\cdot N_j\|_{H^{-1/2}(\Ga_j)}
\le C_R2^{-sj}c_j.
\end{equation*}
Multiplication by $|\nu_j|$ preserves this estimate, because
$|\nu_j|$ is uniformly bounded in $C^\beta$ with $\beta>1/2$.

For the second term in \eqref{eq:fhat-decomp}, the proved trace
envelope estimate \eqref{eq:trace-envelope} for $G$ gives
\begin{align*}
\|(\mathrm{Id}-L_j)\mathcal T_jG\|_{H^{-1/2}}^2
&\le C\sum_{k>j-O(1)}2^{-k}
\|P_k\mathcal T_jG\|_2^2\\
&\le C_R2^{-2sj}\sum_{k>j-O(1)}
2^{-2s(k-j)}c_k^2\\
&\le C_R2^{-2sj}c_j^2
\sum_{k>j-O(1)}2^{-2(s-\sigma)(k-j)}
\le C_R2^{-2sj}c_j^2.
\end{align*}
For its divergence correction, \eqref{eq:qprime-trace} yields
\begin{equation*}
\|(\mathrm{Id}-L_j)\mathcal T_j\nabla q'\|_{H^{-1/2}}
\le C2^{-j/2}\|\mathcal T_j\nabla q'\|_2
\le C_R2^{-j/2-s\ell}c_\ell
\le C_R2^{-sj}c_j.
\end{equation*}
Consequently,
\begin{equation*}
\|(\mathrm{Id}-L_j)\mathcal T_jB'\|_{H^{-1/2}}
\le C_R2^{-sj}c_j.
\end{equation*}
Multiplication by $\nu_j$ is bounded on $H^{-1/2}$, so
\eqref{eq:fhat-decomp} gives
$\|b_j\cdot\nu_j\|_{H^{-1/2}}\le C_R2^{-sj}c_j$.
Combining this with the $L^2$ estimates for $r_j$ and $e_j$ in
\eqref{eq:upper-defect-decomposition} gives
\begin{equation}\label{eq:fhat-neg}
\|\widetilde f_j\|_{H^{-1/2}}\le C_R2^{-sj}c_j.
\end{equation}

At the upper endpoint, \eqref{eq:balanced-trace} at order $4$, together
with \eqref{eq:eta-sm-high} and \eqref{eq:stdregbound}, yields
\begin{align*}
\|\mathcal T_j\widetilde B_j\|_{H^{7/2}}
&\le C_R\bigl(\|\widetilde B_j\|_{H^4}
+(1+\|\eta_j\|_{H^{7/2}})\|\widetilde B_j\|_{C^1}\bigr)
\le C_R2^{(4-s)j}c_j.
\end{align*}
The trace envelope and slow variation give
\begin{equation*}
\|L_jg\|_{H^{7/2}}^2
\le C_R\sum_{k\le j+O(1)}2^{(8-2s)k}c_k^2
\le C_R2^{(8-2s)j}c_j^2
\sum_{k\le j+O(1)}2^{-2(4-s-\sigma)(j-k)}
\le C_R2^{(8-2s)j}c_j^2,
\end{equation*}
since $4-s>\sigma$. Thus
\begin{equation*}
\|b_j\|_{H^{7/2}}\le C_R2^{(4-s)j}c_j.
\end{equation*}
To estimate its supremum norm, write
\begin{align*}
b_j={}&(\mathcal T_j-\mathcal T)K_jB
+\mathcal T(K_jB-B)
+(\mathrm{Id}-L_j)g-\mathcal T_j\nabla q_j^B.
\end{align*}
The first term is bounded by
$C_R\|\eta_j-\eta\|_\infty$ using the uniform Lipschitz bound for the
regularized field. Kernel approximation in $L^\infty$,
\eqref{eq:trace-envelope}, and \eqref{eq:qj-C1} treat the other terms. Hence
\begin{equation*}
\|b_j\|_\infty\le C_R2^{-(s-3/2)j}c_j.
\end{equation*}
Finally,
\begin{equation*}
\|b_j\cdot\nu_j\|_{H^{7/2}}
\le C_R\|b_j\|_{H^{7/2}}
+C_R(1+\|\eta_j\|_{H^{9/2}})\|b_j\|_\infty\le C_R\bigl(2^{(4-s)j}c_j
+2^{(6-2s)j}c_j^2\bigr)
\le C_R2^{(4-s)j}c_j,
\end{equation*}
since $6-2s<4-s$ for $s>2$. Combining these bounds in
\eqref{eq:upper-defect-decomposition} proves
\begin{equation}\label{eq:fhat-high}
\|\widetilde f_j\|_{H^{7/2}}\le C_R2^{(4-s)j}c_j.
\end{equation}

Real interpolation between \eqref{eq:fhat-neg} and
\eqref{eq:fhat-high} gives
\begin{equation}
\|\widetilde f_j\|_{H^r}
\le C_R2^{(r-s+1/2)j}c_j,
\qquad -\frac12\le r\le\frac72.          \label{eq:fhat-all}
\end{equation}
It remains to pass from the unnormalized to the unit normal.  At the
negative endpoint multiplication by $|\nu_j|^{-1}$ is bounded on
$H^{-1/2}$ because $|\nu_j|^{-1}\in C^\beta$ uniformly, with
$1/2<\beta<\min\{1,s-2\}$.  At
the upper endpoint the tame surface product estimate gives
\begin{align}
\|f_j\|_{H^{7/2}}
&\le C_R\|\widetilde f_j\|_{H^{7/2}}
+C_R\||\nu_j|^{-1}\|_{H^{7/2}}
\|\widetilde f_j\|_{L^\infty}.       \label{eq:unit-normal-tame}
\end{align}
Because $\nu_j$ depends smoothly on the fixed coordinates, $\eta_j$,
and $\nabla\eta_j$,
\begin{equation*}
\||\nu_j|^{-1}\|_{H^{7/2}}
\le C_R\bigl(1+2^{(9/2-s)j}c_j\bigr).
\end{equation*}
Taking $r=s-3/2>1$ in \eqref{eq:fhat-all} and using the two-dimensional
embedding $H^{s-3/2}\hookrightarrow L^\infty$ gives
\begin{equation*}
\|\widetilde f_j\|_{L^\infty}\le C_R2^{-j}c_j.
\end{equation*}
Thus the second term in \eqref{eq:unit-normal-tame} is bounded by
\begin{equation*}
C_R\left(2^{-j}c_j+2^{(7/2-s)j}c_j^2\right)
\le C_R2^{(4-s)j}c_j.
\end{equation*}
Interpolation proves \eqref{eq:normal-defect} for $f_j$.

\par\smallskip
\noindent\textit{Step 4. Harmonic Neumann correction and magnetic
tangency.}\par\nobreak

By the divergence theorem, $\int_{\Ga_j}f_j\,dS=0$. Hence the
following harmonic Neumann problem has a unique solution:
\begin{equation*}
\Delta\psi_j=0\quad\hbox{in }\Om_j,
\qquad \partial_{N_j}\psi_j=f_j\quad\hbox{on }\Ga_j,
\qquad \int_{\Ga_j}\psi_j\,dS=0.
\end{equation*}
Set
\begin{equation*}
B_j:=\widetilde B_j-\nabla\psi_j.
\end{equation*}
Then
\begin{equation*}
\diver B_j=0\quad\hbox{in }\Om_j,
\qquad B_j\cdot N_j=0\quad\hbox{on }\Ga_j.
\end{equation*}
For $m=s$ and $m=4$, \cref{lem:elliptic-estimates}\textup{(ii)} and
\eqref{eq:normal-defect} give
\begin{align*}
\|\nabla\psi_j\|_{H^m}
&\le C_R\Bigl(
\|f_j\|_{H^{m-1/2}}
+(1+\|\Ga_j\|_{H^{m+3/4}})
\|f_j\|_{H^{s-3/2}}\Bigr)\\
&\le C_R\Bigl(2^{(m-s)j}c_j
+(1+2^{(m+3/4-s)j}c_j)2^{-j}c_j\Bigr).
\end{align*}
The product term has exponent $m-s-1/4$, so
\begin{equation}
\|\nabla\psi_j\|_{H^m(\Om_j)}
\le C_R2^{(m-s)j}c_j,
\qquad m=s,4.                              \label{eq:Bcorr-high}
\end{equation}
The variational Neumann estimate and the negative endpoint of
\eqref{eq:normal-defect} give
\begin{equation}
\|\nabla\psi_j\|_{L^2(\Om_j)}
\le C_R\|f_j\|_{H^{-1/2}(\Ga_j)}
\le C_R2^{-sj}c_j.                         \label{eq:Bcorr-L2}
\end{equation}
Taking $m=s$ in \eqref{eq:Bcorr-high} gives
$\|\nabla\psi_j\|_{H^s}\le C_Rc_j\to0$.

\par\smallskip
\noindent\textit{Step 5. Consecutive estimates and strong convergence.}\par\nobreak

For either Dirichlet correction $q_j=q_j^u$ or $q_j^B$,
\eqref{eq:qj-L2}, \eqref{eq:Bcorr-L2}, the triangle inequality,
and slow variation give
\begin{equation*}
\|\nabla q_{j+1}-\nabla q_j\|_{L^2(\Om_j\cap\Om_{j+1})}
+\|\nabla\psi_{j+1}-\nabla\psi_j\|_{L^2(\Om_j\cap\Om_{j+1})}
\le C_R2^{-sj}c_j.
\end{equation*}
Together with \eqref{eq:eta-increment-L2} and \eqref{eq:kernel-inc},
this proves \eqref{eq:reglow}.

The uniform $H^s$ bound follows from \eqref{eq:stdregbound} and
\eqref{eq:Bcorr-high}. The scalar regularizations converge strongly in the
moving domain $H^s$ topology, while \eqref{eq:qj-Hs} and
\eqref{eq:Bcorr-high} make both corrections tend to zero since
$c_j\to0$. Thus $S_jU_0\to U_0$ in $\X^s$.
The fourth order bounds \eqref{eq:eta-sm-high}, \eqref{eq:kernel-H4},
\eqref{eq:qj-H5}, and \eqref{eq:Bcorr-high} give \eqref{eq:reghigh}.

\par\smallskip
\noindent\textit{Step 6. Uniform geometric bounds and positivity of the Taylor coefficient.}\par\nobreak

Fix $s'$ with $5/2<s'<s$. For $F=u,B$, write
$F^e=E_{\Om}F$ and let $F_j$ denote the corresponding regularized field.
The scalar kernel estimate and interpolation between
\eqref{eq:qj-L2} and \eqref{eq:qj-Hs}, and between
\eqref{eq:Bcorr-L2} and \eqref{eq:Bcorr-high}, give
\begin{equation*}
\|\eta_j-\eta\|_{H^{s'}}
+\sum_{F=u,B}\|F_j-F^e\|_{H^{s'}(\Om_j)}
\le C_R2^{-(s-s')j}.
\end{equation*}
For the maps $\Xi_j:\Om_*\to\Om_j$ and $\Xi:\Om_*\to\Om$
from the proof of \cref{lem:pressure-estimates}\textup{(ii)}, the graph estimate gives
$\Xi_j\to\Xi$ uniformly in $C^1$ on the local set. Using
$\|F^e\|_{C^{1,\gamma}}\le C_R$ for
$0<\gamma<\min\{1,s-5/2\}$ and $H^{s'}\hookrightarrow C^1$, we obtain
\begin{equation*}
\sum_{F=u,B}
\|F_j\circ\Xi_j-F\circ\Xi\|_{C^1(\overline{\Om_*})}
\longrightarrow0
\end{equation*}
uniformly on that set. The $C^1$ convergence of the domain maps
preserves the uniform transversality and injectivity bounds.
Let $p_j$ be the pressure associated with $(u_j,B_j,\Ga_j)$.
The pressure-continuity argument in the proof of
\cref{lem:pressure-estimates}\textup{(ii)}, applied to these pulled back
fields and domains, gives uniform convergence of the Taylor coefficients.
Consequently,
one $j_0$ works throughout the local set, with
\begin{equation*}
-\partial_{N_j}p_j\ge \frac12c_*>0.
\end{equation*}
Consecutive regularizations are uniformly close in $C^0$.
On the portion where $\eta_j<\eta_{j+1}$,
$p_{j+1}=0$ on $\Ga_{j+1}$ and the uniform $C^1$ pressure bound give
\begin{equation*}
|p_{j+1}(G_{\eta_j}(y))|
\le C_R|\eta_{j+1}(y)-\eta_j(y)|.
\end{equation*}
The analogous bound holds with the indices interchanged on the
remaining portion. The uniform surface Jacobian bounds and Taylor
lower bound control the boundary part of $\D$ by
$C_R\|\eta_{j+1}-\eta_j\|_2^2$.
Together with \eqref{eq:reglow}, this proves \eqref{eq:D0}.

\par\smallskip
\noindent\textit{Step 7. Continuity of $S_j$ for fixed $j$.}\par\nobreak

Fix $j$ and let $U_0^{(n)}\to U_0$ in $\X^s$ within the local set.
Continuity of the extension family gives
\begin{equation*}
E_{\Om_n}u_n\longrightarrow E_\Om u,
\qquad E_{\Om_n}B_n\longrightarrow E_\Om B
\quad\hbox{in }H^s(\R^3).
\end{equation*}
The fixed smooth kernels satisfy, for every finite $m\ge0$,
\begin{equation*}
\|K_jG\|_{H^m(\R^3)}
\le C_{j,m}\|G\|_{L^2(\R^3)}.
\end{equation*}
Here the local convolution formula is extended to a fixed ambient
neighborhood and multiplied by a fixed cutoff. It agrees with the
inward regularizer on each regularized domain, since every sample point
lies in the original domain. Thus the smoothed fields converge in every
finite ambient Sobolev norm. Likewise, $L_j\eta_n\to L_j\eta$
in every finite Sobolev norm on $\Ga_*$.

Extend the regularized graph displacements through the fixed collar
to obtain smooth reference-domain maps. At fixed $j$, these maps and
their inverses converge in every finite smooth norm. The pulled back
Dirichlet equations for $q_j^u$ and $q_j^B$ therefore have convergent
sources and uniformly elliptic, convergent coefficients. The variational
estimate for their difference gives $H^1_0$ convergence, and successive
elliptic estimates give convergence of the potentials, corrected fields,
and normal traces in every finite Sobolev norm.

For the Neumann problems, variational estimates on
$H^1(\Om_*)/\R$ and successive elliptic estimates give convergence of
the potentials modulo constants.
The boundary-mean normalization also converges, since both the boundary
Jacobians and the traces of the potentials converge. Thus the Neumann
corrections, and hence $S_jU_0^{(n)}$, converge in every Sobolev topology
of finite order. Constants here may depend on $j$ and the target order. This completes the proof.
\end{proof}

\begin{remark}
Smoothing the graph changes the domain and its normal. The Dirichlet
correction restores zero divergence on $\Om_j$. The harmonic Neumann
correction then restores magnetic tangency and preserves zero divergence.
Its compatibility condition is exactly
$\int_{\Ga_j}\widetilde B_j\cdot N_j\,dS=0$.
The $H^{-1/2}$ estimate in \eqref{eq:normal-defect} supplies the
$L^2$ error needed for weak stability, while the higher estimates
make the correction vanish in $H^s$.
\end{remark}

\subsection{Weak comparison along smooth solutions}

For each $j\ge j_0$, let $U_j(t)$ be the maximal smooth solution from
$S_jU_0$, provided by \cite[Theorem~1.9]{IPTT-MHD}.  Its lifespan $T_j>0$ may depend on $j$. Write
$U_j=(u_j,B_j,\Ga_j)$, $\eta_j=\eta_{\Ga_j}$ and
$W_j^\pm=u_j\pm B_j$.

\begin{proposition}\label{prop:Dprop}
\textup{(i)} Fix a bounded reference collar, $R<\infty$, and $c>0$.
There is $h=h(R,c)>0$ such that any two states $U_1,U_2$ in this
collar with $\|U_i\|_{\X^s}\le R$, Taylor coefficients at least $c$, and
\begin{equation}\label{eq:weak-band}
\|\eta_1-\eta_2\|_{C^0(\Ga_*)}\le h
\end{equation}
satisfy
\begin{equation*}
\D(U_1,U_2)\simeq_{R,c}
\|u_1-u_2\|_{L^2(\Om_1\cap\Om_2)}^2
+\|B_1-B_2\|_{L^2(\Om_1\cap\Om_2)}^2+\|\eta_1-\eta_2\|_{L^2(\Ga_*)}^2.
\end{equation*}
The width $h$ is chosen so that the transverse segments between the two
graphs remain in the fixed reference collar.

\textup{(ii)} Let $U_j,U_{j+1}$ be the smooth solutions with initial
data $S_jU_0,S_{j+1}U_0$, defined on $[0,T_*]$, where $0<T_*\le1$.
Assume that both remain in a common enlarged collar, have $\X^s$ norms
at most $R$ and Taylor coefficients at least $c>0$, and satisfy
\eqref{eq:weak-band} throughout this interval. Then
\begin{align}
\D(U_j(t),U_{j+1}(t))^{1/2}
&\le C_R2^{-sj}c_j,\label{eq:Dprop}\\
\|W_{j+1}^\pm-W_j^\pm\|_{L^2(\Om_j\cap\Om_{j+1})}
+\|\eta_{j+1}-\eta_j\|_{L^2(\Ga_*)}
&\le C_R2^{-sj}c_j,\qquad 0\le t\le T_*.
\label{eq:L2prop}
\end{align}
The constant may depend on $c$ and the fixed collar, but is independent
of $j$ and the length of this interval.
\end{proposition}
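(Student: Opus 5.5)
For part (i), I will compare the three pieces of $\D$ with their counterparts on the right separately. The bulk integrals coincide, so only the boundary terms need work. Take the portion of $\Ga_*$ where $\eta_1<\eta_2$, so that $\Ga_1$ lies inside $\Om_2$. There $p_2(G_{\eta_1}(y))$ is computed by integrating $\nabla p_2$ along the transverse segment $r\mapsto y+rX_*(y)$, running from $r=\eta_1(y)$ to $r=\eta_2(y)$, where $p_2=0$. By \eqref{eq:p-C1alpha}, $\nabla p_2\in C^{1/4}$ with norm bounded by $C_R$. On $\Ga_2$ we have $\nabla p_2=-a_2N_2$ with $a_2\ge c$, and $X_*\cdot N_2$ has a uniform positive lower bound. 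Therefore
\begin{equation*}
p_2(G_{\eta_1}(y))=(\eta_2-\eta_1)(y)\,\bigl(a_2\,X_*\cdot N_2\bigr)(G_{\eta_2}(y))+O\bigl(|\eta_2-\eta_1|^{5/4}\bigr).
\end{equation*}
This is the step where $h$ gets fixed. I choose $h$ small, depending on $R$ and $c$, so that the error $C_R h^{1/4}|\eta_1-\eta_2|$ is at most half of the leading term. Then $|p_2|\simeq|\eta_1-\eta_2|$ pointwise. The weight $a_1^{-1}$ lies between $c_R$ and $c^{-1}$, since $a_1\le C_R$ by \cref{lem:pressure-estimates}. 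The surface Jacobian of $G_{\eta_1}$ is also uniformly comparable to one. Integrating gives the two-sided bound for this portion, and the portion where $\eta_2<\eta_1$ is handled symmetrically.

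For part (ii), the plan is to invoke the weak stability estimate \cite[Theorem~4.1]{IPTT-MHD}. It gives, along two smooth solutions satisfying the band condition, Taylor lower bounds, and uniform low-norm control,
\begin{equation*}
\frac{d}{dt}\D(U_j,U_{j+1})\le C\,\D(U_j,U_{j+1}).
\end{equation*}
The point to verify is that this constant depends only on quantities bounded by $C_R$: $\|u,B\|_{W^{1,\infty}}$, the $C^{1,\alpha}$ norm of the boundary, $\|\nabla p\|_{C^{\alpha}}$, $\|D_ta\|_{L^\infty}$, and $c^{-1}$. It must not depend on the extra $\mathcal Y^r$ norms. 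For this I use \eqref{eq:Dt-a}, the bounds in \cref{lem:pressure-estimates}(i), and $H^s\hookrightarrow C^{1,\alpha}$. I expect this dependence check to be the main obstacle. Concretely, I will trace the proof of \cite[Theorem~4.1]{IPTT-MHD} term by term and confirm that the magnetic directional derivatives occur only through the cancellation of the $D_B$ transport terms. That cancellation uses $\diver B_i=0$ and $B_i\cdot N_i=0$, and therefore only needs $B\in W^{1,\infty}$. If some boundary term instead requires a second derivative of the pressure, I would replace it by the $C^{1,1/4}$ bound \eqref{eq:p-C1alpha} together with the band restriction, as in part (i).

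Given the differential inequality, Gronwall on $[0,T_*]$ with $T_*\le1$ gives $\D(t)\le e^{C_R}\D(0)$. Combined with \eqref{eq:D0}, this proves \eqref{eq:Dprop}, with a constant independent of $j$ and of the interval length. Finally, part (i) applies at each time, since the band condition, the Taylor bound, and the $\X^s$ bound all hold there. It converts \eqref{eq:Dprop} into $L^2$ control of $u_{j+1}-u_j$, $B_{j+1}-B_j$, and $\eta_{j+1}-\eta_j$. The triangle inequality for $W^\pm=u\pm B$ then yields \eqref{eq:L2prop}.
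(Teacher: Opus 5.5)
Part (i) of your proposal is the paper's argument. You integrate $\nabla p_2\cdot X_*$ along the transverse segment from $G_{\eta_1}(y)$ to $G_{\eta_2}(y)$, use $\nabla p_2=-a_2N_2$ on $\Ga_2$, fix $h(R,c)$ through the $C^{1/4}$ modulus in \eqref{eq:p-C1alpha}, and use comparability of $a_1^{-1}$ and of the surface Jacobians. The skeleton of part (ii) is also the paper's: \cite[Theorem~4.1]{IPTT-MHD}, then Gr\"onwall with $T_*\le1$, then \eqref{eq:D0}, then part (i) at each time.

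The gap is in the step you flag as the main obstacle. You propose to verify that the constant depends only on $\|u,B\|_{W^{1,\infty}}$, a $C^{1,\alpha}$ boundary norm, $\|\nabla p\|_{C^\alpha}$, $\|D_ta\|_{L^\infty}$ and $c^{-1}$. You also expect magnetic directional derivatives to enter only through the cancelled $D_B$ transport terms. Neither matches what \cite[Theorem~4.1]{IPTT-MHD} uses. Its constant is governed by two explicit control norms: $\|W^\pm\|_{C^{1/2+\epsilon}}+\|\Ga\|_{C^{1,\epsilon}}$, and $\|W^\pm\|_{W^{1,\infty}}+\|\Ga\|_{C^{1,1/2}}+\|D_t^\pm p\|_{W^{1,\infty}}$, where $D_t^\pm=D_t\pm D_B$. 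So you need bulk Lipschitz bounds on both $D_tp$ and $D_Bp$. The latter is a magnetic derivative of the pressure, and a boundary quantity like $\|D_ta\|_{L^\infty(\Ga)}$ does not control either of them.

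The missing fact is that these bounds hold with constant $C_R$. Both $Q=D_tp$ and $D_Bp$ vanish on $\Ga$, the latter by magnetic tangency. They solve Dirichlet problems whose sources lie in $H^{s-2}$ with norm at most $C_R$, including the terms $2\operatorname{tr}(\mathsf{C}D_BA)-2\operatorname{tr}(AD_B\mathsf{C})$ in \eqref{eq:Q-equation}. Hence \eqref{eq:dir-s} gives $\|D_tp\|_{H^s}+\|D_Bp\|_{H^s}\le C_R$, and $H^s\hookrightarrow W^{1,\infty}$ since $s>5/2$. This bound is already in \cref{lem:pressure-estimates}(i), which you cite, but you have to aim it at the right quantity. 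The boundary norm needed is $C^{1,1/2}$ specifically, which holds because $s-2>1/2$. With these identifications, no term-by-term retracing of \cite{IPTT-MHD} is needed, since the dependence is explicit in their theorem. Your fallback of trading a second pressure derivative for the $C^{1,1/4}$ bound plus the band width is then unnecessary, and in any case it would not be a valid substitute for a bulk derivative bound.
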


\begin{proof}
The proof is divided into three steps.

\par\nobreak\smallskip
\noindent\textit{Step 1. Transverse coordinates in the thin graph band.}\par\nobreak

Use the graph maps
\begin{equation*}
G_{\eta_i}(y)=y+\eta_i(y)X_*(y),\qquad y\in\Ga_*,\quad i=1,2.
\end{equation*}
The fixed collar assumptions imply
\begin{equation*}
0<\mu\le X_*(y)\cdot N_i(G_{\eta_i}(y))\le C_R
\end{equation*}
for both surfaces, where $\mu>0$ is a uniform transversality constant.
By \cref{lem:pressure-estimates}\textup{(i)},
\begin{equation*}
\|p_i\|_{C^{1,1/4}(\overline\Om_i)}
+\|a_i\|_{L^\infty(\Ga_i)}\le C_R,
\qquad a_i=-\partial_{N_i}p_i\ge c>0.
\end{equation*}
At the boundary,
\begin{equation*}
\nabla p_i(G_{\eta_i}(y))\cdot X_*(y)
=-a_i(G_{\eta_i}(y))N_i(G_{\eta_i}(y))\cdot X_*(y)
\le-c\mu.
\end{equation*}
The uniform $C^{1/4}$ modulus of $\nabla p_i$ allows us to choose
$h=h(R,c)>0$ so that the variation of $\nabla p_i\cdot X_*(y)$ is
at most $c\mu/2$ along each interior transverse segment of length
at most $h\|X_*\|_\infty$. Hence, throughout such a segment,
\begin{equation}
-C_R\le \nabla p_i\cdot X_*\le-\frac{c\mu}{2}.            \label{eq:transverse-p-band}
\end{equation}

\par\smallskip
\noindent\textit{Step 2. The boundary part of $\D$ is equivalent to the
graph $L^2$ distance.}\par\nobreak

For $y\in\Ga_*$ with $\eta_1(y)<\eta_2(y)$, the point $G_{\eta_1}(y)$ lies inside $\Om_2$, while
$p_2(G_{\eta_2}(y))=0$.  The fundamental theorem of calculus along the
transverse segment gives
\begin{align*}
p_2(G_{\eta_1}(y))
&=-\int_{\eta_1(y)}^{\eta_2(y)}
\nabla p_2(y+rX_*(y))\cdot X_*(y)\,dr.
\end{align*}
By \eqref{eq:transverse-p-band},
\begin{equation*}
\frac{c\mu}{2}|\eta_2(y)-\eta_1(y)|
\le |p_2(G_{\eta_1}(y))|
\le C_R|\eta_2(y)-\eta_1(y)|.
\end{equation*}
When $\eta_2(y)<\eta_1(y)$, the same calculation with the indices interchanged gives
the corresponding estimate for $p_1(G_{\eta_2}(y))$.

Let $j_i$ be the surface Jacobian of $G_{\eta_i}$, so that
$G_{\eta_i}^*(dS_i)=j_i(y)dS_*(y)$. The collar bounds give
$C_R^{-1}\le j_i\le C_R$, and
$c\le a_i\le C_R$.  Therefore
\begin{align*}
&\int_{\Ga_1\cap\Om_2}a_1^{-1}|p_2|^2\,dS
+\int_{\Ga_2\cap\Om_1}a_2^{-1}|p_1|^2\,dS\\
&\qquad\simeq_{R,c}
\int_{\{\eta_1<\eta_2\}}|p_2(G_{\eta_1}(y))|^2\,dS_*(y)
+\int_{\{\eta_2<\eta_1\}}|p_1(G_{\eta_2}(y))|^2\,dS_*(y)\\
&\qquad\simeq_{R,c}
\|\eta_1-\eta_2\|_{L^2(\Ga_*)}^2.                       
\end{align*}
Together with the bulk terms in $\D$, this proves part~\textup{(i)}.

\par\smallskip
\noindent\textit{Step 3. Uniform coefficient bounds in the weak
stability estimate.}\par\nobreak

For each solution, set $W_i^\pm=u_i\pm B_i$. In formulas indexed by $i$,
write $D_t=\partial_t+u_i\cdot\nabla$, $D_B=B_i\cdot\nabla$, and
$D_t^\pm=D_t\pm D_B=\partial_t+W_i^\pm\cdot\nabla$.
Fix a sufficiently small $0<\epsilon<1/4$.  The prescribed common collar is chosen within a
$C^{1,\epsilon}$ collar as in \cite[Theorem~4.1]{IPTT-MHD}.
Since $s>5/2$, uniform Stein extension and the bulk and surface
Sobolev embeddings give
\begin{equation*}
\|u_i\|_{W^{1,\infty}}+\|B_i\|_{W^{1,\infty}}
+\|\Ga_i\|_{C^{1,1/2}}\le C_R.
\end{equation*}
Similarly, \cref{lem:pressure-estimates}\textup{(i)} gives
$\|p_i\|_{C^{1,1/2}}\le C_R$ from its $H^{s+1/2}$ bound, and
\begin{equation*}
\|D_tp_i\|_{H^s(\Om_i)}+\|D_Bp_i\|_{H^s(\Om_i)}\le C_R.
\end{equation*}
Thus $D_t^\pm p_i=D_tp_i\pm D_Bp_i$ are uniformly bounded in
$W^{1,\infty}$. The two control quantities in
\cite[Theorem~4.1]{IPTT-MHD} therefore satisfy
\begin{align*}
&\|W_i^+\|_{C^{1/2+\epsilon}}
+\|W_i^-\|_{C^{1/2+\epsilon}}
+\|\Ga_i\|_{C^{1,\epsilon}}\le C_R,\\
&\|W_i^+\|_{W^{1,\infty}}+\|W_i^-\|_{W^{1,\infty}}
+\|\Ga_i\|_{C^{1,1/2}}
+\|D_t^+p_i\|_{W^{1,\infty}}
+\|D_t^-p_i\|_{W^{1,\infty}}\le C_R.
\end{align*}
For $T_*\le1$, \cite[Theorem~4.1]{IPTT-MHD} gives
\begin{equation*}
\frac{d}{dt}\D(U_j(t),U_{j+1}(t))
\le C_R\D(U_j(t),U_{j+1}(t)).
\end{equation*}
The constant $C_R$ depends only on the two control bounds above,
the fixed collar, and the reciprocal of the Taylor lower bound.
Gr\"onwall's inequality, $T_*\le1$, and \eqref{eq:D0} give
\eqref{eq:Dprop}. Part~\textup{(i)} then yields \eqref{eq:L2prop}.
\end{proof}

\section{Fourth order estimates with \texorpdfstring{$H^s$}{Hs} low norm control}
\label{sec:fourth-order}

In this section, we establish a fourth order energy estimate whose
constants depend only on the standard $H^s$ bound, the fixed collar
geometry, and a positive lower bound for the Taylor coefficient.
Using the elliptic and pressure estimates from
Section~\ref{sec:phase-spaces}, we first construct a harmonic extension of
the boundary normal to control the differentiated energy coefficients. We then combine the bulk and boundary energies,
using the cancellation of the leading pressure terms to close the
fourth order estimate with coefficients controlled by the standard $H^s$ norm.
Finally, we introduce a comparison regularizer with a kernel independent
of the moving domain and combine the fourth order estimate with the
weak stability bounds from \cref{sec:lowerreg}. Frequency-envelope
summation then yields a common lifespan, strong convergence in the
moving-domain $H^s$ topology, uniqueness, and continuous dependence.

\subsection{A harmonic normal extension}

\begin{lemma}\label{lem:harmonic-normal}
Along a smooth solution satisfying \eqref{eq:low-bound}, let $F$ be the
componentwise harmonic extension of the outward unit normal:
\begin{equation*}
\Delta F=0\quad\hbox{in }\Om,\qquad F=N\quad\hbox{on }\Ga.
\end{equation*}
Then
\begin{equation}\label{eq:F-bounds}
\|N\|_{H^{s-1}(\Ga)}+\|F\|_{H^{s-1/2}(\Om)}
+\|D_tF\|_{H^{s-1}(\Om)}+\|D_BF\|_{H^{s-1}(\Om)}\le C_R.
\end{equation}
\end{lemma}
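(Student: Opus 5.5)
The plan is to establish the four bounds in \eqref{eq:F-bounds} in order, each resting on the previous ones.

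\emph{Step 1: the normal.} The bound $\|N\|_{H^{s-1}(\Ga)}\le C_R$ comes from the graph formula: $N\circ G_\eta=\nu(\eta)/|\nu(\eta)|$, with $\nu(\eta)$ affine in $\nabla\eta$. The collar bounds keep $|\nu|$ away from zero, and $H^{s-1}(\Ga_*)$ is an algebra because $s-1>1$. Moser estimates then give the claim; this was already used in Step 5 of \cref{lem:pressure-estimates}.

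\emph{Step 2: the extension $F$.} For $\|F\|_{H^{s-1/2}(\Om)}$ I would apply \eqref{eq:balanced-dir} at $\tau=s-1/2\ge2$ with $\Delta F=0$. The choice $(r,\alpha)=(1/2,1/2)$ keeps the geometric factor $1+\|\Ga\|_{H^{s}}\le C_R$, so
\[
\|F\|_{H^{s-1/2}}\lesssim\|N\|_{H^{s-1}(\Ga)}+C_R\|F\|_{C^{1/2}}.
\]
The maximum principle bounds $\|F\|_\infty\le1$. For the $C^{1/2}$ term I would use interpolation: $\|F\|_{C^{1/2}}\le\varepsilon\|F\|_{H^{s-1/2}}+C_\varepsilon\|F\|_{H^1}$, using $H^{2}\hookrightarrow C^{1/2}$ and $s-1/2>2$. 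The $H^1$ norm is bounded variationally by $\|N\|_{H^{1/2}(\Ga)}$, and absorbing the $\varepsilon$ term closes the estimate. Alternatively, boundary Schauder theory with $N\in C^{1/2}$ would give this directly.

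\emph{Step 3: the material derivative.} For $D_tF$ I commute $D_t$ with $\Delta$ as in \eqref{eq:laplace-Dt-comm}:
\[
\Delta(D_tF)=(\Delta u)\cdot\nabla F+2\partial_ku_i\,\partial_{ki}F\quad\text{in }\Om,
\qquad D_tF=D_tN=-\Pi(\nabla u)^TN\quad\text{on }\Ga.
\]
The boundary value uses the stated identity for $D_tN$, valid because $F=N$ on the material boundary. By \eqref{eq:uniform-product}, the source lies in $H^{s-3}$ when $s\ge3$: we have $\Delta u\in H^{s-2}$, $\nabla F\in H^{s-3/2}$, and $\nabla^2F\in H^{s-5/2}$, while $\nabla u\in H^{s-1}$ is a multiplier on $H^{s-5/2}$ since $s-1>3/2$. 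The boundary datum $\Pi(\nabla u)^TN$ must be placed in $H^{s-3/2}(\Ga)$. The trace of $\nabla u\in H^{s-1}(\Om)$ lies there, and multiplying by $N,\Pi\in H^{s-1}(\Ga)$ is allowed since $s-1>1$ and $s-1\ge s-3/2$.

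The required elliptic estimate is at order $\tau=s-1$, which may lie in $(3/2,2)$, below the range $\tau\ge2$ of \eqref{eq:balanced-dir}. This is the step I expect to be the main obstacle. My first attempt is to split $D_tF=\mathcal H(D_tN)+w$, where $\mathcal H$ is the harmonic extension and $w\in H^1_0$ solves $\Delta w=\text{source}$. For the harmonic part I would use the harmonic-extension estimates of \cite[Appendix A.4]{IPTT-MHD} at low index on a $C^{1,1/2}$ boundary, giving $H^{s-1}$ from $H^{s-3/2}(\Ga)$. For $w$, I would estimate in a Lipschitz/$C^{1,\alpha}$ domain, where zero Dirichlet $H^{\tau}$ regularity holds for $\tau<3/2+\alpha$ by transmission theory. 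If the low-range source in $H^{s-3}$ (a negative index when $s<3$) is troublesome, I would rewrite it in divergence form, $\partial_k(2\partial_ku_i\partial_iF)-\dots$, to lower the demand. As a fallback, I would pull back by $\Xi_\eta$ to $\Om_*$ and use variable-coefficient estimates with $H^{s-1/2}$ coefficients.

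\emph{Step 4: the magnetic derivative.} $D_BF$ is handled identically. Here $D_BF|_\Ga=D_BN=-\Pi(\nabla B)^TN$, which uses magnetic tangency so that $D_B$ is tangential. The source has the same structure with $B$ in place of $u$, and it involves no time derivative.
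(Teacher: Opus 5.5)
Steps 1 and 2 of your proposal are sound. In Step 2 you use \eqref{eq:balanced-dir} with the interior H\"older norm of $F$ and absorb it. The paper instead applies the harmonic extension estimate \cite[Proposition~A.16]{IPTT-MHD}, whose right side has the boundary norm $\|N\|_{C^{1/2}(\Ga)}$, so no absorption is needed. Both work; in your version the geometric index is $\tau+r-1/2=s-1/2$, not $s$, which is harmless. Your boundary identities, the trace bounds, and the $H^{s-3}$ source bound for $s\ge3$ in Step 3 also agree with the paper.

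The genuine gap is Step 3, and hence Step 4, in the range $5/2<s<3$. There you list options rather than give an argument: regularity on $C^{1,\alpha}$ domains for a source of negative order, rewriting in divergence form, or pulling back with rough coefficients. You carry none of them out, and none follows from the elliptic estimates collected in Section~\ref{sec:phase-spaces}. Treating a source in $H^{s-3}(\Om)$ with constants depending only on $R$ would itself take real work.

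The missing observation is that the commutator source is much better than $H^{s-3}$. For every $s>5/2$,
\[
\|\Delta Z\|_{L^2}\le C\bigl(\|\Delta V\|_{L^3}\|\nabla F\|_{L^6}+\|\nabla V\|_{L^\infty}\|\nabla^2F\|_{L^2}\bigr)\le C_R .
\]
This uses $H^{s-2}\hookrightarrow L^3$, $H^{s-3/2}\hookrightarrow L^6$, $H^{s-1}\hookrightarrow L^\infty$, and $F\in H^{s-1/2}\subset H^2$. Split $Z=Z_0+Z_h$, with $Z_0$ the zero-Dirichlet solution for this source and $Z_h$ harmonic with boundary value $D_tN$ or $D_BN$. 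Then \eqref{eq:dir2} gives $\|Z_0\|_{H^2}\le C_R$, which already contains $H^{s-1}$ when $s\le3$. So no elliptic estimate below order $2$ is ever needed.

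For the harmonic part, the linear map $g\mapsto Z_h$ is bounded from $H^{1/2}(\Ga)$ to $H^1(\Om)$ variationally, and from $H^{3/2}(\Ga)$ to $H^2(\Om)$ by \eqref{eq:dir2}. Interpolating with parameter $s-2\in(1/2,1]$ gives $\|Z_h\|_{H^{s-1}}\le C_R\|Z|_\Ga\|_{H^{s-3/2}(\Ga)}$. In the range $s>3$, the same $H^2$ bound for $Z_0$ also supplies the $L^\infty$ control of $Z$ needed in \eqref{eq:balanced-dir} with $(\tau,r,\alpha)=(s-1,1,0)$. It combines with the maximum principle for $Z_h$ and the embedding $H^{s-3/2}(\Ga)\hookrightarrow L^\infty$; your proposal leaves this lower-order term unaddressed.
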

\begin{proof}
The proof is divided into two steps.

\par\nobreak\smallskip
\noindent\textit{Step 1. The normal and its boundary derivatives.}\par\nobreak

In an oriented graph chart $G(y)=Y_*(y)+\eta(y)X_*(y)$, the normal is
$N=(\partial_1G\times\partial_2G)/|\partial_1G\times\partial_2G|$.
Transversality bounds the denominator away from zero, so Moser estimates give
\begin{equation}\label{eq:N-Hsminus1}
\|N\|_{H^{s-1}(\Ga)}\le C_R.
\end{equation}
Since $s-1>3/2$ on the two-dimensional boundary, this also gives
$\|N\|_{C^{1/2}(\Ga)}\le C_R$.
Apply \cite[Proposition~A.16]{IPTT-MHD} with output order
$s-1/2$ and $(r,\alpha)=(1/2,1/2)$:
\begin{equation*}
\|F\|_{H^{s-1/2}(\Om)}
\le C\Bigl(
\|N\|_{H^{s-1}(\Ga)}
+(1+\|\Ga\|_{H^{s-1/2}})\|N\|_{C^{1/2}(\Ga)}\Bigr)\le C_R.
\end{equation*}

For a tangent vector $X$ transported by the differential of the fluid
flow, one has $D_tX=AX$.
Differentiating $N\cdot X=0$ and $|N|^2=1$ therefore gives
$D_tN=-\Pi A^TN$. For the magnetic derivative, write $D_X=X\cdot\nabla$. The symmetry
of the second fundamental form and the tangency of $B$ imply
\begin{equation*}
X\cdot D_BN=B\cdot D_XN=-N\cdot D_XB=-X\cdot \mathsf{C}^TN.
\end{equation*} 
Since $D_BN$ is tangent, we obtain
\begin{equation}\label{eq:normal-identities}
D_tN=-\Pi A^TN,
\qquad
D_BN=-\Pi \mathsf{C}^TN.
\end{equation}

Apply \eqref{eq:balanced-trace} to $A$ and $\mathsf{C}$ with
$(\tau,r,\alpha)=(s-1,1,0)$. By \eqref{eq:low-bound},
\begin{equation*}
\|A|_\Ga\|_{H^{s-3/2}}+\|\mathsf{C}|_\Ga\|_{H^{s-3/2}}\le C_R.
\end{equation*}
Since $H^{s-3/2}(\Ga)$ is an algebra, \eqref{eq:N-Hsminus1} and
\eqref{eq:normal-identities} give
\begin{equation}\label{eq:DtN-DBN-bound}
\|D_tN\|_{H^{s-3/2}(\Ga)}
+\|D_BN\|_{H^{s-3/2}(\Ga)}\le C_R.
\end{equation}

\par\smallskip
\noindent\textit{Step 2. Commuting the harmonic extension with $D_t$ and
$D_B$.}\par\nobreak

Let $(Z,V)$ stand for $(D_tF,u)$ or $(D_BF,B)$.
Commuting $D_t$ or $D_B$ with $\Delta$ and using $\Delta F=0$ gives
\begin{equation*}
\Delta Z=(\Delta V)\cdot\nabla F
+2\partial_kV\cdot\nabla\partial_kF,
\qquad Z|_\Ga=D_tN\ \hbox{or}\ D_BN.
\end{equation*}
Since $s>5/2$, the embeddings
$H^{s-2}\hookrightarrow L^3$, $H^{s-3/2}\hookrightarrow L^6$ and
$H^{s-1}\hookrightarrow L^\infty$ give
\begin{equation*}
\|\Delta Z\|_2
\le C\Bigl(\|\Delta V\|_3\|\nabla F\|_6
+\|\nabla V\|_\infty\|\nabla^2F\|_2\Bigr)\le C_R\|V\|_{H^s}\|F\|_{H^{s-1/2}}\le C_R.
\end{equation*}

Write $Z=Z_0+Z_h$, where
\begin{equation*}
\Delta Z_0=\Delta Z,
\quad Z_0|_\Ga=0,
\qquad
\Delta Z_h=0,
\quad Z_h|_\Ga=Z|_\Ga.
\end{equation*}
By \eqref{eq:dir2}, $\|Z_0\|_{H^2}\le C_R$.
First suppose $5/2<s\le3$. Then
$\|Z_0\|_{H^{s-1}}\le C_R$ because $s-1\le2$.
For the harmonic part, the variational lifting and the $H^2$ Dirichlet
estimate give the endpoint bounds
\begin{equation*}
\|Z_h\|_{H^1}\le C_R\|Z|_\Ga\|_{H^{1/2}},
\qquad
\|Z_h\|_{H^2}\le C_R\|Z|_\Ga\|_{H^{3/2}}.
\end{equation*}
Complex or real interpolation with parameter
$\vartheta=s-2\in(1/2,1]$ gives
\begin{equation*}
\|Z_h\|_{H^{s-1}}
\le C_R\|Z|_\Ga\|_{H^{s-3/2}}.
\end{equation*}
This proves the required bound in this range.

For $3<s<4$, the same source has the stronger estimate
\begin{align*}
\|\Delta Z\|_{H^{s-3}}
&\le C_R\Bigl(
\|\Delta V\|_{H^{s-2}}\|\nabla F\|_{H^{s-3/2}}
+\|\nabla V\|_{H^{s-1}}\|\nabla^2F\|_{H^{s-5/2}}\Bigr)
\le C_R.
\end{align*}
In each product, the target index $s-3$ is below both input indices,
whose sum minus $s-3$ equals $s-1/2>3/2$.
Moreover, $H^2(\Om)\hookrightarrow L^\infty(\Om)$ controls $Z_0$,
while the componentwise maximum principle and
$H^{s-3/2}(\Ga)\hookrightarrow L^\infty(\Ga)$ control $Z_h$.
Consequently $\|Z\|_{L^\infty}\le C_R$. Apply
\eqref{eq:balanced-dir} with $(\tau,r,\alpha)=(s-1,1,0)$ to obtain
\begin{align*}
\|Z\|_{H^{s-1}}
&\le C_R\Bigl(
\|\Delta Z\|_{H^{s-3}}
+\|Z|_\Ga\|_{H^{s-3/2}}
+(1+\|\Ga\|_{H^{s-1/2}})\|Z\|_{L^\infty}\Bigr)
\le C_R.
\end{align*}
Using \eqref{eq:DtN-DBN-bound}, the two ranges together prove
\begin{equation*}
\|D_tF\|_{H^{s-1}}+\|D_BF\|_{H^{s-1}}\le C_R.
\end{equation*}
This completes the proof of \eqref{eq:F-bounds}.
\end{proof}

We now extend the tangential quadratic form into the fluid domain.
The subharmonicity of $|F|^2$ gives $|F|\le1$.
By \cite[Corollary~A.12]{IPTT-MHD}, $F$ has a uniform $C^{1/2}$ modulus. Since $F=N$ on the
boundary and $|N|=1$, there is a width $\delta_R>0$ such that
$|F|\ge3/4$ at interior points whose distance from $\Ga$ is less
than $\delta_R$.

Fix $R$ and the corresponding $\delta_R$. Around the initial boundary $\Ga_0$, choose a smooth
ambient cutoff $0\le\chi\le1$ equal to one on a sufficiently thin
distance band and supported in a slightly wider band of width less
than $\delta_R/4$. Shrink the state neighborhood and time interval
so that every boundary stays in the inner band and is sufficiently
close to $\Ga_0$ in Hausdorff distance. Thus
\begin{equation}\label{eq:cutoff-support}
\chi|_\Ga=1,\qquad
\operatorname{supp}\chi\cap\Om
\subset\{x\in\Om:\operatorname{dist}(x,\Ga)<\delta_R\}.
\end{equation}
The cutoff derivatives may depend on $R$.
Choose a smooth map from $\R^3$ to the closed unit ball that agrees with
$z\mapsto z/|z|$ for $|z|\ge1/2$, and let $n$ be its composition with $F$.
Thus $n=F/|F|$ on the support of $\chi$. Set
$\gamma=\mathrm{Id}-\chi^2n\otimes n$.
Composition and multiplication in $H^{s-1/2}$ and $H^{s-1}$,
both above the three-dimensional algebra threshold, give
\begin{equation*}
\|n\|_{H^{s-1/2}}+\|D_tn\|_{H^{s-1}}+\|D_Bn\|_{H^{s-1}}
+\|\gamma\|_{H^{s-1/2}}
+\|D_t\gamma\|_{H^{s-1}}+\|D_B\gamma\|_{H^{s-1}}
\le C_R.
\end{equation*}
Derivatives of $\chi$ contribute $u\cdot\nabla\chi$ or
$B\cdot\nabla\chi$, which have the required bounds.
Since $H^{s-1}\hookrightarrow L^\infty$ and
$H^{s-3/2}\hookrightarrow L^6$, it follows that
\begin{equation}\label{eq:gamma-bounds}
0\le\gamma\le\mathrm{Id},\qquad \gamma|_\Ga=\Pi,\qquad
\|\nabla\gamma\|_{L^6}+\|D_t\gamma\|_{L^\infty}
+\|D_B\gamma\|_{L^\infty}\le C_R.
\end{equation}
Away from the cutoff support, $\gamma=\mathrm{Id}$.

\subsection{The fourth order energy}

We use the bulk and boundary quadratic forms from
\cite[Section~5]{Hao2017}, using the harmonic normal from
\cref{lem:harmonic-normal} and the weight $\gamma$ satisfying
\eqref{eq:gamma-bounds}.

For an integer $r\ge2$ and ordered multi-indices
$I=(i_1,\ldots,i_r)$ and $J=(j_1,\ldots,j_r)$ with entries in
$\{1,2,3\}$, put
\begin{equation*}
\partial_I=\partial_{i_1}\cdots\partial_{i_r},\qquad
g^{IJ}=\prod_{\ell=1}^r\gamma^{i_\ell j_\ell}.
\end{equation*}
The order of the tensor $g$ is determined by the length of $I$ and $J$.
Repeated Cartesian indices and multi-indices are summed; indices are
raised and lowered with the Euclidean metric. Write
$U_I=\partial_Iu$, $V_I=\partial_IB$, $P_I=\partial_Ip$,
and $T=\Pi^{\otimes r}\nabla^rp$ on $\Ga$, where the projection acts on
each derivative index. Define
\begin{align*}
E_r={}&1+\frac12\int_\Om g^{IJ}
(U_I\cdot U_J+V_I\cdot V_J)\,dx\\
&+\frac12\int_\Om
(|\nabla^{r-1}\curl u|^2+|\nabla^{r-1}\curl B|^2)\,dx
+\frac12\int_\Ga a^{-1}|T|^2\,dS.
\end{align*}
Write $\mathcal K_r$, $\mathcal W_r$, and $\mathcal S_r$ for the bulk,
curl, and boundary terms, respectively, so that
$E_r=1+\mathcal K_r+\mathcal W_r+\mathcal S_r$.

We first differentiate the bulk energy.
For a vector field $v$ and a scalar or vector field $f$, define
\begin{equation*}
C_r(v,f)_I=
\sum_{\varnothing\ne S\subset\{1,\ldots,r\}}
(\partial_{I_S}v^k)\partial_{I_{S^c}}\partial_k f.
\end{equation*}
Here $I_S$ is the ordered subtuple with positions in $S$,
$S^c=\{1,\ldots,r\}\setminus S$, and $\partial_{I_\varnothing}$
is the identity operator.
Thus $[\partial_I,D_t]f=C_r(u,f)_I$ and
$[\partial_I,D_B]f=C_r(B,f)_I$.
Set
\begin{equation*}
R^u=C_r(B,B)-C_r(u,u),\qquad
R^B=C_r(B,u)-C_r(u,B).
\end{equation*}
The differentiated equations are
\begin{equation*}
D_tU_I=D_BV_I-\nabla P_I+R_I^u,
\qquad D_tV_I=D_BU_I+R_I^B.
\end{equation*}
By symmetry of $g^{IJ}$, the magnetic transport terms combine as
$g^{IJ}D_B(U_I\cdot V_J)$. Since $\diver B=0$ and $B\cdot N=0$,
their integral is $-\int_\Om(D_Bg^{IJ})U_I\cdot V_J$.
For the pressure term, integration by parts gives
\begin{align*}
-\int_\Om g^{IJ}U_I^k\partial_kP_J
={}&\int_\Om(\partial_kg^{IJ})U_I^kP_J
+\int_\Om g^{IJ}(\partial_kU_I^k)P_J
-\int_\Ga g^{IJ}U_I\cdot N\,P_J\,dS\\
={}&\int_\Om(\partial_kg^{IJ})U_I^kP_J
-\int_\Ga T^I\partial_Iu\cdot N\,dS,
\end{align*}
where $\partial_kU_I^k=0$ and $\gamma|_\Ga=\Pi$.
Reynolds' formula and $\diver u=0$ therefore yield
\begin{align}
\frac d{dt}\mathcal K_r
={}&\frac12\int_\Om (D_t g^{IJ})
(U_I\cdot U_J+V_I\cdot V_J)
-\int_\Om(D_B g^{IJ})U_I\cdot V_J\notag\\
&+\int_\Om g^{IJ}(U_I\cdot R^u_J+V_I\cdot R^B_J)
+\int_\Om(\partial_kg^{IJ})U_I^kP_J\notag\\
&-\int_\Ga T^I\partial_Iu\cdot N\,dS.\label{eq:bulk-energy}
\end{align}
Unmarked integrals are with respect to $dx$. The coefficient bounds follow
from the product defining $g^{IJ}$ and \eqref{eq:gamma-bounds}.

For the boundary energy, we use two commutator cancellations.
The scalar pressure satisfies $D_tP_I=\partial_IQ-C_r(u,p)_I$.
Separate the terms with $|S|=1$ and $|S|=r$, and write the remaining
terms as
\begin{equation*}
\widetilde C_r(u,p)_I=
\sum_{\substack{S\subset\{1,\ldots,r\}\\2\le|S|\le r-1}}
(\partial_{I_S}u^k)\partial_{I_{S^c}}\partial_kp.
\end{equation*}
Since $T=\Pi^{\otimes r}\nabla^rp$,
\begin{equation*}
D_tT=(D_t\Pi^{\otimes r})\nabla^rp
+\Pi^{\otimes r}(\nabla^rQ-C_r(u,p)).
\end{equation*}
Fix a slot $\ell\in\{1,\ldots,r\}$. Since
\begin{equation*}
D_t\Pi=-D_tN\otimes N-N\otimes D_tN,
\qquad T^IN_{i_\ell}=0,
\end{equation*}
only the first term contributes to the contraction with $T^I$, giving
\begin{equation*}
T^I(D_t\Pi)_{i_\ell}{}^{j_\ell}
\prod_{h\ne\ell}\Pi_{i_h}{}^{j_h}P_J=-T^I(D_tN)_{i_\ell}N^{j_\ell}
\prod_{h\ne\ell}\Pi_{i_h}{}^{j_h}P_J
=T^I(\partial_{i_\ell}u\cdot N)N^{j_\ell}
\prod_{h\ne\ell}\Pi_{i_h}{}^{j_h}P_J.
\end{equation*}
Here products over $h\ne\ell$ run through $\{1,\ldots,r\}\setminus\{\ell\}$.
The tensor $T$ is tangential in every slot and
$D_tN=-\Pi(\nabla u)^TN$. The part with normal output of the $|S|=1$
term in $-T\cdot\Pi^{\otimes r}C_r(u,p)$ is exactly
\begin{equation*}
-T^I(\partial_{i_\ell}u\cdot N)N^{j_\ell}
\prod_{h\ne\ell}\Pi_{i_h}{}^{j_h}P_J.
\end{equation*}
These terms cancel slot by slot. Define the tangential velocity gradient by
\begin{equation*}
(A_\Ga)_i{}^k=\Pi_i{}^j(\partial_ju^m)\Pi_m{}^k.
\end{equation*}
The remaining tangential term is
$-T^I(A_\Ga)_{i_\ell}{}^kT_{I(i_\ell\mapsto k)}$, where
$I(i_\ell\mapsto k)$ replaces the indicated index.
The remaining $r$th pressure derivatives are therefore fully tangential.

For the other cancellation, $\nabla p=-aN$ on $\Ga$ gives
\begin{equation*}
-a^{-1}T^I(\partial_Iu\cdot\nabla p)
=T^I\partial_Iu\cdot N.
\end{equation*}
This is the $|S|=r$ term. Its boundary integral cancels the pressure
flux in \eqref{eq:bulk-energy}.

It remains to differentiate the surface measure and the Taylor weight.
The surface divergence is $\diver_\Ga u=\operatorname{tr}(\Pi\nabla u)$.
The identities
\begin{equation*}
D_ta=-\partial_NQ-a(N\cdot\nabla u\,N),\qquad
\diver_\Ga u=-N\cdot\nabla u\,N
\end{equation*}
imply
\begin{equation*}
\diver_\Ga u-\frac{D_ta}{a}
=\frac{\partial_NQ}{a}.
\end{equation*}
Applying the surface transport identity
\begin{equation*}
\frac{d}{dt}\int_{\Ga_t}f\,dS
=\int_{\Ga_t}\bigl(D_tf+f\,\diver_\Ga u\bigr)\,dS
\end{equation*}
to $f=\frac12a^{-1}|T|^2$ gives
\begin{equation}\begin{aligned}
\frac d{dt}\mathcal S_r
={}&\int_\Ga a^{-1}T\cdot\Pi^{\otimes r}\nabla^rQ
+\int_\Ga T^I\partial_Iu\cdot N\\
&-\int_\Ga a^{-1}T\cdot\Pi^{\otimes r}\widetilde C_r(u,p)
-\sum_{\ell=1}^r\int_\Ga a^{-1}
T^I (A_\Ga)_{i_\ell}{}^kT_{I(i_\ell\mapsto k)}\\
&+\frac12\int_\Ga a^{-2}(\partial_NQ)|T|^2.
\end{aligned}\label{eq:boundary-energy}\end{equation}

We next express the fourth tangential derivatives in the fixed collar.
Fix a reference chart, with smooth maps $Y_*,X_*$,
and write the moving boundary as
\begin{equation*}
G(y)=Y_*(y)+\eta(y)X_*(y).
\end{equation*}
Here $y=(y_1,y_2)$ denotes the surface coordinates;
$D^mG$ collects all order-$m$ derivatives in $y$, and subscripts
$\alpha_1,\ldots,\alpha_m\in\{1,2\}$ denote their individual components.
The tangent vectors $G_\alpha=\partial_\alpha G$ have a uniformly
nondegenerate metric and $|N\cdot X_*|\ge\mu>0$.  The sign of
$N\cdot X_*$ may be fixed by orienting the reference transverse field.
For any scalar $q$ whose boundary trace vanishes, differentiate
$q\circ G=0$ four times. Grouping terms by partitions of the four labeled
derivatives gives the Fa\`a di Bruno identity
\begin{align}
0={}&\nabla^4q(G_{\alpha_1},G_{\alpha_2},G_{\alpha_3},G_{\alpha_4})
+\nabla q\cdot G_{\alpha_1\alpha_2\alpha_3\alpha_4}\notag\\
&+\sum_{\substack{S\subset\{1,2,3,4\}\\|S|=2}}
\nabla^3q(G_{\alpha_S},G_{\alpha_b},G_{\alpha_c})\notag\\
&+\sum_{\substack{S\subset\{1,2,3,4\}\\|S|=3}}
\nabla^2q(G_{\alpha_S},G_{\alpha_b})
+\sum_{\{S,S'\}\in\mathcal P}
\nabla^2q(G_{\alpha_S},G_{\alpha_{S'}}).\label{eq:chain-four}
\end{align}
Here $G_{\alpha_S}$ means the derivative of $G$ with respect to the
indices in the block $S$. In the second line $\{b,c\}$ is the complement
of $S$, so there are six $(2,1,1)$ terms.  The first sum in the third line
has four $(3,1)$ terms, with $\{b\}=S^c$, and $\mathcal P$ is
the set of the three unordered partitions of $\{1,2,3,4\}$ into
two pairs.  Every derivative of $q$ is evaluated at
$G(y)$.  Since each first derivative $G_\alpha$ is tangent to the
boundary, the first term is exactly the contraction of
$\Pi^{\otimes4}\nabla^4q$ with four tangent vectors.

For $w\in H^1(\Om)$, we use the uniform first order trace estimate
\begin{equation}\label{eq:trace-L4}
\|w|_\Ga\|_{L^4(\Ga)}\le C_R\|w\|_{H^1(\Om)}.
\end{equation}
This follows from the Lipschitz trace into $H^{1/2}$ and the
surface embedding $H^{1/2}\hookrightarrow L^4$, and also applies to physical
derivatives. Uniform Stein extensions give the bulk interpolation estimates.

Since $G=Y_*+\eta X_*$, Leibniz' rule gives
\begin{equation}\label{eq:graph-four}
G_{\alpha_1\alpha_2\alpha_3\alpha_4}
=X_*\partial_{\alpha_1\alpha_2\alpha_3\alpha_4}\eta+\mathcal R,
\qquad
\|\mathcal R\|_2\le C_RM_4^{\theta_0},
\end{equation}
where $\mathcal R$ collects the terms containing at most three derivatives of
$\eta$.
Interpolation when $m>s$, and the low norm bound when $m\le s$, give
$\|\eta\|_{H^m}\le C_RM_4^{(m-s)_+/(4-s)}$ for $m\in\{3,7/2\}$.
Thus
\begin{equation*}
\|\eta\|_{H^3}\le C_RM_4^{\theta_0},
\qquad
\|\eta\|_{H^{7/2}}\le C_RM_4^{\theta_1}.
\end{equation*}
The low $H^s$ bound controls $DG$ in $L^\infty$ and $D^2G$ in $L^4$.
The surface embeddings $H^1\hookrightarrow L^8$ and
$H^{1/2}\hookrightarrow L^4$ then give
\begin{equation*}
\|DG\|_\infty+\|D^2G\|_4\le C_R,
\qquad
\|D^2G\|_8\le C_RM_4^{\theta_0},
\qquad
\|D^3G\|_4\le C_RM_4^{\theta_1}.
\end{equation*}

\begin{lemma}\label{lem:E4equiv}
\textup{(i)} For every integer $r\ge1$ and smooth vector field $v$, the
following pointwise estimate holds for each unit vector $n$, with
$\Pi_n=\mathrm{Id}-n\otimes n$:
\begin{equation*}
|\nabla^rv|\le C(r)\bigl(
|\Pi_n^{\otimes r}\nabla^rv|
+|\nabla^{r-1}\curl v|+|\nabla^{r-1}\diver v|\bigr).
\end{equation*}
The constant $C(r)$ depends only on $r$.

\textup{(ii)} For smooth states satisfying \eqref{eq:low-bound},
$a\ge c_0>0$, and \eqref{eq:cutoff-support},
\begin{equation}\label{eq:energy-equivalence}
C_R^{-1}M_4^2\le E_4\le C_RM_4^2.
\end{equation}
\end{lemma}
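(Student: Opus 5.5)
Part~(i) is a pointwise linear-algebra statement, and I will prove it by induction on $r$ after a rotation. At a fixed point, choose coordinates with $n=e_3$. A component $\partial_{i_1}\cdots\partial_{i_r}v^k$ is controlled by $\Pi_n^{\otimes r}\nabla^rv$ when every index $i_\ell\in\{1,2\}$. Otherwise some $i_\ell=3$, and by symmetry of mixed derivatives we may move it to the front, so we need to bound $\partial_3\partial_{I'}v^k$ with $|I'|=r-1$.

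\emph{Case $k\in\{1,2\}$.} Write $\partial_3v^k=\partial_kv^3\pm(\curl v)_m$. Then $\partial_3\partial_{I'}v^k=\partial_k\partial_{I'}v^3\pm\partial_{I'}(\curl v)_m$. The first term carries one fewer index equal to $3$ among the derivative slots, because the index $3$ has moved into the component.

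\emph{Case $k=3$.} Write $\partial_3v^3=\diver v-\partial_1v^1-\partial_2v^2$. This again trades the normal derivative slot for tangential ones, at the cost of $\nabla^{r-1}\diver v$.

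Induct on the number of indices equal to $3$ among the derivative slots. Each step either reduces that count or produces a curl or divergence term. After at most $r$ steps, with finitely many branches, the component is bounded by the right side with a constant $C(r)$ depending only on $r$. Rotation invariance of the three norms makes the bound independent of $n$.

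For part~(ii), the upper bound is direct. We have $0\le\gamma\le\mathrm{Id}$, so $g^{IJ}U_I\cdot U_J\le|\nabla^4u|^2$ (and likewise for $B$), and the curl terms are bounded by $\|u\|_{H^4}^2+\|B\|_{H^4}^2$. For the boundary term, $a^{-1}\le c_0^{-1}$, and $\|\nabla^4p\|_{L^2(\Ga)}$ is bounded by the trace of $\nabla^4 p\in H^{1/2}(\Om)$, controlled via \eqref{eq:balanced-trace} and \eqref{eq:pressure-high} by $C_RM_4$.

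The lower bound is the real content and has three steps.

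\emph{Interior velocity and field.} Write $\gamma=\mathrm{Id}-\chi^2n\otimes n$. Then $g^{IJ}U_I\cdot U_J\ge|\Pi_n^{\otimes4}\nabla^4u|^2$ where $\chi=1$; where $\chi<1$ it dominates $(1-\chi^2)^4|\nabla^4u|^2$. Combining this with part~(i), $\diver u=0$, and the curl energy gives control of $\|\nabla^4u\|_2^2$ on the region $\chi=1$ and on $\chi=0$. The intermediate region needs a convexity argument: for $\chi\in(0,1)$, $\gamma\ge(1-\chi^2)\mathrm{Id}+\chi^2\Pi_n$, and expanding the product gives a dominant term bounded below by $c\,(|\nabla^4u|^2\cdot\text{(weight)})$. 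I would rather argue that $\gamma^{\otimes 4}\ge\Pi_n^{\otimes4}$ everywhere on $\operatorname{supp}\chi$, since $\gamma\ge\Pi_n$ as quadratic forms there. Part~(i) then applies globally with $n$ defined on the cutoff support. Off that support, $\gamma=\mathrm{Id}$. Adding the $L^2$ low norm gives $\|u\|_{H^4}^2+\|B\|_{H^4}^2\le C_R(E_4)$.

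\emph{Boundary graph.} This is the main obstacle: recovering $\|\eta\|_{H^4}$ from $\mathcal S_4$. Apply \eqref{eq:chain-four} to $q=p$. The first term is $T$ contracted with the $G_\alpha$, and $\nabla p\cdot G_{\alpha_1\cdots\alpha_4}=-aN\cdot X_*\partial^4\eta-aN\cdot\mathcal R$. Since $a\ge c_0$ and $|N\cdot X_*|\ge\mu$, we get $\|\partial^4\eta\|_2\le C(\|T\|_2+\|\mathcal R\|_2+\text{lower terms})$. The lower terms are of the form $\nabla^3p\,D^2G$, $\nabla^2p\,D^3G$ and $\nabla^2p\,D^2GD^2G$. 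I would bound them using \eqref{eq:trace-L4}, the bounds $\|D^2G\|_8,\|D^3G\|_4$, and the interpolated pressure bounds $\|p\|_{H^4}\le C_RM_4^{\theta_1}$ and $\|p\|_{W^{2,\infty}}\le C_RM_4^{\theta_0+e}$. Every power of $M_4$ that appears, for instance $\theta_0+\theta_1$, $2\theta_0$, and $\theta_1+e$, is strictly below one by \eqref{eq:strict-exponents}.

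\emph{Absorption.} Using $x^\theta\le\varepsilon x+C_\varepsilon$, we obtain $M_4\le C_R(E_4^{1/2}+1)+\varepsilon M_4$, which yields $M_4^2\le C_RE_4$ once $\varepsilon$ is absorbed into the left side.
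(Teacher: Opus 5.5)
\textbf{Part (i).} Your proof is correct and differs from the paper's. You reduce each component with a normal derivative slot explicitly. The curl (for $k\in\{1,2\}$) or the divergence (for $k=3$) trades $\partial_3$ for a tangential derivative, so the number of normal derivative slots drops by one per step, and you get an explicit constant of order $2^r$. The paper instead proves that the map $\nabla^rv\mapsto(\Pi_n^{\otimes r}\nabla^rv,\nabla^{r-1}\curl v,\nabla^{r-1}\diver v)$ is injective. Curl-free gives full symmetry in all $r+1$ indices, divergence-free gives a trace condition, and an induction raises the count of normal indices two at a time. It then appeals to finite-dimensional norm equivalence. Your route is more elementary and quantitative; the paper's is more structural.

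\textbf{Lower bound in (ii).} Your final version is the paper's argument: $\gamma\ge\Pi_n$ on $\operatorname{supp}\chi$, hence $\gamma^{\otimes4}\ge\Pi_n^{\otimes4}$; then part (i); then \eqref{eq:chain-four} with $q=p$ and absorption of the sublinear powers of $M_4$. You should state explicitly that $a\le C_R$ on $\Ga$. This is what lets you bound $\|T\|_{L^2(\Ga)}$ by $C_RE_4^{1/2}$.

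\textbf{The gap: upper bound for the boundary energy.} You bound $\|T\|_{L^2(\Ga)}$ by the trace of $\nabla^4p\in H^{1/2}(\Om)$. That is exactly the endpoint where the trace theorem fails: $C_c^\infty(\Om)$ is dense in $H^{1/2}(\Om)$, so there is no bounded trace into $L^2(\Ga)$. Consistently, \eqref{eq:balanced-trace} is stated only for $\tau>1/2$. You cannot raise the regularity to $p\in H^{9/2+\epsilon}$ either. The elliptic estimate would then need $\Ga$ in $H^{4+\epsilon}$, which $M_4$ does not control.

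\textbf{The fix.} You do not need the full trace of $\nabla^4p$, only its tangential projection, and the Dirichlet condition $p|_\Ga=0$ controls that. Solve \eqref{eq:chain-four} with $q=p$ for its first term. Since the $G_\alpha$ span the tangent plane with a uniformly nondegenerate metric, $|T|$ is bounded by $C\bigl(|\nabla p|\,|D^4G|\bigr)$ plus the lower-order terms. Here $\|\nabla p\|_\infty\le C_R$ and $\|D^4G\|_{L^2}\le C_RM_4$ by \eqref{eq:graph-four}. The lower-order terms are bounded by $C_RM_4^{\theta_1}$ or $C_RM_4^{2\theta_0}$, exactly as in your lower-bound step. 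This gives $\|\Pi^{\otimes4}\nabla^4p\|_{L^2(\Ga)}\le C_RM_4$, which is the paper's \eqref{eq:T-upper-by-M4}. With this replacement your upper bound closes.
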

\begin{proof}
The proof is divided into three steps.

\par\nobreak\smallskip
\noindent\textit{Step 1. Pointwise div--curl algebra.}\par\nobreak

Fix a point and rotate coordinates so that $n=e_3$, where $e_3$ is the
third Cartesian unit vector. Suppose all three quantities on the right
side of the asserted inequality vanish. The curl condition gives, for
an ordered multi-index $I$ of length $r-1$ and $i,b\in\{1,2,3\}$,
\begin{equation}\label{eq:T-curl-symmetry}
\partial_I\partial_i v_b=\partial_I\partial_b v_i.
\end{equation}
Together with symmetry of the derivative indices, \eqref{eq:T-curl-symmetry} makes
$\nabla^rv$ symmetric in all $r+1$ indices, including the vector index.
The divergence condition gives
\begin{equation}\label{eq:T-tracefree}
\partial_I\partial_1v_1+\partial_I\partial_2v_2
+\partial_I\partial_3v_3=0.
\end{equation}
The vanishing tangential projection and full symmetry imply that every
component with at most one normal index vanishes. If all components with
$q$ normal indices vanish, choose $I$ in \eqref{eq:T-tracefree} with
$q$ normal indices. Its first two terms vanish, so the component with
$q+2$ normal indices also vanishes. Induction from $q=0,1$ gives
$\nabla^rv=0$. Thus the tangential projection together with the curl
and divergence contractions defines an injective linear map on the
finite dimensional space of tensors symmetric in the derivative indices.
Finite dimensional norm equivalence gives the estimate, and rotational
invariance makes its constant independent of $n$.

\par\smallskip
\noindent\textit{Step 2. The fourth order boundary chain rule.}\par\nobreak

Apply \eqref{eq:chain-four} with $q=p$.  The coefficient of the
highest graph derivative is obtained from
\eqref{eq:graph-four}:
\begin{equation*}
\nabla p\cdot G_{\alpha_1\alpha_2\alpha_3\alpha_4}
=(\nabla p\cdot X_*)\partial_{\alpha_1\alpha_2\alpha_3\alpha_4}\eta
+\nabla p\cdot \mathcal R.
\end{equation*}
Because $p=0$ on $\Ga$, $\nabla p=-aN$, and therefore
\begin{equation}\label{eq:graph-leading-coeff}
|\nabla p\cdot X_*|=a|N\cdot X_*|\ge c_0\mu.
\end{equation}
All other terms in \eqref{eq:chain-four} contain at most three
derivatives of the graph.  The pressure trace bounds follow from
\eqref{eq:trace-L4}:
\begin{align*}
\|\nabla^3p|_\Ga\|_4
&\le C_R\|p\|_{H^4}
\le C_RM_4^{\theta_1},\\
\|\nabla^2p|_\Ga\|_4
&\le C_R\|p\|_{H^3}\le C_R,
\qquad
\|\nabla p\|_\infty\le C_R.
\end{align*}
Here we used the pressure interpolation from
\eqref{eq:p-H4-interp} and the low $H^{s+1/2}$ bound.

H\"older's inequality bounds the $(2,1,1)$, $(3,1)$, and $(2,2)$
classes in $L^2(\Ga)$ by
\begin{align*}
\|\nabla^3p|_\Ga\|_4\|D^2G\|_4\|DG\|_\infty^2
&\le C_RM_4^{\theta_1},\\
\|\nabla^2p|_\Ga\|_4\|D^3G\|_4\|DG\|_\infty
&\le C_RM_4^{\theta_1},\\
\|\nabla^2p|_\Ga\|_4\|D^2G\|_8^2
&\le C_RM_4^{2\theta_0}.
\end{align*}
The remaining term satisfies
$\|\nabla p\cdot \mathcal R\|_2\le C_RM_4^{\theta_0}$.
Since $2\theta_0\le\theta_1<1$, solving
\eqref{eq:chain-four} for $D^4\eta$ using
\eqref{eq:graph-leading-coeff} and summing over the finite atlas gives
\begin{equation}\label{eq:graph-recovery}
\|\Ga\|_{H^4}
\le C_R\left(
1+\|\Pi^{\otimes4}\nabla^4p\|_{L^2(\Ga)}
+M_4^{\theta_1}\right).
\end{equation}
Conversely, solve \eqref{eq:chain-four} for its first term. Since
$\|D^4\eta\|_2\le M_4$ and every lower term is bounded by $C_RM_4$, we obtain
\begin{equation}\label{eq:T-upper-by-M4}
\|\Pi^{\otimes4}\nabla^4p\|_{L^2(\Ga)}\le C_RM_4.
\end{equation}
This proves the upper bound for the boundary part of $E_4$.

\par\smallskip
\noindent\textit{Step 3. Recovery of the full fourth order bulk norms.}\par\nobreak

Apply the pointwise div--curl inequality with the extended normal
$n(x)$ on the support of $\chi$. There
$|n|=1$ and
\begin{equation*}
\gamma=\mathrm{Id}-\chi^2n\otimes n
\ge\mathrm{Id}-n\otimes n.
\end{equation*}
Thus the bulk energy controls the tangential projection in part~(i).
Off the support of $\chi$, $\gamma=\mathrm{Id}$ controls all fourth derivatives.
Using $\diver u=\diver B=0$, we obtain
\begin{equation}\label{eq:H4-top-deriv}
\|\nabla^4u\|_2+\|\nabla^4B\|_2
\le C_R E_4^{1/2}.
\end{equation}
Uniform interpolation, the $L^2$ bounds in \eqref{eq:low-bound}, and
\eqref{eq:H4-top-deriv} give
\begin{equation*}
\|u\|_{H^4}+\|B\|_{H^4}
\le C_RE_4^{1/2}.
\end{equation*}
Combining this with \eqref{eq:graph-recovery} and the bounds
$c_0\le a\le C_R$ gives
\begin{equation*}
M_4\le C_RE_4^{1/2}+C_RM_4^{\theta_1}.
\end{equation*}
Young's inequality, $\theta_1<1$, and $E_4\ge1$ give
$M_4\le C_RE_4^{1/2}$. The reverse bound follows from
\eqref{eq:T-upper-by-M4}, the bulk and curl energies, and $a\ge c_0$,
proving \eqref{eq:energy-equivalence}.
\end{proof}

\begin{proposition}
Let a smooth solution remain in the fixed collar on $[0,\tau]$ and
satisfy \eqref{eq:cutoff-support}. Assume
$M_s(t)\le R$ and $a(t)\ge c_0>0$ throughout this interval. Then
\begin{equation*}
\frac{d}{dt}E_4(t)\le C_RE_4(t),\qquad
E_4(t)\le e^{C_Rt}E_4(0).
\end{equation*}
Consequently,
\begin{equation}\label{eq:H4-growth}
M_4(t)\le C_Re^{C_Rt}M_4(0),\qquad 0\le t\le\tau.
\end{equation}
The constants depend only on $s$, $R$, $c_0^{-1}$, the fixed collar,
and the fixed volume bound.
\end{proposition}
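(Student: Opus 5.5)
We add the three time derivatives of $\mathcal K_4$, $\mathcal W_4$ and $\mathcal S_4$ already computed in \eqref{eq:bulk-energy} and \eqref{eq:boundary-energy}, together with the curl identity. We then show that every surviving term is bounded by $C_RM_4^2$. Since \cref{lem:E4equiv}\textup{(ii)} gives $M_4^2\le C_RE_4$, this yields the differential inequality. Gr\"onwall's inequality and \eqref{eq:energy-equivalence} at times $0$ and $t$ then give \eqref{eq:H4-growth}. The pressure flux $-\int_\Ga T^I\partial_Iu\cdot N$ in \eqref{eq:bulk-energy} cancels exactly with the second term of \eqref{eq:boundary-energy}, so no fifth-order boundary quantity survives.

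\emph{Curl energy.} Taking curls removes $\nabla p$: $D_t\curl u=D_B\curl B+$ lower terms, and similarly for $\curl B$. Differentiating three times, the transport terms combine into $\int_\Om D_B(\nabla^3\curl u\cdot\nabla^3\curl B)$, which vanishes by $\diver B=0$ and $B\cdot N=0$. The commutators are sums of $\partial^k W\,\partial^{5-k}W$ with $W\in\{u,B\}$. They are estimated in $L^2$ by $\|\nabla W\|_\infty\|W\|_{H^4}$, together with interpolated terms such as $\|W\|_{W^{2,\infty}}\|W\|_{H^3}\le C_RM_4^{\theta_1+e+\theta_0}$, exactly as in Step~4 of the proof of \cref{lem:pressure-estimates}.

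\emph{Bulk terms.} The coefficient terms with $D_tg^{IJ}$ and $D_Bg^{IJ}$ are bounded by $C_RM_4^2$ using \eqref{eq:gamma-bounds}. The commutators $R^u$ and $R^B$ are handled in the same way as the curl commutators. The one exception is the part of $R^u$ containing $\partial u\,\partial^4 p$, which is controlled by $\|p\|_{H^{9/2}}\le C_RM_4$. The pressure term $\int(\partial_kg^{IJ})U_I^kP_J$ is treated by H\"older's inequality:
\begin{equation*}
\|\nabla\gamma\|_6\,\|\nabla^4u\|_2\,\|\nabla^4p\|_3\le C_R M_4\|p\|_{H^{9/2}}\le C_RM_4^2,
\end{equation*}
where we used $H^{1/2}\hookrightarrow L^3$ and \eqref{eq:pressure-high}.

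\emph{Boundary terms.} The terms $\sum_\ell\int a^{-1}T A_\Ga T$ and $\tfrac12\int a^{-2}\partial_NQ\,|T|^2$ are bounded by $C_R\|T\|_2^2\le C_RM_4^2$. This uses $a\ge c_0$, $\|\nabla u\|_\infty\le C_R$, the bound $\|a_Q\|_\infty\le C_R$ from \cref{lem:pressure-estimates}, and \eqref{eq:T-upper-by-M4}. The term with $\widetilde C_4(u,p)$ contains only $\nabla^2u\,\nabla^3p$ and $\nabla^3u\,\nabla^2p$ on $\Ga$. By \eqref{eq:trace-L4} and interpolation these are bounded in $L^2(\Ga)$ by $C_RM_4^{\theta_0+\theta_1}$ (for instance $\|\nabla^2u\|_{L^4(\Ga)}\lesssim\|u\|_{H^3}$), so after multiplying by $\|T\|_2$ they contribute at most $C_RM_4^2$.

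\emph{Main obstacle: $\int_\Ga a^{-1}T\cdot\Pi^{\otimes4}\nabla^4Q$.} Here $Q|_\Ga=0$ and $\|Q\|_{H^4}\le C_RM_4$, but the trace of $\nabla^4Q$ does not lie in $L^2(\Ga)$. We therefore apply the Fa\`a di Bruno identity \eqref{eq:chain-four} with $q=Q$. It expresses $\Pi^{\otimes4}\nabla^4Q(G_\alpha,\dots)$ as $-(\nabla Q\cdot X_*)\,\partial^4\eta$ plus lower-order graph terms. Those lower terms are bounded in $L^2(\Ga)$ exactly as in Step~2 of the proof of \cref{lem:E4equiv}, now using $\|Q\|_{H^4}\le C_RM_4$ and the $L^4$ traces of $\nabla^2Q$ and $\nabla^3Q$. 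Their product with $\|T\|_2$ is again at most $C_RM_4^2$. The leading term is bounded by $\|\nabla Q\|_\infty\|\eta\|_{H^4}\le C_RM_4$, since $Q\in H^s\subset C^1$. If the term $\nabla Q\cdot\mathcal R$ or the $(2,1,1)$ terms need more care, we would interpolate $\|\nabla^3Q\|_{L^4(\Ga)}\lesssim\|Q\|_{H^4}$ against the low bounds, as before.

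Summing all the estimates gives $\tfrac{d}{dt}E_4\le C_RM_4^2\le C_RE_4$, with constants independent of $M_4$.
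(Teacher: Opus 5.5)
Your architecture is the paper's: cancel the pressure flux, bound every term by $C_RM_4^2$, convert with \cref{lem:E4equiv}, then apply Gr\"onwall. The bulk, curl, and quadratic boundary terms, and the use of \eqref{eq:chain-four} with $q=Q$, all match.

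\textbf{The gap is in the lower-order terms of the step you call the main obstacle.} You have misidentified which terms need care. $\nabla Q\cdot\mathcal R$ is harmless, since $\|\nabla Q\|_\infty\|\mathcal R\|_2\le C_RM_4^{\theta_0}$. The $(2,1,1)$ class is also harmless: $\|\nabla^3Q|_\Ga\|_4\le C_R\|Q\|_{H^4}\le C_RM_4$ and $\|D^2G\|_4\|DG\|_\infty^2\le C_R$.

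The delicate terms are the $(3,1)$ and $(2,2)$ classes. They carry the growing graph factors $\|D^3G\|_4\le C_RM_4^{\theta_1}$ and $\|D^2G\|_8^2\le C_RM_4^{2\theta_0}$.
\begin{itemize}
\item In Step~2 of \cref{lem:E4equiv} these factors were harmless only because $\|\nabla^2p|_\Ga\|_4\le C_R\|p\|_{H^3}\le C_R$. That bound uses $p\in H^{s+1/2}$ with $s+1/2>3$.
\item $Q$ is controlled only in $H^s$, so for $5/2<s<3$ there is no bound $\|Q\|_{H^3}\le C_R$. Arguing ``exactly as in Step~2'' therefore does not carry over.
\item If you estimate the $L^4$ trace of $\nabla^2Q$ through $\|Q\|_{H^4}\le C_RM_4$, as your text suggests, these classes are of size $M_4^{1+\theta_1}$ and $M_4^{1+2\theta_0}$. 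Multiplied by $\|T\|_2$, the contribution is not bounded by $C_RE_4$.
\end{itemize}
The missing ingredient is interpolation between \eqref{eq:Q-low} and \eqref{eq:pressure-high}, which gives $\|Q\|_{H^3}\le C_RM_4^{\theta_0}$ and hence $\|\nabla^2Q|_\Ga\|_4\le C_RM_4^{\theta_0}$. The two classes are then $O(M_4^{\theta_0+\theta_1})$ and $O(M_4^{3\theta_0})$. These are at most $C_RM_4$ exactly because $\theta_0+\theta_1<1$ and $3\theta_0<1$ in \eqref{eq:strict-exponents}; the condition $3\theta_0<1$ is listed there for this purpose. Your closing hedge points to $\nabla^3Q$ and the $(2,1,1)$ terms, which is the wrong place.

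\textbf{Two smaller inaccuracies, both harmless.}
\begin{itemize}
\item $R^u=C_4(B,B)-C_4(u,u)$ contains no pressure term. The pressure enters only through $\nabla P_I$, which is integrated by parts, so your ``exception'' $\partial u\,\partial^4p$ does not occur.
\item The $(3,2)$ part of $\widetilde C_4(u,p)$ is of size $\|u\|_{H^4}\|p\|_{H^3}\le C_RM_4$, not $C_RM_4^{\theta_0+\theta_1}$. This still suffices.
\end{itemize}
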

\begin{proof}
The proof is divided into four steps.

\par\nobreak\smallskip
\noindent\textit{Step 1. Bulk terms.}\par\nobreak

Adding \eqref{eq:bulk-energy} and \eqref{eq:boundary-energy} cancels
the pressure flux. The product rule and \eqref{eq:gamma-bounds} give
\begin{equation*}
\|D_tg\|_\infty+\|D_Bg\|_\infty\le C_R.
\end{equation*}
Together with \eqref{eq:H4-top-deriv}, these bounds control the first
two coefficient terms in \eqref{eq:bulk-energy} by $C_RE_4$.

For the commutators, $s>5/2$ gives
$H^{s-2}(\Om)\hookrightarrow H^{1/2}(\Om)\hookrightarrow L^3(\Om)$,
while $H^1(\Om)\hookrightarrow L^6(\Om)$.  Consequently, for
$V,W\in\{u,B\}$,
\begin{equation}\label{eq:bulk-tame-product}
\|\nabla^2V\,\nabla^3W\|_2
\le \|\nabla^2V\|_3\|\nabla^3W\|_6
\le C_R\|V\|_{H^s}\|W\|_{H^4}
\le C_RM_4.
\end{equation}
The derivative distributions in $C_4(V,W)$ are $(1,4)$ and $(2,3)$,
up to interchange of the factors. The low $W^{1,\infty}$ bound handles
the first, and \eqref{eq:bulk-tame-product} handles the second. Hence
\begin{equation*}
\|R^u\|_2+\|R^B\|_2\le C_RM_4.
\end{equation*}
By \cref{lem:E4equiv}\textup{(ii)}, these terms contribute at most
$C_RM_4E_4^{1/2}\le C_RE_4$.

For the pressure coefficient term, \eqref{eq:gamma-bounds} gives
$\|\nabla g\|_6\le C_R$. Hence
\begin{equation*}
\left|\int_\Om(\partial_kg^{IJ})U_I^kP_J\,dx\right|
\le C\|\nabla g\|_6
\|\nabla^4u\|_2\|\nabla^4p\|_3\le C_RM_4^2.
\end{equation*}
Since $p\in H^{9/2}$,
$\nabla^4p\in H^{1/2}\hookrightarrow L^3$ and
$\|\nabla^4p\|_3\le C_RM_4$.  By \eqref{eq:energy-equivalence}, this contribution is bounded by $C_RE_4$.

\par\smallskip
\noindent\textit{Step 2. Middle boundary commutators.}\par\nobreak

For $r=4$, the middle commutator
$\widetilde C_4(u,p)$ contains only the derivative distributions
$(2,3)$ and $(3,2)$.  By \eqref{eq:trace-L4},
\eqref{eq:interp-H3-fields}, and \eqref{eq:p-H4-interp},
\begin{align*}
\|\nabla^2u|_\Ga\|_4
&\le C_R\|u\|_{H^3}
\le C_RM_4^{\theta_0},\\
\|\nabla^3p|_\Ga\|_4
&\le C_R\|p\|_{H^4}
\le C_RM_4^{\theta_1}.
\end{align*}
Thus the $(2,3)$ terms have $L^2(\Ga)$ norm at most
$C_RM_4^{\theta_0+\theta_1}$.  For the $(3,2)$ terms,
\begin{align*}
\|\nabla^3u|_\Ga\|_4
&\le C_R\|u\|_{H^4}\le C_RM_4,\\
\|\nabla^2p|_\Ga\|_4
&\le C_R\|p\|_{H^3}\le C_R,
\end{align*}
so their $L^2$ norm is at most $C_RM_4$.  Since
$\theta_0+\theta_1<1$ and $M_4\ge1$,
\begin{equation*}
\|\widetilde C_4(u,p)\|_{L^2(\Ga)}\le C_RM_4.
\end{equation*}
Multiplication by $a^{-1}T$ and Cauchy--Schwarz therefore give a
contribution bounded by $C_RM_4E_4^{1/2}\le C_RE_4$.

The remaining quadratic boundary terms are bounded by $C_RE_4$, since
$\|A_\Ga\|_\infty+\|\partial_NQ\|_\infty\le C_R$ and $a\ge c_0$.

\par\smallskip
\noindent\textit{Step 3. The projected fourth derivative of $Q$.}\par\nobreak

Since $Q|_\Ga=0$, apply \eqref{eq:chain-four} with $q=Q$.
Estimates \eqref{eq:Q-low}, \eqref{eq:pressure-high}, and
\eqref{eq:trace-L4} give
\begin{align*}
\|\nabla Q\|_\infty\|D^4G\|_2&\le C_RM_4,\\
\|\nabla^3Q|_\Ga\|_4\|D^2G\|_4\|DG\|_\infty^2
&\le C_R\|Q\|_{H^4}\le C_RM_4.
\end{align*}
Interpolation between $Q\in H^s$ and $Q\in H^4$ when $s<3$,
and the low $H^s$ bound when $s\ge3$, give
\begin{equation*}
\|Q\|_{H^3}\le C_RM_4^{\theta_0},
\qquad
\|\nabla^2Q|_\Ga\|_4\le C_RM_4^{\theta_0}.
\end{equation*}
The $(3,1)$ and $(2,2)$ terms are therefore bounded by
\begin{align*}
\|\nabla^2Q|_\Ga\|_4\|D^3G\|_4\|DG\|_\infty
&\le C_RM_4^{\theta_0+\theta_1},\\
\|\nabla^2Q|_\Ga\|_4\|D^2G\|_8^2
&\le C_RM_4^{3\theta_0}.
\end{align*}
By \eqref{eq:strict-exponents} and $M_4\ge1$, all these terms are
bounded by $C_RM_4$. Solving the chain rule for its fully projected first term gives
\begin{equation*}
\|\Pi^{\otimes4}\nabla^4Q\|_{L^2(\Ga)}\le C_RM_4.
\end{equation*}
Thus the first term in \eqref{eq:boundary-energy} is bounded by
\begin{equation*}
C_R\|T\|_2M_4\le C_RM_4E_4^{1/2}\le C_RE_4.
\end{equation*}

\par\smallskip
\noindent\textit{Step 4. Curl energy.}\par\nobreak

Set $\omega=\curl u$ and $\jmath=\curl B$. Let $\epsilon_{ikl}$ be
the alternating symbol, normalized by
$\epsilon_{123}=1$. Taking curls of the MHD equations gives
\begin{align*}
D_t\omega_i-D_B\jmath_i
&=\epsilon_{ikl}\bigl(\partial_kB^m\partial_mB^l
-\partial_ku^m\partial_mu^l\bigr),\\
D_t\jmath_i-D_B\omega_i
&=\epsilon_{ikl}\bigl(\partial_kB^m\partial_mu^l
-\partial_ku^m\partial_mB^l\bigr).
\end{align*}
Apply three spatial derivatives and retain the commutators with $D_t,D_B$
on the right. In the sum of the two $L^2$ energy identities,
the leading magnetic terms cancel:
\begin{equation*}
\int_\Om \nabla^3\omega\cdot D_B\nabla^3\jmath
+\int_\Om \nabla^3\jmath\cdot D_B\nabla^3\omega=\int_\Om D_B(\nabla^3\omega\cdot\nabla^3\jmath)=0.
\end{equation*}
The last equality uses $\diver B=0$ and $B\cdot N=0$.
The commutators and differentiated quadratic terms have distributions
$(1,4)$ and $(2,3)$, up to interchange. The Lipschitz bounds from
\eqref{eq:low-bound} and the product estimate \eqref{eq:bulk-tame-product} give
\begin{equation*}
\left|\frac d{dt}\mathcal W_4\right|\le C_RM_4^2
\le C_RE_4.
\end{equation*}

Adding the estimates gives $\frac d{dt}E_4\le C_RE_4$.
Gr\"onwall's inequality and \cref{lem:E4equiv}\textup{(ii)} yield
\eqref{eq:H4-growth}.
\end{proof}

For the regularized solutions, combining
\eqref{eq:H4-growth} with \eqref{eq:reghigh} gives
\begin{equation}\label{eq:E4local}
\sup_{0\le t\le\tau}\|U_j(t)\|_{\X^4}
\le C_R\bigl(1+\|S_jU_0\|_{\X^4}\bigr)
\le C_R2^{(4-s)j}c_j.
\end{equation}
The constants are uniform for $0\le\tau\le1$ and independent of $j$.
\subsection{Comparison on moving domains}

To sum differences of smooth solutions on varying domains, we use
operators $\mathcal J_j$ with a fixed sampling kernel. They preserve
divergence and act on bulk fields beyond each boundary, at a distance
comparable to the smoothing scale. The graph estimate
\eqref{eq:Linfgeomtail} places all later domains in this enlargement.

\begin{lemma}
\label{lem:fixed-comparison-regularizer}
Fix a bounded $\X^s$ subchart of a sufficiently small graph collar, and
write $\Om^h=\{x:\operatorname{dist}(x,\Om)<h\}$ within the fixed
ambient neighborhood. There are linear operators $\mathcal J_j$,
$j\ge j_0$, with a domain-independent kernel and a fixed $\kappa>0$.
The kernels, $\kappa$, and the operator constants depend only on the collar;
$j_0$ may also depend on the common $H^s$ bound. These operators have
the following properties.

\begin{enumerate}[label=\textup{(\roman*)},leftmargin=2.5em]
\item \emph{Inward sampling and preservation of divergence.}
For every domain $\Om$ in the subchart and every locally integrable
vector field $F$ on $\Om$, the field $\mathcal J_jF$ is smooth on
$\Om^{\kappa2^{-j}}$ and uses only values of $F$ in $\Om$.
If $F$ is solenoidal in $\Om$, then $\mathcal J_jF$ is solenoidal
on this enlargement.

\item \emph{Sobolev bounds and approximation.}
For $F\in H^r(\Om;\R^3)$ and $0\le q\le r\le4$,
\begin{align}
\|\mathcal J_jF\|_{H^r(\Om^{\kappa 2^{-j}})}
&\le C\|F\|_{H^r(\Om)},
\label{eq:fixed-smoother-bounded}\\
\|(\mathrm{Id}-\mathcal J_j)F\|_{H^q(\Om)}
&\le C2^{-(r-q)j}\|F\|_{H^r(\Om)},
\label{eq:fixed-smoother-error}\\
\|(\mathcal J_{j+1}-\mathcal J_j)F\|_{H^q(V)}
&\le C2^{-(r-q)j}\|F\|_{H^r(\Om)}
\quad\text{if }V\subset\Om^{\kappa 2^{-j-1}}\text{ is open}.
\label{eq:fixed-smoother-difference}
\end{align}
For every fixed $F\in H^r(\Om)$, one also has
$\mathcal J_jF\to F$ in $H^r(\Om)$ as $j\to\infty$.
\item \emph{Local $L^2$ bound.}
The kernel satisfies
\begin{equation}
\|\mathcal J_jF\|_{L^2(V)}\le C\|F\|_{L^2(D)}
\label{eq:fixed-smoother-core}
\end{equation}
for every open set $V$ on which $\mathcal J_jF$ is defined, whenever
all points sampled from $V$ lie in a measurable set $D\subset\Om$.
The same kernel applies on intersections and on all subsequent domains.
\end{enumerate}
\end{lemma}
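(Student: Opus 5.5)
The plan is to build $\mathcal J_j$ from the Piola transform of a fixed family of translations, localized by a fixed partition of unity subordinate to the collar charts. Choose finitely many coordinate patches $\{\chi_a\}$ covering a neighbourhood of $\overline{\Om_*}$, with interior patches away from the collar. On each boundary patch fix a constant inward direction $e_a$ with $e_a\cdot N\ge\mu'>0$ for all boundaries in the small collar; this is possible by uniform transversality and the uniform $C^1$ graph bounds. Fix a smooth kernel $\kappa_a\ge$ (not necessarily positive) of mass one, with vanishing moments of orders $1,\dots,M-1$, $M\ge 8$, supported in a narrow cone around $-e_a$ at distance comparable to one. Set $\kappa_{a,j}(z)=2^{3j}\kappa_a(2^jz)$. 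For a single constant translation the Piola transform is just translation, so convolution $\kappa_{a,j}*F$ commutes with $\diver$ and preserves solenoidality. The difficulty is the partition of unity: $\sum_a\chi_a(\kappa_{a,j}*F)$ produces the defect $\sum_a\nabla\chi_a\cdot(\kappa_{a,j}*F)$, as in \eqref{eq:scalar-kernel}.

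To remove this defect without a domain-dependent elliptic correction, I would work with two-forms. Identify solenoidal $F$ with the closed two-form $\star F^\flat$, and use a fixed smooth family of orientation-preserving diffeomorphisms $\phi_{a,z}(x)$. On the patch these equal $x+z$; near patch boundaries they are interpolated by a fixed flow with inward direction, namely the time-$|z|$ map of a single smooth vector field $Y$ with $Y\cdot N\ge\mu'$ on the collar. Then define
\[
\mathcal J_jF=\int \kappa_j(z)\,(\phi_{z})^{*}F\,dz ,
\]
where $(\phi_z)^*$ is the Piola pullback $(\det D\phi_z)(D\phi_z)^{-1}F\circ\phi_z$. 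The Piola pullback commutes with divergence, and an average of closed forms is closed, which gives (i). The kernel, hence $\kappa$, depends only on $Y$ and the collar. The sampling points $\phi_z(x)$ with $x\in\Om^{\kappa2^{-j}}$ lie a distance $\gtrsim 2^{-j}$ inside $\Om$ once $\kappa$ is small relative to the cone aperture and $\mu'$; this uses the uniform Lipschitz graph bound, which gives the inward cone condition uniformly on the subchart. For $j\ge j_0$ this also guarantees that the scale is below the collar width, which is where the dependence on the $H^s$ bound enters.

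For (ii) and (iii), the operator is an integral operator with kernel $K_j(x,w)$ obeying $|\partial_x^\beta K_j|\lesssim 2^{(3+|\beta|)j}$ and supported in $|x-w|\lesssim2^{-j}$. This gives \eqref{eq:fixed-smoother-core} by Schur's test and \eqref{eq:fixed-smoother-bounded} at integer orders. Apply a uniform Stein extension of $F$, differentiate under the integral, and note that derivatives falling on the Jacobian factors are smooth and bounded. Fractional orders then follow by interpolation. For \eqref{eq:fixed-smoother-error}, write $(\mathrm{Id}-\mathcal J_j)F$ via Taylor expansion of $F\circ\phi_z$ in $z$. The difficulty is that $\phi_z$ is a flow rather than a translation near patch seams. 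I would impose the moment conditions on $\kappa$ so that the first $M-1$ Taylor coefficients in $z$ of the pullback vanish after integration. This requires choosing $\kappa$ orthogonal to finitely many smooth functions of $z$, which is possible, or else simply using exact translations, since $Y$ is constant, on a fixed neighborhood of the collar, with the partition handled by the two-form trick below. Interpolating between $q=r$ and integer gaps gives the rate $2^{-(r-q)j}$. The difference bound \eqref{eq:fixed-smoother-difference} follows from the error bound for both indices on $V$ through the extension. Convergence $\mathcal J_jF\to F$ for fixed $F$ follows from density of smooth functions and the uniform bound.

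The main obstacle is reconciling preservation of divergence with a nonconstant sampling direction around a closed ball-shaped boundary, since no single translation points inward everywhere. The first attempt I would try is the global flow $\phi_z=\exp(|z|\hat z\cdot\text{-adapted }Y)$. If exact moment cancellation for the Piola pullbacks of a nonlinear flow proves awkward, the fallback is a local patch construction: use translations $\tau_{a,z}$ on patches, and correct the seam defect by a fixed, domain-independent Bogovski\u{\i}-type right inverse of divergence on each overlap region. Because the defect $\sum_a\nabla\chi_a\cdot(\kappa_{a,j}*F)$ is, by the moment conditions, $O(2^{-Mj})$-small in every norm relative to $F$, and is supported where $\nabla\chi_a\neq0$, the correction inherits (ii) and (iii). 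The overlaps must be positioned so that the Bogovski\u{\i} operators act on star-shaped sets well inside $\Om$, keeping the kernel independent of the domain.
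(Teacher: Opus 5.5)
Your use of Piola pullbacks to preserve divergence matches the paper, and so does your treatment of (ii) and (iii): Stein extension, change of variables for each pullback, a Taylor remainder with interpolation, and Schur's test. The gap is in (i). You do not give a family of diffeomorphisms that works, and that construction is the substance of the lemma; the obstacle you name at the end is left open.

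The candidates you describe fail:
\begin{itemize}
\item If $\phi_z$ is the time-$|z|$ map of a single field $Y$, then $(\phi_z)^*F$ depends only on $|z|$. So $\int\kappa_j(z)(\phi_z)^*F\,dz$ is a one-dimensional average along the flow lines of $Y$. It does not smooth transversally, $\mathcal J_jF$ is not smooth for general $F\in L^1_{\rm loc}$, and there is no kernel with $|k_j(x,y)|\le C2^{3j}$.
\item If $\phi_z=x+z$ with one kernel, the sampling cone is the same everywhere and cannot point inward around a closed surface. With patch-dependent kernels, the partition defect returns.
\item The framework itself (one family $\phi_w$ smooth in $w$, $\phi_0=\mathrm{Id}$, one rescaled kernel with moments in $w$) is blocked. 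Put $A(x)=D_w\phi_w(x)|_{w=0}$. Sampling from $\Om^{\kappa2^{-j}}$ for all large $j$ forces $A(x)w_0\cdot N<0$ on $\Ga$ for every $w_0\in\operatorname{supp}\kappa$. A kernel bounded by $C2^{3j}$ forces $A(x)$ to be invertible; otherwise the rescaled pushforward of $\kappa$ degenerates to a measure on a plane. Then, for $w_1\notin\R w_0$, the projection of $A(x)w_1$ onto $T_x\Ga$ along $A(x)w_0$ is a nonvanishing tangent field on $\Ga\cong S^2$, contradicting the hairy ball theorem.
\item The Bogovski\u{\i} fallback also fails. There must be at least two boundary patches, so their seams meet $\Ga$ and cannot sit in star-shaped sets well inside $\Om$. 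Near $\Ga$ the defect is defined only on $\Om^{\kappa2^{-j}}$. Solving $\diver w=g_j$ on a fixed set straddling $\Ga$ therefore needs values outside $\Om$ or a domain-dependent extension, which destroys the domain-independent kernel and (iii).
\item The defect is not $O(2^{-Mj})$ relative to $F$. The moments give only $\|g_j\|_{H^q}\lesssim 2^{-(r-q)j}\|F\|_{H^r}$.
\end{itemize}

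The missing idea is to separate the scale, which carries the inward drift, from the smoothing variable. The paper takes one outward transverse field $V_*$ and sets $\mathcal F_{h,z}(x)=x+h(z-V_*(x))$, with $z$ in a fixed small ball $B(0,r_0)$, $Lr_0\le\mu_1/4$. The Piola matrix is $\mathsf M_h=\det(\mathrm{Id}-hDV_*)(\mathrm{Id}-hDV_*)^{-1}$. This choice gives three things:
\begin{itemize}
\item The drift $-hV_*$ dominates the isotropic shift $hz$ in the direction $\nabla\rho_\eta$. A first order Taylor expansion of the defining function, using the uniform $C^\alpha$ bound on $\nabla\rho_\eta$, gives $\rho_\eta(\mathcal F_{h,z}(x))\le\rho_\eta(x)-\mu_1h/2$. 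This yields inward sampling from $\Om^{\kappa h}$ for $\kappa\le\mu_1/(4L)$.
\item Each $\mathcal F_{h,z}$ is a global diffeomorphism by contraction once $h\|DV_*\|_\infty<1/2$.
\item For fixed $(x,h)$, the map $z\mapsto\mathcal F_{h,z}(x)$ is an affine bijection with Jacobian $h^3$. This gives a smooth kernel with $|k_j|\le C2^{3j}$ and no topological obstruction.
\end{itemize}
Because the drift does not vanish with $z$, the cancellation cannot be done by moments in $z$. Instead, $T^z_hv=\mathsf M_h\,v\circ\mathcal F_{h,z}$ is smooth in $h$ with $T^z_0=\mathrm{Id}$. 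The combination $\mathcal J_j=\sum_{m=1}^9 w_m\int\varrho(z)\,T^z_{m2^{-j}}\,dz$, with $w_m=(-1)^{m+1}\binom9m$, kills the $h^1,\dots,h^8$ terms, so only $\int\varrho=1$ is required of the kernel.
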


\begin{proof}

\par\nobreak\smallskip
\noindent\textit{(i) Inward sampling and preservation of divergence.}\par\nobreak

Write the fixed collar map and the defining function of $\Ga_\eta$ as
\begin{equation}\label{eq:collar-coordinates}
G(y,r)=y+rX_*(y),\qquad G^{-1}=(\pi_*,r_*),\qquad
\rho_\eta=r_*-\eta\circ\pi_*.
\end{equation}
Here $\pi_*$ and $r_*$ are the surface and transverse coordinate
projections. Then $\rho_\eta<0$ on the interior side and
$N=\nabla\rho_\eta/|\nabla\rho_\eta|$ on $\Ga_\eta$.
Choose $0<\alpha<\min\{1,s-2\}$. On a smaller collar containing all the
surfaces, the bounded $H^s$ graph norm and Sobolev embedding give uniform
constants $L,C_\rho>0$ such that
\begin{equation*}
\|\nabla\rho_\eta\|_{L^\infty}\le L,\qquad
[\nabla\rho_\eta]_{C^\alpha}\le C_\rho.
\end{equation*}
Also $\nabla\rho_\eta(G(y,r))\cdot X_*(y)=1$, so
$|\nabla\rho_\eta|\ge m_0>0$ uniformly.

Choose a smooth compactly supported vector field $V_*$ pointing outward
and transverse to all the graphs. Shrinking the fixed $C^1$ collar gives
\begin{equation*}
V_*\cdot N\ge\mu_0>0\qquad\hbox{on every }\Ga_\eta.
\end{equation*}
Set $\mu_1=m_0\mu_0/2$. The uniform H\"older bound gives $d_0>0$
such that the region $\operatorname{dist}(x,\Ga_\eta)<2d_0$ lies in the
smaller collar and $\nabla\rho_\eta(x)\cdot V_*(x)\ge\mu_1$ there.
Choose $0<r_0\le1$ with $Lr_0\le\mu_1/4$ and
$\varrho\in C_c^\infty(B(0,r_0))$ with $\int\varrho=1$.
The choices of $V_*,r_0,\varrho$ depend only on the fixed $C^1$ collar;
the $H^s$ bound affects $d_0$ and the admissible range of $h$.

For $h>0$ and $z\in\operatorname{supp}\varrho$, define the sampling map
and its Piola matrix by
\begin{equation*}
\mathcal F_{h,z}(x)=x+h(z-V_*(x)),\qquad
\mathsf M_h(x)=\det(\mathrm{Id}-hDV_*(x))(\mathrm{Id}-hDV_*(x))^{-1}.
\end{equation*}
When $h\|DV_*\|_\infty<1/2$, the equation
$\mathcal F_{h,z}(x)=y$ is solved by contraction in
$x=y-hz+hV_*(x)$. Thus $\mathcal F_{h,z}$ is a global smooth
diffeomorphism with positive Jacobian and uniform derivative bounds at
every fixed order for the map and its inverse.

Let $M=1+\|V_*\|_\infty$. For
$\operatorname{dist}(x,\Ga_\eta)<d_0$ and $hM<d_0$, the segment from
$x$ to $\mathcal F_{h,z}(x)$ stays in
$\{x:\operatorname{dist}(x,\Ga_\eta)<2d_0\}$. Taylor's
formula, with remainder $R_{h,z}$, gives
\begin{align*}
\rho_\eta(\mathcal F_{h,z}(x))
&=\rho_\eta(x)+h\nabla\rho_\eta(x)\cdot(z-V_*(x))+R_{h,z}(x),\\
|R_{h,z}(x)|&\le \frac{C_\rho}{1+\alpha}M^{1+\alpha}h^{1+\alpha}.
\end{align*}
The linear term is at most $-3\mu_1h/4$. Choose $h_0>0$ so that
$h_0M<d_0/2$, $h_0\|DV_*\|_\infty<1/2$, and the remainder is at
most $\mu_1h/4$ for $h\le h_0$. Then
\begin{equation*}
\rho_\eta(\mathcal F_{h,z}(x))
\le\rho_\eta(x)-\mu_1h/2.
\end{equation*}
Choose $0<\kappa\le\min\{1,\mu_1/(4L)\}$ and decrease $h_0$ so that
$\kappa h_0<d_0$. For $x\in\Om^{\kappa h}$ within distance $d_0$
of the boundary, comparison with a closest boundary point gives
$\rho_\eta(x)\le L\kappa h$; hence
$\rho_\eta(\mathcal F_{h,z}(x))\le-\mu_1h/4$.
The remaining points lie in $\Om$ at distance at least $d_0$ from the
boundary and move by less than $d_0/2$. Consequently
\begin{equation}\label{eq:sampling-inside}
x\in\Om^{\kappa h}
\quad\Longrightarrow\quad
\mathcal F_{h,z}(x)\in\Om
\qquad(0<h\le h_0,\ z\in\operatorname{supp}\varrho).
\end{equation}
In the boundary region the sampling distance is at least
$\mu_1h/(4L)$, and in the remaining region it is at least $d_0/2$.
Thus every sampled point has distance at least a fixed positive multiple
of $h$ from the boundary, uniformly on the bounded graph chart.

Take $j$ large enough that $9\cdot2^{-j}\le h_0$, and put
$\lambda_j=2^{-j}$ and $w_m=(-1)^{m+1}\binom9m$ for $1\le m\le9$.
The ninth finite difference identity gives
\begin{equation}\label{eq:moment-cancel}
\sum_{m=1}^9w_m=1,\qquad
\sum_{m=1}^9w_m m^k=0,\quad 1\le k\le8.
\end{equation}
Define
\begin{equation}
\mathcal J_jF(x)
=\sum_{m=1}^{9}w_m\int\varrho(z)\mathsf M_{m\lambda_j}(x)
F\bigl(\mathcal F_{m\lambda_j,z}(x)\bigr)\,dz.
\label{eq:fixed-smoother-definition}
\end{equation}
By \eqref{eq:sampling-inside}, this uses only values in $\Om$ even on
$\Om^{\kappa\lambda_j}$. The Piola identity
\begin{equation*}
\diver_x\bigl(\mathsf M_h(F\circ\mathcal F_{h,z})\bigr)
=\det D\mathcal F_{h,z}\,(\diver F)\circ\mathcal F_{h,z}
\end{equation*}
therefore proves that $\mathcal J_j$ preserves divergence on the entire
enlargement.

\par\smallskip
\noindent\textit{(ii) Sobolev bounds and approximation.}\par\nobreak

Let $\widetilde F=\mathscr E_\Om F$ be the uniform Stein extension from
\cite[Appendix~A.1, Proposition~A.2]{IPTT-MHD}. For an ambient vector field
$v$, write
\begin{equation*}
T_h^zv(x)=\mathsf M_h(x)v\bigl(x+h(z-V_*(x))\bigr).
\end{equation*}
The uniform bounds for the coordinate maps, change of variables, and
interpolation give, for $0\le r\le4$,
\begin{equation}\label{eq:Jj-Hr-bounded-detail}
\|\mathcal J_j\widetilde F\|_{H^r(\R^3)}
\le C\|\widetilde F\|_{H^r(\R^3)}.
\end{equation}
Differentiating the kernel also gives
$\|\mathcal J_j\widetilde F\|_{H^r}\le C2^{rj}\|\widetilde F\|_{L^2}$.
Restriction proves \eqref{eq:fixed-smoother-bounded}.

For smooth ambient $v$, Taylor expansion of $T_h^zv$ through degree
$8$ has remainder bounded in $H^q$ by $Ch^9\|v\|_{H^{q+9}}$:
each $h$ derivative introduces at most one spatial derivative of $v$.
Applying \eqref{eq:moment-cancel} at $h=m\lambda_j$ therefore gives
\begin{equation*}
\|(\mathrm{Id}-\mathcal J_j)v\|_{H^q}
\le C2^{-9j}\|v\|_{H^{q+9}}.
\end{equation*}
Uniform boundedness also yields
\begin{equation*}
\|(\mathrm{Id}-\mathcal J_j)v\|_{H^q}\le C\|v\|_{H^q}.
\end{equation*}
For $0\le q\le r\le4$, split the extension as
$\widetilde F=L_j\widetilde F+(\mathrm{Id}-L_j)\widetilde F$ using the fixed
low frequency cutoff. Bernstein's inequality and these two bounds imply
\begin{align*}
\|(\mathrm{Id}-\mathcal J_j)L_j\widetilde F\|_{H^q}
&\le C2^{-9j}2^{(q+9-r)j}\|\widetilde F\|_{H^r},\\
\|(\mathrm{Id}-\mathcal J_j)(\mathrm{Id}-L_j)\widetilde F\|_{H^q}
&\le C2^{-(r-q)j}\|\widetilde F\|_{H^r}.
\end{align*}
This proves \eqref{eq:fixed-smoother-error}. Applying the same ambient
estimate to
$\mathcal J_{j+1}-\mathcal J_j=(\mathrm{Id}-\mathcal J_j)-(\mathrm{Id}-\mathcal J_{j+1})$
and restricting to $V\subset\Om^{\kappa2^{-j-1}}$ proves
\eqref{eq:fixed-smoother-difference}: both kernels sample inside $\Om$
and hence agree with their action on the same extension.
For a fixed extension $\widetilde F\in H^r(\R^3)$, approximate
$\widetilde F$ in $H^r$ by a smooth ambient field. The Taylor estimate
above gives convergence for this fixed smooth field, while
\eqref{eq:Jj-Hr-bounded-detail} controls the approximation error uniformly
in $j$. Thus $\mathcal J_jF\to F$ in $H^r(\Om)$.

\par\smallskip
\noindent\textit{(iii) Local $L^2$ bound.}\par\nobreak

Changing variables in \eqref{eq:fixed-smoother-definition} gives a smooth
kernel $k_j(x,y)$ with
\begin{equation*}
|k_j(x,y)|\le C2^{3j},\qquad
k_j(x,y)=0\quad\hbox{if }|x-y|>C2^{-j},
\end{equation*}
and
\begin{equation*}
\sup_x\int|k_j(x,y)|\,dy
+\sup_y\int|k_j(x,y)|\,dx\le C.
\end{equation*}
If all points sampled from $V$ lie in the measurable set $D$, then
$\mathcal J_jF(x)=\int_Dk_j(x,y)F(y)\,dy$ for $x\in V$.
Schur's test proves \eqref{eq:fixed-smoother-core}. The kernel depends
only on $V_*,\varrho,j$, and is independent of the domain.
\end{proof}

The common kernel allows comparison of adjacent states on later domains.
\Cref{lem:telescoping} combines weak differences and fourth order bounds
to control the strong $H^s$ tail by the $\ell^2$ frequency envelope.

\begin{lemma}
\label{lem:telescoping}
Let $U_l=(u_l,B_l,\Ga_{\eta_l})$, $J\le l\le K$, be smooth
regularized solutions on $[0,T]$, with fluid domains $\Om_l$.
Assume that they remain in one bounded local chart, have a common positive
lower bound for the Taylor coefficient, and satisfy \eqref{eq:weak-band}
for each adjacent pair. Let $c$ be the envelope in \eqref{eq:envelope}.
Suppose
\begin{align}
\D(U_l,U_{l+1})^{1/2}&\le C_0 2^{-sl}c_l,
&&J\le l<K,                                             \label{eq:tel-low}\\
\|U_l\|_{\X^4}&\le C_1 2^{(4-s)l}c_l,
&&J\le l\le K.                                      \label{eq:tel-high}
\end{align}
For $m\in\R$, let $c_{\ge m}$ denote the sequence obtained from
$c=(c_l)_{l\ge0}$ by setting its entries with $l<m$ to zero.
For $J$ above a threshold depending only on the local chart,
$C_0+C_1$, and a bound for $\|c\|_{\ell^2}$, the following conclusions
hold uniformly in time and in $K$.

\begin{enumerate}[label=\textup{(\roman*)},leftmargin=2.5em]
\item \emph{Graph tail.} The boundary graphs satisfy
\begin{equation}
\|\eta_K-\eta_J\|_{H^s(\Ga_*)}
\le C(C_0+C_1)\|c_{\ge J}\|_{\ell^2}.             \label{eq:graph-tail}
\end{equation}
\item \emph{Geometric control and common domains.} Fix
\begin{equation*}
0<\varepsilon<\min\{1/16,(s-5/2)/4\}.
\end{equation*}
Then
\begin{align}
\sum_{l=J}^{K-1}\|\eta_{l+1}-\eta_l\|_{L^\infty(\Ga_*)}
&\le C(C_0+C_1)2^{-(s-1-\varepsilon)J}
\|c_{\ge J}\|_{\ell^2},                         \label{eq:Linfgeomtail}\\
\sum_{l=J}^{K-1}\|\eta_{l+1}-\eta_l\|_{C^{1,1/2}(\Ga_*)}
&\le C(C_0+C_1)2^{-(s-5/2-2\varepsilon)J}
\|c_{\ge J}\|_{\ell^2}.                         \label{eq:C1tail}
\end{align}
The later surfaces stay in one $C^{1,1/2}$ collar, while their
graph displacement from the $l$th surface is $o(2^{-l})$.  The fixed
operators $\mathcal J_l$ of \cref{lem:fixed-comparison-regularizer} are
therefore simultaneously defined on all later domains.

\item \emph{Bulk tail.} For $W_l^\pm=u_l\pm B_l$, the smooth comparison field
$\mathcal J_JW_J^\pm$ is defined on a neighborhood of $\Om_K$ and
\begin{equation}
\|W_K^\pm-\mathcal J_JW_J^\pm\|_{H^s(\Om_K)}
\le C(C_0+C_1)\|c_{\ge J-C}\|_{\ell^2}.          \label{eq:bulk-tail}
\end{equation}
For an infinite family satisfying the same hypotheses, the estimates
also hold on the limiting domain.
\end{enumerate}
\end{lemma}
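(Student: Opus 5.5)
The plan is a nonlinear interpolation between the two hypotheses, carried out increment by increment and then summed with the slowly varying envelope. The unifying bound is
\begin{equation*}
\|X\|_{H^\rho}\lesssim\|X\|_{L^2}^{1-\rho/4}\|X\|_{H^4}^{\rho/4}.
\end{equation*}
For an adjacent increment $X$, \eqref{eq:tel-low} together with \cref{prop:Dprop}\textup{(i)} gives $\|X\|_{L^2}\lesssim C_02^{-sl}c_l$. Likewise \eqref{eq:tel-high} gives $\|X\|_{H^4}\lesssim C_12^{(4-s)l}c_l$. Hence
\begin{equation*}
\|X\|_{H^\rho}\lesssim(C_0+C_1)\,2^{(\rho-s)l}c_l .
\end{equation*}
A naive sum over $l$ at $\rho=s$ gives only an $\ell^1$ bound. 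To reach the $\ell^2$ tail, I will decompose each increment dyadically and use the separation between frequency $2^k$ and scale $2^l$: the $L^2$ bound controls $k\ge l$ through $2^{-s k}\cdot 2^{s(k-l)}$ decay from the $H^4$ side, and the $H^4$ bound controls $k\le l$.

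\emph{Graph tail (i).} Set $\delta_l=\eta_{l+1}-\eta_l$ on $\Ga_*$. For each $k$ I bound
\begin{equation*}
\|P_k\delta_l\|_2\lesssim\min\bigl\{2^{-sl}c_l,\;2^{-4k}2^{(4-s)l}c_l\bigr\}.
\end{equation*}
Then $2^{sk}\|P_k\sum_l\delta_l\|_2\lesssim\sum_l 2^{-\epsilon'|k-l|}c_l$, where the gain comes from $s>0$ on one side and $4-s>\sigma$ on the other. Young's inequality in $\ell^2$ then bounds the square sum over $k$ by $\|c_{\ge J}\|_{\ell^2}$, with the frequencies $k<J$ handled by the same kernel. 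This proves \eqref{eq:graph-tail}.

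\emph{Geometric control (ii).} Estimates \eqref{eq:Linfgeomtail} and \eqref{eq:C1tail} follow from the per-increment interpolation. I take $\rho=1+\varepsilon$ (respectively $\rho=5/2+\varepsilon$) together with the surface embeddings $H^{1+\varepsilon}\hookrightarrow L^\infty$ and $H^{5/2+\varepsilon}\hookrightarrow C^{1,1/2}$. Summing $2^{(\rho-s)l}c_l$ over $l\ge J$, Cauchy--Schwarz with an extra factor $2^{-\varepsilon l}$ gives the stated powers of $2^{-J}$ times $\|c_{\ge J}\|_{\ell^2}$. Beyond the threshold $J$, the displacement of every later surface from $\Ga_{\eta_l}$ is then at most $C2^{-(s-1-\varepsilon)l}$, which is $o(2^{-l})$ because $s-1-\varepsilon>1$. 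This places every $\Om_{l'}$ with $l'>l$ inside $\Om_l^{\kappa2^{-l-1}}$, so the operators $\mathcal J_l$ of \cref{lem:fixed-comparison-regularizer} are simultaneously defined. The band condition \eqref{eq:weak-band} and the collar bounds persist for the same reason.

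\emph{Bulk tail (iii), the main obstacle.} Here the fields live on different domains. I telescope on the fixed target $\Om_K$:
\begin{equation*}
W_K-\mathcal J_JW_J=(W_K-\mathcal J_KW_K)+\sum_{l=J}^{K-1}\bigl(\mathcal J_{l+1}W_{l+1}-\mathcal J_lW_l\bigr).
\end{equation*}
The first term is handled by \eqref{eq:fixed-smoother-error} with $(q,r)=(s,4)$ and \eqref{eq:tel-high}. Each remaining summand splits as $\mathcal J_{l+1}(W_{l+1}-W_l)+(\mathcal J_{l+1}-\mathcal J_l)W_l$. For the first piece I use \eqref{eq:fixed-smoother-core} with $D=\Om_l\cap\Om_{l+1}$ to get an $L^2(\Om_K)$ bound of size $2^{-sl}c_l$ from \eqref{eq:tel-low}. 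Smoothness at scale $2^{-l}$ gives $H^4$ cost $2^{4l}$, and at the same time it is bounded in $H^4$ by $2^{(4-s)l}c_l$. The difference piece is bounded by \eqref{eq:fixed-smoother-difference}: in $H^q$ it is at most $2^{-(4-q)l}\|W_l\|_{H^4}$, and it is also bounded in $H^4$.

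Thus every summand is an $L^2$/$H^4$ pair at scale $l$ with the same envelope profile as in (i). To convert this into frequency-localized estimates on a domain, I apply uniform Stein extensions and repeat the dyadic almost-orthogonality argument, losing only a shift $J-C$. The delicate point is keeping the sampling sets inside $\Om_l\cap\Om_{l+1}$ uniformly for all later $K$, which step (ii) provides. For infinite families, letting $K\to\infty$ along the uniformly convergent graphs gives the estimates on the limiting domain.
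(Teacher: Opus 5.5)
Your proposal is correct and follows essentially the same route as the paper. The graph tail uses per-increment $L^2$/$H^4$ bounds, the $\ell^1$ kernel in $l-k$, and Young's inequality; the geometric tails use interpolation plus Cauchy--Schwarz; and the bulk tail uses the same telescoping identity on $\Om_K$ with a common Stein extension. Your split $\mathcal J_{l+1}(W_{l+1}-W_l)+(\mathcal J_{l+1}-\mathcal J_l)W_l$ mirrors the paper's $\mathcal J_l(W_{l+1}-W_l)+(\mathcal J_{l+1}-\mathcal J_l)W_{l+1}$ and works equally well, since $\Om_K\subset\Om_l^{\kappa2^{-l-1}}$ already puts the $\mathcal J_{l+1}$ samples in $\Om_l\cap\Om_{l+1}$. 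One slip in your opening paragraph: the prose reverses the roles. The $L^2$ bound controls frequencies $k\le l$ and the $H^4$ bound controls $k\ge l$, although your displayed minimum states this correctly.
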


\begin{proof}

\par\nobreak\smallskip
\noindent\textit{(i) Graph tail.}\par\nobreak

For $g_l=\eta_{l+1}-\eta_l$, \cref{prop:Dprop}\textup{(i)} and
\eqref{eq:tel-low} give
\begin{equation*}
\|g_l\|_{L^2(\Ga_*)}\le CC_0 2^{-sl}c_l.
\end{equation*}
By \eqref{eq:tel-high} and slow variation of the envelope,
\begin{equation*}
\|g_l\|_{H^4}
\le\|\eta_{l+1}\|_{H^4}+\|\eta_l\|_{H^4}
\le CC_1 2^{(4-s)l}c_l.
\end{equation*}
For a dyadic projector $P_k$ on the reference surface, these bounds yield
\begin{equation*}
2^{sk}\|P_kg_l\|_2\le C(C_0+C_1)\beta_{l-k}c_l,\qquad
\beta_m=\begin{cases}
2^{-sm},&m\ge0,\\
2^{(4-s)m},&m<0.
\end{cases}
\end{equation*}
Since $s>0$ and $4-s>0$, $\beta=(\beta_m)_{m\in\mathbb Z}\in\ell^1(\mathbb Z)$. Thus
\begin{equation*}
2^{sk}\left\|P_k\sum_{l=J}^{K-1}g_l\right\|_2
\le C(C_0+C_1)\sum_{l=J}^{K-1}\beta_{l-k}c_l.
\end{equation*}
Taking the $\ell^2$ norm in $k$ and applying Young's inequality gives
\begin{equation*}
\left\|\sum_{l=J}^{K-1}g_l\right\|_{H^s}
\le C(C_0+C_1)\|\beta\|_{\ell^1}\|c_{\ge J}\|_{\ell^2},
\end{equation*}
which proves \eqref{eq:graph-tail}.

\par\smallskip
\noindent\textit{(ii) Geometric control and common domains.}\par\nobreak

Interpolation gives
$\|g_l\|_{H^q}\le C(C_0+C_1)2^{-(s-q)l}c_l$ for $0\le q\le4$.
Use $q=1+\varepsilon$ and $q=5/2+2\varepsilon<s$, together with
$H^{1+\varepsilon}\hookrightarrow L^\infty$ and
$H^{5/2+2\varepsilon}\hookrightarrow C^{1,1/2}$ on $\Ga_*$.
Cauchy--Schwarz and the geometric series prove
\eqref{eq:Linfgeomtail}--\eqref{eq:C1tail}. The exponents
satisfy $s-1-\varepsilon>1$ and $s-5/2-2\varepsilon>0$, so the
surfaces remain in one $C^{1,1/2}$ collar and their displacement from
$\Ga_l$ is $o(2^{-l})$, uniformly over finite tails. Moreover,
\begin{equation*}
\sup_{x\in\overline\Om_K}\operatorname{dist}(x,\Om_l)
\le\|X_*\|_{L^\infty}\|\eta_K-\eta_l\|_{L^\infty}.
\end{equation*}
Increasing $J$ ensures
$\overline\Om_K\subset\Om_l^{\kappa2^{-l-3}}$ for every $J\le l\le K$.
By \eqref{eq:sampling-inside}, $\mathcal J_l$ samples
$\Om_l\cap\Om_{l+1}$ from $\Om_K$, while $\mathcal J_{l+1}$ samples
$\Om_{l+1}$. Thus $\mathcal J_l$ and $\mathcal J_{l+1}$ act on the
common terminal domain $\Om_K$.

\par\smallskip
\noindent\textit{(iii) Bulk tail.}\par\nobreak

Fix $K$. On $\Om_K$ the exact telescoping identity is
\begin{equation}\label{eq:decomp}
W_K^\pm=\mathcal J_JW_J^\pm
+\sum_{l=J}^{K-1}Z_l^\pm+(\mathrm{Id}-\mathcal J_K)W_K^\pm,
\qquad
Z_l^\pm=\mathcal J_{l+1}W_{l+1}^\pm-\mathcal J_lW_l^\pm.
\end{equation}
Uniform $H^4$ boundedness of the smoother and slow variation give
\begin{equation}\label{eq:A-high}
\|Z_l^\pm\|_{H^4(\Om_K)}\le CC_1 2^{(4-s)l}c_l.
\end{equation}
Because the kernel is independent of the domain, we can write
\begin{equation*}
Z_l^\pm=(\mathcal J_{l+1}-\mathcal J_l)W_{l+1}^\pm
+\mathcal J_l(W_{l+1}^\pm-W_l^\pm).
\end{equation*}
The first term is bounded by
\eqref{eq:fixed-smoother-difference} with $(q,r)=(0,4)$:
\begin{equation*}
\|(\mathcal J_{l+1}-\mathcal J_l)W_{l+1}^\pm\|_{L^2(\Om_K)}
\le C2^{-4l}\|W_{l+1}^\pm\|_{H^4(\Om_{l+1})}
\le CC_1 2^{-sl}c_l.
\end{equation*}
For the second, the sampling set lies in the intersection, so
\eqref{eq:fixed-smoother-core} and \eqref{eq:tel-low} yield
\begin{equation*}
\|\mathcal J_l(W_{l+1}^\pm-W_l^\pm)\|_{L^2(\Om_K)}
\le C\|W_{l+1}^\pm-W_l^\pm\|_{L^2(\Om_l\cap\Om_{l+1})}
\le CC_0 2^{-sl}c_l.
\end{equation*}
Consequently
\begin{equation}\label{eq:A-low}
\|Z_l^\pm\|_{L^2(\Om_K)}\le C(C_0+C_1)2^{-sl}c_l.
\end{equation}

Use the same Stein extension
$\mathscr E_K:=\mathscr E_{\Om_K}$ for every $Z_l^\pm$. For ambient dyadic
projectors,
\eqref{eq:A-low} and \eqref{eq:A-high} give
\begin{equation*}
2^{sk}\|P_k\mathscr E_KZ_l^\pm\|_2
\le C(C_0+C_1)\beta_{l-k}c_l,
\end{equation*}
with $\beta$ defined in part~\textup{(i)}. Young's inequality in the frequency index gives
\begin{equation}\label{eq:Asum}
\left\|\sum_{l=J}^{K-1}Z_l^\pm\right\|_{H^s(\Om_K)}
\le C(C_0+C_1)\|c_{\ge J-C}\|_{\ell^2}.
\end{equation}
The finite shift accommodates the smooth cutoffs and slow variation. The remaining term in \eqref{eq:decomp} satisfies
\begin{equation*}
\|(\mathrm{Id}-\mathcal J_K)W_K^\pm\|_{H^s(\Om_K)}
\le C2^{-(4-s)K}\|W_K^\pm\|_{H^4(\Om_K)}\le CC_1c_K.
\end{equation*}
This proves \eqref{eq:bulk-tail}.

If the surfaces converge, \eqref{eq:Linfgeomtail} ensures that the
kernels still sample strictly inside the earlier domains when evaluated
on the limiting domain. Every $Z_l^\pm$ and
$\mathcal J_JW_J^\pm$ restricts there as the same ambient kernel field.
Applying \eqref{eq:Asum} to tails of the series proves convergence in
$H^s$ on the limiting domain, uniformly in time, and yields
\eqref{eq:bulk-tail} there.
\end{proof}

\begin{remark}
The dyadic estimate retains decay in the difference of the two frequency
indices. Its $\ell^1$ kernel acts boundedly on the $\ell^2$ envelope and
yields the tail in \eqref{eq:Asum}. The common operators $\mathcal J_l$
make this summation compatible with the moving boundary.
\end{remark}

We close the $H^s$ bound for finite families of regularizations on a time
interval independent of the terminal index.

\begin{proposition}
\label{prop:E4}
After shrinking the $H^s$ data neighborhood, there are $T>0$ and an
integer $J$ such that the smooth solutions with initial data $S_jU_0$,
$j\ge J$, exist on $[0,T]$ and satisfy
\begin{equation}
\sup_{0\le t\le T}\|U_j(t)\|_{\X^4}
\le C2^{(4-s)j}c_j.
\label{eq:H4bound}
\end{equation}
Their $H^s$ norms are uniformly bounded. The surfaces remain in the fixed
collar with uniform transversality and injectivity bounds, and the Taylor
coefficients have a common positive lower bound. All constants are
uniform in $j$ and in the datum in this neighborhood.
\end{proposition}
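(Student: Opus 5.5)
The argument is a continuity (bootstrap) argument on a finite family $J\le j\le K$, with constants independent of $K$, followed by $K\to\infty$.

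\emph{Choice of constants.} First fix $R$ somewhat larger than twice the $H^s$ size of the data neighborhood, and fix the Taylor bound $c=c_*/4$. Let $C_R$ denote the constants of \cref{prop:Dprop}\textup{(ii)} and of \eqref{eq:E4local}; these are independent of $j$. Then the hypotheses of \cref{lem:telescoping} hold with $C_0+C_1\le C_R$, wherever they apply.

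\emph{Reduction to one coarse solution.} Choose $J$ so large that two things hold. First, the threshold of \cref{lem:telescoping} is met. Second, $C_R\|c_{\ge J-C}\|_{\ell^2}$ is below a small $\epsilon$, uniformly over a small $\X^s$ neighborhood of $U_0$. The uniformity comes from \eqref{eq:envelope-continuity} and a compactness argument: shrink the neighborhood so that the envelopes are $\ell^2$-close to that of $U_0$. Fix this $J$. Since $S_J$ is continuous into $\X^m$ by \cref{lem:compatible-regularization}, the smooth solution from $S_JU_0$ exists on some interval $[0,T_0]$ by \cite[Theorem~1.9]{IPTT-MHD}. After shrinking the neighborhood, stability of smooth solutions shows the following on $[0,T_0]$: the solution stays in the collar, satisfies \eqref{eq:cutoff-support}, has $\X^s$ norm at most $R/2$, and has Taylor coefficient at least $c_*/2$. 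Take $T\le\min\{T_0,1\}$.

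\emph{Bootstrap.} For $K>J$, let $T_K^*$ be the supremum of times $t\le T$ with the following properties. All $U_j$, $J\le j\le K$, exist on $[0,t]$. They have $\X^s$ norms at most $R$ and Taylor coefficients at least $c$. They satisfy \eqref{eq:weak-band} for adjacent pairs and remain in the enlarged collar with \eqref{eq:cutoff-support}. On $[0,T_K^*)$, \cref{prop:Dprop}\textup{(ii)} gives \eqref{eq:tel-low}, and \eqref{eq:E4local} gives \eqref{eq:tel-high}. \Cref{lem:telescoping} then yields three conclusions for each $j\le K$:
\begin{equation*}
\|\eta_j-\eta_J\|_{H^s}\le\epsilon,\qquad
\|W_j^\pm-\mathcal J_JW_J^\pm\|_{H^s(\Om_j)}\le\epsilon,
\end{equation*}
together with $C^{1,1/2}$ and $L^\infty$ closeness of the graphs. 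Since $\mathcal J_J$ is bounded on $H^s$, the $\X^s$ norms are at most $R/2+C\epsilon<R$. The $s'$-metric distance between $U_j$ and $U_J$ is $O(\epsilon)$, so \cref{lem:pressure-estimates}\textup{(ii)} gives Taylor coefficients at least $c_*/2-o(1)>c$. The graph closeness in \eqref{eq:Linfgeomtail} gives the band condition \eqref{eq:weak-band} and keeps each surface in the collar. All these inequalities are strict, so they persist a little beyond $T_K^*$, by continuity in time of smooth solutions.

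\emph{Continuation and closing.} Continuation is needed as long as $T_K^*<T$. The standard criterion is control of the $\mathcal Y^{7/2}$ norm. This is bounded by the $\X^4$ norm through \eqref{eq:enh-72-bound}, and the $\X^4$ norm is finite by \eqref{eq:E4local}. Hence every $U_j$ extends past $T_K^*$, which forces $T_K^*=T$. Because the argument never uses $K$, we can let $K$ vary, and \eqref{eq:H4bound} follows from \eqref{eq:E4local}. Uniformity in the datum follows because $J$, $T$, and all the constants were chosen uniformly over the neighborhood.

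\emph{Main obstacle.} The hardest step is the uniform choice of $J$ and $T_0$. The envelope tail $\|c_{\ge J}\|_{\ell^2}$ is not uniformly small on an open set unless the neighborhood is shrunk around $U_0$ using the continuity in \eqref{eq:envelope-continuity}. The lifespan of the coarse solution must likewise be controlled uniformly via continuity of $S_J$ into high norms. I would handle both by a contradiction and compactness argument near the single datum $U_0$.
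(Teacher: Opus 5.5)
Your architecture matches the paper's. You bootstrap over finite families $J\le j\le K$ with $K$-independent constants. You reduce through \cref{lem:telescoping} to one coarse smooth solution plus an envelope tail, made uniformly small by choosing $J$ for the reference datum and shrinking the neighborhood using \eqref{eq:envelope-continuity}. You exclude breakdown by bounding $\|U_j\|_{\mathcal Y^{7/2}}$ through $(1+\|U_j\|_{\X^4})^2$.

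\textbf{The step that closes the $H^s$ bootstrap fails as written.} \Cref{lem:telescoping} compares $W_j^\pm$ with $\mathcal J_JW_J^\pm$, and $\mathcal J_J$ is bounded on $H^s$ only with some fixed constant $C_{\rm col}\ge1$. Nothing in its construction makes this constant close to $1$; the moment-cancelling weights alone have $\sum_m|w_m|=2^9-1$. What you actually obtain is
\begin{equation*}
\|U_j(t)\|_{\X^s}\le C_{\rm col}\bigl(1+\|U_J(t)\|_{\X^s}\bigr)+C\epsilon .
\end{equation*}
With the coarse norm only below $R/2$, this is about $C_{\rm col}R/2$, which need not be less than $R$.

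The repair is as follows. Choose $R>8C_{\rm col}(1+R_0)$, where $R_0$ bounds $\|S_jU_0\|_{\X^s}$ over the neighborhood. Then take the coarse lifespan so small that $C_{\rm col}(1+\|U_J(t)\|_{\X^s})<R/4$. You should say explicitly why this is not circular: $C_{\rm col}$ depends only on the collar and the fixed kernels, not on $R$. So $R$ is fixed first. Then come the $R$-dependent quantities ($C_R$, $h(R,c)$, the normal cutoff), then $J$ and $\epsilon$, then the neighborhood and the coarse time. This is the paper's ordering, and with it the bound improves to $R/2$.

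\textbf{The Taylor sign.} You transfer the Taylor sign, the weak band, the collar and the cutoff condition from the coarse solution. The paper uses time smallness instead: under $\|U_j\|_{\X^s}\le R$ one has $a_j(t)\ge2c_*-C_Rt$ and $\|\eta_j(t)-\eta_j(0)\|_{C^0}\le C_Rt$, so a short time $T_{\rm geo}$ suffices. Your route works for the geometric conditions via the $C^0$ and $C^{1,1/2}$ tails. For the Taylor sign it needs more than the $O(\epsilon)$ you claim. The telescoping bound is $H^s(\Om_j)$-closeness to the auxiliary field $\mathcal J_JW_J^\pm$, not closeness in the $s'$-metric of \cref{lem:pressure-estimates}\textup{(ii)}. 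To get there you must add $\|(\mathrm{Id}-\mathcal J_J)W_J^\pm\|_{H^{s'}(\Om_J)}\lesssim2^{-(s-s')J}$. You must also compare extensions of one smooth field from the nearby domains $\Om_j,\Om_J$ uniformly in time and datum, or else rerun the pulled-back $C^1$ pressure argument directly. The paper's argument avoids all of this.

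\textbf{Continuation.} The $\mathcal Y^{7/2}$ criterion only gives a $\mathcal Y^4$ extension. To stay within smooth solutions you need every $\mathcal Y^m$ norm to persist past the breakdown time. The paper obtains this by applying the criterion at each order $m$ together with uniqueness.
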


\begin{proof}
The proof is divided into three steps.

\par\nobreak\smallskip
\noindent\textit{Step 1. Choice of bootstrap constants.}\par\nobreak

Choose a bounded initial neighborhood $\mathscr U^+$ whose surfaces
remain strictly inside a common collar, with uniform transversality and
injectivity bounds and a common positive lower bound for the Taylor
coefficient. Put
\begin{equation*}
R_0:=\sup_{U_0\in\mathscr U^+}\sup_{j\ge j_0}
\|S_jU_0\|_{\X^s}<\infty.
\end{equation*}
Fix an enlarged collar containing all initial regularized surfaces a
positive distance from its boundary. By
\cref{lem:pressure-estimates}\textup{(ii)}, shrinking $\mathscr U^+$
if necessary gives $c_*>0$ with $a_j(0)\ge2c_*$ for all initial
regularizations.

For a finite family $U_J,\ldots,U_N$, \cref{lem:telescoping} gives
\begin{equation}\label{eq:coarse-tail}
\|U_j(t)\|_{\X^s}
\le C_{\rm col}\bigl(1+\|U_J(t)\|_{\X^s}\bigr)
+C_R\|c_{\ge J-C}\|_{\ell^2},\qquad J\le j\le N,
\end{equation}
under its bootstrap hypotheses. Here $C_{\rm col}$ depends only on the
fixed chart and comparison operators, whereas $C_R$ is fixed once the
bootstrap $H^s$ bound $R$ and Taylor lower bound $c_*$ are chosen. Choose
\begin{equation*}
R>8C_{\rm col}(1+R_0).
\end{equation*}
Fix all coefficient, stability, and fourth order constants using the low
bound $1+R$ and Taylor threshold $c_*$.

Let $h=h(1+R,c_*)$ be the band width from \cref{prop:Dprop}\textup{(i)}.
Shrink the data neighborhood so its graphs lie within $h/8$ of the
reference initial graph, and increase the regularization threshold so
all regularized graphs lie within $3h/16$ of that graph. Enforce also
that all initial surfaces lie strictly inside the region where the
harmonic normal cutoff equals one. The collar and comparison operators remain fixed.

For the reference datum, choose $J$ above all regularization and sampling
thresholds so that
\begin{equation*}
C_R\|c_{\ge J-C}\|_{\ell^2}<R/8.
\end{equation*}
By continuity of the envelope, restrict further to
$\mathscr U\subset\mathscr U^+$ such that
\begin{equation}\label{eq:bootstrap-tail-small}
C_R\sup_{U_0\in\mathscr U}\|c(U_0)_{\ge J-C}\|_{\ell^2}<\frac R4.
\end{equation}
For fixed $J$, \cref{lem:compatible-regularization} and the smooth local
theory in \cite[Theorem~1.9]{IPTT-MHD} give a common time $T_J>0$ with
\begin{equation}\label{eq:coarse-small}
C_{\rm col}\left(1+
\sup_{U_0\in\mathscr U}\sup_{0\le t\le T_J}
\|U_J(t)\|_{\X^s}\right)<\frac R4.
\end{equation}
Indeed, the left side is below $R/8$ at time zero, and the fixed coarse
flow depends continuously on the datum and time.

Use the graph parametrization $G_{\eta_j}$ and the physical outward
normal $N_j$ on $\Ga_j$.
While $\|U_j\|_{\X^s}\le R$, \eqref{eq:Dt-a} and the kinematic
condition in \eqref{eq:bc} give
\begin{gather*}
\partial_t\eta_j
=\frac{(u_j\circ G_{\eta_j})\cdot(N_j\circ G_{\eta_j})}
{X_*\cdot(N_j\circ G_{\eta_j})},\\
a_j(t)\ge2c_*-C_Rt,\qquad
\|\eta_j(t)-\eta_j(0)\|_{C^0}\le C_Rt.
\end{gather*}
Interpolation with the $H^s$ bound makes the graph change small in
$C^{1,\rho}$ for $0<\rho<\min\{1,s-2\}$.
Choose $T_{\rm geo}>0$ with $C_RT_{\rm geo}\le h/32$, so each graph
stays within $7h/32<h/4$ of the reference graph. Decrease it to keep
the graphs strictly inside the enlarged collar, preserve the cutoff
support condition, and ensure $a_j\ge3c_*/2$. Set
\begin{equation*}
T=\min\{1,T_J,T_{\rm geo}\}.
\end{equation*}

\par\smallskip
\noindent\textit{Step 2. Bootstrap a finite family.}\par\nobreak

Fix $N\ge J$. Let $T_N$ be the supremum of $0<t\le T$ for which every
$U_j$, $J\le j\le N$, exists on $[0,t]$ and satisfies
\begin{equation*}
\|U_j\|_{\X^s}<R,\qquad a_j>c_*,
\end{equation*}
together with strict containment in the enlarged collar, a graph band of width $h/4$,
adjacent weak band, and cutoff support conditions. These conditions hold
strictly at the initial time, so smooth local existence gives $T_N>0$.

On $[0,T_N)$, \eqref{eq:E4local}, \cref{prop:Dprop}\textup{(ii)}, and
\cref{lem:telescoping} apply. Equations
\eqref{eq:coarse-tail}--\eqref{eq:coarse-small} imply
\begin{equation*}
\sup_{0\le t<T_N}\|U_j(t)\|_{\X^s}<R/2,\qquad J\le j\le N.
\end{equation*}
The choice of $T_{\rm geo}$ improves the remaining bootstrap bounds:
$a_j\ge3c_*/2$, all surfaces stay a positive distance from the collar and
cutoff thresholds, and each graph lies within $7h/32$ of the reference
graph. Adjacent differences are thus at most $7h/16<h/2$.
No bootstrap quantity can reach its stopping value. If $T_N<T$, one
member of this finite family must therefore lose smooth existence.

\par\smallskip
\noindent\textit{Step 3. Exclude loss of smooth existence.}\par\nobreak

Suppose such a lifespan ends at $t_*\le T$. For that fixed $j$,
\eqref{eq:E4local} gives
\begin{equation*}
\sup_{t<t_*}\|U_j(t)\|_{\X^4}
\le C_R2^{(4-s)j}c_j<\infty.
\end{equation*}
The $H^3$ algebra estimate gives
\begin{equation*}
\|D_{B_j}u_j\|_{H^3}+\|D_{B_j}B_j\|_{H^3}
\le C\|B_j\|_{H^4}(\|u_j\|_{H^4}+\|B_j\|_{H^4}),
\end{equation*}
so, including the standard $H^{7/2}$ components,
\begin{equation}\label{eq:enh-72-bound}
\|U_j(t)\|_{\mathcal Y^{7/2}}
\le C\bigl(1+\|U_j(t)\|_{\X^4}\bigr)^2.
\end{equation}
The continuation criterion \cite[Theorem~1.11]{IPTT-MHD}, written
in the enhanced spaces $\mathcal Y^r$ with $r,m$ in place of
$s,\sigma$, extends a $\mathcal Y^m$ solution while the surfaces
remain in a common collar with uniform transversality and
injectivity bounds, the Taylor coefficient stays bounded away
from zero, and the $\mathcal Y^r$ norm remains bounded for some
$5/2<r\le m$. With $r=7/2$ and $m=4$, Step~2 and
\eqref{eq:enh-72-bound} therefore give an $\mathcal Y^4$ extension
to $t_*+\epsilon_j$ for some $\epsilon_j>0$, with the same collar
conditions and a positive lower bound for the Taylor coefficient.

To retain smoothness, fix an integer $m>4$ and let $T_m$ be the maximal
$\mathcal Y^m$ lifespan of the same smooth datum. Uniqueness in
$\mathcal Y^4$ identifies this solution with the extension on their
common interval. If $T_m\le t_*+\epsilon_j/2$, the extension controls its
lower $\mathcal Y^{7/2}$ norm, preserves the collar conditions, and
keeps the Taylor coefficient uniformly positive up to $T_m$.
\cite[Theorem~1.11]{IPTT-MHD}, applied at index $m$, then extends it
past $T_m$, a contradiction. Hence all finite orders persist to
$t_*+\epsilon_j/2$ and agree by uniqueness, contradicting termination
of the smooth solution.

Thus $T_N=T$ for every finite $N$. Since $T$ and all constants are
independent of $N$, every $j\ge J$ exists on $[0,T]$, and
\eqref{eq:E4local} proves \eqref{eq:H4bound}. The finitely many coarser
regularizations may be discarded.
\end{proof}

\subsection{Frequency envelopes and the \texorpdfstring{$H^s$}{Hs} flow}

On $[0,T]$, \cref{prop:Dprop}\textup{(ii)} and \cref{prop:E4} give the low and high bounds
\eqref{eq:tel-low}--\eqref{eq:tel-high}. By
\cref{lem:telescoping}, there is a uniform $J_*$ such that
for every $K>J\ge J_*$ and every $0\le t\le T$,
\begin{equation}
\|\eta_K(t)-\eta_J(t)\|_{H^s}
+\|W_K^\pm(t)-\mathcal J_JW_J^\pm(t)\|_{H^s(\Om_K(t))}
\le C\|c_{\ge J-C}\|_{\ell^2}.                        \label{eq:uniformtail}
\end{equation}
\begin{proposition}\label{prop:persistence}
\textup{(i)} For every datum $U_0$ in a sufficiently small $H^s$
neighborhood, the smooth regularizations converge in
$C([0,T];\X^s)$ to a solution $\Phi(U_0)$ with initial state $U_0$
and positive Taylor sign.

\textup{(ii)} Two solutions in $C([0,T];\X^s)$ with the same initial
state and positive Taylor sign coincide on their common interval in the
chart.
\end{proposition}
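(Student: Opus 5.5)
For part (i), the plan is to pass to the limit in the telescoping estimate \eqref{eq:uniformtail}. I restrict to the neighborhood $\mathscr U$ and the time $T$ of \cref{prop:E4}. For fixed $U_0$ and $t\in[0,T]$, \eqref{eq:graph-tail} makes $(\eta_K(t))_K$ Cauchy in $H^s(\Ga_*)$, uniformly in $t$; call its limit $\eta(t)$. The estimates \eqref{eq:Linfgeomtail}--\eqref{eq:C1tail} keep the limit surfaces in one $C^{1,1/2}$ collar, and the $\mathcal J_l$ sample strictly inside every earlier domain. The infinite-family clause of \cref{lem:telescoping}(iii) then shows that the ambient fields $\mathcal J_JW_J^\pm+\sum_{l\ge J}Z_l^\pm$ converge in $H^s$ on the limiting domain $\Om(t)$. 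This defines $W^\pm(t)$, hence $u=(W^++W^-)/2$ and $B=(W^+-W^-)/2$. Taking the smooth fields $\mathcal J_JW_J^\pm$ as the comparison fields of \cref{def:stdtop}, with $\delta\simeq\|c_{\ge J-C}\|_{\ell^2}$, gives $U_K(t)\to U(t)$ in $\X^s$ uniformly in $t$. Each $U_K$ is continuous in time into $\X^s$, being a smooth solution, so the uniform limit lies in $C([0,T];\X^s)$. At $t=0$, \eqref{eq:regconv} identifies the limit with $U_0$.

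Next I check that the limit solves \eqref{eq:MHD}. I pull back to $\Om_*$ by the maps $\Xi_{\eta_K}$ of \eqref{eq:reference-domain-map}. The $C^1$ convergence of fields and graphs passes $\diver=0$ and $B\cdot N=0$ to the limit. The pressures converge in $C^1$ by the argument of \cref{lem:pressure-estimates}(ii). The weak (distributional) form of the momentum and induction equations on the space-time domain then passes to the limit, as does the kinematic condition written as the graph equation $\partial_t\eta=(u\cdot N)/(X_*\cdot N)$. The same $C^0$ convergence of the Taylor coefficients keeps $a\ge c_*$ for the limit.

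For part (ii), I invoke the uniqueness criterion \cite[Theorem~1.6]{IPTT-MHD}. It requires the control quantities used in the proof of \cref{prop:Dprop}(ii): the $C^{1/2+\epsilon}$ and $W^{1,\infty}$ norms of $W^\pm$, the $C^{1,\epsilon}$ and $C^{1,1/2}$ norms of $\Ga$, and $\|D_t^\pm p\|_{W^{1,\infty}}$, each bounded in $L^1_t$ or $L^\infty_t$, together with positive Taylor sign. For a solution in $C([0,T];\X^s)$, Sobolev embedding and \cref{lem:pressure-estimates}(i) bound all of these. The bound on $D_tp,D_Bp$ in $H^s\subset W^{1,\infty}$ holds for $H^s$ states; for $D_tp$ I either approximate or use the elliptic equation \eqref{eq:Q-equation} interpreted distributionally. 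Once these are in place, the weak comparison $\D$ between the two solutions satisfies a Gr\"onwall inequality. Since $\D=0$ at $t=0$, it vanishes, and by \cref{prop:Dprop}(i), on short intervals where the graphs stay within $h$, this gives equality; a continuity argument covers the whole interval.

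\textbf{Main obstacle.} I expect the hardest step to be justifying the hypotheses of the uniqueness theorem for a merely $C([0,T];\X^s)$ solution. In particular, the material derivative $D_tp$ is only defined distributionally, and the proof of \cref{lem:pressure-estimates} gives its bound along smooth solutions. I would handle this by showing that $Q=D_tp$ is the weak Dirichlet solution of \eqref{eq:Q-equation}: the equation holds in the limit of the smooth approximations, and the uniform bound $\|Q_K\|_{H^s}\le C_R$ passes to the limit by weak lower semicontinuity.
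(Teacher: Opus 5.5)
Your part (i) follows the paper's construction essentially step by step: the uniform Cauchy property of the graphs from \eqref{eq:graph-tail}, the limit-domain version of the telescoping sum with the smooth fields $\mathcal J_LW_L^\pm$ as comparison fields, and pullback by $\Xi_{\eta_K}$ to pass divergence, tangency, pressure, kinematics and the Taylor sign to the limit. This part is fine. Your continuity-in-time argument via uniform limits also works once the convergence is read in the metric $d$ of \cref{lem:extmetric-top}.

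The gap is in part (ii). Statement (ii) concerns two \emph{arbitrary} solutions in $C([0,T];\X^s)$ with positive Taylor sign. The criterion \cite[Theorem~1.6]{IPTT-MHD} needs $D_t^\pm p\in W^{1,\infty}$, in time-integrated form, for both of them.

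For $D_Bp=B\cdot\nabla p$ this is available for $H^s$ states: it is a spatial product with evident zero trace, so \cref{lem:pressure-estimates}(i) applies. The $H^s$ bound for $Q=D_tp$, however, is proved there only along smooth solutions.

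Your first fix passes \eqref{eq:Q-equation} and $\|Q_K\|_{H^s}\le C_R$ to the limit of the smooth approximations. That only covers the solution $\Phi(U_0)$ built in (i). A general solution in the class is not known to be such a limit, and assuming it would be circular, since that is exactly what uniqueness is meant to show.

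Your alternative, ``interpreting \eqref{eq:Q-equation} distributionally,'' is also insufficient as stated. The commutator identity gives $\Delta Q$ in the interior. But \eqref{eq:dir-s} applies only if $Q(t)$ is the variational zero-Dirichlet solution, that is, $Q(t)\in H^1_0(\Om(t))$ for a.e.\ $t$. For the space-time distribution $D_tp$ on a moving domain, neither the $H^1$ regularity nor the vanishing trace is automatic, because nothing is known a priori about $\partial_tp$.

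The paper supplies exactly this in Step~4 of its proof, as follows.
\begin{enumerate}[label=(\alph*)]
\item It pulls back by the volume-preserving Lagrangian flow $Y$ of $u$, with $Y(t)(\Om_0)=\Om(t)$.
\item It writes $\varpi=p\circ Y$ as the solution of a fixed-domain Dirichlet problem. The coefficients are $H_L=(DY)^{-1}(DY)^{-T}\in W^{1,\infty}(0,T;L^\infty)$. The source is $F=f\circ Y\in W^{1,\infty}(0,T;L^2)$; this comes from the pulled-back transport equations for $\nabla u,\nabla B$, using $D_tu,D_tB\in L^\infty_tH^{s-1}$.
\item Time difference quotients and coercivity then give $\varpi\in W^{1,\infty}(0,T;H^1_0(\Om_0))$.
\item Consequently $Q=\varpi_t\circ Y^{-1}\in L^\infty_tH^1_0(\Om(t))$ is the weak Dirichlet solution of \eqref{eq:Q-equation}.
\item \cref{lem:elliptic-estimates}(i) then yields $Q\in L^\infty_tH^s\subset L^\infty_tW^{1,\infty}$.
\end{enumerate}
Without an argument of this type, your proof of (ii) establishes uniqueness only within the class of limits of smooth solutions.
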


\begin{proof}
The proof is divided into five steps.

\par\nobreak\smallskip
\noindent\textit{Step 1. Construct the strong $H^s$ limit.}\par\nobreak

By \eqref{eq:uniformtail}, the graphs converge in
$C([0,T];H^s(\Ga_*))$ to a limit $\eta$. Hence the surfaces converge
uniformly in $C^1$ within the fixed collar to $\Ga(t)=\Ga_{\eta(t)}$,
which encloses $\Om(t)$.

For a fixed scale $L$, set
\begin{equation*}
F_L^\pm(t)=\mathcal J_LW_L^\pm(t).
\end{equation*}
The sampling property in \cref{lem:telescoping} shows that this field is
defined on a neighborhood of $\Om(t)$ and of every
sufficiently late $\Om_K(t)$.  For $M>L$, applying the version on the limit domain of \eqref{eq:Asum} gives
\begin{equation*}
\sup_{0\le t\le T}
\|F_M^\pm(t)-F_L^\pm(t)\|_{H^s(\Om(t))}
\le C\|c_{\ge L-C}\|_{\ell^2}.
\end{equation*}
Denote the $H^s(\Om(t))$ limit by $W^\pm(t)$; the Cauchy estimate is
uniform in time. Set
\begin{equation*}
u=\frac{W^++W^-}{2},
\qquad B=\frac{W^+-W^-}{2}.
\end{equation*}
Passing to the limit in the finite telescoping estimate gives
\begin{equation}\label{eq:finite-comparison-tail}
\begin{aligned}
\sup_{K\ge L}\sup_t\|W_K^\pm-F_L^\pm\|_{H^s(\Om_K(t))}
&\le C\|c_{\ge L-C}\|_{\ell^2},\\
\sup_t\|W^\pm-F_L^\pm\|_{H^s(\Om(t))}
&\le C\|c_{\ge L-C}\|_{\ell^2}.
\end{aligned}
\end{equation}
For large $L$, the single smooth field $F_L^\pm$ approximates both the
limit and every sufficiently late regularization in \cref{def:stdtop}.
Together with the graph convergence, this gives strong moving domain
$H^s$ convergence. To check continuity at $t_0$, first make the tails in
\eqref{eq:finite-comparison-tail} small, then use smoothness of
$F_L^\pm(t)$ on a common neighborhood of $\overline\Om(t_0)$ for nearby
$t$. The triangle inequality gives continuity in the comparison topology.
At $t=0$, \eqref{eq:regconv} identifies the limit with $U_0$.

\par\smallskip
\noindent\textit{Step 2. Pass the constraints and the pressure to the limit.}\par\nobreak

Use the fixed maps \eqref{eq:reference-domain-map},
$\Xi_j(t)=\Xi_{\eta_j(t)}$ and $\Xi(t)=\Xi_{\eta(t)}$.
The graph convergence and the uniform
$H^s$ bounds imply
\begin{equation}\label{eq:Xij-uniform-time}
\Xi_j\to\Xi\quad\hbox{in }C_tC_x^1,
\qquad
\sup_j\|\Xi_j\|_{L_t^\infty(H^{s+1/2}\cap C^{1,\alpha})}
\le C_R
\end{equation}
where we choose $1/4<\alpha<1/2<s-2$; the $C^{1,\alpha}$ bound
follows from the uniform $H^{s+1/2}$ bound by Sobolev embedding.
Here $C_tC_x^1$ means
$C([0,T];C^1(\overline\Om_*))$, and the spatial norms in
\eqref{eq:Xij-uniform-time} are taken on $\Om_*$. We next establish
the stronger convergence
\begin{equation}\label{eq:fields-C1-pullback}
u_j\circ\Xi_j\to u\circ\Xi,
\qquad
B_j\circ\Xi_j\to B\circ\Xi
\quad\hbox{in }C_tC_x^1.
\end{equation}
Fix a large comparison scale $L$.  The tail
\eqref{eq:finite-comparison-tail}, Sobolev embedding
$H^s\hookrightarrow C^1$, and the uniform extension constants show that
both $W_j^\pm$ and $W^\pm$ are close in $C^1$ to the same smooth field
$F_L^\pm$.  For fixed $L$, composition of $F_L^\pm$ with $\Xi_j$ converges
in $C^1$ to composition with $\Xi$ by \eqref{eq:Xij-uniform-time}.
First let $j\to\infty$ and then $L\to\infty$ to obtain
\eqref{eq:fields-C1-pullback}.

The divergence constraints pass to the limit distributionally on every
compact set contained in the limiting domain.  The trace convergence in
\eqref{eq:fields-C1-pullback}, together with uniform convergence of the
normals, gives $B\cdot N=0$ on $\Ga$.

Let
$f_j=-\operatorname{tr}((\nabla u_j)^2)+\operatorname{tr}((\nabla B_j)^2)$
be the pressure source, and let $f$ be the analogous expression for
$(u,B)$. Denote by $p$ the zero Dirichlet solution of $\Delta p=f$ on
$\Om(t)$, and set
\begin{equation*}
\pi_j=p_j\circ\Xi_j,
\quad
J_j=\det D\Xi_j,
\quad
H_j=J_j(D\Xi_j)^{-1}(D\Xi_j)^{-T}.
\end{equation*}
Define $\pi=p\circ\Xi$, $J=\det D\Xi$, and
$H=J(D\Xi)^{-1}(D\Xi)^{-T}$ for the limiting map.
The sources satisfy $f_j\circ\Xi_j\to f\circ\Xi$ uniformly in
spacetime. The fixed domain energy estimate from
\cref{lem:pressure-estimates}\textup{(ii)}, applied at each time, gives
\begin{align*}
\|\pi_j-\pi\|_{H^1_0}
&\le C_R\Bigl(
\|H_j-H\|_\infty\|\nabla\pi\|_2
+\|J_j(f_j\circ\Xi_j)-J(f\circ\Xi)\|_{H^{-1}}\Bigr).
\end{align*}
All coefficient and source differences tend to zero uniformly in time;
therefore
\begin{equation}\label{eq:pi-uniform-H1}
\sup_{0\le t\le T}\|\pi_j(t)-\pi(t)\|_{H^1_0(\Om_*)}
\longrightarrow0.
\end{equation}
The uniform $C^{1,1/4}$ bounds after pullback and compactness in $C^1$
then yield
\begin{equation*}
\sup_t\|\pi_j(t)-\pi(t)\|_{C^1}\to0.
\end{equation*}
Indeed, a sequence of times violating this convergence would have
$C^1$ convergent subsequences with identical limits by
\eqref{eq:pi-uniform-H1}.
Because
$(\nabla p_j)\circ\Xi_j=(D\Xi_j)^{-T}\nabla\pi_j$, the normals and
inverse derivatives converge uniformly, and therefore the Taylor
coefficients converge uniformly.  Their common positive lower bound
passes to the limit.

\par\smallskip
\noindent\textit{Step 3. Pass the equations and the kinematic condition.}\par\nobreak

In the fixed graph chart the kinematic relation is
\begin{equation*}
\partial_t\eta_j(y)
=\frac{u_j(G_{\eta_j}(y))\cdot N_j(G_{\eta_j}(y))}
{X_*(y)\cdot N_j(G_{\eta_j}(y))}.
\end{equation*}
The denominator has a uniform positive lower bound by transversality, and
the numerator converges uniformly by \eqref{eq:fields-C1-pullback}.
Integrating from $0$ to $t$ and passing to the limit gives the same
kinematic identity for $\eta$.

Every compact spacetime cylinder in the limiting fluid interior lies
in all sufficiently late approximate domains. On such cylinders, the
$C^1$ convergence of the fields and pressure passes the nonlinear terms
to the limit, while the time derivatives converge distributionally. Thus
\begin{equation*}
D_tu=B\cdot\nabla B-\nabla p,
\qquad
D_tB=B\cdot\nabla u
\end{equation*}
in distributions on the limiting spacetime region.  The spatial
regularity $u,B\in H^s$ and the pressure estimates place the right sides
in $H^{s-1}$, so these are equations in the natural Sobolev class.  The
zero pressure trace and magnetic tangency were already obtained.  Thus
the limit is a solution in $C([0,T];\X^s)$, denoted $\Phi(U_0)$.

\par\smallskip
\noindent\textit{Step 4. Pressure material derivative for a rough solution.}\par\nobreak

Let now $U\in C([0,T];\X^s)$ be any solution in the class of the
proposition. Its state norms are bounded on the compact time interval.
Choose $R\ge1+\sup_t\|U(t)\|_{\X^s}$.
The stationary part of \cref{lem:pressure-estimates}\textup{(i)} gives
$p\in L^\infty_tH^{s+1/2}$, with a bound depending only on these state
norms and the fixed chart. Extend $u$ to a continuous family of
compactly supported $C^1$ ambient fields and let $Y(t)$ be its flow.
The kinematic condition and uniqueness of characteristics give
$Y(t)(\Ga_0)=\Ga(t)$. Since $Y(t)$ is an ambient homeomorphism, it maps
the bounded component enclosed by $\Ga_0$ onto that enclosed by
$\Ga(t)$. Thus $Y(t)(\Om_0)=\Om(t)$. The uniform $C^1$ bounds give
uniformly Lipschitz flow maps and inverses, and the divergence constraint
along interior trajectories gives $\det DY=1$.
Set
\begin{equation*}
H_L=(DY)^{-1}(DY)^{-T},
\qquad \varpi=p\circ Y,
\qquad f=\Delta p,
\qquad F=f\circ Y.
\end{equation*}
Since $\nabla u\circ Y\in C_tL^\infty_x$, the equation
$\partial_tDY=(\nabla u\circ Y)DY$ gives
$H_L\in C^1([0,T];L^\infty(\Om_0))$. In particular,
\begin{equation*}
H_L\in W^{1,\infty}(0,T;L^\infty(\Om_0)),
\qquad
\lambda|\xi|^2\le H_L(t,x)\xi\cdot\xi
\le\lambda^{-1}|\xi|^2
\end{equation*}
for a fixed $\lambda>0$, all $\xi\in\R^3$, and almost every $(t,x)$.

Equations \eqref{eq:MHD} and \cref{lem:pressure-estimates}\textup{(i)}
give $D_tu,D_tB\in L^\infty_tH^{s-1}$.  Commuting one spatial
derivative with $D_t$ and differentiating the quadratic expression
$f=\operatorname{tr}((\nabla B)^2)-\operatorname{tr}((\nabla u)^2)$
therefore yields
\begin{equation*}
D_tf\in L^\infty_tH^{s-2}.
\end{equation*}
Indeed, each differentiated term is a product of an $H^{s-1}$ factor
and an $H^{s-2}$ factor, and $s-1>3/2$.  These identities hold
distributionally by the product rule and spatial approximation.
Pullback of the transport identity by the Lipschitz flow gives
\begin{equation}\label{eq:transport-source-pullback}
F\in W^{1,\infty}(0,T;L^2(\Om_0)),
\qquad \partial_tF=(D_tf)\circ Y.
\end{equation}
To justify \eqref{eq:transport-source-pullback}, let $w$ be any component
of $\nabla u$ or $\nabla B$, and set $g=D_tw\in L^\infty_tL^2$.
Test its weak transport equation against
$\psi(t,Y(t)^{-1}(x))$, where $\psi\in C_c^\infty((0,T)\times\Om_0)$.
These $C^1$ tests are admissible by approximation. Volume preservation
gives $\partial_t(w\circ Y)=g\circ Y$ distributionally, so
$w\circ Y\in W^{1,\infty}(0,T;L^2(\Om_0))$.
The pullbacks are also continuous in time in $L^\infty$, and the product
rule for their quadratic products yields $\partial_tF=(D_tf)\circ Y$.

On the fixed domain, $\varpi\in L^\infty_tH^1_0(\Om_0)$ satisfies
\begin{equation*}
\int_{\Om_0}H_L(t)\nabla\varpi(t)\cdot\nabla\varphi
=-\int_{\Om_0}F(t)\varphi,
\qquad \varphi\in H^1_0(\Om_0).
\end{equation*}
For a time-dependent function $v$ and $0<h<T$, write
$\delta_hv(t)=(v(t+h)-v(t))/h$ for $0<t<T-h$.
Subtracting the weak equations at $t+h$ and $t$ gives
\begin{equation*}
\int_{\Om_0}H_L(t+h)\nabla\delta_h\varpi\cdot\nabla\varphi
=-\int_{\Om_0}\delta_hF\,\varphi
-\int_{\Om_0}\delta_hH_L\nabla\varpi(t)\cdot\nabla\varphi.
\end{equation*}
Choose $\varphi=\delta_h\varpi$.  Coercivity and Poincar\'e's
inequality imply, uniformly for almost every $0<t<T-h$,
\begin{equation*}
\|\delta_h\varpi(t)\|_{H^1_0}
\le C\Bigl(
\|\delta_hF(t)\|_{H^{-1}}
+\|\delta_hH_L(t)\|_{L^\infty}
\|\varpi(t)\|_{H^1_0}\Bigr)
\le C_R.
\end{equation*}
Weak star compactness on interior time intervals identifies the limit of
these difference quotients with the distributional derivative of
$\varpi$.  Hence
\begin{equation*}
\varpi\in W^{1,\infty}(0,T;H^1_0(\Om_0)).
\end{equation*}
Passing to the limit in the time integrated weak equation gives, for
almost every $t$,
\begin{equation*}
\int_{\Om_0}H_L\nabla\varpi_t\cdot\nabla\varphi
=-\int_{\Om_0}(D_tf)\circ Y\,\varphi
-\int_{\Om_0}\partial_tH_L\nabla\varpi\cdot\nabla\varphi.
\end{equation*}
The passage uses the strong convergence of $\delta_hH_L$ in
$L^q_tL^\infty_x$ and of $\delta_hF$ in $L^q_tL^2_x$ on compact time
intervals, for every finite $q$, together with the uniform $H^1_0$
bound for $\delta_h\varpi$.

Define $Q(t)=\varpi_t(t)\circ Y(t)^{-1}$.  Volume preservation and
change of variables show that $Q=D_tp$ distributionally on the moving
domain.  Moreover, $Q\in L^\infty_tH^1_0(\Om(t))$, so its boundary
trace is zero for almost every time. The commutator identity
\eqref{eq:laplace-Dt-comm} gives
\begin{equation*}
\Delta Q
=D_tf+(\Delta u)\cdot\nabla p
+2\partial_ku_i\,\partial_{ki}p
\quad\hbox{in }\Om(t),
\qquad Q|_{\Ga(t)}=0.
\end{equation*}
All terms on the right belong to $L^\infty_tH^{s-2}$.  For the last
two terms, use $\nabla u\in H^{s-1}$ and
$\nabla p\in H^{s-1/2}$ as Sobolev multipliers, together with
$\nabla^2u\in H^{s-2}$ and $\nabla^2p\in H^{s-3/2}$.
Thus \cref{lem:elliptic-estimates}\textup{(i)} yields
\begin{equation*}
Q\in L^\infty_tH^s(\Om(t))
\hookrightarrow L^\infty_tW^{1,\infty}(\Om(t)).
\end{equation*}
By \cref{lem:pressure-estimates}\textup{(i)},
$D_Bp\in L^\infty_tH^s$.
Consequently
\begin{equation*}
D_t^\pm p=Q\pm D_Bp
\in L^\infty_tW^{1,\infty}(\Om(t)).
\end{equation*}
These bounds at almost every time suffice for the time integral in the
uniqueness criterion of \cite[Theorem~1.6]{IPTT-MHD}.

\par\smallskip
\noindent\textit{Step 5. Uniqueness.}\par\nobreak

Fix a sufficiently small $0<\epsilon<1/4$. Sobolev embedding gives
$u,B\in L^\infty_tW^{1,\infty}_x$ and
$\Ga\in L^\infty_tC^{1,1/2}$, hence uniform $C^{1/2+\epsilon}$ and
$C^{1,\epsilon}$ bounds for the fields and boundary, respectively.
Step~4 gives $D_t^\pm p\in L^\infty_tW^{1,\infty}_x$.
With the common collar and positive Taylor lower bound,
\cite[Theorem~1.6]{IPTT-MHD} therefore gives uniqueness on each
common local chart interval. Continuity in $\X^s$ and an open--closed
argument extend this equality throughout the common lifespan,
proving \cref{prop:persistence}\textup{(ii)}.
\end{proof}

\subsection{Equivalence of the Eulerian metric and the state topology}

\begin{lemma}
\label{lem:extmetric-top}
For admissible states in every bounded subchart of a fixed collar,
convergence in $d$ is equivalent to $\X^s$ convergence in
\cref{def:stdtop}. The equivalence also
holds for uniform convergence of continuous trajectories on a compact
time interval.
\end{lemma}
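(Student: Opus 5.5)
We prove the two implications separately. In both we use the convention that the graph component of $d$ is exactly condition (ii) of \cref{def:stdtop}. Condition (i), the common chart and the common Taylor lower bound, is part of admissibility in a bounded subchart. So everything reduces to comparing the bulk-field parts.

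\emph{From $d$ to $\X^s$ convergence.} Suppose $d(U_n,U)\to0$. Given $\delta>0$, density lets us choose a smooth compactly supported ambient field $G$ with $\|E_\Om u-G\|_{H^s(\R^3)}<\delta/4$. We cannot use $G$ directly, because \cref{def:stdtop} requires a divergence-free comparison field, so we correct it. Set $\widetilde u=\mathcal J_k(u)$ for the comparison regularizer of \cref{lem:fixed-comparison-regularizer}, with $k$ large. By part~(i) of that lemma, $\widetilde u$ is smooth and solenoidal on $\Om^{\kappa2^{-k}}$. By \eqref{eq:fixed-smoother-error} and the convergence $\mathcal J_ku\to u$, it is within $\delta/4$ of $u$ in $H^s(\Om)$. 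We then cut it off near $\overline\Om$; the divergence is unaffected on a neighborhood of $\overline\Om$. Since the graphs converge in $H^s$, hence in $C^0$, the domains $\Om_n$ eventually lie in that neighborhood. On $\Om_n$ we estimate by the triangle inequality,
\[
\|u_n-\widetilde u\|_{H^s(\Om_n)}\le\|E_{\Om_n}u_n-E_\Om u\|_{H^s(\R^3)}+\|E_\Om u-\widetilde u^e\|_{H^s(\R^3)},
\]
where $\widetilde u^e$ is any fixed ambient extension of $\widetilde u$. The first term is bounded by $d(U_n,U)$. The second term is the delicate one, because $\widetilde u^e$ agrees with $E_\Om u$ only approximately. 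To handle it, we choose $\widetilde u^e$ to be the cutoff ambient field itself, and we bound $\|E_\Om u-\widetilde u^e\|_{H^s(\R^3)}$ using boundedness of $E_\Om$. Concretely, we write $E_\Om u-\widetilde u^e=E_\Om(u-\widetilde u|_\Om)+(E_\Om(\widetilde u|_\Om)-\widetilde u^e)$. The first piece is small. The second piece is not small on $\R^3$, but it vanishes on $\Om$. Its restriction to $\Om_n$ is supported in the thin set $\Om_n\setminus\Om$, and its $H^s(\Om_n)$ norm tends to zero by \eqref{eq:extension-continuity} applied with $F=\widetilde u^e$, which gives $E_{\Om_n}(\widetilde u^e|_{\Om_n})\to E_\Om(\widetilde u^e|_\Om)$. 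The main point is to organize the estimate so that this continuity is applied only to a fixed smooth field. The same argument applies to $B$.

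\emph{From $\X^s$ convergence to $d$.} Given $\delta$, let $\widetilde u$ be the comparison field from \cref{def:stdtop}, cut off to lie in $C_c^\infty(\R^3)$. Then
\[
\|E_{\Om_n}u_n-E_\Om u\|_{H^s}\le \|E_{\Om_n}(u_n-\widetilde u)\|+\|E_{\Om_n}\widetilde u-E_\Om\widetilde u\|+\|E_\Om(\widetilde u-u)\|.
\]
The outer two terms are at most $C\delta$ for large $n$, by the uniform bounds on the extension operators. The middle term tends to zero by \eqref{eq:extension-continuity}. This direction is routine.

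\emph{Trajectories.} For uniform convergence on $[0,T]$, we repeat both arguments with $\delta$ chosen uniformly in $t$. The potential obstacle is that a single smooth comparison field may not serve all times. We resolve this by compactness. The image of a continuous trajectory is compact in the metric $d$, so it can be covered by finitely many small $d$-balls. Within each ball, the comparison field chosen at its center, with a slightly enlarged neighborhood, serves every time. Continuity of the trajectory in time together with the pointwise equivalence then gives uniform equivalence. Finally, we note that the solenoidal correction via $\mathcal J_k$ is the only place where the smoothing results are genuinely needed.
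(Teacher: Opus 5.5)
Your treatment of the bulk fields is sound, and in one place it is simpler than the paper's. You note that on $\Om_n$ the field $E_\Om(\widetilde u^e|_\Om)-\widetilde u^e$ coincides with $E_\Om(\widetilde u^e|_\Om)-E_{\Om_n}(\widetilde u^e|_{\Om_n})$. Then \eqref{eq:extension-continuity}, applied to the fixed field $\widetilde u^e$, gives vanishing of its $H^s(\Om_n)$ norm directly. The paper instead proves this domain continuity of restrictions by dominated convergence on the Sobolev--Slobodeckij seminorm. You should write the identity out explicitly. Also, the right side of your displayed inequality should carry $H^s(\Om_n)$ norms, not $H^s(\R^3)$ norms. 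Your $\X^s\Rightarrow d$ direction is essentially the paper's.

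\textbf{The gap is your opening reduction: item (i) of \cref{def:stdtop} does not follow from admissibility.}
\begin{itemize}
\item Admissibility gives each $U_n$ its own positive minimum of $-\partial_{N_n}p_n$. The bounded subchart fixes only geometric bounds.
\item Item (i), however, asks for one Taylor lower bound valid for the whole sequence, and $d$ contains no information about the pressure.
\item So in the direction $d\Rightarrow\X^s$ you must show that $d(U_n,U)\to0$ keeps the Taylor coefficients uniformly positive.
\end{itemize}
This is the role of \cref{lem:pressure-estimates}\textup{(ii)}:
\begin{itemize}
\item Convergence in $d$ with bounded $H^s$ norms implies convergence in the $H^{s'}$ version of $d$, for $5/2<s'<s$.
\item That lemma then gives uniform convergence $a_n\circ G_{\eta_n}\to a\circ G_\eta$ on $\Ga_*$.
\item Hence $a_n\ge\frac12\min a$ for large $n$, and the finitely many earlier states contribute their own positive minima.
\end{itemize}
For trajectories, the same lemma gives continuity of the pulled-back coefficient on $I\times\Ga_*$ and uniform convergence there. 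Compactness of $I\times\Ga_*$ then yields the common bound. Without this step, a $d$-ball is not known to be a neighborhood in the $\X^s$ topology, and the lemma is used in exactly that way later, for instance in the proof of \cref{thm:nonuniform}.

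A secondary point: your trajectory sketch is thin in the $\X^s\Rightarrow d$ direction. There the middle extension term must vanish uniformly in $t$, and pointwise equivalence does not give that. You need continuity of $\eta\mapsto E_{\Om_\eta}(F|_{\Om_\eta})$ together with compactness of $\{\eta(t)\}$, for example via a subsequence $t_n\to t_*$.
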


\begin{proof}
The proof is divided into four steps.

\par\nobreak\smallskip
\noindent\emph{Step 1. From comparison convergence to metric convergence.}\par\nobreak

Assume $U_n\to U$ in the sense of \cref{def:stdtop}.  For
$\varepsilon>0$, choose the smooth divergence-free comparison fields
$\widetilde u,\widetilde B$ in item~(iii), with both comparison errors
less than $\varepsilon$.  Uniform extension bounds give
\begin{align*}
\|E_{\Om_n}u_n-E_\Om u\|_{H^s}
&\le C\|u_n-\widetilde u\|_{H^s(\Om_n)}\\
&\quad+
\|E_{\Om_n}(\widetilde u|_{\Om_n})
-E_\Om(\widetilde u|_\Om)\|_{H^s}
+C\|u-\widetilde u\|_{H^s(\Om)}.
\end{align*}
Apply the cutoff convention in Section~\ref{subsec:state-topology}
to regard $\widetilde u$ as a fixed smooth ambient field.
Then \eqref{eq:extension-continuity} makes the middle term tend to zero.
Taking the limsup gives at most $C\varepsilon$.
The same argument applies to $B$, while the graph term converges by
item~(ii).  Letting $\varepsilon\downarrow0$ proves $d(U_n,U)\to0$.

\par\smallskip
\noindent\emph{Step 2. Continuity of restrictions with respect to the domain.}\par\nobreak

Assume now $d(U_n,U)\to0$. We use the following consequence of graph
convergence: if $G\in H^s(\R^3)$ vanishes almost everywhere on
$\Om$, then
\begin{equation}\label{eq:smooth-domain-continuity}
\|G\|_{H^s(\Om_n)}\longrightarrow0.
\end{equation}
Write $\mathbf1_E$ for the indicator of a measurable set $E$.
Set $m=\lfloor s\rfloor$ and $\vartheta=s-m\in[0,1)$.
Each $D^\alpha G$, $|\alpha|\le m$, vanishes almost everywhere on $\Om$.
Since $\mathbf1_{\Om_n}\to\mathbf1_\Om$ almost everywhere, dominated
convergence gives $\|D^\alpha G\|_{L^2(\Om_n)}\to0$.
This proves the claim when $\vartheta=0$. For $\vartheta>0$ and $|\alpha|=m$,
the fractional seminorm is
\begin{equation*}
[D^\alpha G]_{H^\vartheta(\Om_n)}^2
=\iint_{\R^3\times\R^3}
\frac{|D^\alpha G(x)-D^\alpha G(y)|^2}
{|x-y|^{3+2\vartheta}}
\mathbf1_{\Om_n}(x)\mathbf1_{\Om_n}(y)\,dx\,dy.
\end{equation*}
The fraction in the integrand belongs to $L^1(\R^3\times\R^3)$, and the integrand
converges almost everywhere to zero because $D^\alpha G=0$ on $\Om$.
Dominated convergence and the uniform equivalence of intrinsic and
restriction Sobolev norms on the Lipschitz collar prove
\eqref{eq:smooth-domain-continuity}.

\par\smallskip
\noindent\emph{Step 3. From metric convergence to comparison convergence.}\par\nobreak

The strong $H^s$ convergence in
\cref{lem:fixed-comparison-regularizer}\textup{(ii)}, its preservation of
divergence, and the cutoff convention in
Section~\ref{subsec:state-topology} allow us to choose
$\widetilde u\in C_c^\infty(\R^3)$, divergence-free on a neighborhood of
$\overline\Om$ containing all sufficiently late $\Om_n$, such that
\begin{equation*}
\|u-\widetilde u\|_{H^s(\Om)}<\varepsilon.
\end{equation*}
Set $G=E_\Om(\widetilde u|_\Om)-\widetilde u$, so that
$G\in H^s(\R^3)$ and $G=0$ on $\Om$. Restricting
\begin{equation*}
E_{\Om_n}u_n-\widetilde u
=(E_{\Om_n}u_n-E_\Om u)
+E_\Om(u-\widetilde u|_\Om)+G
\end{equation*}
to $\Om_n$ gives
\begin{equation*}
\|u_n-\widetilde u\|_{H^s(\Om_n)}
\le C\|E_{\Om_n}u_n-E_\Om u\|_{H^s(\R^3)}
+C\varepsilon+\|G\|_{H^s(\Om_n)}.
\end{equation*}
Metric convergence and \eqref{eq:smooth-domain-continuity} therefore yield
\begin{equation*}
\limsup_{n\to\infty}
\|u_n-\widetilde u\|_{H^s(\Om_n)}\le C\varepsilon.
\end{equation*}
The same argument yields a smooth divergence-free comparison field for
$B$.  Letting $\varepsilon\downarrow0$ gives item~(iii) of
\cref{def:stdtop}. Item~(ii) is already one of the terms of $d$.

It remains to verify the common strict Taylor sign required in item~(i).
Metric convergence gives graph convergence and ambient field convergence
in $H^s$, hence also in every weaker $H^{s'}$ topology with
$5/2<s'<s$.  By \cref{lem:pressure-estimates}\textup{(ii)}, the pulled back normal
pressure coefficients converge uniformly. Since $U$ is admissible, its
Taylor coefficient has a positive minimum $c_*>0$, and those of $U_n$
are at least $c_*/2$ for all sufficiently large $n$. Each of the finitely
many remaining states is admissible by assumption. Taking
the minimum of their positive Taylor lower bounds and $c_*/2$ gives a
common positive lower bound for the entire sequence. The condition on
the fixed collar follows from the graph convergence.
Thus the two state topologies agree.

\par\smallskip
\noindent\emph{Step 4. Uniform convergence of trajectories.}\par\nobreak

For continuous trajectories $U_n,U:I=[0,T]\to\X^s$ in a bounded
subchart of the fixed collar, the uniform version of \cref{def:stdtop} requires a
common positive Taylor lower bound,
\begin{equation*}
\sup_{t\in I}\|\eta_n(t)-\eta(t)\|_{H^s(\Ga_*)}\longrightarrow0,
\end{equation*}
and the following comparison property: for every $\varepsilon>0$ there
are a finite relatively open cover $I_1,\ldots,I_M$ of $I$, an integer
$n_0$, and pairs $(\widetilde u_a,\widetilde B_a)\in C_c^\infty(\R^3)$,
divergence-free on a common neighborhood of all
$\overline{\Om(t)}$ and $\overline{\Om_n(t)}$ with $t\in I_a$, $n\ge n_0$,
such that
\begin{align}
&\sup_{t\in I_a}\left(
\|u(t)-\widetilde u_a\|_{H^s(\Om(t))}
+\|B(t)-\widetilde B_a\|_{H^s(\Om(t))}\right)<\varepsilon,
\label{eq:uniform-comparison-limit}\\
&\sup_{n\ge n_0}\sup_{t\in I_a}\left(
\|u_n(t)-\widetilde u_a\|_{H^s(\Om_n(t))}
+\|B_n(t)-\widetilde B_a\|_{H^s(\Om_n(t))}\right)<\varepsilon.
\label{eq:uniform-comparison-sequence}
\end{align}
Suppose these conditions hold. By \eqref{eq:extension-continuity},
compactness of $\{\eta(t):t\in I\}$ and uniform graph convergence give,
for each fixed $F\in C_c^\infty(\R^3)$,
\begin{equation}\label{eq:uniform-extension-fixed-field}
\sup_{t\in I}
\|E_{\Om_n(t)}(F|_{\Om_n(t)})
-E_{\Om(t)}(F|_{\Om(t)})\|_{H^s(\R^3)}\longrightarrow0.
\end{equation}
For any sequence $t_n\in I$, pass to a subsequence with $t_n\to t_*$.
Both graph sequences then converge to $\eta(t_*)$ in $H^s$, and
\eqref{eq:extension-continuity} gives the same limit for the two
extensions. This proves \eqref{eq:uniform-extension-fixed-field}.
Apply the triangle inequality from Step~1 and
\eqref{eq:uniform-extension-fixed-field} to the finitely many comparison
fields.  Uniform graph convergence then gives
\begin{equation*}
\limsup_{n\to\infty}\sup_{t\in I}d(U_n(t),U(t))
\le C\varepsilon.
\end{equation*}
Letting $\varepsilon\downarrow0$ proves uniform metric convergence.

Conversely, suppose $\sup_{t\in I}d(U_n(t),U(t))\to0$.
Steps~2 and~3 give, for every state $V=(v,b,\Sigma)$ and $\varepsilon>0$,
smooth comparison fields $(\widetilde v,\widetilde b)$ and $r_V>0$ such
that every state $W=(w,b',\Sigma_W)$ with $d(W,V)<r_V$ satisfies
\begin{equation}\label{eq:comparison-neighborhood}
\|w-\widetilde v\|_{H^s(\Om_W)}
+\|b'-\widetilde b\|_{H^s(\Om_W)}<\varepsilon/2.
\end{equation}
Here $\Om_V$ and $\Om_W$ are the domains enclosed by $V$ and $W$.
For example, set $G_V=E_{\Om_V}(\widetilde v|_{\Om_V})-\widetilde v$.
The decomposition in Step~3, restricted to $\Om_W$, gives
\begin{equation*}
\|w-\widetilde v\|_{H^s(\Om_W)}
\le C d(W,V)+C\|v-\widetilde v\|_{H^s(\Om_V)}
+\|G_V\|_{H^s(\Om_W)}.
\end{equation*}
Since $G_V=0$ on $\Om_V$, the last term tends to zero as $d(W,V)\to0$
by Step~2. Choose the comparison errors at $V$ sufficiently small and
then decrease $r_V$, applying the same estimate to the magnetic field.
This gives \eqref{eq:comparison-neighborhood}; graph convergence also
keeps the domain closures in a common neighborhood where the comparison
fields are divergence-free.

Apply \eqref{eq:comparison-neighborhood} at each $V=U(t_0)$.
By Steps~1--3, $U$ is continuous for $d$.  Thus each $t_0$ has a
relatively open neighborhood $I_{t_0}$ on which
$d(U(t),U(t_0))<r_{U(t_0)}/3$.  Choose a finite subcover
$I_a=I_{t_a}$.  Uniform metric convergence gives one $n_0$ such that
\begin{equation*}
\sup_{t\in I}d(U_n(t),U(t))
<\tfrac13\min_{1\le a\le M}r_{U(t_a)},
\qquad n\ge n_0.
\end{equation*}
Both states then lie in the corresponding $r_{U(t_a)}$ ball on $I_a$,
so \eqref{eq:comparison-neighborhood} gives
\eqref{eq:uniform-comparison-limit}--\eqref{eq:uniform-comparison-sequence}.
The metric already gives uniform graph convergence.

Finally, let $\widetilde a_n(t,y)$ and $\widetilde a(t,y)$ denote the Taylor
coefficients of $U_n(t)$ and $U(t)$ pulled back to $\Ga_*$ by their
respective graph parametrizations. Uniform convergence in $d$ implies
uniform convergence in its $H^{s'}$ version for $5/2<s'<s$.
By \cref{lem:pressure-estimates}\textup{(ii)}, these functions are continuous
on $I\times\Ga_*$ and satisfy
\begin{equation*}
\sup_{(t,y)\in I\times\Ga_*}
|\widetilde a_n(t,y)-\widetilde a(t,y)|\longrightarrow0.
\end{equation*}
Admissibility and compactness yield
$c_I:=\min_{I\times\Ga_*}\widetilde a>0$, so $\widetilde a_n\ge c_I/2$
for all large $n$.  Each remaining trajectory has its own positive
minimum on $I\times\Ga_*$.  The minimum of these finitely many numbers
and $c_I/2$ supplies a common positive lower bound for the Taylor coefficient.  This proves the
uniform equivalence; continuity of individual trajectories follows from
the state equivalence.
\end{proof}

\subsection{Continuous dependence}

\begin{proposition}\label{prop:cont}
\textup{(i)} Let $V_n(t)=(u_n,B_n,\Ga_n)$ and
$V(t)=(u,B,\Ga)$ be smooth states on $[0,T]$.  Write
$\eta_n:=\eta_{\Ga_n}$, $\eta:=\eta_\Ga$, and let $\Om_n,\Om$ be the
corresponding fluid domains.

Assume that the states remain in one bounded local chart and have a common
positive lower bound for the Taylor coefficient. Suppose that each pair
$(V_n,V)$ satisfies \eqref{eq:weak-band} and that
\begin{equation*}
\sup_n\sup_{0\le t\le T}
\bigl(\|V_n(t)\|_{\X^4}+\|V(t)\|_{\X^4}\bigr)\le R_4.
\end{equation*}
If
\begin{equation}
\sup_{0\le t\le T}\D(V_n(t),V(t))^{1/2}\longrightarrow0,
\label{eq:Dtwozero}
\end{equation}
then
\begin{equation*}
V_n\longrightarrow V\qquad\text{in }C([0,T];\X^s).
\end{equation*}

\textup{(ii)} After shrinking the neighborhood if necessary, the map
\begin{equation*}
U_0\longmapsto\Phi(U_0)
\end{equation*}
is continuous from the moving domain $H^s$ topology to
$C([0,T];\X^s)$.
\end{proposition}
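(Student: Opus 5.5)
For part (i), I will interpolate between the weak $L^2$ control given by $\D$ and the uniform $\X^4$ bound $R_4$. First, \cref{prop:Dprop}(i) turns \eqref{eq:Dtwozero} into uniform-in-time convergence of
\begin{equation*}
\|u_n-u\|_{L^2(\Om_n\cap\Om)}+\|B_n-B\|_{L^2(\Om_n\cap\Om)}+\|\eta_n-\eta\|_{L^2(\Ga_*)}
\end{equation*}
to zero. Interpolating with $\|\eta_n-\eta\|_{H^4}\le 2R_4$ gives $\eta_n\to\eta$ in $C([0,T];H^{q})$ for every $q<4$, and in particular in $H^s$ and in $C^{1,1/2}$. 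Hence the surfaces stay in one collar and the Hausdorff distance between $\Om_n(t)$ and $\Om(t)$ is $o(1)$ uniformly in $t$.

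For the bulk fields, I will compare through the fixed regularizers $\mathcal J_j$ of \cref{lem:fixed-comparison-regularizer}. Fix $j$ large and set $\widetilde F_j=\mathcal J_jF$ for $F\in\{u,B\}$ on $\Om(t)$. This field is smooth and divergence-free on $\Om(t)^{\kappa2^{-j}}$, which contains $\Om_n(t)$ for $n$ large. By \eqref{eq:fixed-smoother-error} with $(q,r)=(s,4)$, we have $\|F-\widetilde F_j\|_{H^s(\Om)}\le C2^{-(4-s)j}R_4$, uniformly in $t$. On $\Om_n$ I will write
\begin{equation*}
F_n-\widetilde F_j=(\mathrm{Id}-\mathcal J_j)F_n+\mathcal J_j(F_n-F).
\end{equation*}
The first term is again $O(2^{-(4-s)j}R_4)$. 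For the second, once $n$ is large the sampling set of $\mathcal J_j$ from $\Om_n$ lies in $\Om_n\cap\Om$, because the displacement is $o(2^{-j})$. The smooth kernel of fixed scale $j$ then gives
\begin{equation*}
\|\mathcal J_j(F_n-F)\|_{H^s(\Om_n)}\le C2^{sj}\|F_n-F\|_{L^2(\Om_n\cap\Om)}\to0.
\end{equation*}
Thus the $\limsup$ of the comparison errors is $O(2^{-(4-s)j})$, uniformly in time, and the same pair $\widetilde u_j,\widetilde B_j$ serves on short time intervals by continuity in $t$. The Taylor lower bound is assumed, so this verifies the uniform version of \cref{def:stdtop}, and \cref{lem:extmetric-top} gives convergence in $C([0,T];\X^s)$. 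The main technical point here is the sampling-inclusion check, which follows exactly as in \cref{lem:telescoping}(ii).

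For part (ii), I will use the frequency-envelope argument. Let $U_0^{(n)}\to U_0$ in $\X^s$. By \eqref{eq:envelope-continuity}, the envelopes satisfy $c^{(n)}\to c$ in $\ell^2$, so their tails $\|c^{(n)}_{\ge J}\|_{\ell^2}$ are small uniformly in $n$ once $J$ is large. By \eqref{eq:finite-comparison-tail}, $\Phi(U_0^{(n)})$ lies within $C\|c^{(n)}_{\ge J-C}\|_{\ell^2}$ of its regularized solution $U_J^{(n)}$ in the metric $d$, uniformly in time, and likewise $\Phi(U_0)$ lies within that distance of $U_J$; this uses \cref{lem:extmetric-top} to pass from comparison fields to $d$. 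It therefore suffices to show that $U_J^{(n)}\to U_J$ in $C([0,T];\X^s)$ for each fixed $J$. By \cref{lem:compatible-regularization}, $S_J$ is continuous into $\X^m$ for all $m$, so $S_JU_0^{(n)}\to S_JU_0$ in $\X^5$, say.

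Since all these solutions exist on $[0,T]$ with the bounds of \cref{prop:E4}, the weak stability estimate \cite[Theorem~4.1]{IPTT-MHD}, applied as in \cref{prop:Dprop}(ii) but to the pair $(U_J^{(n)},U_J)$, gives $\sup_t\D(U_J^{(n)},U_J)\le C\,\D(S_JU_0^{(n)},S_JU_0)$. The right side tends to zero by \cref{prop:Dprop}(i) and the continuity of $S_J$. Each of these solutions has $\X^4$ norm at most $C2^{(4-s)J}$, uniformly in $n$, by \eqref{eq:H4bound}. Part (i) then applies once the weak band \eqref{eq:weak-band} is checked, and it is: $\D\to0$ together with the $H^4$ bound gives $C^0$ closeness of the graphs. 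An $\varepsilon/3$ argument concludes.

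The main obstacle is arranging the order of limits. For fixed $J$, the $\X^4$ bounds in part (i) are allowed to grow with $J$; part (i) must only be applied at fixed $J$, after the tails have been made uniformly small through envelope continuity.
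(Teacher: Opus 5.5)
\emph{Part (i).} Your argument is correct and differs only mildly from the paper's. The paper bounds $\|u_n-\mathcal J_mu\|_{L^2(\Om_n)}$ by $\D^{1/2}$ on $\Om_n\cap\Om$, plus an $L^\infty$ times $|\Om_n\triangle\Om|$ estimate on the excess region, and then interpolates with the uniform $H^4$ bound. Your exact splitting $(\mathrm{Id}-\mathcal J_j)F_n+\mathcal J_j(F_n-F)$, combined with the fixed-scale kernel bound, avoids the excess region altogether. Both rest on the inclusion $\Om_n(t)\subset\Om(t)^{\kappa2^{-j}}$.

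\emph{Part (ii).} There is a genuine gap at the step ``by \eqref{eq:finite-comparison-tail}, $\Phi(U_0^{(n)})$ lies within $C\|c^{(n)}_{\ge J-C}\|_{\ell^2}$ of $U_J^{(n)}$ in the metric $d$.'' That estimate only says that $W_n^\pm$ on $\Om_n(t)$ and $W_{n,J}^\pm$ on $\Om_{n,J}(t)$ are each close, on their own domain, to the smooth field $F_{n,J}^\pm=\mathcal J_JW_{n,J}^\pm$. Turning this into a bound on $\|E_{\Om_n}W_n^\pm-E_{\Om_{n,J}}W_{n,J}^\pm\|_{H^s(\R^3)}$ requires controlling $E_{\Om_n}(F_{n,J}^\pm|_{\Om_n})-E_{\Om_{n,J}}(F_{n,J}^\pm|_{\Om_{n,J}})$ quantitatively and uniformly in $n$. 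This field has $H^4$ norm of size $2^{(4-s)J}c_J$, and the two domains remain at a $J$-dependent distance. The only domain-dependence property of the extension family available is the qualitative \eqref{eq:extension-continuity}, which applies to a fixed field along convergent graphs. Likewise, \cref{lem:extmetric-top} is a sequential equivalence, not a quantitative bound. So your $\varepsilon/3$ argument in $d$ is unsupported, and the reduction to $U_J^{(n)}\to U_J$ does not close by itself.

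The paper avoids $d$ at this point. It uses the single $n$-independent field $F_J^\pm=\mathcal J_JW_J^\pm$ as comparison field in \cref{def:stdtop}, for $\Phi(U_0)$ and for every $\Phi(U_0^{(n)})$. It then bounds
$\|W_n^\pm-F_J^\pm\|_{H^s(\Om_n)}\le\|W_n^\pm-F_{n,J}^\pm\|_{H^s(\Om_n)}+\|F_{n,J}^\pm-F_J^\pm\|_{H^4(\Om_n)}$.
The second term is at most $C_J\|W_{n,J}^\pm-W_J^\pm\|_{L^2(\Om_{n,J}\cap\Om_J)}\to0$. This follows from the fixed-scale kernel bound and the fact that all sample points taken from $\Om_n(t)\cup\Om(t)$ lie in $\Om_{n,J}(t)\cap\Om_J(t)$, which in turn follows from \eqref{eq:Linfgeomtail} and the fixed-$J$ convergence. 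Every estimate is then on the same domain $\Om_n(t)$, so no transfer between domains is needed. This is the same kernel mechanism you already used in part (i), so the repair is close at hand.

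\emph{A smaller point.} You apply the weak stability estimate to $(U_J^{(n)},U_J)$ and only afterwards verify \eqref{eq:weak-band} from $\D\to0$. That estimate, and \cref{prop:Dprop}(i), already presuppose the band, so the reasoning is circular as written. You need a continuity argument:
\begin{itemize}
\item let $\tau_n$ be the first time the $C^0$ graph difference reaches $h$;
\item on $[0,\tau_n)$ the estimate gives $\D=o_n(1)$;
\item interpolation with the $H^4$ bound then gives $o_n(1)$ in $C^0$;
\item hence no such $\tau_n$ exists for large $n$.
\end{itemize}
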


\begin{proof}
The proof is divided into three steps.

\par\nobreak\smallskip
\noindent\textit{Step 1. Interpolation from weak distance under a common $H^4$
bound.}\par\nobreak

By \cref{prop:Dprop}\textup{(i)}, \eqref{eq:Dtwozero} implies
\begin{equation*}
\sup_t\|\eta_n(t)-\eta(t)\|_{L^2(\Ga_*)}\longrightarrow0.
\end{equation*}
Interpolation with the common $H^4$ bound gives
$\|\eta_n-\eta\|_{H^s}
\le C(R_4)\|\eta_n-\eta\|_{L^2}^{1-s/4}$, hence
\begin{equation}\label{eq:two-state-graph-Hs}
\sup_t\|\eta_n(t)-\eta(t)\|_{H^s}\longrightarrow0.
\end{equation}
In particular, the graphs converge uniformly in $C^1$, and the measure of
$\Om_n(t)\triangle\Om(t)$ tends to zero uniformly in time.

Fix a large dyadic scale $m$ and put
\begin{equation*}
\widetilde u_m=\mathcal J_mu,
\qquad
\widetilde B_m=\mathcal J_mB.
\end{equation*}
By \cref{lem:fixed-comparison-regularizer}, these fields are smooth on a
common enlargement of $\Om(t)$ and
\begin{align}
\sup_t\bigl(\|u-\widetilde u_m\|_{H^s(\Om)}
+\|B-\widetilde B_m\|_{H^s(\Om)}\bigr)
&\le CR_4 2^{-(4-s)m},\notag\\
\sup_t\bigl(\|u-\widetilde u_m\|_{L^2(\Om)}
+\|B-\widetilde B_m\|_{L^2(\Om)}\bigr)
&\le CR_4 2^{-4m}.
\label{eq:two-state-smooth-L2}
\end{align}
For each fixed $m$, uniform convergence of the graphs gives
$\Om_n(t)\subset\Om(t)^{\kappa2^{-m}}$ for all sufficiently large $n$,
uniformly in $t$.  Thus $u_n-\widetilde u_m$ is defined on $\Om_n$.
On $\Om_n\cap\Om$, the $L^2$ difference is controlled by
$\D^{1/2}$ and \eqref{eq:two-state-smooth-L2}. On the excess
$\Om_n\setminus\Om$, it tends to zero uniformly in time, since
$u_n,\widetilde u_m$ are bounded in $L^\infty$ at fixed $m$ and
$|\Om_n\triangle\Om|\to0$. Consequently
\begin{equation}\label{eq:two-state-L2-full}
\limsup_{n\to\infty}\sup_t
\|u_n-\widetilde u_m\|_{L^2(\Om_n)}
\le CR_4 2^{-4m},
\end{equation}
and the same holds for $B$.

The uniform $H^4$ boundedness of $\mathcal J_m$ gives
$\|u_n-\widetilde u_m\|_{H^4(\Om_n)}\le C(R_4)$ independently of $m$
for all sufficiently large $n$. Interpolating
\eqref{eq:two-state-L2-full} on the uniformly Lipschitz domains yields
\begin{equation*}
\limsup_{n\to\infty}\sup_t
\|u_n-\widetilde u_m\|_{H^s(\Om_n)}
\le C(R_4)2^{-(4-s)m},
\end{equation*}
and likewise for $B$. At fixed $m$, the smooth comparison fields
vary continuously in time on local common neighborhoods of the domains.
Freezing them on a finite time cover and applying the cutoff convention
in Section~\ref{subsec:state-topology} gives fixed ambient comparison
fields on each interval. Letting $m\to\infty$ gives item~(iii) of
\cref{def:stdtop} uniformly in time. Together with
\eqref{eq:two-state-graph-Hs}, this proves \cref{prop:cont}\textup{(i)}.

\par\smallskip
\noindent\textit{Step 2. Choose one frequency cutoff for a convergent sequence of
data.}\par\nobreak

Let $U_0^{(n)}\to U_0$ in the moving domain $H^s$ topology, and write
$\Phi(U_0^{(n)})=(u_n,B_n,\Ga_{\eta_n})$ and
$\Phi(U_0)=(u,B,\Ga_\eta)$, with fluid domains $\Om_n$ and $\Om$.
Let $c^{(n)}$ and $c$ be the envelopes of the respective initial states.
By \cref{lem:extmetric-top} and \eqref{eq:envelope-continuity},
\begin{equation*}
c^{(n)}\longrightarrow c\quad\hbox{in }\ell^2.
\end{equation*}
Given $\delta>0$, choose integers $n_0$ and $J$, with $J$ above every
uniform regularization threshold, so that
\begin{equation}\label{eq:common-tail-delta}
\|c_{\ge J-C}\|_{\ell^2}
+\sup_{n\ge n_0}\|c^{(n)}_{\ge J-C}\|_{\ell^2}<\delta.
\end{equation}
For this fixed $J$, \cref{lem:compatible-regularization} gives
$S_JU_0^{(n)}\to S_JU_0$ in every finite Sobolev norm.  Denote the corresponding smooth
trajectories by
$U_{n,J}=(u_{n,J},B_{n,J},\Ga_{\eta_{n,J}})$ and
$U_J=(u_J,B_J,\Ga_{\eta_J})$, with domains $\Om_{n,J}$ and $\Om_J$.

To verify the weak band condition at fixed $J$, first note that
\cref{prop:E4} gives, for large $n$, common bounds
\begin{equation*}
\|U_{n,J}\|_{\X^s}+\|U_J\|_{\X^s}\le R_s,
\quad
\|U_{n,J}\|_{\X^4}+\|U_J\|_{\X^4}\le R_4,
\quad a_{n,J},a_J\ge c_*>0.
\end{equation*}
Here $a_{n,J}$ and $a_J$ are the Taylor coefficients of the two
regularized solutions, and $c_*>0$ is independent of $n$. Let $h=h(R_s,c_*)$
be supplied by \cref{prop:Dprop}\textup{(i)}. Smooth
convergence of the regularized initial data makes the initial graph
difference smaller than $h/4$.  Suppose, for contradiction, that the
$C^0$ graph difference reaches $h$, and let $\tau_n$ be the first such
time. On $[0,\tau_n)$, \cite[Theorem~4.1]{IPTT-MHD} applies, so
\begin{equation}\label{eq:fixedJ-D-small}
\sup_{0\le t<\tau_n}
\D(U_{n,J}(t),U_J(t))
\le C_J\D(U_{n,J}(0),U_J(0))=o_n(1).
\end{equation}
By \cref{prop:Dprop}\textup{(i)}, the graph difference is $o_n(1)$
in $L^2$. Interpolating with the common $H^4$ bound gives $o_n(1)$
in $H^q$ for any $1<q<4$,
hence in $C^0$ on the two-dimensional surface. For large $n$ the
difference remains below $h/2$, contradicting the definition of $\tau_n$.
Thus the weak band and \eqref{eq:fixedJ-D-small} hold on $[0,T]$.
By \cref{prop:cont}\textup{(i)},
\begin{equation}\label{eq:fixedJ-strong-conv}
U_{n,J}\longrightarrow U_J
\quad\hbox{in }C([0,T];\X^s).
\end{equation}

\par\smallskip
\noindent\textit{Step 3. Compare the full rough solutions through the same fixed
smooth fields.}\par\nobreak

Set $W_n^\pm=u_n\pm B_n$, $W^\pm=u\pm B$,
$W_{n,J}^\pm=u_{n,J}\pm B_{n,J}$, and $W_J^\pm=u_J\pm B_J$.
Define
\begin{equation*}
F_{n,J}^\pm=\mathcal J_JW_{n,J}^\pm,
\qquad
F_J^\pm=\mathcal J_JW_J^\pm.
\end{equation*}
Equations \eqref{eq:graph-tail} and \eqref{eq:finite-comparison-tail} give
\begin{align}
\sup_t\|\eta_n-\eta_{n,J}\|_{H^s}
+\sup_t\|W_n^\pm-F_{n,J}^\pm\|_{H^s(\Om_n(t))}
&\le C\|c^{(n)}_{\ge J-C}\|_{\ell^2},\\
\sup_t\|\eta-\eta_J\|_{H^s}
+\sup_t\|W^\pm-F_J^\pm\|_{H^s(\Om(t))}
&\le C\|c_{\ge J-C}\|_{\ell^2}.
\label{eq:cont-tail-limit}
\end{align}
For fixed $J$, \eqref{eq:fixedJ-strong-conv} gives convergence of the
cutoff graphs and fields.  By the geometric tail
\eqref{eq:Linfgeomtail}, the full domains lie within $o(2^{-J})$ of their
$J$th cutoff domains, uniformly in time and uniformly for large $n$.
Consequently, with $J$ fixed above the sampling threshold and $n$
sufficiently large, every kernel point used by either fixed operator
$\mathcal J_J$ from a point of $\Om_n(t)\cup\Om(t)$ lies in
\begin{equation*}
I_{n,J}(t)=\Om_{n,J}(t)\cap\Om_J(t).
\end{equation*}
Differentiating the fixed scale kernel up to order four and applying
Schur's test gives
\begin{equation}\label{eq:fixed-comparison-convergence}
\sup_t\Bigl(
\|F_{n,J}^\pm-F_J^\pm\|_{H^4(\Om_n(t))}
+\|F_{n,J}^\pm-F_J^\pm\|_{H^4(\Om(t))}\Bigr)\le C_J\sup_t
\|W_{n,J}^\pm-W_J^\pm\|_{L^2(I_{n,J}(t))}
\longrightarrow0.
\end{equation}
Use the single smooth field $F_J^\pm$, independent of $n$, in
\cref{def:stdtop}. The triangle inequality and
\eqref{eq:common-tail-delta}, \eqref{eq:cont-tail-limit}, and
\eqref{eq:fixed-comparison-convergence} bound the comparison errors on
both families of domains by $O(\delta)+o_n(1)$.
At fixed $J$, time continuity of $F_J^\pm$ allows these fields to be
frozen on a finite time cover, as in Step~1. For each $\delta$, first
fix $J=J(\delta)$, then let $n\to\infty$, and finally let
$\delta\downarrow0$. This proves
\begin{equation*}
\Phi(U_0^{(n)})\longrightarrow\Phi(U_0)
\quad\hbox{in }C([0,T];\X^s),
\end{equation*}
and hence \cref{prop:cont}\textup{(ii)}.

\end{proof}

\begin{proof}[Proof of \cref{thm:main}]
\Cref{prop:E4} gives a common lifespan for the regularizations, and
\cref{prop:persistence} gives a unique $C([0,T];\X^s)$ solution with
a positive Taylor coefficient. Continuous dependence follows from
\cref{prop:cont}\textup{(ii)}.
\end{proof}
\section{Nonuniform dependence of the solution map}
\label{sec:nonuniform}

In this section, we prove \cref{thm:nonuniform}. We first construct a
continuous local backward flow. We then choose interior magnetic perturbations
that preserve the constraints and Taylor sign and separate under small
translations. Pulling these perturbations back to the initial time and
applying Galilean invariance gives nonuniform dependence on every
nonempty open subset of the common existence neighborhood.

\subsection{Backward reduction and terminal packets}

\begin{lemma}\label{lem:symmetries-backward}
\textup{(i)} Let $(u,B,\Om,p)$ solve \eqref{eq:mom}--\eqref{eq:bc} with $-\partial_Np>0$ on the free boundary.  Then the equations and these conditions are preserved by
\begin{equation*}
(u^{\rm rev},B^{\rm rev},p^{\rm rev})(t,x)
:=\bigl(-u(-t,x),B(-t,x),p(-t,x)\bigr),
\qquad
\Om_t^{\rm rev}:=\Om_{-t},
\end{equation*}
and, for each constant $c\in\R^3$, by
\begin{equation*}
\Om_t\longmapsto\Om_t+ct,
\qquad
(u,B,p)(t,x)\longmapsto
\bigl(u(t,x-ct)+c,B(t,x-ct),p(t,x-ct)\bigr).
\end{equation*}
With
\begin{equation*}
\mathfrak R(u,B,\Ga):=(-u,B,\Ga),
\end{equation*}
$\mathfrak R$ is an involution on states, and a Galilean
translation carries the normal pressure coefficient to its rigid translate.

\textup{(ii)} Let $U_0$ be covered by \cref{thm:main}. There are a bounded
open neighborhood $\mathscr U$ of $U_0$ and $T>0$ such that all trajectories
from $\mathscr U$ remain in one bounded graph subchart on $[0,T]$.
They stay separated from the collar edge, with uniform positive
transversality and injectivity bounds and a common positive Taylor lower bound.

For every $U\in\mathscr U$ and $0<\tau\le T$, the state $V=\Phi(\tau)U$
has a neighborhood $\mathscr V$ and a continuous backward map
\begin{equation*}
\mathcal B_\tau:\mathscr V\longrightarrow\mathscr U
\end{equation*}
such that
\begin{equation}
\mathcal B_\tau(V)=U,
\qquad
\Phi(\tau)\mathcal B_\tau(W)=W
\quad (W\in\mathscr V).
\label{eq:uniform-backward-inverse}
\end{equation}
\end{lemma}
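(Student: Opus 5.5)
Part~(i) is a direct verification. For time reversal I will write $D_t^{\rm rev}=\partial_t+u^{\rm rev}\cdot\nabla$ and note that $D_t^{\rm rev}f^{\rm rev}(t,x)=-(D_tf)(-t,x)$ for $f^{\rm rev}(t,x)=f(-t,x)$. The momentum equation then becomes
\begin{equation*}
D_t^{\rm rev}u^{\rm rev}=(D_tu)(-t)=(B\cdot\nabla B-\nabla p)(-t)=B^{\rm rev}\cdot\nabla B^{\rm rev}-\nabla p^{\rm rev}.
\end{equation*}
The induction equation is handled the same way: $D_t^{\rm rev}B^{\rm rev}=-(D_tB)(-t)=-(B\cdot\nabla u)(-t)=B^{\rm rev}\cdot\nabla u^{\rm rev}$. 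The constraints $\diver=0$ and $B\cdot N=0$ are unchanged, and so is $p=0$. The normal velocity of $\Om_{-t}$ changes sign, which matches $u^{\rm rev}\cdot N$. The pressure source $\operatorname{tr}(\mathsf C^2)-\operatorname{tr}(A^2)$ is quadratic, so $p^{\rm rev}$ is again the Dirichlet pressure and $a$ is unchanged.

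For the Galilean map I will use that $\nabla$ commutes with translation and that $(\partial_t+(u+c)\cdot\nabla)[f(t,x-ct)]=(D_tf)(t,x-ct)$. The translated domain has normal velocity $u\cdot N+c\cdot N$, and $B$, $p$, $N$ and $a$ are simply transported. That $\mathfrak R$ is an involution is immediate.

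For part~(ii) I will first invoke \cref{thm:main} and then shrink. Take the neighborhood and lifespan from \cref{thm:main}, call them $\mathscr U'$ and $T'$. Pick a smaller bounded open $\mathscr U\ni U_0$ with compact-type control, and $T\le T'$, such that every trajectory from $\mathscr U$ stays in a strictly interior graph subchart with Taylor bound at least $c_*$. This uses continuity of $\Phi$ at $U_0$ and \cref{lem:pressure-estimates}(ii).

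The key observation is that, by part~(i), $\mathfrak R$ intertwines the forward flow with the backward flow. Concretely, if $U(t)=\Phi(t)U$, then $t\mapsto\mathfrak R U(\tau-t)$ is a solution in $C([0,\tau];\X^s)$ with datum $\mathfrak R V$, where $V=\Phi(\tau)U$, and with positive Taylor sign. The state $\mathfrak R V$ is admissible because $\mathfrak R$ preserves $H^s$ norms and the pressure. So \cref{thm:main} applied at $\mathfrak R V$ gives a neighborhood $\mathscr N$ and a time $T_V$.

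The difficulty is that $T_V$ may be smaller than $\tau$. To get around this, I will use compactness of the trajectory $\{\mathfrak R U(\tau-t)\}$ in $\X^s$ and cover it by finitely many such neighborhoods whose lifespans exceed the gaps. Concatenating the local maps using uniqueness, \cref{prop:persistence}(ii), and composing continuous maps yields $\mathscr N_*\ni\mathfrak R V$ and a continuous map
\begin{equation*}
\Psi:\mathscr N_*\to\X^s,\qquad W\mapsto\Phi(\tau)W,
\end{equation*}
defined along a common interval $[0,\tau]$.

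Then I will set $\mathcal B_\tau(W):=\mathfrak R\,\Psi(\mathfrak R W)$ on $\mathscr V:=\mathfrak R\mathscr N_*$. Uniqueness shows that $\Psi(\mathfrak R V)=\mathfrak R U$, so $\mathcal B_\tau(V)=U$. By continuity I can shrink $\mathscr V$ so that $\mathcal B_\tau(\mathscr V)\subset\mathscr U$, which is open.

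To obtain $\Phi(\tau)\mathcal B_\tau(W)=W$, I will reverse the backward solution from $\mathfrak R W$. This gives a forward solution from $\mathcal B_\tau(W)\in\mathscr U$ ending at $W$. Uniqueness, \cref{prop:persistence}(ii), identifies it with $\Phi(\cdot)\mathcal B_\tau(W)$, which is where the common Taylor bound on $[0,T]$ is used.

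The main obstacle is the chaining step. The constants in \cref{thm:main} depend on the datum, so the local lifespans must be handled via compactness of the trajectory. I must also check that every glued piece stays in the fixed chart, so that the metric $d$ and uniqueness apply throughout.
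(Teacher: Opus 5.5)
Your part~(i) is the same direct verification as in the paper. Your part~(ii) is correct, and your closing uniqueness argument for $\Phi(\tau)\mathcal B_\tau(W)=W$ matches the paper's Step~4. Where you differ is in how you get a backward solution map defined on all of $[0,\tau]$.

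You apply \cref{thm:main} at each point of the reflected trajectory $\mathfrak R\Phi(\tau-t)U$ and chain finitely many local solution maps. That requires:
\begin{itemize}
\item compactness of the trajectory and a Lebesgue-number partition of $[0,\tau]$;
\item a backward choice of nested neighborhoods, so that each composite lands in the domain of the next map;
\item gluing of solutions at the junction times via \cref{prop:persistence}\textup{(ii)}.
\end{itemize}
All of this can be carried out.

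The paper avoids it by using the freedom to shrink $T$. It applies \cref{thm:main} once, at $\mathfrak RU_0$, obtaining a neighborhood $\mathscr N$ with a common lifespan $T$. It picks $r$ so that the closed $3r$-ball about $\mathfrak RU_0$ lies in $\mathscr N$. It then decreases $T$ so that $d(\Phi(t)U_0,U_0)<r$ on $[0,T]$, and shrinks the forward neighborhood so that $\sup_t d(\Phi(t)W,\Phi(t)U_0)<r$. Since $\mathfrak R$ is a $d$-isometry, every reflected state $\mathfrak R\Phi(t)W$ lies within $2r$ of $\mathfrak RU_0$, hence in $\mathscr N$. The lifespan of $\mathscr N$ already covers $[0,\tau]$, so $\mathcal B_\tau=\mathfrak R\circ\Phi_-(\tau)\circ\mathfrak R$ is defined directly. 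The same smallness in $d$, through $H^s\hookrightarrow C^1$, also gives the chart, collar-separation and transversality control.

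The difficulty you flagged, that the lifespan at $\mathfrak RV$ may be shorter than $\tau$, disappears once the reflected neighborhood is centered at $\mathfrak RU_0$ rather than at $\mathfrak RV$. Your chaining is more general: it gives a local continuous backward map at every $\tau$ in the full forward lifespan without shortening it. The paper's route is considerably more economical. It also yields a backward domain $\mathscr N$ that is uniform in $U\in\mathscr U$ and $\tau\le T$, whereas your neighborhoods depend on $(U,\tau)$ in an uncontrolled way. The statement only needs neighborhoods depending on $(U,\tau)$, so this does not affect correctness.
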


\begin{proof}
The proof is divided into four steps.

\par\nobreak\smallskip
\noindent\emph{Step 1. Galilean invariance.}\par\nobreak

For the Galilean transform $(u^c,B^c,p^c,\Om^c)$ in part~(i),
set $y=x-ct$.
Then $N^c(t,x)=N(t,y)$. For every smooth scalar field $f$, set
$f^c(t,x)=f(t,x-ct)$. The chain rule gives
\begin{equation*}
(\partial_t+u^c\cdot\nabla_x)f^c(t,x)
=(\partial_t+u\cdot\nabla)f(t,x-ct),
\qquad
B^c(t,x)\cdot\nabla_x f^c(t,x)
=(B\cdot\nabla f)(t,x-ct).
\end{equation*}
Thus both material equations, divergence, magnetic tangency, and the
zero pressure condition are preserved. The boundary identities are
\begin{equation*}
V_{\Ga^c}=V_\Ga+c\cdot N=u^c\cdot N,
\qquad \partial_{N^c}p^c(t,x)=\partial_Np(t,y).
\end{equation*}
Thus the kinematic condition and the translated Taylor sign also hold.

\par\smallskip
\noindent\emph{Step 2. Time reversal.}\par\nobreak

With $D_t^{\rm rev}=\partial_t+u^{\rm rev}\cdot\nabla$,
\begin{equation*}
D_t^{\rm rev}u^{\rm rev}(t,x)=(D_tu)(-t,x),
\qquad D_t^{\rm rev}B^{\rm rev}(t,x)=-(D_tB)(-t,x).
\end{equation*}
Thus the momentum equation is unchanged and both sides of the induction
equation change sign.  The reversed boundary velocity is
$-u(-t)\cdot N=u^{\rm rev}\cdot N$; divergence, tangency, pressure,
and the Taylor sign are preserved.  This proves part~(i).
The map $\mathfrak R$ is an involution and, by linearity of the extensions,
\begin{equation*}
d(\mathfrak RU,\mathfrak R\widetilde U)=d(U,\widetilde U).
\end{equation*}

\par\smallskip
\noindent\emph{Step 3. A common neighborhood on which the backward map is defined.}\par\nobreak

The pressure equation and its zero boundary condition are unchanged
under $u_0\mapsto-u_0$. Thus $\mathfrak RU_0$ has the same pressure
and Taylor coefficient as $U_0$ and is admissible.
We apply \cref{thm:main} at $U_0$ and at $\mathfrak RU_0$. Let
$\mathscr W$ be a forward neighborhood of $U_0$ and $\mathscr N$ a
forward neighborhood of $\mathfrak RU_0$, with a common existence
time after decreasing both lifespans to $T>0$.  Denote the forward
solution map on $\mathscr N$ by $\Phi_-$; time reversal of this
forward evolution will give the backward map.
Write $\eta_0$ for the graph of the boundary of $U_0$.  Choose
$\varepsilon_g>0$ so that every graph satisfying
\begin{equation*}
\|\eta-\eta_0\|_{C^1(\Ga_*)}\le3\varepsilon_g
\end{equation*}
lies strictly inside the reference collar and has common positive
transversality and injectivity bounds.  Such a choice is possible because
the initial surface is compact and embedded and these properties are open
in $C^1$.  Let $C_s$ be the norm of the embedding
$H^s(\Ga_*)\hookrightarrow C^1(\Ga_*)$.  Choose $r>0$ so that the
closed $3r$ metric ball about $\mathfrak RU_0$, within the fixed collar,
lies in $\mathscr N$ and $2C_sr<\varepsilon_g$.  By continuity of the
trajectory through $U_0$, decrease $T$ so that
\begin{equation*}
d(\Phi(t)U_0,U_0)<r,
\qquad 0\le t\le T.
\end{equation*}
By continuous dependence of the forward map in
$C([0,T];\X^s)$, shrink $\mathscr W$ to a bounded
neighborhood $\mathscr U$ of $U_0$ satisfying
\begin{equation*}
\sup_{0\le t\le T}d(\Phi(t)W,\Phi(t)U_0)<r,
\qquad W\in\mathscr U.
\end{equation*}
The triangle inequality and the isometry of $\mathfrak R$ give
\begin{equation*}
d(\mathfrak R\Phi(t)W,\mathfrak RU_0)
=d(\Phi(t)W,U_0)<2r,
\qquad W\in\mathscr U,
\end{equation*}
so $\mathfrak R\Phi(t)W\in\mathscr N$.  Writing $\eta_Z$ for the
boundary graph of a state $Z$, the same estimate gives
\begin{equation*}
\sup_{0\le t\le T}
\|\eta_{\Phi(t)W}-\eta_0\|_{C^1}
\le C_s\sup_{0\le t\le T}d(\Phi(t)W,U_0)
<2C_sr<\varepsilon_g.
\end{equation*}
These estimates keep the surfaces inside the collar with uniform
positive transversality and injectivity bounds.  They also give uniform
state $H^s$ bounds, hence uniform graph $C^{1,\alpha_s}$ bounds.
The common positive Taylor bound follows from \cref{thm:main}.

Fix $U\in\mathscr U$, $0<\tau\le T$, and write
$V=\Phi(\tau)U$.  Since $\mathfrak RV\in\mathscr N$, continuity of the
reversed forward flow shows that, on a neighborhood $\mathscr V_0$ of
$V$ with $\mathfrak R\mathscr V_0\subset\mathscr N$, the map
\begin{equation*}
\mathcal B_\tau(W)
:=\mathfrak R\bigl(\Phi_-(\tau)(\mathfrak RW)\bigr)
\end{equation*}
is well defined and continuous.

\par\smallskip
\noindent\emph{Step 4. The inverse identities.}\par\nobreak

Define
\begin{equation*}
Y(t)=\mathfrak R\Phi(\tau-t)U,
\qquad 0\le t\le\tau.
\end{equation*}
By Step~2, $Y$ and $t\mapsto\Phi_-(t)(\mathfrak RV)$ are solutions in the
uniqueness class with the same initial state $\mathfrak RV$.  Hence
\begin{equation*}
Y(t)=\Phi_-(t)(\mathfrak RV),
\qquad 0\le t\le\tau.
\end{equation*}
At $t=\tau$ this gives
$\mathcal B_\tau(V)=U$.  Since $\mathscr U$ is open and
$\mathcal B_\tau$ is continuous, shrink $\mathscr V_0$ to a neighborhood
$\mathscr V$ of $V$ such that
$\mathcal B_\tau(\mathscr V)\subset\mathscr U$.

For $W\in\mathscr V$ define
\begin{equation*}
Z(t)=\mathfrak R\Phi_-(\tau-t)(\mathfrak RW),
\qquad 0\le t\le\tau.
\end{equation*}
This solves the original equations with $Z(0)=\mathcal B_\tau(W)$ and
$Z(\tau)=W$.  Since $\mathcal B_\tau(W)\in\mathscr U$, its forward
solution exists on $[0,\tau]$ and equals $Z$ by uniqueness.  This proves
\eqref{eq:uniform-backward-inverse}, with $\mathscr U$ and $T$
independent of $U\in\mathscr U$.
\end{proof}

Fix a state
\begin{equation*}
\overline V=(\overline u,\overline B,\overline\Ga)
\end{equation*}
in a bounded $\X^s$ subchart, and let $\Om$ and $\overline p$ be its
fluid domain and associated pressure.  Assume $-\partial_N\overline p>0$.
Choose $x_*\in\Om$ and $\ell>0$ such that
$Q_{4\ell}\Subset\Om$, where $Q_r:=x_*+(-r,r)^3$ for $r>0$.  Let
$\chi\in C_c^\infty(Q_{4\ell})$ satisfy $\chi=1$ on $Q_{2\ell}$.  For a parameter
$0<\delta\ll1$ and integers $k\gg1$ set
\begin{equation*}
b_k:=\delta k^{-(s+1)}
\curl\bigl(0,0,\chi(x)\sin(kx_1)\bigr).
\end{equation*}
Then $b_k$ is smooth, divergence-free and compactly supported in the interior
of $\Om$, so $b_k\cdot N=0$ on
$\overline\Ga$.

\begin{lemma}\label{lem:terminal-adm}
\textup{(i)} For $0\le r\le s$,
\begin{equation}
\|b_k\|_{H^r(\Om)}
\le C_{r,s}\delta k^{r-s}.
\label{eq:bNs}
\end{equation}
With
\begin{equation*}
h_k:=\frac{\pi}{k}e_1,
\end{equation*}
there is a constant $C\ge1$,
independent of $k$ and $\delta$, such that
\begin{equation}
\|b_k(\cdot-h_k)-b_k\|_{H^s(Q_\ell)}\ge C^{-1}\delta
\label{eq:bNsep}
\end{equation}
for all sufficiently large $k$.

\textup{(ii)} Let
\begin{equation*}
V_k:=(\overline u,\overline B+b_k,\overline\Ga),
\end{equation*}
and let $p_k$ be its associated pressure. Suppose
$-\partial_N\overline p\ge 2c_*>0$. Given an $H^s$ neighborhood of
$\overline V$, choose $\delta>0$ sufficiently small. Then, for all
sufficiently large $k$, the states $V_k$ belong to that neighborhood and satisfy
\begin{equation}
-\partial_N p_k\ge c_*>0.
\label{eq:VN-pressure}
\end{equation}
\end{lemma}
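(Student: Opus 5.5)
The plan is to compute $b_k$ explicitly, use the scaling in $k$ for part (i), and then combine the small-amplitude bound with \cref{lem:pressure-estimates}\textup{(ii)} for part (ii).

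\textbf{Explicit form and upper bound.} Writing $\psi=\chi(x)\sin(kx_1)$, we have
\begin{equation*}
b_k=\delta k^{-(s+1)}\bigl(\partial_2\psi,-\partial_1\psi,0\bigr),\qquad
\partial_1\psi=k\chi\cos(kx_1)+(\partial_1\chi)\sin(kx_1).
\end{equation*}
Every $x$-derivative costs at most a factor $k$, so for integers $m\ge 0$
\begin{equation*}
\|b_k\|_{H^m}\le C_m\delta k^{-(s+1)}k^{m+1}=C_m\delta k^{m-s}.
\end{equation*}
Interpolating between integers gives \eqref{eq:bNs} for all $0\le r\le s$. Since $b_k$ is compactly supported in $Q_{4\ell}\Subset\Om$, the norms on $\Om$ and on $\R^3$ agree.

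\textbf{Separation \eqref{eq:bNsep}.} For $k$ large, $|h_k|<\ell$, so $x-h_k\in Q_{2\ell}$ whenever $x\in Q_\ell$. There $\chi=1$, hence $\psi=\sin(kx_1)$ and $b_k=\delta k^{-s}(0,-\cos(kx_1),0)$. The shift by $\pi/k$ flips the sign of $\cos(kx_1)$, so on $Q_\ell$
\begin{equation*}
b_k(\cdot-h_k)-b_k=2\delta k^{-s}\bigl(0,\cos(kx_1),0\bigr).
\end{equation*}
The $H^s(Q_\ell)$ norm dominates the $H^m(Q_\ell)$ seminorm with $m=\lfloor s\rfloor$, but that only yields $\delta k^{m-s}$. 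So for fractional $s$ the lower bound has to be taken on the fractional part. I would use $\|f\|_{H^s(Q_\ell)}\gtrsim[\partial_1^m f]_{H^{s-m}(Q_\ell)}$ and the explicit oscillation: for $g=\cos(kx_1)$, $[g]^2_{H^\vartheta(Q_\ell)}\gtrsim k^{2\vartheta}$, obtained by restricting the Gagliardo integral to $|x-y|\sim k^{-1}$ inside the cube. Alternatively, a duality pairing against a smooth cutoff times $\cos(kx_1)$, bounded in $\widetilde H^{-s}$ by $k^{-s}$, gives the same conclusion. Either way the difference has $H^s(Q_\ell)$ norm at least $c\,\delta k^{-s}k^{s}=c\,\delta$. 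This step is elementary but is the only one needing care, since the bound must come from the top fractional seminorm rather than the integer one.

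\textbf{Part (ii).} Because the perturbation is compactly supported, the extension satisfies $E_\Om(\overline B+b_k)=E_\Om\overline B+b_k$ (the extension family is linear and acts as the identity in the interior, or one simply bounds the difference by $C\|b_k\|_{H^s}$). Hence
\begin{equation*}
d(V_k,\overline V)\le C\|b_k\|_{H^s}\le C\delta,
\end{equation*}
and taking $\delta$ small places $V_k$ in the prescribed neighborhood; constraints and tangency were already verified. For the Taylor sign, fix $5/2<s'<s$. Then \eqref{eq:bNs} gives $d_{s'}(V_k,\overline V)\le C\delta k^{s'-s}\to0$, while the $H^s$ norms stay bounded, so \eqref{eq:low-bound} holds with a fixed $R$. \Cref{lem:pressure-estimates}\textup{(ii)} then gives uniform convergence of the pulled-back Taylor coefficients $a_k\circ G_{\eta}\to\overline a\circ G_\eta$ on the fixed boundary. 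Since $\overline a\ge2c_*$, this yields $a_k\ge c_*$ for all large $k$, which is \eqref{eq:VN-pressure}.
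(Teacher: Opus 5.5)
Your proposal is correct and follows the paper's argument: the explicit curl expansion and $k$-scaling give \eqref{eq:bNs}, the half-wavelength shift reduces \eqref{eq:bNsep} to a lower bound for $2\delta k^{-s}\cos(kx_1)e_2$ in $H^s(Q_\ell)$, and part (ii) follows from $d(V_k,\overline V)\le C\|b_k\|_{H^s}$ together with the $H^{s'}$ pressure continuity of \cref{lem:pressure-estimates}\textup{(ii)}. The only inessential difference is in that lower bound: the paper multiplies by a cutoff $\zeta\in C_c^\infty(Q_\ell)$ and computes $k^{-2s}\|\zeta\cos(kx_1)\|_{H^s(\R^3)}^2\to\frac12\|\zeta\|_2^2$ via the Fourier transform, which is closely related to your pairing against $\zeta\cos(kx_1)$, and your Gagliardo-seminorm variant also works.
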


\begin{proof}
The proof is divided into three steps.

\par\nobreak\smallskip
\noindent\emph{Step 1. Explicit form and Sobolev size of the packet.}\par\nobreak

Expanding the curl gives
\begin{equation}\label{eq:bk-explicit}
b_k=-\delta k^{-s}\chi(x)\cos(kx_1)e_2
+\delta k^{-(s+1)}r_k(x),
\end{equation}
where $r_k=(\partial_2\chi,-\partial_1\chi,0)\sin(kx_1)$.
For $a\in C_c^\infty(\R^3)$ and $0\le r\le s$, Fourier translation gives
\begin{equation*}
\|a(x)e^{ikx_1}\|_{H^r(\R^3)}\le C_{a,r}k^r,
\qquad k\ge1.
\end{equation*}
Applying this to \eqref{eq:bk-explicit} proves \eqref{eq:bNs}.

\par\smallskip
\noindent\emph{Step 2. Separation under a translation by half a wavelength.}\par\nobreak

For large $k$, both $x$ and
$x-h_k$ lie in $Q_{2\ell}$ whenever $x\in Q_\ell$.  Since $\chi=1$
there and $kh_k\cdot e_1=\pi$,
\begin{equation*}
b_k(x-h_k)-b_k(x)=2\delta k^{-s}\cos(kx_1)e_2,
\qquad x\in Q_\ell.
\end{equation*}
Let $0\ne\zeta\in C_c^\infty(Q_\ell)$.  Multiplication by $\zeta$,
followed by extension by zero, is bounded
$H^s(Q_\ell)\to H^s(\R^3)$. Hence
\begin{equation}\label{eq:local-lower-via-zeta}
\|b_k(\cdot-h_k)-b_k\|_{H^s(Q_\ell)}
\ge C^{-1}\delta k^{-s}
\|\zeta\cos(kx_1)\|_{H^s(\R^3)}.
\end{equation}
The Fourier transform is
\begin{equation*}
\widehat{\zeta\cos(kx_1)}(\xi)
=\frac12\widehat\zeta(\xi-ke_1)
+\frac12\widehat\zeta(\xi+ke_1).
\end{equation*}
Dominated convergence for the diagonal terms and rapid decay for the
cross term give
\begin{equation}\label{eq:osc-limit}
\lim_{k\to\infty}k^{-2s}
\|\zeta\cos(kx_1)\|_{H^s(\R^3)}^2
=\frac12\|\zeta\|_2^2.
\end{equation}
Combining \eqref{eq:local-lower-via-zeta} and
\eqref{eq:osc-limit} proves \eqref{eq:bNsep}.

\par\smallskip
\noindent\emph{Step 3. Admissibility and persistence of the Taylor sign.}\par\nobreak

The interior curl preserves the constraints, and \eqref{eq:bNs} gives
\begin{equation*}
d(V_k,\overline V)
\le C\|b_k\|_{H^s(\Om)}
\le C\delta.
\end{equation*}
Choose $\delta>0$ so small that this $C\delta$ ball is contained in the
prescribed $H^s$ neighborhood.

For $5/2<s'<s$, \eqref{eq:bNs} also gives
\begin{equation*}
\|b_k\|_{H^{s'}(\Om)}\le C\delta k^{s'-s}\longrightarrow0.
\end{equation*}
Together with the uniform $H^s$ bound,
\cref{lem:pressure-estimates}\textup{(ii)} yields
\begin{equation*}
\|\partial_Np_k-\partial_N\overline p\|_{L^\infty(\overline\Ga)}
\longrightarrow0.
\end{equation*}
Since $-\partial_N\overline p\ge2c_*$, this proves
\eqref{eq:VN-pressure} and admissibility for all large $k$.
\end{proof}

\begin{proof}[Proof of \cref{thm:nonuniform}]
The proof is divided into three steps.

\par\nobreak\smallskip
\noindent\textit{Step 1. Pull terminal packets back into an arbitrary initial open
set.}\par\nobreak

Choose the bounded neighborhood $\mathscr U$ and common time $T$ from
\cref{lem:symmetries-backward}\textup{(ii)}. Fix $0<\tau\le T$ and a
nonempty open set $\mathscr O\subset\mathscr U$, and choose
$\overline U\in\mathscr O$. Set
\begin{equation*}
\overline V=\Phi(\tau)\overline U
=(\overline u,\overline B,\overline\Ga),
\end{equation*}
and denote its fluid domain and pressure by $\Om$ and $\overline p$.  Let $\mathcal B_\tau$ be the local backward inverse near $\overline V$.
Write $B_d(U,r)=\{W:d(W,U)<r\}$ for the open metric ball of radius
$r$ about a state $U$.  Because $d$ metrizes the state topology and
$\mathcal B_\tau$ is continuous, choose $\rho>0$ with
\begin{equation*}
B_d(\overline U,2\rho)\subset\mathscr O
\end{equation*}
and then choose a terminal neighborhood $\mathscr W\subset\mathscr V$
of $\overline V$ within the domain of $\mathcal B_\tau$ such that
\begin{equation*}
d(\mathcal B_\tau(V),\overline U)<\rho,
\qquad V\in\mathscr W.
\end{equation*}
Let
\begin{equation*}
c_*:=\frac12\inf_{\overline\Ga}(-\partial_N\overline p)>0.
\end{equation*}
By \cref{lem:terminal-adm}\textup{(ii)}, choose once and for all a sufficiently small
packet amplitude $\delta>0$ such that, for all large $k$,
\begin{equation*}
V_k=(\overline u,\overline B+b_k,\overline\Ga)
\end{equation*}
belongs to $\mathscr W$ and has Taylor coefficient at least $c_*$.  Define
\begin{equation*}
U_k=\mathcal B_\tau(V_k).
\end{equation*}
By \eqref{eq:uniform-backward-inverse} and the choice of $\mathscr W$,
\begin{equation}\label{eq:NU-pullback}
\Phi(\tau)U_k=V_k,
\qquad
d(U_k,\overline U)<\rho.
\end{equation}

\par\smallskip
\noindent\textit{Step 2. Apply a vanishing Galilean boost.}\par\nobreak

Let
\begin{equation*}
h_k=\frac\pi k e_1,
\qquad
v_k=\frac{h_k}{\tau}.
\end{equation*}
Apply the Galilean symmetry of \cref{lem:symmetries-backward}\textup{(i)} to the exact
trajectory $\Phi(t)U_k=(u_k(t),B_k(t),\Ga_k(t))$, using the constant
velocity $v_k$.  Let $\Om_k(t)$ be the domain enclosed by $\Ga_k(t)$
and denote the resulting initial state by $\widetilde U_k$.  At $t=0$ the
spatial translation is zero, so
\begin{equation*}
\widetilde U_k=(u_k(0)+v_k,B_k(0),\Ga_k(0)).
\end{equation*}
Adding a constant velocity preserves the constraints and pressure.
Linearity and uniform boundedness of the extensions give
\begin{equation}\label{eq:initialboostdist}
d(U_k,\widetilde U_k)
=\|E_{\Om_k(0)}v_k\|_{H^s(\R^3)}
\le C|v_k|
=\frac{C\pi}{\tau k}\longrightarrow0.
\end{equation}
By \eqref{eq:NU-pullback}, \eqref{eq:initialboostdist}, and the triangle
inequality, both $U_k$ and $\widetilde U_k$ lie in $\mathscr O$ for all
sufficiently large $k$.

We verify that the transformed solutions remain in the same geometric
chart throughout $[0,\tau]$.  Let $\eta_0$ and $\varepsilon_g$ be the
reference graph and radius chosen in Step~3 of the proof of
\cref{lem:symmetries-backward}. The graph functions
$\eta_k(t)$ of the unshifted surfaces $\Ga_k(t)$ satisfy
\begin{equation*}
\|\eta_k(t)-\eta_0\|_{C^1}<\varepsilon_g,
\qquad
\|\eta_k(t)\|_{H^s}+\|\eta_k(t)\|_{C^{1,\alpha_s}}\le R,
\end{equation*}
with $R$ independent of $k$ and $t$.  Every graph in the closed
$C^1$ ball of radius $3\varepsilon_g$ about $\eta_0$ remains inside the
collar with the same uniform positive transversality and injectivity bounds.

Use the collar coordinates $(\pi_*,r_*)$ from
\eqref{eq:collar-coordinates}. For a graph $\eta$ in this family and a
small constant vector $a$, set
\begin{equation*}
G_\eta(y)=y+\eta(y)X_*(y),
\qquad
q_{\eta,a}(y)=\pi_*(G_\eta(y)+a).
\end{equation*}
At $a=0$, the map $q_{\eta,a}$ is the identity on $\Ga_*$.  The chain rule and the bounded
derivatives of $\pi_*$ on the closed smaller collar give
\begin{equation*}
\|q_{\eta,a}-\mathrm{Id}\|_{C^1}\le C_R|a|.
\end{equation*}
All map norms in this step are taken in a fixed finite atlas.
Consequently, for $|a|\le a_0$ with $a_0>0$ independent of the graph,
$q_{\eta,a}$ is a diffeomorphism of $\Ga_*$ with uniform bounds on its
derivative and inverse derivative.  The translated surface has graph
\begin{equation*}
\eta^a=
\bigl(r_*\circ(G_\eta+a)\bigr)\circ q_{\eta,a}^{-1}.
\end{equation*}
The composition and inversion estimates in $H^s$ on the two-dimensional
reference surface apply because $s>2$.  Applied to these formulas, they
give $\|\eta^a\|_{H^s}\le C_R$.  To estimate the change in $C^1$,
observe that
\begin{equation*}
\|r_*\circ(G_\eta+a)-\eta\|_{C^1}\le C_R|a|,
\qquad
\|q_{\eta,a}^{-1}-\mathrm{Id}\|_{C^1}\le C_R|a|.
\end{equation*}
The H\"older bound for $D\eta$ with exponent $\alpha_s$ therefore gives
\begin{equation*}
\|\eta^a-\eta\|_{C^1}\le C_R|a|^{\alpha_s}
\qquad (|a|\le a_0\le1).
\end{equation*}
Decrease $a_0$ so that $C_Ra_0^{\alpha_s}<\varepsilon_g$.
Since $|tv_k|\le\pi/k$ on $[0,\tau]$, all sufficiently large $k$
give translated graphs within $2\varepsilon_g$ of $\eta_0$ in $C^1$,
with a uniform $H^s$ bound.  Thus they remain in a common bounded
subchart for the entire interval.

Translation preserves the bulk Sobolev norms and Taylor lower bound;
the added velocity has norm at most $C|v_k|$.  For fixed $k$, continuity
of $H^s$ composition and inversion gives time continuity of the graph.
Strong continuity of ambient translations and domain continuity of the
extensions give time continuity of the fields; a fixed cutoff treats
the constant velocity.  The transformed solution thus belongs to
$C([0,\tau];\X^s)$ and equals $\Phi(t)\widetilde U_k$ by
\cref{prop:persistence}\textup{(ii)}.  Since $\tau v_k=h_k$,
\begin{align}
\Phi(\tau)U_k
&= (\overline u,\overline B+b_k,\overline\Ga),\label{eq:terminal-unboosted}\\
\Phi(\tau)\widetilde U_k
&=\bigl(\overline u(\cdot-h_k)+v_k,
(\overline B+b_k)(\cdot-h_k),
\overline\Ga+h_k\bigr).
\label{eq:terminal-boosted}
\end{align}

\par\smallskip
\noindent\textit{Step 3. Obtain a fixed positive terminal separation.}\par\nobreak

Let $Q_\ell\Subset\Om$ be the interior box from
\cref{lem:terminal-adm}\textup{(i)}.  Since $h_k\to0$, for all large $k$,
\begin{equation*}
Q_\ell\Subset\Om\cap(\Om+h_k).
\end{equation*}
On this box the ambient extensions appearing in $d$ agree with the
physical magnetic fields.  Since restriction
$H^s(\R^3)\to H^s(Q_\ell)$ is bounded, the magnetic part of the metric and
\eqref{eq:terminal-unboosted}--\eqref{eq:terminal-boosted} imply
\begin{align}\label{eq:terminal-lower-magnetic}
d\bigl(\Phi(\tau)U_k,\Phi(\tau)\widetilde U_k\bigr)
&\ge C^{-1}
\|\overline B(\cdot-h_k)-\overline B
+b_k(\cdot-h_k)-b_k\|_{H^s(Q_\ell)}.
\end{align}
Using a fixed $H^s$ extension of $\overline B$, strong continuity of
translations gives
\begin{equation*}
\|\overline B(\cdot-h_k)-\overline B\|_{H^s(Q_\ell)}\longrightarrow0.
\end{equation*}
The reverse triangle inequality and \eqref{eq:bNsep} therefore imply
from \eqref{eq:terminal-lower-magnetic} that
\begin{equation*}
\liminf_{k\to\infty}
d\bigl(\Phi(\tau)U_k,\Phi(\tau)\widetilde U_k\bigr)
\ge c\delta>0.
\end{equation*}
At the same time \eqref{eq:initialboostdist} shows
$d(U_k,\widetilde U_k)\to0$.  The base state, nonzero observation time,
and open neighborhood were arbitrary.  Thus the map at time $\tau$ is not
uniformly continuous on any nonempty open subset of $\mathscr U$, proving
\cref{thm:nonuniform}.
\end{proof}

\subsection*{Acknowledgment}
S. Yang was supported by the National Natural Science Foundation of China under Grant No. 12601417, the Beijing Natural Science Foundation under Grant
No. 1264051, the Postdoctoral Fellowship Program (Grade C) of China
Postdoctoral Science Foundation under Grant No. GZC20261671, the Beijing
Postdoctoral Research Activity Funding Program under Grant No. 2026-ZZ-28,
and the Funding Program for Newly Recruited Young Teachers of Beijing
University of Technology. C. Hao was partially supported by the National Natural Science Foundation of China under Grant No. 12171460, the CAS Project for Young Scientists in Basic Research under Grant No. YSBR-031 and the National Key R\&D Program of China under Grant No. 2021YFA1000800.

\medskip

\noindent\textbf{Data Availability} Data sharing is not applicable to this article as no datasets were generated or analyzed during the current study.

\medskip

\noindent\textbf{Declarations}
\medskip

\noindent\textbf{Conflict of interest} The authors declare that there is no conflict of interest. 

\end{document}